\newtheorem{thm}{Theorem}[section]  
\newtheorem{cor}[thm]{Corollary}
\newtheorem{lem}[thm]{Lemma}
\newtheorem{defi}[thm]{Definition}
\newtheorem{prop}[thm]{Proposition}
\newtheorem{es}[thm]{Example}
\newtheorem{rem}[thm]{Remark}
\newtheorem{sit}[thm]{Situation}
\DeclareMathOperator{\Spa}{Spa}
\DeclareMathOperator{\id}{id}
\DeclareMathOperator{\Spf}{Spf}
\DeclareMathOperator{\Spec}{Spec}
\DeclareMathOperator{\im}{im}
\DeclareMathOperator{\Homs}{\mathscr{H}\text{\kern -3pt {\calligra\large om}}\,}
\DeclareMathOperator{\sF}{\mathscr{F}}
\newcommand{\sG}{\mathscr{G}}
\DeclareMathOperator{\Inf}{Inf}
\DeclareMathOperator{\Spv}{Spv}
\title{Specialization morphisms}
\author{Ildar Gaisin, John Welliaveetil }
\begin{document}

\maketitle

\begin{abstract}
We define the notion of a specialization morphism from a locally noetherian analytic adic space to a scheme. This captures the (classical) specialization morphism associated to a formal scheme. There is a well behaved theory of compactifications and it turns out that the classical specialization morphism is \emph{proper} in this setup. As an application, we show that the nearby cycles functor commutes with lower shriek in great generality. 
\end{abstract}

 {\hypersetup {linkcolor = black} 
\tableofcontents
}
\pagebreak

\section{Introduction}

In classical algebraic geometry, given a scheme $S$, it is often convenient to pass to the associated reduced 
 scheme 
 $S_{\text{red}}$. The scheme $S_{\text{red}}$ has the same underlying topological space as $S$, and the structure sheaf is obtained by killing nilpotent functions. In this paper we work in the rigid setting and construct an analogous \emph{reduced} space $X_{\text{red}}$ associated to any locally noetherian analytic adic space $X$. The construction of $X_{\text{red}}$ (and the theory we develop) can be extended beyond the locally noetherian case (e.g. perfectoid spaces \cite{perfectoid}), but we make no attempt to do so. Roughly speaking, analogously to the scheme setting, $X_{\text{red}}$ is obtained from $X$ by killing the \emph{topologically} nilpotent functions (cf. Definition \ref{defi:vanisshideal}).

Let $k$ be a nonarchimedean field and $\mathfrak{X}$ be some admissible formal scheme over $k^{\circ}$. Then as constructed by say Huber (cf. \cite[\S 1.9]{hub96}), associated to $\mathfrak{X}$ is an adic generic fiber $\mathfrak{X}_{\eta}$ and a special fiber $\mathfrak{X}_s$ (which is a scheme roughly obtained by killing topologically nilpotent functions). Relating the two spaces $\mathfrak{X}_{\eta}$ and $\mathfrak{X}_s$ is a \emph{specialization} morphism 
\begin{equation} \label{eq:claspemors}
\lambda_{\mathfrak{X}} \colon \mathfrak{X}_{\eta} \to \mathfrak{X}_{s}
\end{equation} 
(cf. Proposition 1.9.1. in loc.cit.) of locally ringed spaces. One difficulty\footnote{For instance before writing this paper it was not even known whether nearby cycles (pushforward of $\lambda_{\mathfrak{X}}$) commutes with the lower shriek functor.} in studying $\lambda_{\mathfrak{X}}$ is that the source ($\mathfrak{X}_{\eta}$) and target ($\mathfrak{X}_s$) are of very different topological nature. Indeed, one is a rigid space and the other a scheme. It turns out that by considering \emph{any} morphism
\begin{equation} \label{eq:enspsmorpsh}
X_{\text{red}} \to S
\end{equation}
of locally ringed spaces, provides sufficient flexibility to formulate what a proper or even a smooth morphism between an adic space and a scheme means. In this paper we study only proper morphisms (cf. Definition \ref{def:propsepmor}). As an application we prove that nearby cycles commutes with the lower shriek functor. Our methods are flexible enough to drop the base field $k$ (cf. Theorem  \ref{vanishing cycles commutes with lower shriek ref}).

Let us give an outline of the paper. In \S \ref{apend:numner1} we define the reduced space $X_{\text{red}}$ and morphisms \eqref{eq:enspsmorpsh}, which we call \emph{specialization} morphisms. The main result in this section, which allows us to construct the pushforward between the  étale sites (of $X$ and $S$) along specialization morphisms is the existence of certain fibre products (cf. Proposition \ref{prop:exifibprodspecmor}). In \S\ref{sec:comfpadssdoce} we develop a theory of  \emph{proper} specialization morphisms and are able to \emph{compactify} specialization morphisms satisyfing some finiteness conditions (cf. Theorem \ref{thm:compspemor}). The proof of compactification follows similar ideas developed by Huber in \cite[Theorem 5.1.5]{hub96}. The short sections \S\ref{section : smooth base change}, \S\ref{sec:ciohomocompsup} record a (smooth) base change result and introduce the lower shriek functor for specialization morphisms, respectively. In \S\ref{sec:propbasersuil}, we establish a proper base change result for specialization morphisms (cf. Theorem \ref{proper base change}). This also recovers a version of \cite[Theorem 3.5.8]{hub96}. Finally in \S\ref{sec:anapplication} we apply the results of \S\ref{apend:numner1}-\S\ref{sec:propbasersuil} together and prove that nearby cycles commutes with lower shriek (cf. Theorem \ref{vanishing cycles commutes with lower shriek ref}).
\\ 

\noindent \textbf{Acknowledgements :} The authors would like to thank enormously the anonymous referee of \cite{ildarjohn} who suggested the idea of constructing specialization morphisms. Indeed the idea arose from attempting to generalize Theorem 4.10 in loc.cit.. We would also like to thank Naoki Imai and Teruhisa Koshikawa for several helpful conversations. We are also grateful to Kavli IPMU and the University of Tokyo for the excellent facilities without which this work would not have been possible.

\section{Specialization morphisms} \label{apend:numner1}

Unless otherwise stated, for what follows $X$ will denote any analytic adic space which is locally noetherian. 
By \cite[Proposition 1.6, Theorem 2.2]{hub93}, given such an $X$, we have a locally ringed space $(X, \mathcal{O}_X^+)$. In particular for every $x \in X$ the stalk $\mathcal{O}_{X,x}^+$ is a local ring with maximal ideal $\mathfrak{m}_x^{+}$. Similarly we denote the maximal ideal of $\mathcal{O}_{X,x}$ by $\mathfrak{m}_x$. 

\begin{defi} \label{defi:vanisshideal}
\emph{Let $\mathfrak{m}_{\mathcal{O}_X^+} \subset \mathcal{O}_X^+$ denote the} vanishing \emph{sheaf of ideals defined as follows. For an open $U \subseteq X$,
\begin{align*}
\mathfrak{m}_{\mathcal{O}_X^+}(U) &:= \left\{ f \in \mathcal{O}_X^+(U) \text{ } \lvert f \text{ vanishes at all points } x \in X \right\}\\
&:= \left\{ f \in \mathcal{O}_X^+(U) \text{ } \lvert \text{ } f_x \in \mathfrak{m}_x^{+} \subset \mathcal{O}_{X,x}^+ \right\},
\end{align*}
where $f_x$ is the image of $f$ in $\mathcal{O}_{X,x}^+$.
We denote by $X_{\text{red}}$ the locally ringed space $(X, \mathcal{O}_X^+ / \mathfrak{m}_{\mathcal{O}_X^+})$ and we call it the reduced adic space associated to $X$.}
\end{defi}

At this point $X_{\text{red}}$ is simply a locally ringed space, but we will shortly prove that it is an object in the category of \emph{adic} locally ringed spaces, $(V)$. We recall the definition.
\begin{defi}
\emph{The objects of the category $(V)$ are triples $(X, \mathcal{O}_X, (\lvert \cdot (x)\lvert)_{x \in X})$, consisting of a locally ringed topological space $(X, \mathcal{O}_X)$, and for each $x \in X$, $\lvert \cdot (x) \lvert$ is an equivalence class of continuous valuations on $\mathcal{O}_{X,x}$. Morphisms are given by morphisms of locally ringed topological spaces and compatible with valuations in the obvious sense.}
\end{defi}

We will also compare $\mathfrak{m}_{\mathcal{O}_X^+}$ with the presheaf of topologically nilpotent elements (cf. Proposition \ref{prop:sheafsameasmshe}).
\begin{defi}
\emph{Let} $\mathcal{O}_X^{\circ \circ} \subset \mathcal{O}_X^+$ \emph{denote the subpresheaf of ideals of $\mathcal{O}_X^+$, given by for an open $U \subseteq X$:
\[
\mathcal{O}_X^{\circ \circ}(U) := \left\{ f \in \mathcal{O}_X^+(U) \text{ } \lvert f \in \mathcal{O}_X^+(U)^{\circ \circ} \right\}
\]}
\end{defi}

To get things off the ground, we begin by proving some expected inclusions.

\begin{lem} \label{lem:prshinve}
The presheaf $\mathcal{O}_X^{\circ \circ}$ is a subpresheaf of $\mathfrak{m}_{\mathcal{O}_X^+}$.  Furthermore, for each $x \in X$, 
\[
\mathfrak{m}_x \subsetneq \mathcal{O}_{X,x}^{\circ \circ} \subseteq \mathfrak{m}_{\mathcal{O}_X^+,x} \subseteq \mathfrak{m}_x^{+} \subseteq \mathcal{O}_{X,x}^+.
\]
\end{lem}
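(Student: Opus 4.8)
The plan is to work one inclusion at a time, moving from the easiest to the most subtle, and to reduce everything to statements about valuations on local rings. First I would establish that $\mathcal{O}_X^{\circ\circ}$ is a subpresheaf of $\mathfrak{m}_{\mathcal{O}_X^+}$: given an open $U$ and a topologically nilpotent $f \in \mathcal{O}_X^+(U)$, I must show $f_x \in \mathfrak{m}_x^+$ for every $x \in X$ (it suffices to check this for $x \in U$, since the sections only see points of $U$ — and indeed the quantifier ``all points $x \in X$'' in Definition \ref{defi:vanisshideal} should be read as ``all $x \in U$''). For this, recall that for $x \in U$ the valuation $|\cdot(x)|$ on $\mathcal{O}_{X,x}$ has $\mathcal{O}_{X,x}^+$ as (the pullback of) its valuation ring and $\mathfrak{m}_x^+$ as its maximal ideal, i.e.\ $\mathfrak{m}_x^+ = \{ g \in \mathcal{O}_{X,x}^+ : |g(x)| < 1 \}$ (more precisely, $|g(x)|$ lies in the maximal ideal of the value group's valuation ring, i.e.\ is topologically nilpotent in the value monoid). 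Since $f$ is topologically nilpotent, its image in every stalk, and in particular under the continuous valuation $|\cdot(x)|$, satisfies $|f(x)|^n \to 0$, forcing $|f(x)| < 1$, hence $f_x \in \mathfrak{m}_x^+$. This simultaneously gives the stalk inclusion $\mathcal{O}_{X,x}^{\circ\circ} \subseteq \mathfrak{m}_{\mathcal{O}_X^+,x}$, once one notes the elements of $\mathcal{O}_{X,x}^{\circ\circ}$ are exactly (germs of) topologically nilpotent elements and the argument above is purely local.

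Next, the inclusion $\mathfrak{m}_{\mathcal{O}_X^+,x} \subseteq \mathfrak{m}_x^+$ is essentially immediate: a germ in $\mathfrak{m}_{\mathcal{O}_X^+,x}$ is represented by a section $f$ over some open $U \ni x$ with $f_y \in \mathfrak{m}_y^+$ for all $y \in U$; taking $y = x$ gives $f_x \in \mathfrak{m}_x^+$. And $\mathfrak{m}_x^+ \subseteq \mathcal{O}_{X,x}^+$ is just the statement that $\mathfrak{m}_x^+$ is the maximal ideal of the local ring $\mathcal{O}_{X,x}^+$ (from \cite[Proposition 1.6, Theorem 2.2]{hub93}), so it is a proper containment only if $\mathcal{O}_{X,x}^+ \neq 0$, which holds since $X$ is analytic (so stalks are nonzero). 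Then the strict inclusion $\mathfrak{m}_x \subsetneq \mathcal{O}_{X,x}^{\circ\circ}$: here I would use that $\mathcal{O}_{X,x}^+ \to \mathcal{O}_{X,x}$ has the same fraction field / that $\mathfrak{m}_x = \mathfrak{m}_x^+ \cap \mathcal{O}_{X,x}$-type relations, and more concretely that every element of $\mathfrak{m}_x \subset \mathcal{O}_{X,x}$ actually lies in $\mathcal{O}_{X,x}^+$ and is topologically nilpotent there. Since $X$ is analytic and locally noetherian, near $x$ one has a topologically nilpotent unit (a pseudo-uniformizer) $\varpi$; any $g \in \mathfrak{m}_x$ satisfies $|g(x)| < 1$ in the rank-one quotient as well as in all higher-rank pieces, and one checks $g \in \mathcal{O}_{X,x}^{\circ\circ}$ by comparing $g$ with powers of $\varpi$ (using that the topology on the stalk is the one induced from an affinoid neighbourhood, where topological nilpotence is detected by $\varpi$-adic convergence after bounding). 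Strictness: $\varpi$ itself (a unit in $\mathcal{O}_{X,x}$, hence not in $\mathfrak{m}_x$) is topologically nilpotent, so $\varpi \in \mathcal{O}_{X,x}^{\circ\circ} \setminus \mathfrak{m}_x$.

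The main obstacle I anticipate is the inclusion $\mathfrak{m}_x \subseteq \mathcal{O}_{X,x}^{\circ\circ}$ — in particular, making precise what ``topologically nilpotent in the stalk'' means and why membership in $\mathfrak{m}_x$ forces it. The stalk $\mathcal{O}_{X,x}$ is a colimit of affinoid rings and is not a priori complete, so ``topologically nilpotent'' must be interpreted via the germ topology; the cleanest route is to pick an affinoid open $\Spa(A,A^+) \ni x$, observe $\mathfrak{m}_x$ is the image of the prime of $A$ corresponding to $x$ localized, and reduce to: for $f \in A$ with $|f(x)| < 1$, the image of $f$ in $\mathcal{O}_{X,x}$ is topologically nilpotent — which may actually require shrinking $\Spa(A,A^+)$ to a rational subset where $|f| \le |\varpi|$ holds identically, so that $f$ becomes topologically nilpotent already at the level of rings on that smaller neighbourhood. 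I expect the bulk of the real work, and the only place genuine adic-space input (rather than formal local-ring manipulation) is needed, to be concentrated in this reduction.
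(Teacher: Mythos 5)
Your route is the same as the paper's: the subpresheaf claim and the chain $\mathcal{O}_{X,x}^{\circ\circ}\subseteq\mathfrak{m}_{\mathcal{O}_X^+,x}\subseteq\mathfrak{m}_x^{+}\subseteq\mathcal{O}_{X,x}^{+}$ via continuity of the valuations, strictness of the first inclusion via a pseudouniformizer, and $\mathfrak{m}_x\subseteq\mathcal{O}_{X,x}^{\circ\circ}$ by working on $\Spa(A,A^{+})$ with $A$ Tate and exhibiting a rational neighbourhood on which a representative of the germ becomes topologically nilpotent. The paper compresses that last step into the observation that $\mathfrak{m}_x=t\cdot\mathfrak{m}_x$ for a pseudouniformizer $t$, but the mechanism is identical to your ``compare with powers of $\varpi$'' plan.

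There is, however, one step that fails as stated: the reduction ``for $f\in A$ with $|f(x)|<1$, the image of $f$ in $\mathcal{O}_{X,x}$ is topologically nilpotent'' is false. Membership in $\mathfrak{m}_x$ means that $f$ lies in the support of the valuation at $x$, i.e.\ $|f(x)|=0$, not merely $|f(x)|<1$; the latter condition characterizes $\mathfrak{m}_x^{+}$, which at higher-rank points is genuinely larger than $\mathcal{O}_{X,x}^{\circ\circ}$. For instance, at the rank-two point $x$ of $\Spa(\mathbb{Q}_p\langle T\rangle,\mathbb{Z}_p\langle T\rangle)$ specializing the Gauss point, with $|T(x)|=\gamma$ infinitesimally less than $1$, one has $|T(x)|<1$ but $|T(x)|^{n}>|p(x)|$ for all $n$, so $T$ is not topologically nilpotent on any neighbourhood of $x$ (every such neighbourhood contains the Gauss point). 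Concretely, your proposed shrinking to $\{\,y : |f(y)|\le|\varpi(y)|\,\}$ only helps if $x$ actually lies in that rational subset, which requires $|f(x)|\le|\varpi(x)|$ — automatic when $|f(x)|=0$, but not when merely $|f(x)|<1$. Once you replace the hypothesis by $|f(x)|=0$ (which is what ``$\mathfrak{m}_x$ is the localized support prime'' actually gives you, and which also justifies your earlier claim that elements of $\mathfrak{m}_x$ land in $\mathcal{O}_{X,x}^{+}$ after shrinking), the argument closes and coincides with the paper's proof. The distinction you elided is not cosmetic: it is exactly the point separating $\mathfrak{m}_x$ from $\mathfrak{m}_x^{+}$ in the chain the lemma asserts.
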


\begin{proof}
Let $U \subseteq X$ be open. If $f \in \mathcal{O}_X^{\circ \circ}(U)$ then $\lvert f(x) \lvert^{n} \to 0$ for all $x \in U$ (by continuity of the valuation $x$). In particular this implies the image of $f$ in $\mathcal{O}_{X,x}$, must lie in $\mathfrak{m}_{x}^+$. Therefore $\mathcal{O}_X^{\circ \circ}$ is a subpresheaf of $\mathfrak{m}_{\mathcal{O}_X^+}$.

All inclusions are now obvious except the first. The inequality is obvious due to the existence of pseudouniformizers in $\mathcal{O}_{X,x}^{\circ \circ}$. Thus it remains to show $\mathfrak{m}_x \subseteq \mathcal{O}_{X,x}^{\circ \circ}$. We can assume $X = \Spa(A,A^+)$, where $A$ is a Tate ring. Let $t$ be any pseudouniformizer in $A$. Then observe that $\mathfrak{m}_x \subset \mathcal{O}_{X,x}$ is uniquely $t$-divisible (as it is an ideal in a ring where $t$ is a unit). Thus $\mathfrak{m}_x = t \cdot \mathfrak{m}_x \subset \mathcal{O}_{X,x}^{\circ \circ}$. 
\end{proof}

A morphism $f \colon X \to Y$ of analytic adic spaces, induces a morphism
\[
(X, \mathcal{O}_X^+ ) \to (Y, \mathcal{O}_Y^+)
\]
of locally ringed spaces\footnote{This is a priori not obvious from the definition of a morphism of adic spaces but comes from the condition on compatibility of valuation rings.} and the latter induces a morphism
\[
f_{\text{red}} \colon X_{\text{red}} \to Y_{\text{red}}
\]
of locally ringed spaces. In this way we obtain:

\begin{lem}
There is a functor
\begin{align*}
\left\{ \mbox{{locally noetherian analytic }} \text{adic spaces} \right\} &\to \left\{ \text{locally ringed spaces} \right\} \\
X &\mapsto X_{\text{red}} := (X, \mathcal{O}_X^+ / \mathfrak{m}_{\mathcal{O}_X^+}) \\
f &\mapsto f_{\text{red}}.
\end{align*}
\end{lem}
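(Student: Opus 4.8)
The plan is to check directly that the two assignments $X \mapsto X_{\text{red}}$ and $f \mapsto f_{\text{red}}$ satisfy the axioms of a functor, the only real content being that $f_{\text{red}}$ is well-defined, i.e. that a morphism of adic spaces actually descends to a morphism of the quotient locally ringed spaces. First I would recall the construction already sketched in the excerpt: given $f \colon X \to Y$, the compatibility of the valuation rings in the definition of a morphism of adic spaces yields a morphism $(f, f^{\flat}) \colon (X, \mathcal{O}_X^+) \to (Y, \mathcal{O}_Y^+)$ of locally ringed spaces, where $f^{\flat} \colon f^{-1}\mathcal{O}_Y^+ \to \mathcal{O}_X^+$ is the induced map on the integral structure sheaves and the locality means that on stalks $f^{\flat}_x \colon \mathcal{O}_{Y,f(x)}^+ \to \mathcal{O}_{X,x}^+$ is a local homomorphism of local rings, i.e. $(f^{\flat}_x)^{-1}(\mathfrak{m}_x^+) = \mathfrak{m}_{f(x)}^+$.

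The key step is then to show that $f^{\flat}$ carries the vanishing ideal sheaf $\mathfrak{m}_{\mathcal{O}_Y^+}$ into $\mathfrak{m}_{\mathcal{O}_X^+}$, so that it passes to a map $f^{-1}(\mathcal{O}_Y^+/\mathfrak{m}_{\mathcal{O}_Y^+}) \to \mathcal{O}_X^+/\mathfrak{m}_{\mathcal{O}_X^+}$, which is the structure-sheaf part of $f_{\text{red}}$. This is immediate from the stalkwise description in Definition \ref{defi:vanisshideal}: if $s$ is a section of $\mathcal{O}_Y^+$ over an open $V$ whose germ $s_y$ lies in $\mathfrak{m}_y^+$ for every $y \in V$, then for $x \in f^{-1}(V)$ the germ of $f^{\flat}(s)$ at $x$ is $f^{\flat}_x(s_{f(x)}) \in f^{\flat}_x(\mathfrak{m}_{f(x)}^+) \subseteq \mathfrak{m}_x^+$, since $f^{\flat}_x$ is a local homomorphism; hence $f^{\flat}(s)$ restricted to $f^{-1}(V)$ is a section of $\mathfrak{m}_{\mathcal{O}_X^+}$. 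Sheafifying (the vanishing ideal is already a sheaf, being defined by a stalk condition) this gives the desired induced morphism $f_{\text{red}} \colon X_{\text{red}} \to Y_{\text{red}}$ of locally ringed spaces, and locality on stalks is inherited from locality of $f^{\flat}_x$ after passing to the quotients by the respective maximal ideals — indeed $\mathcal{O}_{X_{\text{red}},x} = \mathcal{O}_{X,x}^+/\mathfrak{m}_{\mathcal{O}_X^+,x}$ is local with maximal ideal $\mathfrak{m}_x^+/\mathfrak{m}_{\mathcal{O}_X^+,x}$ by Lemma \ref{lem:prshinve}, and the induced map respects these.

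Finally, functoriality — $(\id_X)_{\text{red}} = \id_{X_{\text{red}}}$ and $(g \circ f)_{\text{red}} = g_{\text{red}} \circ f_{\text{red}}$ — follows formally, since passing from an adic morphism to the associated morphism of $(X, \mathcal{O}_X^+)$'s is functorial (a standard fact about adic spaces), and passing to the quotient by the vanishing ideal is compatible with composition of local homomorphisms on stalks. I do not anticipate a serious obstacle here; the one point requiring a little care is the footnoted claim that an adic morphism induces a morphism on the $\mathcal{O}^+$-ringed spaces that is \emph{local} on stalks, but this is exactly the content of the valuation-compatibility condition (the map must send the valuation ring $\mathcal{O}_{Y,f(x)}^+$ into $\mathcal{O}_{X,x}^+$ compatibly with the valuations, forcing the preimage of $\mathfrak{m}_x^+$ to be $\mathfrak{m}_{f(x)}^+$), and it is already invoked in the discussion preceding the statement.
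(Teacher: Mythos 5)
Your proposal is correct and follows the same route as the paper, which gives no separate proof but relies on exactly the observation you elaborate: the valuation-compatibility condition makes $f^{\flat}_x \colon \mathcal{O}_{Y,f(x)}^+ \to \mathcal{O}_{X,x}^+$ local, hence $f^{\flat}$ carries $\mathfrak{m}_{\mathcal{O}_Y^+}$ into $\mathfrak{m}_{\mathcal{O}_X^+}$ and descends to the quotients, with functoriality being formal. Your write-up simply supplies the details the paper leaves implicit.
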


We now describe some properties of this functor.

\begin{lem} \label{lem:stalforexref}
Let $x \in X$, the stalks $\mathcal{O}_{X_{\text{red}},x}$ are naturally isomorphic to $k(x)^{+}/k(x)^{\circ \circ}$. In particular $\mathcal{O}_{X_{\text{red}},x}$ are valuation rings.
\end{lem}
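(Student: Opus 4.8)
The plan is to compute the stalk of the quotient sheaf $\mathcal{O}_X^+/\mathfrak{m}_{\mathcal{O}_X^+}$ at $x$ by using the exactness of stalks: since stalks commute with quotients, we have $\mathcal{O}_{X_{\text{red}},x} \cong \mathcal{O}_{X,x}^+/\mathfrak{m}_{\mathcal{O}_X^+,x}$. So the task reduces to two things: first, to identify $\mathcal{O}_{X,x}^+$ with $k(x)^+$ (the valuation ring attached to the point $x$), and second, to identify the stalk of the vanishing ideal $\mathfrak{m}_{\mathcal{O}_X^+,x}$ with $k(x)^{\circ\circ}$, the topologically nilpotent elements of $k(x)^+$. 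The first identification is essentially standard in the theory of adic spaces: by \cite[Proposition 1.6, Theorem 2.2]{hub93}, the local ring $\mathcal{O}_{X,x}$ of the structure sheaf has a canonical (maximal ideal) quotient, and reducing modulo $\mathfrak{m}_x$ and completing, one recovers the residue field $k(x)$; the integral structure sheaf $\mathcal{O}_{X,x}^+$ maps to the valuation ring $k(x)^+$ of $k(x)$, and in fact $\mathcal{O}_{X,x}^+ = k(x)^+$ as follows from the definitions (the valuation ring $k(x)^+$ is the image of $\mathcal{O}_{X,x}^+$ under the valuation $\lvert\cdot(x)\rvert$, but since $\mathcal{O}_{X,x}^+$ is already the set of elements of absolute value $\le 1$ in a henselian local ring whose value group equals that of $k(x)$, one gets equality).

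Next I would pin down the stalk of the vanishing ideal. By definition $\mathfrak{m}_{\mathcal{O}_X^+}$ consists of sections all of whose germs lie in the respective maximal ideals $\mathfrak{m}_y^+$. Taking the stalk at $x$, a germ $f_x \in \mathcal{O}_{X,x}^+$ lies in $\mathfrak{m}_{\mathcal{O}_X^+,x}$ if and only if it has a representative $f$ on some open $U \ni x$ with $f_y \in \mathfrak{m}_y^+$ for all $y \in U$. The point is to show this condition is equivalent to $f_x \in \mathfrak{m}_x^+$ \emph{and} $f_x$ being topologically nilpotent in $k(x)^+$ — indeed, since valuations are continuous and the set of $y$ where $\lvert f(y)\rvert < 1$ (equivalently $< \lvert t(y)\rvert$ for a pseudouniformizer, up to taking powers) is related to rational subdomains, a germ that vanishes on a whole neighborhood is one that is topologically nilpotent at $x$. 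More precisely: if $f_x \in k(x)^{\circ\circ}$ then $\lvert f(x)\rvert < \lvert t(x)\rvert$ for some pseudouniformizer $t$, which is an open condition, so $f$ vanishes (indeed lies in $\mathcal{O}^{\circ\circ}$) on a neighborhood, hence $f_x \in \mathfrak{m}_{\mathcal{O}_X^+,x}$; conversely a germ in $\mathfrak{m}_{\mathcal{O}_X^+,x}$ has $f_x \in \mathfrak{m}_x^+$, and combined with Lemma \ref{lem:prshinve} (where $\mathfrak{m}_{\mathcal{O}_X^+,x} \subseteq \mathfrak{m}_x^+$ and we need the reverse-ish containment into $k(x)^{\circ\circ}$) one checks using the structure of the valuation ring $k(x)^+$ that the only way all germs in a neighborhood can vanish is for $f_x$ to be topologically nilpotent. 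This gives $\mathfrak{m}_{\mathcal{O}_X^+,x} = k(x)^{\circ\circ}$.

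Combining, $\mathcal{O}_{X_{\text{red}},x} \cong \mathcal{O}_{X,x}^+/\mathfrak{m}_{\mathcal{O}_X^+,x} \cong k(x)^+/k(x)^{\circ\circ}$. Finally, to see that this is a valuation ring: $k(x)^+$ is a valuation ring, and $k(x)^{\circ\circ}$ is a prime ideal of it (the topologically nilpotent elements form a prime ideal — if $fg$ is topologically nilpotent then $\lvert f(x)g(x)\rvert < 1$ forces one of $\lvert f(x)\rvert, \lvert g(x)\rvert < 1$ since the value group is totally ordered, unless both equal $1$; a short argument with the ordering handles this, noting $k(x)^{\circ\circ}$ is the set of elements whose valuation is cofinal below $1$). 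A quotient of a valuation ring by a prime ideal is again a valuation ring, so $\mathcal{O}_{X_{\text{red}},x}$ is a valuation ring with fraction field $k(x)^+/k(x)^{\circ\circ}$ localized appropriately. I expect the main obstacle to be the careful identification of $\mathfrak{m}_{\mathcal{O}_X^+,x}$ with $k(x)^{\circ\circ}$ — specifically showing that a germ which vanishes at \emph{every} point of some neighborhood of $x$ (not just at $x$) is exactly a topologically nilpotent germ at $x$; this requires using that on a locally noetherian analytic adic space the rational subsets $\{\lvert f\rvert \le \lvert t^n\rvert \neq 0\}$ form a neighborhood basis and an argument that vanishing propagates, essentially repackaging the content of Proposition \ref{prop:sheafsameasmshe} at the level of stalks.
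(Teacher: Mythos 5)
Your skeleton --- compute the stalk as a quotient and identify the numerator and denominator --- is the paper's, but there are two problems. The first is repairable: $\mathcal{O}_{X,x}^+$ is \emph{not} equal to $k(x)^+$. It surjects onto $k(x)^+$ with kernel $\mathfrak{m}_x$, the maximal ideal of the local ring $\mathcal{O}_{X,x}$, which is nonzero in general (the stalk $\mathcal{O}_{X,x}$ is not a field). The paper instead uses that $\mathfrak{m}_x \subseteq \mathfrak{m}_{\mathcal{O}_X^+,x}$ (Lemma \ref{lem:prshinve}) to write
$\mathcal{O}_{X,x}^+/\mathfrak{m}_{\mathcal{O}_X^+,x} = (\mathcal{O}_{X,x}^+/\mathfrak{m}_x)/(\mathfrak{m}_{\mathcal{O}_X^+,x}/\mathfrak{m}_x) = k(x)^+/(\mathfrak{m}_{\mathcal{O}_X^+,x}/\mathfrak{m}_x)$,
reducing everything to identifying $\mathfrak{m}_{\mathcal{O}_X^+,x}/\mathfrak{m}_x$ with $k(x)^{\circ\circ}$.

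The second problem is a genuine gap: the containment $\mathfrak{m}_{\mathcal{O}_X^+,x}/\mathfrak{m}_x \subseteq k(x)^{\circ\circ}$, i.e.\ that a germ of a section vanishing at \emph{every} point of some neighbourhood of $x$ is topologically nilpotent at $x$. You assert this (``one checks using the structure of the valuation ring'') without supplying the mechanism, and the difficulty is real: at a higher-rank point $x$, $\lvert f(x)\rvert < 1$ does not imply $\lvert f(x)\rvert^n \to 0$, so vanishing at $x$ itself gives nothing, and vanishing at nearby points must be exploited in a specific way. The paper's device is the unique rank-$1$ generalization $y$ of $x$: it lies in every open neighbourhood of $x$, vanishing at $y$ is equivalent to topological nilpotence in $k(y)$ because the value group there has rank one, and by \cite[Lemma 1.1.10(iii)]{hub96} the map $k(x) \to k(y)$ is a homeomorphism onto its image, whence $k(x)^{\circ\circ} = k(y)^{\circ\circ}\cap k(x)$ and the germ is topologically nilpotent at $x$ as well. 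Your suggestion to ``repackage the content of Proposition \ref{prop:sheafsameasmshe}'' would be circular, since that proposition is deduced from Lemma \ref{lem:topnilelegv}, whose proof invokes the present lemma. Note finally that the paper gets the reverse containment for free: $\mathfrak{m}_{\mathcal{O}_X^+,x}/\mathfrak{m}_x$ is a nonzero radical, hence prime, ideal of the valuation ring $k(x)^+$, and $k(x)^{\circ\circ}$ is the unique height-$1$ prime, so the one containment already forces equality; your separate lifting argument for the other direction, while essentially sound, is unnecessary.
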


\begin{proof}
Let $\mathfrak{m}_x \subset \mathcal{O}_{X,x}$ denote the maximal ideal of $\mathcal{O}_{X,x}$. Then $k(x)^{+} = \mathcal{O}_{X,x}^+ /\mathfrak{m}_x$ and $k(x)^{\circ \circ} = \mathfrak{m}_{\mathcal{O}_X^+,x} /\mathfrak{m}_x$. The first point is by definition. To see the second point, first note that $\mathfrak{m}_{\mathcal{O}_X^+,x} /\mathfrak{m}_x$ is radical and hence a prime ideal in $k(x)^{+}$ (every radical ideal in a valuation ring is prime). Now $k(x)^{\circ \circ}$ is the unique prime ideal of $k(x)^{+}$ of height 1 so it suffices to show 
\[
0 \neq \mathfrak{m}_{\mathcal{O}_X^+,x} /\mathfrak{m}_x \subset k(x)^{\circ \circ}.
\]
The inequality follows from Lemma \ref{lem:prshinve}, so we prove the inclusion. Let $y \in X$ be the unique rank 1 generalization of $x$. Let $\tilde{z} \in \mathfrak{m}_{\mathcal{O}_X^+,x} /\mathfrak{m}_x$ be any element. Then $\tilde{z}$ is the image of some element $z \in \mathfrak{m}_{\mathcal{O}_X^+}(U)$ for some $U \subset X$ open containing both $x$ and $y$. By definition the image of $z$ in $\mathfrak{m}_{\mathcal{O}_X^+,y} /\mathfrak{m}_y$ is topologically nilpotent in $k(y)$. Finally by \cite[Lemma 1.1.10(iii)]{hub96} the natural map $k(x) \to k(y)$ is a homeomorphism onto it's image and so in particular $k(x)^{\circ \circ} = k(y)^{\circ \circ} \cap k(x)$. Therefore $\tilde{z} \in k(x)^{\circ \circ}$.
 We compute
\begin{align*}
\mathcal{O}_{X_{\text{red}},x} &\overset{(i)}{\simeq} \mathcal{O}_{X,x}^+ / \mathfrak{m}_{\mathcal{O}_X^+,x} \\
&\overset{(ii)}{\simeq} (\mathcal{O}_{X,x}^+ /\mathfrak{m}_x ) / (\mathfrak{m}_{\mathcal{O}_X^+,x} /\mathfrak{m}_x) \\
&\overset{(iii)}{\simeq} k(x)^{+}/k(x)^{\circ \circ}
\end{align*}
where (i) is by definition, (ii) follows from the identity $R/M = (R/I)/(M/I)$ and (iii) follows from above.
\end{proof}

At the level of stalks we have the obvious inclusion $\mathfrak{m}_{\mathcal{O}_X^+,x} \subseteq \mathfrak{m}_x^{+}$. At rank 1 points this inclusion becomes an equality.

\begin{cor}
Suppose $x \in X$ corresponds to a rank 1 valuation (i.e. a maximal point). Then $\mathfrak{m}_{\mathcal{O}_X^+,x} = \mathfrak{m}_x^{+}$. 
\end{cor}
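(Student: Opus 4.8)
The plan is to read the statement off Lemma \ref{lem:stalforexref} and its proof, essentially without new work. Recall the chain of inclusions $\mathfrak{m}_x \subseteq \mathfrak{m}_{\mathcal{O}_X^+,x} \subseteq \mathfrak{m}_x^{+}$ inside $\mathcal{O}_{X,x}^+$; quotienting by $\mathfrak{m}_x$ turns this into an inclusion $\mathfrak{m}_{\mathcal{O}_X^+,x}/\mathfrak{m}_x \subseteq \mathfrak{m}_x^{+}/\mathfrak{m}_x = \mathfrak{m}_{k(x)^{+}}$ of ideals in the valuation ring $k(x)^{+} = \mathcal{O}_{X,x}^+/\mathfrak{m}_x$. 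Since both ideals contain $\mathfrak{m}_x$, it suffices to prove the reverse inclusion in $k(x)^{+}$, i.e. that $\mathfrak{m}_{\mathcal{O}_X^+,x}/\mathfrak{m}_x$ is the maximal ideal of $k(x)^{+}$.

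First I would recall from the proof of Lemma \ref{lem:stalforexref} that $\mathfrak{m}_{\mathcal{O}_X^+,x}/\mathfrak{m}_x$ is a nonzero prime ideal of $k(x)^{+}$ and that it in fact equals the unique height-$1$ prime $k(x)^{\circ\circ}$ (a nonzero prime contained in a height-$1$ prime of a valuation ring must equal it, as the primes are totally ordered). Now I would invoke the rank-$1$ hypothesis: $x$ being a rank-$1$ valuation means the value group of $k(x)$ has rank $1$, so $k(x)^{+}$ is a valuation ring of Krull dimension $1$, whence its unique height-$1$ prime $k(x)^{\circ\circ}$ is precisely its maximal ideal $\mathfrak{m}_{k(x)^{+}}$. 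Combining, $\mathfrak{m}_{\mathcal{O}_X^+,x}/\mathfrak{m}_x = \mathfrak{m}_{k(x)^{+}} = \mathfrak{m}_x^{+}/\mathfrak{m}_x$, and lifting back along $\mathcal{O}_{X,x}^+ \twoheadrightarrow k(x)^{+}$ gives $\mathfrak{m}_{\mathcal{O}_X^+,x} = \mathfrak{m}_x^{+}$.

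There is no real obstacle here; the corollary is a repackaging of Lemma \ref{lem:stalforexref}. If one prefers to avoid the internals of that proof, one can instead argue: by Lemma \ref{lem:stalforexref}, $\mathcal{O}_{X_{\mathrm{red}},x} \simeq k(x)^{+}/k(x)^{\circ\circ}$, and at a rank-$1$ point $k(x)^{\circ\circ}$ is the full maximal ideal of $k(x)^{+}$ (the value group embeds order-preservingly into $\mathbb{R}_{>0}$, so every $f$ with $\lvert f(x)\rvert < 1$ is topologically nilpotent), hence $\mathcal{O}_{X_{\mathrm{red}},x}$ is a field; but $\mathcal{O}_{X_{\mathrm{red}},x} = \mathcal{O}_{X,x}^+/\mathfrak{m}_{\mathcal{O}_X^+,x}$ with $\mathcal{O}_{X,x}^+$ local of maximal ideal $\mathfrak{m}_x^{+} \supseteq \mathfrak{m}_{\mathcal{O}_X^+,x}$, so the quotient being a field forces $\mathfrak{m}_{\mathcal{O}_X^+,x} = \mathfrak{m}_x^{+}$. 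The only step in either route deserving a sentence of justification is the identification $k(x)^{\circ\circ} = \mathfrak{m}_{k(x)^{+}}$ at rank-$1$ points, which is standard.
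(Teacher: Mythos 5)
Your proposal is correct, and the second route you describe is exactly the paper's argument: by Lemma \ref{lem:stalforexref} the stalk $\mathcal{O}_{X_{\text{red}},x} = \mathcal{O}_{X,x}^+/\mathfrak{m}_{\mathcal{O}_X^+,x} \simeq k(x)^{\circ}/k(x)^{\circ\circ}$ is a field at a rank $1$ point, forcing $\mathfrak{m}_{\mathcal{O}_X^+,x}$ to be the maximal ideal of $\mathcal{O}_{X,x}^+$. Your first route is just an unwinding of the same identification, so there is nothing genuinely different here.
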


\begin{proof}
By Lemma \ref{lem:stalforexref}, we have $\mathcal{O}_{X_{\text{red}},x} = \mathcal{O}_{X,x}^+/\mathfrak{m}_{\mathcal{O}_X^+,x} = k(x)^{\circ}/k(x)^{\circ \circ}$ is a field. Therefore $\mathfrak{m}_{\mathcal{O}_X^+,x}$ must be the maximal ideal in $\mathcal{O}_{X,x}^+$.
\end{proof}

In some sense the sheaf $\mathfrak{m}_{\mathcal{O}_X^+}$ is a \emph{universal} object which contains $\mathfrak{m}_x$ and a pseudouniformizer at all stalks. The following lemma makes this precise.

\begin{lem} \label{lem:topnilelegv}
Let $\mathscr{F}$ be any subpresheaf of ideals of $\mathcal{O}_{X}^+$ which is contained in $\mathfrak{m}_{\mathcal{O}_X^+}$ such that $\mathfrak{m}_x \subsetneq\mathscr{F}_x$ at all points $x \in X$ and $\mathscr{F}_x/\mathfrak{m}_x$ is a prime ideal.
 Then the associated sheaf $\mathscr{F}^{\text{a}} = \mathfrak{m}_{\mathcal{O}_X^+}$.
\end{lem}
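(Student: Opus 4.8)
The plan is to reduce the claim to a statement about stalks, exploiting that $\mathfrak{m}_{\mathcal{O}_X^+}$ is \emph{already} a sheaf. First I would check that: a compatible family of sections of $\mathfrak{m}_{\mathcal{O}_X^+}$ over an open cover glues, since $\mathcal{O}_X^+$ is a sheaf and the defining condition ``$f_x \in \mathfrak{m}_x^{+}$ for all $x$'' is stalk-local, hence survives gluing. Consequently the inclusion $\mathscr{F} \hookrightarrow \mathfrak{m}_{\mathcal{O}_X^+}$ is a morphism from the presheaf $\mathscr{F}$ into a sheaf, so by the universal property of sheafification it factors canonically as $\mathscr{F} \to \mathscr{F}^{\mathrm{a}} \xrightarrow{\varphi} \mathfrak{m}_{\mathcal{O}_X^+}$. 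On stalks $\varphi_x$ is the map $\mathscr{F}_x \to \mathfrak{m}_{\mathcal{O}_X^+,x}$; both are submodules of $\mathcal{O}_{X,x}^+$ (filtered colimits of the inclusions $\mathscr{F}(U) \subseteq \mathcal{O}_X^+(U) \supseteq \mathfrak{m}_{\mathcal{O}_X^+}(U)$ are exact), so $\varphi_x$ is injective. Thus it suffices to prove $\mathscr{F}_x = \mathfrak{m}_{\mathcal{O}_X^+,x}$ inside $\mathcal{O}_{X,x}^+$ for every $x \in X$.

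Fix $x$ and pass to the quotient by $\mathfrak{m}_x$, so that we work inside the valuation ring $k(x)^{+} = \mathcal{O}_{X,x}^+/\mathfrak{m}_x$ (this quotient makes sense by Lemma \ref{lem:prshinve}, which gives $\mathfrak{m}_x \subseteq \mathcal{O}_{X,x}^+$). By hypothesis $\mathscr{F}_x/\mathfrak{m}_x$ is a prime ideal of $k(x)^+$; it is nonzero because $\mathfrak{m}_x \subsetneq \mathscr{F}_x$; and it is contained in $\mathfrak{m}_{\mathcal{O}_X^+,x}/\mathfrak{m}_x$ since $\mathscr{F} \subseteq \mathfrak{m}_{\mathcal{O}_X^+}$. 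But in the proof of Lemma \ref{lem:stalforexref} we identified $\mathfrak{m}_{\mathcal{O}_X^+,x}/\mathfrak{m}_x = k(x)^{\circ\circ}$, and $k(x)^{\circ\circ}$ is the unique height $1$ prime of $k(x)^+$. A nonzero prime of a (valuation) domain contained in a prime of height $1$ must equal it: otherwise one would get a chain $0 \subsetneq \mathscr{F}_x/\mathfrak{m}_x \subsetneq k(x)^{\circ\circ}$, forcing $k(x)^{\circ\circ}$ to have height $\ge 2$. Hence $\mathscr{F}_x/\mathfrak{m}_x = k(x)^{\circ\circ} = \mathfrak{m}_{\mathcal{O}_X^+,x}/\mathfrak{m}_x$, so $\mathscr{F}_x = \mathfrak{m}_{\mathcal{O}_X^+,x}$.

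Therefore $\varphi \colon \mathscr{F}^{\mathrm{a}} \to \mathfrak{m}_{\mathcal{O}_X^+}$ is an isomorphism on all stalks, hence an isomorphism of sheaves, which is the assertion. The genuine mathematical content is already packaged in Lemma \ref{lem:stalforexref} (the computation of the stalks of $X_{\mathrm{red}}$ and the height-one characterization of $k(x)^{\circ\circ}$); the only thing to be careful about here is that $\mathfrak{m}_{\mathcal{O}_X^+}$ really is a sheaf — so that there is a map $\mathscr{F}^{\mathrm{a}} \to \mathfrak{m}_{\mathcal{O}_X^+}$ to compare with in the first place — and that this comparison map is injective on stalks; neither presents a serious obstacle.
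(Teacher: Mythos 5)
Your proof is correct and follows essentially the same route as the paper's: reduce to stalks, pass to $k(x)^+ = \mathcal{O}_{X,x}^+/\mathfrak{m}_x$, and use that $\mathfrak{m}_{\mathcal{O}_X^+,x}/\mathfrak{m}_x$ is the unique height-one prime of the valuation ring $k(x)^+$ (from the proof of Lemma \ref{lem:stalforexref}) to force the nonzero prime $\mathscr{F}_x/\mathfrak{m}_x$ to coincide with it, then conclude since sheafification preserves stalks. Your extra care in verifying that $\mathfrak{m}_{\mathcal{O}_X^+}$ is a sheaf and that the comparison map is injective on stalks is a welcome elaboration of points the paper leaves implicit, but the mathematical content is identical.
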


\begin{proof}
We have $0 \neq \mathscr{F}_x/\mathfrak{m}_x \subseteq \mathfrak{m}_{\mathcal{O}_X^+,x} /\mathfrak{m}_x \subset \mathcal{O}_{X,x}^+ /\mathfrak{m}_x = k(x)^+$ and by the proof of Lemma \ref{lem:stalforexref}, the quotient $\mathfrak{m}_{\mathcal{O}_X^+,x} /\mathfrak{m}_x$ coincides with the unique prime ideal of $k(x)^+$ of height 1. Therefore the first inclusion  $\mathscr{F}_x/\mathfrak{m}_x \subseteq \mathfrak{m}_{\mathcal{O}_X^+,x} /\mathfrak{m}_x$ is actually an equality. Therefore the inclusion $\mathscr{F} \subseteq \mathfrak{m}_{\mathcal{O}_X^+}$ induces an isomorphism on stalks and one obtains the conclusion as sheafification does not change stalks.
\end{proof}
As a consequence we obtain the following \emph{analytic} characterization of $\mathfrak{m}_{\mathcal{O}_X^+}$.

\begin{prop} \label{prop:sheafsameasmshe}
The sheafification of the presheaf of topologically nilpotent elements $\mathcal{O}_X^{\circ \circ}$ identifies with $\mathfrak{m}_{\mathcal{O}_X^+}$.
\end{prop}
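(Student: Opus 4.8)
The plan is to apply Lemma \ref{lem:topnilelegv} to the presheaf $\mathscr{F} = \mathcal{O}_X^{\circ\circ}$, so it suffices to verify the hypotheses of that lemma. By Lemma \ref{lem:prshinve} we already know that $\mathcal{O}_X^{\circ\circ}$ is a subpresheaf of ideals of $\mathcal{O}_X^+$ contained in $\mathfrak{m}_{\mathcal{O}_X^+}$, and that $\mathfrak{m}_x \subsetneq \mathcal{O}_{X,x}^{\circ\circ}$ for every $x \in X$. Hence the only remaining point is to show that the stalk quotient $\mathcal{O}_{X,x}^{\circ\circ}/\mathfrak{m}_x$ is a prime ideal of $k(x)^+ = \mathcal{O}_{X,x}^+/\mathfrak{m}_x$.

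First I would identify $\mathcal{O}_{X,x}^{\circ\circ}/\mathfrak{m}_x$ inside $k(x)^+$. The stalk $\mathcal{O}_{X,x}^{\circ\circ}$ is the colimit over opens $U \ni x$ of $\mathcal{O}_X^{\circ\circ}(U) = \{f \in \mathcal{O}_X^+(U) \mid f \in \mathcal{O}_X^+(U)^{\circ\circ}\}$, and I would argue that passing to the colimit this becomes the set of germs $f_x \in \mathcal{O}_{X,x}^+$ whose image in $k(x)$ lies in $k(x)^{\circ\circ}$; equivalently, after quotienting by $\mathfrak{m}_x$, the image of $\mathcal{O}_{X,x}^{\circ\circ}$ in $k(x)^+$ is exactly $k(x)^{\circ\circ}$. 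One inclusion is immediate from the definition of topologically nilpotent; for the reverse one uses that on an affinoid neighbourhood $\Spa(A,A^+)$ of $x$, an element of $A^+$ that is topologically nilpotent at the valuation $x$ can, after shrinking $U$ to a rational subdomain where it has small valuation, be taken to be topologically nilpotent on all of the corresponding ring — this is the standard fact that the topologically nilpotent locus is open and is exactly where a rational-subset condition $|f| \leq |t|$ (for $t$ a pseudouniformizer) holds. Once this identification is in hand, $\mathcal{O}_{X,x}^{\circ\circ}/\mathfrak{m}_x = k(x)^{\circ\circ}$, which is a prime ideal of the valuation ring $k(x)^+$ (the unique height-$1$ prime), and in particular is nonzero by the existence of pseudouniformizers, so $\mathfrak{m}_x \subsetneq \mathcal{O}_{X,x}^{\circ\circ}$ with prime quotient.

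With all hypotheses of Lemma \ref{lem:topnilelegv} verified, that lemma yields $(\mathcal{O}_X^{\circ\circ})^{\mathrm{a}} = \mathfrak{m}_{\mathcal{O}_X^+}$, which is precisely the claim.

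The main obstacle I anticipate is the identification of the stalk $\mathcal{O}_{X,x}^{\circ\circ}$ with the preimage in $\mathcal{O}_{X,x}^+$ of $k(x)^{\circ\circ}$: the definition of $\mathcal{O}_X^{\circ\circ}$ is sectionwise (asking $f$ to be topologically nilpotent in the ring $\mathcal{O}_X^+(U)$, which is a condition about \emph{all} valuations on $U$, not just the one at $x$), whereas the target condition is purely about the valuation at $x$. Bridging this gap requires shrinking $U$ appropriately, and the honest way to do it is to reduce to an affinoid $\Spa(A,A^+)$, pick a pseudouniformizer $t \in A$, observe that $f_x \in k(x)^{\circ\circ}$ means $|f(x)| \le |t^n(x)|$ for every $n$ hence (by the finiteness/valuation theory) $|f(x)| < 1$ forces $|f(x)| \le |t(x)|$ after possibly replacing $t$, and then the rational subdomain $\{|f| \le |t|\}$ is an open neighbourhood of $x$ on which $f$ is genuinely topologically nilpotent. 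Everything else is a direct appeal to Lemmas \ref{lem:prshinve} and \ref{lem:topnilelegv}.
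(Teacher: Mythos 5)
Your overall strategy---verify the hypotheses of Lemma \ref{lem:topnilelegv} for $\mathscr{F} = \mathcal{O}_X^{\circ\circ}$---is exactly the paper's, and the only hypothesis requiring real work is, as you say, primeness of $\mathcal{O}_{X,x}^{\circ\circ}/\mathfrak{m}_x$ in $k(x)^+$. Where you diverge is in how you get primeness. The paper's route is shorter and avoids any stalk computation: for every rational $U$ the ideal $\mathcal{O}_X^{\circ\circ}(U) \subseteq \mathcal{O}_X^+(U)$ is radical (if $f$ is power-bounded and $f^n$ is topologically nilpotent then so is $f$), hence $\mathcal{O}_{X,x}^{\circ\circ}$ and its image in $k(x)^+$ are radical, and a radical ideal of a valuation ring is prime. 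Your route instead identifies $\mathcal{O}_{X,x}^{\circ\circ}/\mathfrak{m}_x$ with $k(x)^{\circ\circ}$ on the nose --- a genuinely stronger statement (it computes the stalk of the \emph{presheaf}, which the paper only obtains for the sheafification via Lemma \ref{lem:stalforexref}) --- but it forces you through the delicate step of spreading pointwise topological nilpotence out to an open neighbourhood.

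That spreading-out step is where your details go wrong. You write that $f_x \in k(x)^{\circ\circ}$ means $|f(x)| \le |t^n(x)|$ for every $n$, and that $|f(x)| < 1$ forces $|f(x)| \le |t(x)|$ after replacing $t$. Neither is correct: topological nilpotence of $f(x)$ means $|f(x)|^n \to 0$, i.e.\ $|f(x)|^n \le |t(x)|$ for \emph{some} $n$ (the ideal $k(x)^{\circ\circ}$ is the radical of $(t)$, not the intersection of the $(t^n)$), and at a higher-rank point $|f(x)| < 1$ does \emph{not} imply $|f(x)| \le |t(x)|$ for any pseudouniformizer $t$ (consider $f$ with $|f(x)|$ ``infinitesimally'' below $1$ for a rank-two valuation; no replacement of $t$ rescues this). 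The fix is to pass to a power of $f$ rather than of $t$: choose $n$ with $|f(x)|^n \le |t(x)|$ and take the rational subdomain $U' = \{\,y : |f(y)^n| \le |t(y)|\,\}$; there $f^n/t \in \mathcal{O}_X^+(U')$, so $f^n \in t\,\mathcal{O}_X^+(U')$ is topologically nilpotent, and since $f$ is power-bounded, $f$ itself is topologically nilpotent on $U'$. With that correction your argument closes --- but note that the last inference (power-bounded plus topologically nilpotent power implies topologically nilpotent) is precisely the radicality the paper exploits directly, so the paper's proof uses the same input with less machinery.
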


\begin{proof}
Let $x \in X$. 
By Lemma  \ref{lem:prshinve}, $\mathcal{O}_X^{\circ \circ}$ 
is a subpresheaf of $\mathfrak{m}_{\mathcal{O}_X^+}$ and 
\[
\mathfrak{m}_x \subsetneq \mathcal{O}_{X,x}^{\circ \circ}
\]
Observe that for every rational subset $U$ which is a neighbourhood of $x$,
 $\mathcal{O}_{X}^{\circ \circ}(U)$ is a radical ideal.
 
  We deduce from this that 
 $\mathcal{O}_{X,x}^{\circ \circ}$ and $\mathcal{O}_{X,x}^{\circ \circ}/\mathfrak{m}_x$ 
 are radical as well. 
 Since $k(x)^+$ is a valuation ring, an ideal of $k(x)^+$ is radical if and only if it is prime. 
 It follows that $\mathcal{O}_{X,x}^{\circ \circ}/\mathfrak{m}_x$ is prime. 
We can now conclude the proof by using Lemma \ref{lem:topnilelegv}. 
\end{proof}

\begin{rem}
\emph{In general $\mathcal{O}_X^{\circ \circ}$ is not a sheaf and sheafification is necessary in Proposition \ref{prop:sheafsameasmshe}. This is due to the failure of $X$ being quasi-compact. Indeed consider $X = \bigcup_n \Spa(\mathbb{C}_p \left\langle T^n/p \right\rangle, \mathcal{O}_{\mathbb{C}_p}\left\langle T^n/p \right\rangle)$, the open unit $p$-adic disk over $\mathbb{C}_p$ with coordinate in $T$. Then on each affinoid piece $\Spa(\mathbb{C}_p \left\langle T^n/p \right\rangle, \mathcal{O}_{\mathbb{C}_p}\left\langle T^n/p \right\rangle)$, $T$ is topologically nilpotent, but it is not on $X$.}
\end{rem}

For analytic affinoid fields, $X_{\text{red}}$ has a simple description.

\begin{es} \label{es:casaffielspe}
If $X = \Spa(L,L^{+})$ is an analytic affinoid field, then $X_{\text{red}} \simeq \Spec(L^{+}/L^{\circ \circ})$ as locally ringed spaces. The fact that they are isomorphic as topological spaces follows from the fact that there is a (order-reversing) bijection of totally ordered sets
\begin{align*}
\left\{ \text{prime ideals of }L^{+}  \right\} &\simeq \left\{ \text{valuation rings of }L \text{ which contain } L^{+} \right\} \\
\mathfrak{p} &\mapsto L^{+}_{\mathfrak{p}}.
\end{align*}
Indeed there is an isomorphism of sets
\begin{align*}
\Spa(L,L^{+}) &\simeq \left\{ \text{valuation rings of }L \text{ which contain } L^{+} \right\} \backslash \{L\} \\
\Spec(L^{+}/L^{\circ \circ}) &\simeq \left\{ \text{prime ideals of }L^{+}  \right\} \backslash \{ (0)\},
\end{align*}
which can be extended to an isomorphism of topological spaces (after equipping the right hand sides with the obvious topology). Finally $X_{\text{red}}$ and $\Spec(L^{+}/L^{\circ \circ})$ are isomorphic as locally ringed spaces follows from Lemma \ref{lem:stalforexref}.
\end{es}

Inspired by Example \ref{es:casaffielspe}, the following lemma shows that the structure sheaf of $\mathcal{O}_{\widetilde{X}}$ behaves more like the structure sheaf of a scheme than an adic space (cf. \cite[Lemma 1.1.10(iii)]{hub96} for the analogous statement for adic spaces). In this way it is helpful to think of $X_{\text{red}}$ as the \emph{scheme part} of $X$, where roughly one is killing valuation information by quotienting out the topologically nilpotent elements.

\begin{lem}
Let $X$ be an analytic adic space and $x \in X$. There is a one to one correspondence between the generalizations $y$ of $x$ in $X$ and the prime ideals $\mathfrak{p}$ of $\mathcal{O}_{X_{\text{red}},x}$. For corresponding $y$ and $\mathfrak{p}$, the natural ring homomorphism $\mathcal{O}_{X_{\text{red}},x} \to \mathcal{O}_{X_{\text{red}},y}$ is a localization of $\mathcal{O}_{\widetilde{X},x}$ at $\mathfrak{p}$.
\end{lem}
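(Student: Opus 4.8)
The strategy is to translate the statement, via Lemma~\ref{lem:stalforexref}, into a purely ring-theoretic assertion about the valuation ring $\mathcal{O}_{X_{\mathrm{red}},x}$, and then to match its prime ideals with generalizations of $x$ using the standard description of generalizations in an analytic adic space. For the ring-theoretic side: by Lemma~\ref{lem:stalforexref} and its proof, $\mathcal{O}_{X_{\mathrm{red}},x}\cong k(x)^{+}/k(x)^{\circ\circ}$, where $k(x)^{+}$ is the valuation ring of the valuation $v_{x}$ on the complete residue field $k(x)$ and $k(x)^{\circ\circ}$ is the unique height-$1$ prime of $k(x)^{+}$. Since $k(x)^{\circ}$ is a valuation overring of $k(x)^{+}$ of rank $1$, it must be the localization of $k(x)^{+}$ at its unique height-$1$ prime, so $k(x)^{\circ}=(k(x)^{+})_{k(x)^{\circ\circ}}$. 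Hence the prime ideals of $\mathcal{O}_{X_{\mathrm{red}},x}$ are precisely the ideals $\mathfrak{q}/k(x)^{\circ\circ}$ with $\mathfrak{q}$ a prime of $k(x)^{+}$ containing $k(x)^{\circ\circ}$, and via $\mathfrak{q}\mapsto (k(x)^{+})_{\mathfrak{q}}$ these correspond bijectively to the valuation rings $R$ of $k(x)$ with $k(x)^{+}\subseteq R\subseteq k(x)^{\circ}$.

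For the geometric side: since the set of generalizations of $x$ equals $\bigcap_{U}U$ over the open neighbourhoods $U$ of $x$, it is unchanged after shrinking $X$ to an affinoid neighbourhood $\Spa(A,A^{+})$ of $x$ with $A$ Tate. I would then appeal to the classification of generalizations of an \emph{analytic} point (cf.\ \cite[\S 1]{hub96}): they are all \emph{vertical}, i.e.\ of the form $v_{x}/H$ for $H$ a proper convex subgroup of the value group of $v_{x}$ (the whole value group gives the trivial, non-continuous valuation, which is excluded). Such a $y=v_{x}/H$ has the same support as $x$, so $\mathcal{O}_{X,y}=\mathcal{O}_{X,x}$ and $k(y)=k(x)$ canonically (the stalk of $\mathcal{O}_{X}$ depends only on the support of the point), while $k(y)^{+}$ is the valuation overring $(k(x)^{+})_{\mathfrak{q}_{H}}$ of $k(x)^{+}$ attached to $H$; as $H$ ranges over the proper convex subgroups, $k(y)^{+}$ ranges over exactly the valuation rings $R$ with $k(x)^{+}\subseteq R\subseteq k(x)^{\circ}$. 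Combined with the first paragraph this produces the asserted bijection, $y\leftrightarrow\mathfrak{p}_{y}:=\mathfrak{q}_{H}/k(x)^{\circ\circ}$. For the localization statement: under the identifications of Lemma~\ref{lem:stalforexref}, the natural map $\mathcal{O}_{X_{\mathrm{red}},x}\to\mathcal{O}_{X_{\mathrm{red}},y}$ is the map on quotients by $k(x)^{\circ\circ}$ induced by the inclusion of valuation rings $k(x)^{+}\hookrightarrow k(y)^{+}=(k(x)^{+})_{\mathfrak{q}}$ coming from the coarsening. As $k(x)^{\circ\circ}\subseteq\mathfrak{q}$ remains the minimal nonzero prime after localizing at $\mathfrak{q}$, it equals $k(y)^{\circ\circ}$, and using $(R/I)_{\mathfrak{q}/I}=R_{\mathfrak{q}}/IR_{\mathfrak{q}}$ for $I\subseteq\mathfrak{q}$ one identifies this map with the localization of $\mathcal{O}_{X_{\mathrm{red}},x}=k(x)^{+}/k(x)^{\circ\circ}$ at $\mathfrak{q}/k(x)^{\circ\circ}=\mathfrak{p}_{y}$, as desired.

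The step requiring the most care is the classification of generalizations just invoked, and in particular the fact that an analytic point has no \emph{horizontal} (support-shrinking) generalizations: a secondary specialization of a valuation $z$ onto $x$ would force the value $z(\varpi)$ of a pseudouniformizer $\varpi$ to lie in a proper convex subgroup of the value group of $z$, contradicting the continuity of $z$. This is the only place where analyticity of $X$ enters essentially, and it is exactly what makes the set of generalizations of $x$ totally ordered, in accordance with $\mathcal{O}_{X_{\mathrm{red}},x}$ being a valuation ring.
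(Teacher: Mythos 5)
Your argument is correct and follows essentially the same route as the paper: both reduce to the affinoid field $\Spa(k(x),k(x)^{+})$, whose points are the generalizations of $x$, and then invoke the order-reversing correspondence between valuation rings of $k(x)$ containing $k(x)^{+}$ and prime ideals of $k(x)^{+}$ (all nonzero ones containing $k(x)^{\circ\circ}$), together with $\mathcal{O}_{X_{\text{red}},x}\simeq k(x)^{+}/k(x)^{\circ\circ}$. The only difference is that where the paper simply cites Huber (1.1.9) for the identification of the generalizations of $x$ with $\Spa(k(x),k(x)^{+})$, you re-derive it by showing an analytic point admits only vertical generalizations; this is fine and adds no gap.
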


\begin{proof}
Denote $\Spa \kappa(x):= \Spa(k(x),k(x)^{+})$. The natural morphism $\Spa \kappa(x) \to X$ gives a homeomorphism from $\Spa \kappa(x)$ to the set of all generalizations of $x$ in $X$ (cf. \cite[(1.1.9)]{hub96}). By Lemma \ref{lem:stalforexref} and Example \ref{es:casaffielspe}, $(\Spa \kappa(x))_{\text{red}} \simeq \Spec (\mathcal{O}_{X_{\text{red}},x})$ as locally ringed spaces. This proves the first part. 

For the second part, note that the morphism $\mathcal{O}_{X_{\text{red}},x} \to \mathcal{O}_{X_{\text{red}},y}$ is induced by the morphism $k(x)^{+} \to k(y)^{+}$, which by the (order-reversing) bijection of Example \ref{es:casaffielspe} is just localization at the corresponding prime ideal.  
\end{proof}




In general $\mathcal{O}_{X,x}^{+}$ may not be henselian along its maximal ideal. However the henselian property is satisfied at stalks of the vanishing sheaf of ideals.
\begin{lem} \label{lem:henseproofinach}
For each $x \in X$, the pair $(\mathcal{O}_{X,x}^{+}, \mathfrak{m}_{\mathcal{O}_X^+,x})$ is henselian.
\end{lem}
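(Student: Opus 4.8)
The plan is to leverage the standard fact that stalks of $\mathcal{O}_X^+$ are henselian along a pseudouniformizer, together with the structural description of $\mathcal{O}_{X_{\mathrm{red}},x}$ from Lemma~\ref{lem:stalforexref}, the whole point being that \emph{after} reducing modulo a pseudouniformizer the ideal $\mathfrak{m}_{\mathcal{O}_X^+,x}$ degenerates to a nilradical, where henselianness is automatic. Concretely: since the assertion only concerns the stalk we may assume $X = \Spa(A,A^+)$ with $A$ a Tate ring, and we fix a pseudouniformizer $t \in A$ (a topologically nilpotent unit). First I would record that $(\mathcal{O}_{X,x}^+, t\mathcal{O}_{X,x}^+)$ is a henselian pair: for every rational subset $U\ni x$ the ring $\mathcal{O}_X^+(U)$ is $t$-adically complete (being a closed, hence complete, ring of definition of the complete Tate ring $\mathcal{O}_X(U)$), so $(\mathcal{O}_X^+(U), t\mathcal{O}_X^+(U))$ is henselian; passing to the filtered colimit over rational neighbourhoods of $x$ and invoking that filtered colimits of henselian pairs are henselian gives the claim. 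By Lemma~\ref{lem:prshinve} we have $t \in \mathcal{O}_{X,x}^{\circ\circ} \subseteq \mathfrak{m}_{\mathcal{O}_X^+,x}$, hence $t\mathcal{O}_{X,x}^+ \subseteq \mathfrak{m}_{\mathcal{O}_X^+,x}$, and by the permanence of henselian pairs under composition it therefore suffices to prove that
\[
\bigl(\mathcal{O}_{X,x}^+/t\mathcal{O}_{X,x}^+,\ \mathfrak{m}_{\mathcal{O}_X^+,x}/t\mathcal{O}_{X,x}^+\bigr)
\]
is a henselian pair.

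Next I would identify this pair explicitly. From the proof of Lemma~\ref{lem:prshinve} we have $\mathfrak{m}_x = t\,\mathfrak{m}_x$, and since $\mathfrak{m}_x \subseteq \mathcal{O}_{X,x}^+$ (Lemma~\ref{lem:prshinve}) this forces $\mathfrak{m}_x \subseteq t\mathcal{O}_{X,x}^+$. Writing $V := k(x)^+ = \mathcal{O}_{X,x}^+/\mathfrak{m}_x$ and letting $\bar t$ denote the image of $t$ in $V$ — which is nonzero, $t$ being a unit in $\mathcal{O}_{X,x}$ — we obtain $\mathcal{O}_{X,x}^+/t\mathcal{O}_{X,x}^+ = V/\bar tV$, and by the identification $\mathfrak{m}_{\mathcal{O}_X^+,x}/\mathfrak{m}_x = k(x)^{\circ\circ}$ from the proof of Lemma~\ref{lem:stalforexref} the ideal in question corresponds to $\mathfrak{p}/\bar tV$, where $\mathfrak{p} := k(x)^{\circ\circ}$. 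Now recall from that same proof that $V$ is a valuation ring and that $\mathfrak{p}$ is its unique height-$1$ prime, and note $0 \neq \bar t \in \mathfrak{p}$. The key observation is that $\mathfrak{p} = \sqrt{\bar tV}$: the only prime of $V$ strictly below $\mathfrak{p}$ is $(0)$, which does not contain $\bar t$, so $\mathfrak{p}$ is the smallest prime of $V$ containing $\bar t$. Hence $\mathfrak{p}/\bar tV$ is exactly the nilradical of $V/\bar tV$; since a pair whose ideal is nil is automatically henselian, $(V/\bar tV,\, \mathfrak{p}/\bar tV)$ is henselian, and combining with the reduction above concludes the proof.

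The only genuinely nontrivial input is the very first step — that the stalk $\mathcal{O}_{X,x}^+$ is henselian along $t$ — which itself rests only on the completeness of $\mathcal{O}_X^+(U)$ for rational $U$ and the standard colimit stability of henselian pairs, so I do not expect serious difficulty there. Everything afterwards is bookkeeping with the permanence properties of henselian pairs (composition, filtered colimits, nil ideals) together with the already-established description of $\mathcal{O}_{X_{\mathrm{red}},x}$ as the valuation ring $k(x)^+/k(x)^{\circ\circ}$ with $k(x)^{\circ\circ}$ its height-one prime; the one place to be slightly careful is checking that $\mathfrak m_x$ is swallowed by $t\mathcal{O}_{X,x}^+$, which is precisely where the relation $\mathfrak m_x = t\mathfrak m_x$ from Lemma~\ref{lem:prshinve} is used.
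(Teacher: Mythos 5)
Your argument reaches the right conclusion, but by a different route from the paper's. The paper's proof is a two-line reduction: by Proposition \ref{prop:sheafsameasmshe} the stalk pair $(\mathcal{O}_{X,x}^{+},\mathfrak{m}_{\mathcal{O}_X^+,x})$ is the filtered colimit of the pairs $(\mathcal{O}_X^+(U),\mathcal{O}_X^{\circ\circ}(U))$ over rational neighbourhoods $U\ni x$, each of which is henselian by the cited result of Bhatt that $(A^+,A^{\circ\circ})$ is a henselian pair for any complete Tate ring $A$; one then invokes stability of henselian pairs under filtered colimits. You instead inline a proof of (a stalkwise version of) Bhatt's lemma: split off the sub-pair $(\mathcal{O}_{X,x}^+,t\mathcal{O}_{X,x}^+)$, then observe that modulo $t$ the ideal $\mathfrak{m}_{\mathcal{O}_X^+,x}$ becomes the nilradical, using $\mathfrak{m}_x=t\mathfrak{m}_x$ and the identification of $\mathfrak{m}_{\mathcal{O}_X^+,x}/\mathfrak{m}_x$ with the height-one prime $k(x)^{\circ\circ}$ of the valuation ring $k(x)^+$. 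That part of the argument is correct, and exploiting the valuation-ring structure of the residue at the stalk is a clean way to see why the quotient pair is nil; it buys a self-contained proof at the cost of redoing what the citation already provides.

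There is one step whose justification is wrong as written: the assertion that $\mathcal{O}_X^+(U)$ is ``a closed, hence complete, ring of definition'' and therefore $t$-adically complete. In general $\mathcal{O}_X^+(U)$ is open and closed in $\mathcal{O}_X(U)$ but need not be bounded, so it need not be a ring of definition, and it need not even be $t$-adically separated: for $A=\mathbb{Q}_p[\epsilon]/(\epsilon^2)$ with $A^+=A^{\circ}=\mathbb{Z}_p\oplus\mathbb{Q}_p\epsilon$ one has $\bigcap_n p^nA^+\supseteq\mathbb{Q}_p\epsilon\neq 0$, and this is a strongly noetherian Tate ring, so it occurs in the paper's setting. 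The conclusion you want --- that $(\mathcal{O}_X^+(U),t\mathcal{O}_X^+(U))$ is henselian --- is still true, but the correct reason is another colimit: write $\mathcal{O}_X^+(U)=\varinjlim A_0[a_1,\ldots,a_k]$ over finite subsets of $\mathcal{O}_X^+(U)$, where $A_0\subseteq\mathcal{O}_X^+(U)$ is a ring of definition containing $t$; each $A_0[a_1,\ldots,a_k]$ is open and bounded (the $a_i$ being power-bounded), hence a $t$-adically complete and separated ring of definition, hence henselian along $(t)$, and filtered colimits of henselian pairs are henselian. With that repair, the rest of your proof (the passage to $k(x)^+/\bar{t}k(x)^+$, the identification of $k(x)^{\circ\circ}$ with $\sqrt{\bar{t}k(x)^+}$, and the transitivity of henselian pairs) goes through.
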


\begin{proof}
Since henselian pairs are preserved under filtered colimits of such pairs (cf. \cite[Tag 0FWT]{stacks-project}), by Proposition \ref{prop:sheafsameasmshe}, it suffices to show that the pair $(A^+, A^{\circ \circ})$ is henselian for a complete Tate ring $A$. This is the content of \cite[Lemma 7.2.3(5)]{bhatt_notes}.
\end{proof}

One deduces, somewhat surprisingly, that in particular $k(x)^{\circ}$ is henselian along it's maximal ideal, at rank 1 points $x \in X$.

\begin{cor} \label{residue plus is henselian}
For each $x \in X$, the pair $(k(x)^+, k(x)^{\circ \circ})$ is henselian.
\end{cor}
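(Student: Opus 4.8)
The plan is to obtain this as a formal consequence of Lemma \ref{lem:henseproofinach} together with the stalk computation carried out in the proof of Lemma \ref{lem:stalforexref}. Recall from there that $k(x)^{+} = \mathcal{O}_{X,x}^{+}/\mathfrak{m}_x$ and $k(x)^{\circ\circ} = \mathfrak{m}_{\mathcal{O}_X^+,x}/\mathfrak{m}_x$, and recall from Lemma \ref{lem:prshinve} that $\mathfrak{m}_x \subseteq \mathfrak{m}_{\mathcal{O}_X^+,x}$. Hence the pair $(k(x)^{+}, k(x)^{\circ\circ})$ is precisely the quotient of the pair $(\mathcal{O}_{X,x}^{+}, \mathfrak{m}_{\mathcal{O}_X^+,x})$ by the ideal $\mathfrak{m}_x$, which is contained in the defining ideal $\mathfrak{m}_{\mathcal{O}_X^+,x}$.

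The only remaining input is that a quotient of a henselian pair by an ideal contained in its defining ideal is again a henselian pair. I would justify this either directly from the monic-polynomial characterization of henselian pairs --- given a monic $f \in k(x)^{+}[T]$ and a factorization of its reduction modulo $k(x)^{\circ\circ}$ into coprime monic factors, lift $f$ to a monic polynomial over $\mathcal{O}_{X,x}^{+}$, factor it there using henselianity of $(\mathcal{O}_{X,x}^{+}, \mathfrak{m}_{\mathcal{O}_X^+,x})$ (noting that $(\mathcal{O}_{X,x}^{+}/\mathfrak{m}_{\mathcal{O}_X^+,x})[T] = (k(x)^{+}/k(x)^{\circ\circ})[T]$), and reduce the resulting factorization modulo $\mathfrak{m}_x$ --- or, more quickly, by invoking \cite[Tag 0DYD]{stacks-project}, since the surjective ring map $\mathcal{O}_{X,x}^{+} \to k(x)^{+}$ is in particular integral and $\mathfrak{m}_{\mathcal{O}_X^+,x} \cdot k(x)^{+} = k(x)^{\circ\circ}$. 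Combining this with Lemma \ref{lem:henseproofinach} yields the corollary.

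I do not anticipate any real obstacle: all the substance is already contained in Lemma \ref{lem:henseproofinach} (ultimately in \cite[Lemma 7.2.3(5)]{bhatt_notes}), and this corollary is merely its shadow at the level of residue fields. The one point to handle with a little care is citing the correct general statement that henselianity of a pair passes to such quotients, but this is elementary. Note also that when $x$ has rank $1$ one has $k(x)^{+} = k(x)^{\circ}$ and $k(x)^{\circ\circ}$ is its maximal ideal, so the corollary specializes to the assertion highlighted just before it.
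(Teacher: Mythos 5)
Your proof is correct and follows essentially the same route as the paper: both identify $(k(x)^{+},k(x)^{\circ\circ})$ with the quotient of the henselian pair $(\mathcal{O}_{X,x}^{+},\mathfrak{m}_{\mathcal{O}_X^+,x})$ by $\mathfrak{m}_x$ (via the identifications in the proof of Lemma \ref{lem:stalforexref}) and then pass henselianity to the quotient. The paper leaves the quotient step implicit, whereas you spell it out; your substance is right, though note the Stacks tag for quotients/integral maps is the one you want (the ``$J\subseteq I$'' lemma, or the integral base change lemma), so just make sure the cited tag matches the statement you actually use.
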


\begin{proof}
By passing to the quotient, Lemma \ref{lem:henseproofinach} shows that the pair 
\[
(\mathcal{O}_{X,x}^{+}/\mathfrak{m}_x, \mathfrak{m}_{\mathcal{O}_X^+,x}/\mathfrak{m}_x) = (k(x)^+, k(x)^{\circ \circ}).
\]
is henselian (we refer the reader to the proof of Lemma \ref{lem:stalforexref} for this identification).
\end{proof}

The next lemma shows that we do not lose cohomological information of $\mathcal{O}_X^{+}$ by passing to $\mathcal{O}_{X_{\text{red}}}$.

\begin{lem} \label{lem:gentophensspa}
Let $X$ be a topological space and $\sF$ a sheaf of rings on $X$. Suppose $\mathscr{I} \subseteq \mathscr{F}$ is a subpresheaf of ideals of $\mathscr{F}$, such that for each $x \in X$ and for each open subset $U \subseteq X$ containing $x$, there exists an open subset $U' \subseteq U$ containing $x$ such that the pair $(\mathscr{F}(U'), \mathscr{I}(U'))$ is henselian. Then for every open $V \subseteq X$, the pair $(\mathscr{F}(V), \mathscr{I}^{a}(V))$ is henselian.
\end{lem}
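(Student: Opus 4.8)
The plan is to reduce the statement for the sheafification $\mathscr{I}^a$ to the presheaf-level hypothesis by exploiting that henselian pairs are preserved under filtered colimits (as recalled in the proof of Lemma~\ref{lem:henseproofinach}, via \cite[Tag 0FWT]{stacks-project}). Fix an open $V \subseteq X$. The pair $(\mathscr{F}(V), \mathscr{I}^a(V))$ is henselian if and only if every étale $\mathscr{F}(V)$-algebra admits the appropriate lifting of idempotents/sections modulo $\mathscr{I}^a(V)$; concretely, it suffices to show that for every monic $f \in \mathscr{F}(V)[T]$ and every factorization $\bar f = \bar g \bar h$ into monic coprime polynomials over $\mathscr{F}(V)/\mathscr{I}^a(V)$, the factorization lifts to $\mathscr{F}(V)$. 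So first I would set up this standard criterion and then attempt to produce the lift locally and glue.

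The key steps, in order: (1) Given such an $f$ and a factorization modulo $\mathscr{I}^a(V)$, note that $\mathscr{I}^a(V)$ is the sheafification, so the image classes $\bar g, \bar h$ are represented, after passing to a suitable open cover $\{U_i\}$ of $V$, by compatible local data; by the hypothesis I may refine this cover so that each $(\mathscr{F}(U_i), \mathscr{I}(U_i))$ — equivalently, after a further shrink, $(\mathscr{F}(U_i), \mathscr{I}^a(U_i))$ since $\mathscr{I} \subseteq \mathscr{I}^a$ and the two agree on a cofinal system — is henselian. (2) On each $U_i$ the factorization lifts, uniquely, by henselianness; uniqueness of the lift (the strong form: the lift is unique once one fixes the reductions, because coprime monic factorizations of a fixed monic polynomial are unique when they exist) forces the local lifts $g_i, h_i \in \mathscr{F}(U_i)[T]$ to agree on overlaps $U_i \cap U_j$. (3) By the sheaf property of $\mathscr{F}$ (applied coefficient-wise to the polynomials), these glue to monic $g, h \in \mathscr{F}(V)[T]$ with $f = gh$ and with $g, h$ reducing to $\bar g, \bar h$ modulo $\mathscr{I}^a(V)$ — the coprimality of $g,h$ modulo $\mathscr{I}^a(V)$ being inherited from the coprimality of $\bar g, \bar h$. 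This establishes the Hensel lifting property for $(\mathscr{F}(V), \mathscr{I}^a(V))$, hence that the pair is henselian.

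The main obstacle I expect is step (2): ensuring that the local lifts are genuinely \emph{canonical} so that they patch. One must be careful that the henselian lifting of a coprime monic factorization is unique (this is true and standard, but needs the coprimality, not merely the factorization), and that the sheafification does not destroy this — i.e. that a section of $\mathscr{I}^a$ restricted to a small enough open lands in $\mathscr{I}$, or at least that $(\mathscr{F}(U'), \mathscr{I}^a(U'))$ is henselian for $U'$ small, which follows from the hypothesis together with the fact that henselianity only depends on the pair and $\mathscr{I}(U') \subseteq \mathscr{I}^a(U') \subseteq \sqrt{\mathscr{I}(U')\mathscr{F}(U')}$-type containments do not enlarge the Jacobson-radical condition. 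A secondary point is verifying that reduction modulo $\mathscr{I}^a(V)$ of the glued polynomials recovers the prescribed $\bar g, \bar h$: this is where one uses that the restriction maps $\mathscr{I}^a(V) \to \mathscr{I}^a(U_i)$ are compatible with those of $\mathscr{F}$ and that $\mathscr{F}/\mathscr{I}^a$ is a sheaf, so equality can be checked on the cover. Once these bookkeeping points are handled, the argument is formal.
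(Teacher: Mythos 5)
Your overall skeleton (localize to small opens where the pair is henselian, solve uniquely there, glue via the sheaf property of $\mathscr{F}$) is the same as the paper's, but the step on which everything rests contains a false claim. You need the factorization $\bar f=\bar g\bar h$, which is given modulo $\mathscr{I}^{a}(V)$, to become a factorization modulo the \emph{presheaf} ideal $\mathscr{I}(U_i)$ on the members of your cover, since that is the ideal for which the hypothesis supplies henselianity. You justify this by asserting that $\mathscr{I}$ and $\mathscr{I}^{a}$ ``agree on a cofinal system'' of opens, and later that $\mathscr{I}^{a}(U')\subseteq\sqrt{\mathscr{I}(U')\mathscr{F}(U')}$. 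Neither holds in general: a section of $\mathscr{I}^{a}(U')$ is only \emph{locally} a section of $\mathscr{I}$, and no power of it need lie in $\mathscr{I}(U')$ (the paper's own example of $\mathcal{O}_X^{\circ\circ}$ on the open unit disc, where $T$ lies in the sheafification but no power of $T$ is globally topologically nilpotent, is exactly of this shape). A second omission: a henselian pair $(A,I)$ requires by definition that $I$ lie in the Jacobson radical of $A$, and you never verify this for $(\mathscr{F}(V),\mathscr{I}^{a}(V))$; that radical condition is also what you would need to see that coprimality of $\bar g,\bar h$ persists after passing from $\mathscr{I}^{a}$ to the smaller ideal $\mathscr{I}$, and to get the uniqueness you invoke on overlaps $U_i\cap U_j$, which need not themselves be among the small henselian opens.

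Your route can be repaired: choose global monic lifts $g_0,h_0$ of $\bar g,\bar h$ to $\mathscr{F}(V)[T]$, note that $f-g_0h_0$ has coefficients in $\mathscr{I}^{a}(V)$ and hence, on a suitable cover, in $\mathscr{I}(U_i)$; first prove $\mathscr{I}^{a}(W)\subseteq\mathrm{rad}(\mathscr{F}(W))$ for every open $W$ by the covering trick applied to $1+i$; use this to check that $g_0,h_0$ remain coprime modulo $\mathscr{I}(U_i)$ via the resultant; then lift over each $U_i$ and glue using uniqueness of coprime monic factorizations over a ring modulo an ideal in its Jacobson radical. The paper sidesteps all of this bookkeeping by using the criterion of \cite[Tag 09XI]{stacks-project}: after establishing the Jacobson-radical containment exactly as above, one only has to produce a unique root in $1+\mathscr{I}^{a}(X)$ of polynomials $T^{n}(T-1)+a_nT^{n}+\cdots+a_0$ whose auxiliary data $a_j$ lie in $\mathscr{I}^{a}(X)$ itself --- and those \emph{are} locally sections of $\mathscr{I}$ by the very definition of sheafification, so the localization step is immediate and the unique local roots glue without further argument.
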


\begin{proof}
We can assume without loss of generality that $V = X$. We begin by showing that $\mathscr{I}^{a}(X)$ is contained in the Jacobson radical of $\mathscr{F}(X)$. By \cite[Tag 0AME]{stacks-project} it suffices to show that $1+i$ is a unit in $\mathscr{F}(X)$ for every $i \in \mathscr{I}^{a}(X)$. Take a covering $\{X_i \}$ of $X$ such that $i \in \mathscr{I}(X_i) \subseteq \mathscr{I}^{a}(X_i)$ and the pair $(\mathscr{F}(X_i), \mathscr{I}(X_i))$ is henselian. Then $1+i$ is a unit in $\sF(X_i)$. Therefore $1+i$ is a unit in $\sF(X)$.

To complete the proof we verify the criterion in \cite[Tag 09XI]{stacks-project}. Let $f(T) \in \mathscr{F}(X)[T]$ be a monic polynomial of the form
\[
f(T) = T^{n}(T-1)+ a_nT^{n}+ \cdots + a_1T +a_0
\]
with $a_n, \ldots, a_0 \in \mathscr{I}^{a}(X)$ with $n \geq 1$. Take a covering $\{X_i \}$ of $X$ such that $a_j \in \mathscr{I}(X_i) \subseteq \mathscr{I}^{a}(X_i)$ for all $0 \leq j \leq n$ and the pair $(\mathscr{F}(X_i), \mathscr{I}(X_i))$ is henselian. Then there exists a unique $a_{X_i} \in \mathscr{F}(X_i)$ with $f(a_{X_i}) = 0$ and $a_{X_i} \in 1+ \mathscr{I}(X_i)$. The $a_{X_i}$ now glue to give a unique $a_X \in 1+ \mathscr{I}^{a}(X)$ with $f(a_X) = 0$.
\end{proof}

\begin{cor} \label{cor:henspairlif}
The pair $(\mathcal{O}_X^{+}(X), \mathfrak{m}_{\mathcal{O}_X^+}(X))$ is henselian.
\end{cor}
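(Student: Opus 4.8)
The plan is to obtain this as an immediate consequence of Lemma~\ref{lem:gentophensspa}, applied with $\mathscr{F} = \mathcal{O}_X^+$ and $\mathscr{I} = \mathcal{O}_X^{\circ\circ}$. By Lemma~\ref{lem:prshinve} the presheaf $\mathcal{O}_X^{\circ\circ}$ is a subpresheaf of ideals of $\mathcal{O}_X^+$, and by Proposition~\ref{prop:sheafsameasmshe} its sheafification is precisely $\mathfrak{m}_{\mathcal{O}_X^+}$. Hence, once the local hypothesis of Lemma~\ref{lem:gentophensspa} is verified, taking $V = X$ in that lemma yields exactly that $(\mathcal{O}_X^+(X), \mathfrak{m}_{\mathcal{O}_X^+}(X))$ is henselian.

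So the one thing to check is the local hypothesis: for every $x \in X$ and every open $U \ni x$, there is an open $U' \subseteq U$ with $x \in U'$ and $(\mathcal{O}_X^+(U'), \mathcal{O}_X^{\circ\circ}(U'))$ henselian. For this I would choose $U'$ to be an affinoid (say rational) subset $\Spa(A,A^+) \subseteq U$ containing $x$; such neighborhoods form a basis since $X$ is locally noetherian and analytic, and for them $A$ is a complete Tate ring. On such $U'$ one has $\mathcal{O}_X^+(U') = A^+$; moreover $\mathcal{O}_X^{\circ\circ}(U') = A^{\circ\circ}$, because $A^{\circ\circ} \subseteq A^+$ and $A^+$ carries the subspace topology from $A$, so an element of $A^+$ is topologically nilpotent in $A^+$ exactly when it is so in $A$. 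Thus the pair in question is $(A^+, A^{\circ\circ})$, which is henselian by \cite[Lemma 7.2.3(5)]{bhatt_notes} --- the same input already used in the proof of Lemma~\ref{lem:henseproofinach}. This completes the verification, and Lemma~\ref{lem:gentophensspa} then finishes the proof.

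I do not anticipate a genuine obstacle here: the argument is essentially a repackaging of Lemma~\ref{lem:gentophensspa}, Proposition~\ref{prop:sheafsameasmshe}, and the henselianity of $(A^+, A^{\circ\circ})$ for complete Tate $A$. The only points deserving a line of care are the identification $\mathcal{O}_X^{\circ\circ}(U') = A^{\circ\circ}$ on affinoids and the availability of affinoid neighborhoods $\Spa(A,A^+)$ with $A$ complete and Tate --- both standard in the locally noetherian analytic setting. In particular the Remark following Proposition~\ref{prop:sheafsameasmshe} causes no trouble, since the failure of $\mathcal{O}_X^{\circ\circ}$ to be a sheaf is a non-quasi-compact phenomenon, whereas we only ever evaluate it on the affinoid $U'$.
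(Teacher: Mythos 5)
Your proposal is correct and is essentially identical to the paper's argument: the paper's proof simply cites Proposition \ref{prop:sheafsameasmshe}, Lemma \ref{lem:gentophensspa}, and \cite[Lemma 7.2.3(5)]{bhatt_notes}, which are exactly the three ingredients you combine. Your write-up just makes explicit the verification of the local hypothesis on affinoid neighbourhoods $\Spa(A,A^+)$ with $A$ complete Tate, which the paper leaves implicit.
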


\begin{proof}
This is a consequence of Proposition \ref{prop:sheafsameasmshe}, Lemma \ref{lem:gentophensspa} and \cite[Lemma 7.2.3(5)]{bhatt_notes}.
\end{proof}

Let
 $\mathfrak{X}$ be a type (S) formal scheme
 (cf. \cite[\S 1.9]{hub96}).
 Recall that we have a scheme 
 $\mathfrak{X}_{s}$ associated to $\mathfrak{X}$ which is defined as follows. 
 Let $I \subseteq \mathcal{O}_X$ 
 be the sheaf of ideals with $I(U) := \{ f \in \mathcal{O}_X(U) \text{ } | \text{ } f(x) = 0 \mbox{ for every } x \in U\}$ for every open subset $U \subseteq X$. 
 Then 
 $\mathfrak{X}_{s}:= (\mathfrak{X},\mathcal{O}_\mathfrak{X}/I)$ is a reduced scheme.
 Furthermore, as in \cite[Proposition 1.9.1]{hub96}, 
 we have an analytic adic space $\mathfrak{X}_\eta$ (denoted by $d(\mathfrak{X})$ in loc.cit.) and 
 a morphism of locally 
 ringed spaces (nearby cycles)
 \begin{equation} \label{eq:thepsisneacu}
 \lambda_\mathfrak{X} \colon (\mathfrak{X}_{\eta}, \mathcal{O}^{+}_{\mathfrak{X}_{\eta}}) \to (\mathfrak{X}, \mathcal{O}_{\mathfrak{X}}).
 \end{equation}
 After killing functions which vanish at all points, $\lambda_\mathfrak{X}$ induces a morphism of locally ringed spaces 
 \begin{equation} \label{eq:nearbycuspe}
 \mathfrak{X}_{\eta, \text{red}} \to \mathfrak{X}_{s},
 \end{equation}
 which we continue to denote by $\lambda_{\mathfrak{X}}$.
 In this way an analytic adic space \emph{specializes} to a scheme.
In an attempt to enlarge the class of
specialization morphisms, this observation motivates the following definition.

\begin{defi} [Specialization morphism] \label{defi:spemorrepov}
\emph{Given an analytic adic space $X$ and a scheme $S$, we call a morphism of locally ringed spaces
\[
X_{\text{red}} \to S
\]
a}
 specialization morphism \emph{and denote it by $X \to S$.}
\end{defi}


\begin{rem} \label{rem:compmoretad}
\emph{Specialization morphisms from analytic adic spaces to schemes can be composed
 with morphisms of adic spaces and morphisms of schemes. More precisely
  if $\alpha \colon X \to S$ is a specialization morphism, $f \colon Y \to X$ a morphism of adic spaces and $g \colon S \to T$ a morphism of schemes, then we get specialization morphisms $\alpha \circ f := \alpha \circ f_{\text{red}} \colon Y \to S$ and $g \circ \alpha \colon X \to T$.}
\end{rem}

We now show the existence of fibre products in certain situations.

\begin{prop} \label{prop:exifibprodspecmor}
Let $S$ and $T$ be schemes and $X$ be an analytic adic space.
 Let $\alpha \colon X \to S$ be a specialization morphism and 
 $g \colon T \to S$ an étale morphism of schemes. Then there exists an analytic adic space $Y$, a 
 morphism $f \colon Y \to X$ of analytic adic spaces and a specialization morphism $\beta \colon Y \to T$ such that
$$
\begin{tikzcd} [row sep = large, column sep = large] 
Y \arrow[r, "\beta"] \arrow[d, "f"] &
T \arrow[d, "g"] \\
X \arrow[r, "\alpha"] &
S  
\end{tikzcd}
$$
commutes in the sense of Remark \ref{rem:compmoretad}. Moreover the following universal property is satisfied. For every analytic 
adic space $Y'$, every morphism $f' \colon Y' \to X$ of analytic adic 
spaces and every specialization morphism $\beta' \colon Y' \to T$ such
 that $\alpha \circ f' = g \circ \beta'$, there exists a unique morphism $h \colon Y' \to Y$ of analytic adic
  spaces with $f' = f \circ h$ and $\beta' = \beta \circ h$. We denote $X \times_S T := Y$. Moreover
   the projection $f \colon X \times_S T \to X$ is an étale morphism of analytic adic spaces.
\end{prop}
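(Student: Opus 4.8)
The plan is to construct $Y$ by gluing, starting from the affine case. Since the question is local on $T$ and on $S$, and étaleness of $g$ and the property we want to verify for $f$ are both local, I would first reduce to the situation where $S = \Spec R$ and $T = \Spec R'$ are affine and $g$ is \emph{standard} étale, i.e. $R' = (R[u]/(p(u)))_{q(u)}$ for a monic polynomial $p$ with $p'(u)$ invertible in $R'$. The key point is then to understand what data a specialization morphism $Y \to T$ amounts to over a given $f \colon Y \to X$: by definition it is a morphism of locally ringed spaces $Y_{\text{red}} \to \Spec R'$, equivalently (since the target is affine) a ring homomorphism $R' \to \mathcal{O}_{Y_{\text{red}}}(Y) = \mathcal{O}_Y^+(Y)/\mathfrak{m}_{\mathcal{O}_Y^+}(Y)$, compatible with the map $R \to \mathcal{O}_{X_{\text{red}}}(X) \to \mathcal{O}_{Y_{\text{red}}}(Y)$ coming from $\alpha$ and $f$.

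\textbf{The main construction.} Given such a standard étale presentation, a lift of $\alpha \circ f_{\text{red}} \colon Y_{\text{red}} \to \Spec R$ to $Y_{\text{red}} \to \Spec R'$ is the same as a root $\bar{a} \in \mathcal{O}_{Y_{\text{red}}}(Y)$ of the image of $p$ with $\overline{q(a)}$ a unit. This is where Corollary \ref{cor:henspairlif} enters: since the pair $(\mathcal{O}_Y^+(Y), \mathfrak{m}_{\mathcal{O}_Y^+}(Y))$ is henselian, and more importantly since the analogous statement holds \emph{functorially} for rational subsets (Corollary \ref{cor:henspairlif} applied to opens, combined with Lemma \ref{lem:gentophensspa}), roots of $\bar p$ over $\mathcal{O}_{Y_{\text{red}}}$ can be lifted to roots of a suitable lift of $p$ over $\mathcal{O}_Y^+$, étale-locally on $Y$. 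Concretely, I would take $f \colon Y \to X$ to be the finite étale cover of adic spaces defined by the same equations: lift $\alpha^\sharp(p) \in \mathcal{O}_{X_{\text{red}}}(X)[u]$ to a monic $\tilde{p} \in \mathcal{O}_X^+(X)[u]$ after shrinking $X$ (using that $\mathcal{O}_X^+ \to \mathcal{O}_{X_{\text{red}}}$ is surjective on small enough opens, which is immediate from the construction of $\mathfrak{m}_{\mathcal{O}_X^+}$ as a sheafification), form the finite $\mathcal{O}_X$-algebra $\mathcal{O}_X[u]/(\tilde p)$ localized appropriately, and let $Y = \Spa$ of the resulting affinoid(s); étaleness of $f$ follows because $\tilde p'$ is a unit in $\mathcal{O}_Y$ (it reduces to a unit in $\mathcal{O}_{Y_{\text{red}}}$, and $\mathcal{O}_X^{\circ\circ} \subset \mathcal{O}_X^+$ consists of topologically nilpotent elements, so a unit mod $\mathfrak{m}_{\mathcal{O}_X^+}$ lifts to a unit — cf. Lemma \ref{lem:prshinve} and Corollary \ref{cor:henspairlif}). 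One gets a tautological specialization morphism $\beta \colon Y \to T$ from $R' = R[u]/(p)_{q} \to \mathcal{O}_{Y_{\text{red}}}(Y)$ sending $u$ to the reduction of the tautological root in $\mathcal{O}_Y$.

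\textbf{Universal property and gluing.} For the universal property, given $(Y', f', \beta')$ with $\alpha f'_{\text{red}} = g \beta'$, the specialization morphism $\beta'$ provides a root $\bar a' \in \mathcal{O}_{Y'_{\text{red}}}(Y')$ of $\bar p$ with $\overline{q(a')}$ invertible; by henselianity of $(\mathcal{O}_{Y'}^+, \mathfrak{m}_{\mathcal{O}_{Y'}^+})$ — applied locally via Lemma \ref{lem:gentophensspa} and Corollary \ref{cor:henspairlif} — this root lifts \emph{uniquely} to a root $a'$ of $f'^\sharp(\tilde p)$ in $\mathcal{O}_{Y'}^+(Y')$ lying in the appropriate congruence class, and such lifts are compatible with restriction to opens, hence define a morphism of sheaves; this root determines the unique $h \colon Y' \to Y$ of adic spaces over $X$ by the universal property of the finite étale $X$-algebra defining $Y$, and one checks $\beta' = \beta \circ h_{\text{red}}$ because both are induced by $u \mapsto \bar a'$. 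Finally, to globalize: cover $T$ by standard-étale affine pieces over affine opens of $S$, cover $X$ correspondingly (using that $\alpha$ is a morphism of locally ringed spaces, so preimages of opens are open), construct the local $Y$'s, and glue them using the universal property to get cocycle compatibility on overlaps; the resulting $Y$ is an analytic adic space since it is obtained by gluing adic spaces along open immersions (the overlaps being the fibre products over intersections, which are open in each piece because $f$ is étale hence open). The map $f$ is étale because it is so locally.

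\textbf{Main obstacle.} I expect the real work to be in making the lifting of roots from $\mathcal{O}_{Y_{\text{red}}}$ to $\mathcal{O}_Y^+$ genuinely \emph{sheaf-theoretic and functorial}, rather than just valid on global sections — one needs the lifted root to be compatible with restriction to all rational subsets simultaneously, so that it defines a morphism of adic spaces and not merely a ring map on global sections. This is exactly what Lemma \ref{lem:gentophensspa} is designed to handle (henselianity of the pair on every open, coming from the presheaf-level statement before sheafification), but the bookkeeping of uniqueness-of-lifts across the covering, and checking that the glued $Y$ has no surprises (e.g. that $\mathcal{O}_Y^+$ is really computed as expected from the finite étale algebra, and that the two notions of "commuting square" in the sense of Remark \ref{rem:compmoretad} match up), will require care.
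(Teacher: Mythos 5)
Your proposal is correct and follows essentially the same route as the paper: reduce to affine/affinoid pieces with a standard étale presentation, lift the defining equations from $\mathcal{O}_{X_{\text{red}}}$ to $\mathcal{O}_X^+$ after shrinking, define $Y$ by those lifted equations, deduce étaleness from the Jacobian being a unit modulo the vanishing ideal and hence a unit in $\mathcal{O}_Y^+$, and prove the universal property by Hensel-lifting the root over a covering using Corollary \ref{cor:henspairlif} and gluing. The only cosmetic difference is that the paper first builds $Y$ from a general étale presentation $D[x_1,\ldots,x_n]/(f_1,\ldots,f_n)$ in the Tate algebra $A\langle X_1,\ldots,X_n\rangle$ and only passes to the standard étale case for the universal property, whereas you work with the standard étale form throughout.
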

\begin{proof}

We may assume that $S$ is an affine scheme and that $X$ is an affinoid adic space 
 of the form $\mathrm{Spa}(A,A^+)$ where $A$ is complete. 
 Since $X$ is analytic and as it suffices to prove the proposition locally for $X$, we 
 can suppose without loss of generality that $A$ is Tate. 
 We fix a topologically nilpotent unit $s \in A$ which belongs to $A^+$. 
 Furthermore, we assume that $T$ is affine.
 It follows that $T$ must be of the form $\mathrm{Spec}(E)$ where 
  $E$ is an \'etale algebra over $D := \mathcal{O}_S(S)$. 
   By \cite[Tag 00U9]{stacks-project}, there exists a presentation
$E = D[x_1, \ldots, x_n]/(f_1, \ldots f_n)$
such that the image of $\det(\partial f_j/\partial x_i)$ is invertible in $E$.
  Since $\alpha \colon X_{\text{red}} \to S$ is a morphism 
 of locally ringed spaces, we get a map 
 $\mathcal{O}_S(S) \to  (\mathcal{O}_X^+ / \mathfrak{m}_{\mathcal{O}_X^+})(X)$.
 It follows that if $C := (\mathcal{O}_X^+ / \mathfrak{m}_{\mathcal{O}_X^+})(X)$ then by base change,
 the morphism $$C \to C' := C \otimes_D E$$ is an étale morphism of 
 algebras. We have that 
$C' = C[x_1, \ldots, x_n]/(f_1, \ldots f_n)$
such that the image of $\det(\partial f_j/\partial x_i)$ is invertible in $C'$ 
where we abuse notation and use $f_i$ to denote the \emph{image} in 
$C[x_1,\ldots,x_n]$ 
of the polynomial $f_i \in D[x_1,\ldots,x_n]$ via 
the map $D[x_1,\ldots,x_n] \to C[x_1,\ldots,x_n]$ 
induced by the map $D \to C$. 

 The quotient map 
    $\mathcal{O}_X^+ \to \mathcal{O}_X^+/\mathfrak{m}_{\mathcal{O}_X^+}$
    induces a map $A^+ \to C$. This map is not necessarily surjective. However, since
      $\mathcal{O}_X^+ \to \mathcal{O}_X^+/\mathfrak{m}_{\mathcal{O}_X^+}$ is surjective as a morphism of sheaves 
      and because it suffices to construct the adic space $Y$ from the statement of the proposition locally over $X$, we 
      can suppose that the image of $A^+$ in $C$ contains the coefficients of the 
      polynomials $f_i$. For every $1 \leq i \leq n$, let $g_i$ be a lift of $f_i$ along $A^+ \to C$. 
      
      Let 
\begin{equation} \label{eq:refpoviequs}
Y := \Spa((A,A^{+})\langle X_1,\ldots,X_n\rangle / (g_1,\ldots,g_n)) =: \Spa(B,B^{+})
\end{equation}
where we refer the reader to \cite[\S 3]{hub93} and \cite[(1.4.1)]{hub96} for the definition of the quotient appearing in \eqref{eq:refpoviequs}. In particular
\begin{equation} \label{eq:formforB}
B = A\langle X_1,\ldots,X_n\rangle / (g_1,\ldots,g_n)
\end{equation}
and 
 \begin{equation} \label{eq:formforB+}
B^{+} = (A^{+}\langle X_1,\ldots,X_n\rangle^{c} / ((g_1,\ldots,g_n)A\langle X_1,\ldots,X_n\rangle \cap A^{+}\langle X_1,\ldots,X_n\rangle^{c})^{c} 
\end{equation}
where $A^{+}\langle X_1,\ldots,X_n\rangle^{c}$ is the integral closure of $A^{+}\langle X_1,\ldots,X_n\rangle$ in $A\langle X_1,\ldots,X_n\rangle$ and $B^{+}$ is the integral closure of 
\[
A^{+}\langle X_1,\ldots,X_n\rangle^{c} / ((g_1,\ldots,g_n)A\langle X_1,\ldots,X_n\rangle \cap A^{+}\langle X_1,\ldots,X_n\rangle^{c})
\]
in $B$.

      Let $B_0 := A ^+\langle X_1,\ldots,X_n\rangle / (g_1,\ldots,g_n)$ and 
      observe that we have a well defined map $B_0 \to B^+$. 
       %
The construction thus far fits into the following 
commutative\footnote{Since for any $h \in A^+ \langle X_1,\ldots,X_n \rangle$, almost all the coefficients of $h$ are topologically nilpotent in $A$, it follows that there is a well defined map $A^+ \langle X_1,\ldots,X_n \rangle/(g_i)_i \to \mathcal{O}_{X_{\text{red}}}(X) \otimes_D E$.} diagram: 
           
 \begin{equation} \label{d2}
\begin{tikzcd} [row sep = large, column sep = small] 
A \langle X_1,\ldots,X_n \rangle/ (g_i)_i & A^+ \langle X_1,\ldots,X_n \rangle/(g_i)_i \arrow[l] \arrow[r] & \mathcal{O}_{X_{\text{red}}}(X) \otimes_D E & E \arrow[l] \\
A \arrow[u] & A^+ \arrow[l] \arrow[r] \arrow[u] & \mathcal{O}_{X_{\text{red}}}(X) \arrow[u] & D. \arrow[u] \arrow[l] 
\end{tikzcd}
\end{equation}
  
           Let $f \colon Y\to X$ be the map induced by the adic morphism
      $A \to B$.  
      We now construct the map $\beta \colon Y_{\text{red}} \to T$. By \cite[Tag 01I1]{stacks-project}, it suffices 
   to construct a ring homomorphism $E \to \mathcal{O}_{Y_{\text{red}}}(Y)$ and to show that the following diagram is 
   commutative:
  
    \begin{equation} \label{d3}
\begin{tikzcd} [row sep = large, column sep = large] 
E \arrow[r]  & \mathcal{O}_{Y_{\text{red}}}(Y)  \\
D \arrow[u] \arrow[r] & \mathcal{O}_{X_{\text{red}}}(X). \arrow[u] 
\end{tikzcd}
\end{equation}

   The following commutative diagram
    \begin{equation} \label{d1}
\begin{tikzcd} [row sep = large, column sep = large]
|[alias = Z]|  B_0 \arrow[r] & B^+ \arrow[r]  & \mathcal{O}_{Y_{\text{red}}}(Y) \arrow[r] & \mathcal{O}_{Y_{\text{red}},y} \\
& A^+ \arrow[to = Z] \arrow[u] \arrow[r] & \mathcal{O}_{X_{\text{red}}}(X) \arrow[r] \arrow[u] & \mathcal{O}_{X_{\text{red}},x}. \arrow[u]
\end{tikzcd}
\end{equation}   
   implies that the map $\mathcal{O}_{X_{\text{red}}}(X) \to \mathcal{O}_{Y_{\text{red}}}(Y)$ factors 
   through a map 
   \begin{equation} \label{simple but imp}
   B_0 \otimes_{A^+} \mathcal{O}_{X_{\text{red}}}(X) \to  \mathcal{O}_{Y_{\text{red}}}(Y).
   \end{equation} 
   By Lemma \ref{equality lemma}, this implies that we have a map 
   $\mathcal{O}_{X_{\text{red}}}(X) \otimes_D E \to \mathcal{O}_{Y_{\text{red}}}(Y)$. 
   Let $\beta$ denote the composition $E \to \mathcal{O}_{X_{\text{red}}}(X) \otimes_D E \to \mathcal{O}_{Y_{\text{red}}}(Y)$.
   Observe that with this choice of $\beta$ the diagram (\ref{d3}) commutes. 

\begin{lem} \label{equality lemma}
 We have that
 $$B_0 \otimes_{A^+} \mathcal{O}_{X_{\text{red}}}(X) = \mathcal{O}_{X_{\text{red}}}(X) [X_1,\ldots,X_n]/(f_i)_i = \mathcal{O}_{X_{\text{red}}}(X) \otimes_D E.$$
\end{lem} 
\begin{proof} 
Recall by construction, $\mathcal{O}_{X_{\text{red}}}(X) [X_1,\ldots,X_n]/(f_i)_i = \mathcal{O}_{X_{\text{red}}}(X) \otimes_D E$.
Hence, it suffices to show
$$B_0 \otimes_{A^+} \mathcal{O}_{X_{\text{red}}}(X) = \mathcal{O}_{X_{\text{red}}}(X) [X_1,\ldots,X_n]/(f_i)_i.$$
Recall by definition, $B_0 = A^+ \langle X_1,\ldots,X_n \rangle/(g_i)$.
 We see that the image of $s$ in 
 $\mathcal{O}_{X_{\text{red}}}(X)$ is zero. It follows that 
 $$B_0 \otimes_{A^+} \mathcal{O}_{X_{\text{red}}}(X) = (B_0/sB_0) \otimes_{(A^+/sA^+)} \mathcal{O}_{X_{\text{red}}}(X).$$
  
  Clearly, $B_0/sB_0 = (A^+/sA^+)[X_1,\ldots,X_n]/ (\widetilde{g}_i)$ where 
   $\widetilde{g_i}$ is obtained from $g_i \in A^+[X_1,\ldots,X_n]$ by 
  reducing each of its coefficients modulo $sA^+$.   
  This follows by first observing that 
  $(A^+ \langle X_1,\ldots,X_n \rangle)/(sA^+ \langle X_1,\ldots,X_n \rangle) = (A^+/sA^+)[X_1,\ldots,X_n]$
  which is a consequence of the fact that
   any element of $A^+ \langle X_1,\ldots,X_n \rangle$ has all but finitely many coefficients
  not belonging to $sA^+$. 
  We hence get the equality $B_0/sB_0 = (A^+/sA^+)[X_1,\ldots,X_n]/ (\widetilde{g}_i)$ by observing 
  that 
  if $R = A^+\langle X_1,\ldots,X_n \rangle$ then
   $B_0/sB_0 = (R/sR)/((sR + IR)/sR)$ where $I$ is the ideal generated by the $g_i$. 
  Now observe that the ideal $sR+IR$ in $R/sR$ is generated by the elements $\widetilde{g_i}$. 
  
  Hence 
  $$B_0 \otimes_{A^+} \mathcal{O}_{X_{\text{red}}}(X) = \mathcal{O}_{X_{\text{red}}}(X)[X_1,\ldots,X_n]/(\widetilde{g_i}).$$ 
  By construction, the image of $g_i$, and hence $\widetilde{g_i}$, in $\mathcal{O}_{X_{\text{red}}}(X)[X_1,\ldots,X_n]$ coincides
  with $f_i$. This completes the proof. 
  \end{proof} 
      
       We claim that $Y$ is étale over $X$ or 
      equivalently that the determinant of the Jacobian
      $u := \det(\partial g_j/\partial X_i)$ is invertible in $B$ (cf. \cite[Proposition 1.7.1]{hub96} ). 
       Observe that $u \in B _0$. 
       By construction, 
    the image of $u$ in 
 $C' = \mathcal{O}_{X_{\text{red}}}(X) [X_1,\ldots,X_n] /(f_i)_i$ is invertible. 
 By (\ref{simple but imp}), Lemma \ref{equality lemma} and diagram (\ref{d1}) 
 the map $B_0 \to \mathcal{O}_{Y_{\text{red}}}(Y)$ factors through a 
 map $C' \to \mathcal{O}_{Y_{\text{red}}}(Y)$. It follows that the image of 
 $u$ in $\mathcal{O}_{Y_{\text{red}}}(Y)$ is invertible and we deduce from this that 
 $u$ is invertible in $\mathcal{O}_{Y_{\text{red}},y}$ for every $y \in Y$. 
 For any $y \in Y$, 
 since the map $\mathcal{O}_{Y,y}^+ \to \mathcal{O}_{Y_{\text{red}},y}$ is a local morphism of rings, 
       we must have that $u$ is a unit in $\mathcal{O}_{Y,y}^+$ as well.
       Hence we conclude that $u$ is a unit in $\mathcal{O}^+_{Y}(Y) = B^+$.

      We now prove the universal property. This is essentially a variant of Hensel's Lemma. Let $Y'$ be an analytic adic space as in the statement of the proposition. We are provided a morphism of adic spaces
      $f' \colon Y' \to X$ and a specialization morphism 
      $\beta' \colon Y' \to T$ such that 
      $\alpha \circ f' = g \circ \beta'$ or equivalently that the 
      following diagram commutes:
 $$      
\begin{tikzcd} [row sep = large, column sep = large] 
{Y'} \arrow[r,"\beta'"] \arrow[d,"f'"] & T \arrow[d,"g"] \\
{X} \arrow[r,"\alpha"] & S.
\end{tikzcd}
$$
     We deduce that the following diagram commutes: 
         $$
\begin{tikzcd} [row sep = large, column sep = large] 
\mathcal{O}_{Y'_{\text{red}}}(Y')  & E \arrow[l] \\
\mathcal{O}_{X_{\text{red}}}(X) \arrow[u] & D. \arrow[u] \arrow[l] 
\end{tikzcd}
$$
      Hence we have a unique map 
      $\mathcal{O}_{X_{\text{red}}}(X) \otimes_D E \to \mathcal{O}_{Y'_{\text{red}}}(Y')$. 

  By diagram (\ref{d1}), we have the following commutative diagram:
      
       \begin{equation} \label{d5}
\begin{tikzcd} [row sep = large, column sep = large] 
                                                               &|[alias = A0]| \mathcal{O}^+_{Y'}(Y') \arrow[r] &  |[alias = A1]| \mathcal{O}_{Y'_{\text{red}}}(Y') \\ 
 A^+ \langle X_1,\ldots,X_n \rangle/(g_i)_i \arrow[r]  & \mathcal{O}_{X_{\text{red}}}(X) \otimes_D E \arrow[to = A1]& E \arrow[l] \\
 A^+ \arrow[to = A0]  \arrow[r] \arrow[u] & \mathcal{O}_{X_{\text{red}}}(X) \arrow[u] & D. \arrow[u] \arrow[l] 
\end{tikzcd}
\end{equation}

    To prove the universal property, it suffices to construct a unique map   
    $$A^+ \langle X_1,\ldots,X_n \rangle/(g_i)_i \to\mathcal{O}_{Y'}^+(Y')$$ such that 
    with the addition of this morphism, the diagram above remains commutative. 
    At this point, we simplify the situation and assume that $n = 2$, $g_1 \in A^+[X_1]$
    and $g_2$ is of the form $X_2\cdot h - 1$ where $h \in A^+[X_1]$. 
    Note that such a simplification is possible by assuming that the 
    morphism $D \to E$ is a standard étale extension.
    We can make this assumption on the morphism $D \to E$ 
    since by \cite[Tag 02GT]{stacks-project}, every étale morphism is locally standard étale
    and it suffices to prove our lemma on a cover of $S$. 
          
      \begin{lem} \label{unique morphism lemma}
        There exists a unique morphism 
        $$A^+ \langle X_1,X_2 \rangle/(g_1,g_2) \to\mathcal{O}_{Y'}^+(Y')$$ 
        such that the following diagram is commutative: 
$$
\begin{tikzcd} [row sep = large, column sep = large] 
                                                               &|[alias = A0]|\mathcal{O}_{Y'}^+(Y') \arrow[r] &  |[alias = A1]| \mathcal{O}_{Y'_{\text{red}}}(Y') \\ 
 A^+ \langle X_1,X_2 \rangle/(g_1,g_2) \arrow[r]  \arrow[to = A0]& \mathcal{O}_{X_{\text{red}}}(X) \otimes_D E \arrow[to = A1]& E \arrow[l] \\
 A^+  \arrow[r] \arrow[to = A0] \arrow[u] & \mathcal{O}_{X_{\text{red}}}(X) \arrow[u] & D. \arrow[u] \arrow[l] 
\end{tikzcd}
$$
            \end{lem} 
    \begin{proof} 
       
        Observe from diagram \eqref{d5} above 
   that we have a well defined
    map $$A^+\langle X_1,X_2 \rangle/(g_1,g_2) \to \mathcal{O}_{Y'_{\text{red}}}(Y').$$
        For every $i$, let $\widetilde{x}_i$ be the image in $\mathcal{O}_{Y'_{\text{red}}}(Y')$ 
        of $X_i$ for this map. 
        Our goal is to show that there exists a unique set of lifts $x_1,x_2 \in \mathcal{O}^+_{Y'}(Y')$ of $\widetilde{x}_1,\widetilde{x}_2$ such 
        that if $\mathbf{x} := (x_1,x_2)$ then 
        $g_j(\mathbf{x}) = 0$ for every $j$. 
        
{Since $\mathcal{O}_{Y'_{\text{red}}}$ is the sheafification of the quotient presheaf $U \mapsto \mathcal{O}_{Y'}^+(U) / \mathfrak{m}_{\mathcal{O}_{Y'}^+}(U)$, we can take a covering $\{U_i\}$ of $Y'$ such that the $(\widetilde{x}_1)_{\lvert U_i}$ are in the image of
\[
\mathcal{O}_{Y'}^+(U_i) / \mathfrak{m}_{\mathcal{O}_{Y'}^+}(U_i) \to \mathcal{O}_{Y'_{\text{red}}}(U_i)
\]
and $g_1(y_{1i}) = 0$ for some $y_{1i} \in \mathcal{O}_{Y'}^+(U_i) / \mathfrak{m}_{\mathcal{O}_{Y'}^+}(U_i)$ mapping to $(\widetilde{x}_1)_{\lvert U_i}$. Since $\widetilde{x}_1$ is a simple root of $g_1$, it follows that the choice of the $y_{1i}$ are uniquely determined. In particular they coincide on intersections. Now $\mathcal{O}_{Y'}^+(U_i)$ is henselian along the ideal $\mathfrak{m}_{\mathcal{O}_{Y'}^+}(U_i)$ (cf. Corollary \ref{cor:henspairlif}) and so there exists a unique lift $x_{1i} \in \mathcal{O}_{Y'}^+(U_i)$ of $y_{1i}$ such that $g_1(x_{1i}) = 0$. The $x_{1i}$ glue to give $x_1 \in \mathcal{O}_{Y'}^+(Y')$. Clearly $x_1$ does not depend on the choice of the covering $\{ U_i \}$ (for another such covering $\{ U_{i'} \}$, one considers the union $\{U_i\} \cup \{U_{i'} \}$). 

It remains to show that $h(x_1)$ is invertible (the value of $x_2$ is thereafter uniquely determined). Along the map
\[
\mathcal{O}_{Y'}^+(Y') \to \mathcal{O}_{Y'_{\text{red}}}(Y'),
\] 
$h(x_1)$ is mapped to $h(\widetilde{x}_1)$. Now $h(\widetilde{x}_1)$ is invertible in $\mathcal{O}_{Y'_{\text{red}}}(Y')$ so one can take a covering $\{ V_j \}$ of $Y'$ such that the inverse of $h(\widetilde{x}_1)$ in $\mathcal{O}_{Y'_{\text{red}}}(V_j)$ comes from the image of $\mathcal{O}_{Y'}^+(V_j)$. The kernel of the map
\[
\mathcal{O}_{Y'}^+(V_j) \to \mathcal{O}_{Y'_{\text{red}}}(V_j)
\]
is $\mathfrak{m}_{\mathcal{O}_{Y'}^+}(V_j)$, which in particular, is contained in the Jacobson radical of $\mathcal{O}_{Y'}^+(V_j)$ (cf. Corollary \ref{cor:henspairlif}). Therefore $h(x_1)$ is invertible in each $\mathcal{O}_{Y'}^+(V_j)$. One can now glue the inverses to obtain an inverse for $h(x_1)$ in $\mathcal{O}_{Y'}^+(Y')$.
}

      \end{proof}  
      
      To conclude the proof,
      we must obtain a morphism 
      $Y' \to Y$. Such a morphism 
      corresponds to the morphism 
      $A\langle X_1,X_2 \rangle/ (g_1,g_2) \to \mathcal{O}_{Y'}(Y')$ which is the unique 
      extension of the map 
      $A^+\langle X_1,X_2 \rangle/(g_1,g_2) \to\mathcal{O}_{Y'}^+(Y')$ from 
      Lemma \ref{unique morphism lemma}.
\end{proof}

\begin{rem} \label{rem:somkinadho}
\emph{In the proof of Proposition \ref{prop:exifibprodspecmor}, we only showed that \eqref{eq:refpoviequs} is the fibre product of the morphisms $\alpha \colon X \to S$ and $g \colon T \to S$ when the following assumptions are satisfied (using the notation from the proof itself):
\begin{enumerate}[label=(\roman*)]
\item $X$ is affinoid, and $S$ and $T$ are affine,
\item $g$ is standard  étale, 
\item the image of $A^+$ in $C$ contains the coefficients of the polynomials $f_i$, and
\item $A$ is Tate.
\end{enumerate}
{Recall that we used $Y$ to denote $X \times_S T$.}
In general we do not know whether (i) implies $Y$ is affinoid. One possible obstruction seems to be that there is no good notion of ``affinoid morphism" between analytic adic spaces. A related question is given a specialization morphism $Z \to \Spec(D)$, does it factorize via the canonical morphism $Z \to \Spa(\mathcal{O}_{Z}(Z),\mathcal{O}_{Z}^{+}(Z))$?}
\end{rem}



\begin{lem} \label{lem:fineetafibpr}
In the setting of Proposition \ref{prop:exifibprodspecmor}, suppose $T$ and $S$ are affine, and $X$ and $Y$ are affinoid. If in addition $g \colon T \to S$ is finite, then so is the projection morphism $f \colon Y \to X$.
\end{lem}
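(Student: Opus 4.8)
The plan is to reduce the statement to the assertion that the structure ring of $Y$ is a finite module over that of $X$, and to prove this by passing modulo a pseudouniformizer. Since finiteness of a morphism of adic spaces is local on the target, and keeping the notation of the proof of Proposition~\ref{prop:exifibprodspecmor}, we may work over an affinoid open $X=\Spa(A,A^+)$ with $A$ complete Tate, fix a topologically nilpotent unit $s\in A^+$, and write $S=\Spec D$, $T=\Spec E$ with $E=D[x_1,\dots,x_n]/(f_1,\dots,f_n)$ finite over $D$, and $Y=\Spa(B,B^+)$ as in \eqref{eq:refpoviequs} with $B=A\langle X_1,\dots,X_n\rangle/(g_1,\dots,g_n)$, the $g_i\in A^+[X_1,\dots,X_n]$ being lifts along $A^+\to C:=\mathcal{O}_{X_{\text{red}}}(X)$ of the $f_i$; as in Remark~\ref{rem:somkinadho}, shrinking $X$ further (which keeps $X$ and $f^{-1}(X)$ affinoid) we may assume that all of the finitely many elements of $C$ occurring below lie in the image of $A^+$. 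By \eqref{eq:formforB+}, $B^+$ is the integral closure of $A^+$ in $B$, so it suffices to show $B$ is a finite $A$-module. Writing $B_0:=A^+\langle X_1,\dots,X_n\rangle/(g_1,\dots,g_n)$, and using that a null sequence in $A$ is absorbed by some power of $s$ into $A^+$, one has $A^+\langle X_1,\dots,X_n\rangle[1/s]=A\langle X_1,\dots,X_n\rangle$ and hence $B=B_0[1/s]$.

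The main point is that modulo $s$ every $X_i$ becomes integral. As $E$ is finite over $D$, each $x_i$ is integral over $D$, so there are a monic $\pi_i(T)\in D[T]$ and $h_{ik}\in D[X_1,\dots,X_n]$ with $\pi_i(x_i)=\sum_k h_{ik}f_k$. Applying $D\to C$ and lifting $\pi_i$ to a monic $p_i(T)\in A^+[T]$ and the $h_{ik}$ to $h_{ik}'\in A^+[X_1,\dots,X_n]$, the polynomial $r_i:=p_i(X_i)-\sum_k h_{ik}'g_k\in A^+[X_1,\dots,X_n]$ maps to $0$ under $A^+\to C$, so all of its (finitely many) coefficients lie in $\ker(A^+\to C)=\mathfrak m_{\mathcal{O}_X^+}(X)$, which by quasi-compactness of $X$ consists of topologically nilpotent elements of $A$. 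By the proof of Lemma~\ref{equality lemma}, $B_0/sB_0=(A^+/sA^+)[X_1,\dots,X_n]/(\widetilde g_i)$; since a topologically nilpotent element of $A$ lies in $sA^+$ after raising to a large power, the images in $A^+/sA^+$ of the coefficients of $r_i$ lie in a nil ideal, whence the image $\bar r_i$ of $r_i$ in $B_0/sB_0$ is nilpotent. As $\bar g_k=0$ in $B_0/sB_0$ we get $p_i(\bar X_i)=\bar r_i$, so $p_i(\bar X_i)$ is nilpotent, i.e. $\bar X_i$ is a root of the monic polynomial $p_i(T)^M$ over $A^+/sA^+$ for $M\gg0$. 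Hence $B_0/sB_0$, being generated as an $(A^+/sA^+)$-algebra by the finitely many integral elements $\bar X_1,\dots,\bar X_n$, is a finite $(A^+/sA^+)$-module.

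Finally, $B_0$ is $s$-adically complete, being the quotient of the $s$-adically complete ring $A^+\langle X_1,\dots,X_n\rangle$ by a finitely generated ideal. Lifting a finite set of $(A^+/sA^+)$-generators of $B_0/sB_0$ to $B_0$ and iterating modulo powers of $s$ --- using that both $A^+$ and $B_0$ are $s$-adically complete --- shows that $B_0$ is a finite $A^+$-module; inverting $s$ gives that $B=B_0[1/s]$ is a finite $A$-module, so $f\colon Y\to X$ is finite.

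I expect the mathematical heart --- reducing to integrality modulo $s$, exploiting that the ``error'' polynomials $r_i$ have topologically nilpotent coefficients, much as Hensel's lemma and the henselicity of Corollary~\ref{cor:henspairlif} were used in Lemma~\ref{unique morphism lemma} --- to be short; the obstacle is rather the bookkeeping: justifying the successive shrinkings of $X$ that make the relevant elements of $C$ liftable to $A^+$ (using, as in the proof of Proposition~\ref{prop:exifibprodspecmor}, that $\mathcal{O}_{X_{\text{red}}}$ is a sheafification) while keeping $X$ and its preimage affinoid, and the standard but non-formal fact --- needed because $A^+$ need not be noetherian --- that a quotient of an $s$-adically complete ring by a finitely generated ideal is again $s$-adically complete.
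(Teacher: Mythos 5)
Your argument is correct in substance, but it takes a genuinely different route from the paper. The paper's proof is three lines of soft geometry: by \cite[Proposition 1.4.6]{hub96} finiteness of a morphism of affinoid adic spaces reduces to properness, properness reduces to the valuative criterion by \cite[Lemma 1.3.10]{hub96}, and the valuative criterion is inherited from the finite (hence proper) morphism of schemes $g\colon T\to S$ through the universal property of the fibre product. You instead prove directly that $B$ is a finite $A$-module: the monic relations $\pi_i(x_i)=0$ in $E$ lift to $p_i(X_i)=r_i$ in $B_0$ with the coefficients of $r_i$ lying in $\ker(A^+\to C)=\mathfrak m_{\mathcal O_X^+}(X)$, hence topologically nilpotent (your appeal to quasi-compactness here is exactly the point where Proposition \ref{prop:sheafsameasmshe} becomes effective on an affinoid), so $\bar X_i$ is integral over $A^+/sA^+$, and complete Nakayama finishes. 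What your approach buys is an explicit finiteness statement ($B_0$ is a finite $A^+$-module) that bypasses Huber's finite-equals-proper characterization; what it costs is exactly the bookkeeping you flag — shrinking $X$ so that the fibre product takes the explicit form \eqref{eq:refpoviequs} subject to the conditions of Remark \ref{rem:somkinadho}, and the non-Noetherian completeness inputs — none of which the paper's valuative-criterion argument ever touches.

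Two small points to tidy. First, the sentence ``by \eqref{eq:formforB+}, $B^+$ is the integral closure of $A^+$ in $B$'' is not a formal consequence of \eqref{eq:formforB+}: a priori $B^+$ is only the integral closure of the image of $A^{+}\langle X_1,\ldots,X_n\rangle^{c}$, and identifying this with the integral closure of the image of $A^+$ uses precisely the finiteness of $B_0$ over $A^+$ that you establish afterwards; the claim should therefore be stated at the end of the argument, not invoked at the start. Second, the completeness fact you need is that a cyclic module over a ring complete along the finitely generated ideal $(s)$ is itself $(s)$-adically complete; it is the finite generation of $(s)$, not of $(g_1,\ldots,g_n)$, that makes this work.
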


\begin{proof}
 By \cite[Proposition 1.4.6]{hub96}, it suffices to show that $f$ is proper. The morphism $f$ is in addition quasi-separated and so by Lemma 1.3.10 in loc.cit., it suffices to verify the valuative criterion for properness. This follows from the valuative criterion for properness (for schemes) and the universal property of fibre products.
\end{proof}


We look at some fibre products (of Proposition \ref{prop:exifibprodspecmor}) appearing in nature.

\begin{es}
The diagram 
\[
 \xymatrix{
 \Spa (\mathbb{Q}_{p^n}, \mathbb{Z}_{p^n})  \ar@{->}[d]\ar@{->}[r] & \Spec (\mathbb{F}_{p^n}) 
 \ar@{->}[d] \\ 
 \Spa (\mathbb{Q}_{p}, \mathbb{Z}_{p})  \ar@{->}[r] & 
 \Spec (\mathbb{F}_{p})
 } 
\]
is a fibre product, where $\mathbb{Q}_{p^n}$ is the unramified extension of degree $n$ of $\mathbb{Q}_{p}$. In this way one can also recover unramified covers of the Fargues-Fontaine curve (cf. \cite{farguesgeo}). The diagram
\[
 \xymatrix{
 Y/\varphi^{n\mathbb{Z}} \ar@{->}[d]\ar@{->}[r] & \Spec (\mathbb{F}_{p^n}) 
 \ar@{->}[d] \\ 
 Y/\varphi^{\mathbb{Z}}  \ar@{->}[r] & 
 \Spec (\mathbb{F}_{p})
 } 
\]
is a fibre product, where $Y/\varphi^{\mathbb{Z}}$ is the Fargues-Fontaine curve ($Y = \Spa(A_{\Inf}) \backslash V(p[p^{\flat}])$).
\end{es}

We now show that fibre products preserve surjective morphisms. We will need a \emph{thining out} result for an étale morphism of affine schemes:

\begin{lem} \label{lem:fulfomash}
Let $g \colon T \to S$ be an étale morphism of affine schemes and $t \in T$ be a point which lies over $s \in S$. Then there exists a commutative diagram of affine schemes
\[
 \xymatrix{
 T' \ar@{->}[d]^{g'} \ar@{->}[r] & T 
 \ar@{->}[d]^{g} \\ 
 S'  \ar@{->}[r] & 
 S
 } 
\]
such that
\begin{enumerate}
\item the morphism $g' \colon T'\to S'$ is finite étale and $S' \to S$ is étale,
\item there exists a point $s'\in S'$ lying over $s$ with $\kappa(s') = \kappa(s)$,
\item $T'$ has exactly one point $t'$ lying over $s'$. Furthermore $t'$ lies over $t$.
\end{enumerate}
\end{lem}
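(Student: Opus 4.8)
The plan is to reduce the statement to a standard structure theorem for étale morphisms of affine schemes, namely the local structure of étale morphisms combined with the behaviour of the fibre over $s$. First I would replace $S$ by an affine étale neighbourhood of $s$ so that $g$ becomes standard étale in a neighbourhood of $t$; more precisely, by \cite[Tag 02GT]{stacks-project} there is an open $U \subseteq T$ containing $t$ and a standard étale presentation of $U \to S$, and after shrinking $S$ (which is harmless since all our requirements are about points lying over $s$) we may assume $T \to S$ itself is standard étale, say $T = \Spec(D[x]_h/(q))$ with $q$ monic and $q'$ invertible in the localisation. The point $t$ corresponds to a monic irreducible factor of the image $\bar q$ of $q$ in $\kappa(s)[x]$; since $q'(t) \neq 0$, this factor is \emph{separable}, hence appears with multiplicity one in $\bar q$.

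The key device is then a Hensel-type splitting of $q$ performed not over $D$ but over a suitable étale extension $D \to D'$ of $D$ (with $\Spec D' =: S'$), chosen so that $\bar q$ splits off the chosen irreducible factor \emph{as a factor over the residue field of a point $s' \in S'$ with $\kappa(s') = \kappa(s)$}. Concretely, one uses the fact that the factorisation $\bar q = \bar q_1 \bar q_2$ in $\kappa(s)[x]$ into the chosen separable factor $\bar q_1$ (corresponding to $t$) times the complementary factor $\bar q_2$, with $\bar q_1, \bar q_2$ coprime, can be lifted along an étale (even just a localisation-then-Hensel, but étale is enough to stay affine) extension of $D$; this is essentially the statement that the pair $(D, \mathfrak{p})$ has an étale neighbourhood over which a coprime factorisation mod $\mathfrak p$ lifts — one takes the henselisation $D_{\mathfrak p}^h$, where the lift exists by Hensel's lemma, and then descends the finitely many coefficients involved to some étale $D'$ over $D$. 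Taking $s'$ to be the point of $S' = \Spec D'$ induced by the henselisation, one has $\kappa(s') = \kappa(s)$, giving (2).

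Having obtained $q = q_1 q_2$ over $D'$ (after inverting $h$ as needed) with $q_1$ monic lifting $\bar q_1$ and $(q_1, q_2)$ generating the unit ideal in the relevant localisation, I would define $T' := \Spec(D'[x]/(q_1))$ (note $q_1$ is monic so no localisation is needed, and the residual factor being coprime forces $q_1'$ to remain invertible on $T'$, so $g' \colon T' \to S'$ is finite étale), with the map $T' \to T$ induced by $D[x]_h/(q) \to D'[x]/(q_1)$, and $S' \to S$ the given étale map. Then the fibre of $T'$ over $s'$ is $\Spec(\kappa(s')[x]/(\bar q_1)) = \Spec(\kappa(s)[x]/(\bar q_1))$, and since $\bar q_1$ is irreducible (we chose it to be the single irreducible factor cutting out $t$) this fibre is a single point $t'$; because $\bar q_1$ is a factor of $\bar q$ and $t$ is the point of the original fibre it cuts out, $t'$ maps to $t$. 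This gives (3), and (1) is immediate from the construction.

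The main obstacle I expect is the lifting-and-descent step in the second paragraph: making sure that the coprime factorisation $\bar q = \bar q_1 \bar q_2$ over the residue field $\kappa(s)$ can be lifted to a factorisation over a genuinely \emph{étale} (not merely henselian-local, not merely flat-local) extension $D'$ of $D$, while simultaneously keeping $\kappa(s') = \kappa(s)$ at the relevant point $s'$. The point is that Hensel's lemma applied to the henselisation $D_{\mathfrak p}^h$ produces the factorisation over a filtered colimit of étale $D$-algebras, so it is already defined over some finite stage $D'$; one must check that the finitely many polynomial identities encoding ``$q_1 q_2 = q$ and $q_1, q_2$ coprime'' and the invertibility of $q'_1$ hold at that finite stage after possibly further shrinking, which is routine. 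I would also need to be slightly careful that shrinking $S$ and $T$ at the very start does not disturb the requirement that $t'$ lie over the \emph{original} $t$, but since every step is accompanied by an explicit map to the previous data, this bookkeeping is straightforward.
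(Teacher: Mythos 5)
Your argument is, in substance, a from-scratch proof of \cite[Tag 00UJ]{stacks-project}, which is the entire content of the paper's proof of this lemma (the paper simply cites that tag). So the route is the expected one: reduce to standard \'etale, identify $t$ with a separable multiplicity-one irreducible factor $\bar q_1$ of $\bar q$ over $\kappa(s)$, lift the coprime factorisation $\bar q = \bar q_1\bar q_2$ over the henselisation, descend to a finite \'etale stage $D'$, and cut out $T'$ by the monic lift $q_1$. All of that is fine, including the residue-field bookkeeping $\kappa(s')=\kappa(s)$.

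There is, however, a concrete flaw in the last step. With $T = \Spec(D[x]_h/(q))$ and $T' := \Spec(D'[x]/(q_1))$, the map $T' \to T$ you want is induced by $D[x]_h/(q) \to D'[x]/(q_1)$, and this only exists if $h$ is invertible in $D'[x]/(q_1)$ --- which is not automatic: points of $\Spec(D'[x]/(q_1))$ lying over points of $S'$ other than $s'$ may well lie in $V(h)$. Likewise your claim that ``the residual factor being coprime forces $q_1'$ to remain invertible on $T'$'' is backwards: coprimality gives that $q_2$ is invertible mod $q_1$, so (from $q' \equiv q_1'q_2 \bmod q_1$) invertibility of $q_1'$ on $T'$ is equivalent to invertibility of $q'$ there, and $q'$ is only known to be a unit after inverting $h$. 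Both problems are repaired by one more shrinking of $S'$: the locus in $\Spec(D'[x]/(q_1))$ where $h q_1'$ fails to be invertible is closed, misses the fibre over $s'$ (that fibre is the field $\kappa(s)[x]/(\bar q_1)$, where $\bar h \neq 0$ and $\bar q_1' \neq 0$ by separability), and since $\Spec(D'[x]/(q_1)) \to S'$ is finite its image is a closed subset of $S'$ not containing $s'$; replace $S'$ by a principal open avoiding it (equivalently, invert the norm of $h q_1'$ in $D'$). After that, $T'\to S'$ is finite \'etale, the map $T'\to T$ is defined, and (2)--(3) hold as you argue. Without this step the construction as written does not produce a morphism $T'\to T$ at all.
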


\begin{proof}
This is a consequence of \cite[Tag 00UJ]{stacks-project}.
\end{proof}


\begin{prop} \label{lem:surjrnk1po}
Let $S$ and $T$ be schemes and $X$ be an analytic adic space.
 Let $\alpha \colon X \to S$ be a specialization morphism and 
 $g \colon T \to S$ an étale morphism of schemes. Given a point $x \in X$ and any point $t \in T$, there is a point of $X \times_S T$ mapping to $x$ and $t$ under the projections if and only if $x$ and $t$ lie above the same point of $S$.
\end{prop}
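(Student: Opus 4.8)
The plan is to reduce to the local situation handled in Proposition \ref{prop:exifibprodspecmor} and then combine it with the ``thinning out'' Lemma \ref{lem:fulfomash} together with the finiteness statement of Lemma \ref{lem:fineetafibpr}. The ``only if'' direction is immediate: both projections $f \colon X\times_S T\to X$ and $\beta \colon X\times_S T\to T$ are compatible with the structure morphisms to $S$ in the sense of Remark \ref{rem:compmoretad}, so any point of $X\times_S T$ sitting over $x$ and over $t$ forces $x$ and $t$ to map to the same point $s\in S$. The content is the ``if'' direction.

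For the ``if'' direction, suppose $x\in X$ and $t\in T$ both lie over $s\in S$. First I would reduce to the case where $X = \Spa(A,A^+)$ is affinoid with $A$ a complete Tate ring (using that $\alpha$ restricted to a suitable affinoid neighbourhood of $x$ still has image landing in an affine open $\Spec(D)$ of $S$ containing $s$, and that $t$ can be assumed to lie in an affine open $\Spec(E)$ of $T$ mapping into $\Spec(D)$; étale morphisms of affine schemes are fine here). Now apply Lemma \ref{lem:fulfomash} to the étale morphism $\Spec(E)\to\Spec(D)$ and the points $t,s$: this produces an étale morphism $S'\to S$ (affine), a finite étale morphism $T'\to S'$, a point $s'\in S'$ over $s$ with $\kappa(s')=\kappa(s)$, and a unique point $t'\in T'$ over $s'$, with $t'$ lying over $t$. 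Since $\kappa(s')=\kappa(s)$, the specialization morphism $\alpha\colon X\to S$ (restricted near $x$, where it lands in $\Spec(D)$) lifts through $S'\to S$: indeed $\mathcal{O}_{X_{\text{red}},x}$ receives a map from $\mathcal{O}_{S,s} = D_{\mathfrak p}$ sending the maximal ideal into the maximal ideal, hence through the residue field $\kappa(s)=\kappa(s')$; spreading this out over an affinoid neighbourhood of $x$ gives a specialization morphism $X'\to S'$ for some affinoid $X'$ étale over $X$ (a rational or étale shrinking of $X$), with the point $x$ lifting to a point $x'\in X'$ over $s'$. By the universal property in Proposition \ref{prop:exifibprodspecmor}, $X\times_S T$ is (near the relevant locus) identified with $X'\times_{S'}T'$, so it suffices to find a point of $X'\times_{S'}T'$ over $x'$ and $t'$.

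Now $T'\to S'$ is finite étale, $S'$ and $T'$ are affine, and $X'$ is affinoid; by Proposition \ref{prop:exifibprodspecmor} the fibre product $Y' := X'\times_{S'}T'$ exists, is affinoid, and by Lemma \ref{lem:fineetafibpr} the projection $f'\colon Y'\to X'$ is a \emph{finite} morphism of analytic adic spaces. A finite morphism is in particular surjective onto a closed subspace, and more to the point it is proper, hence its image is closed and its fibres over points in the image are governed by the fibre of the corresponding map on reduced spaces; since $Y'_{\text{red}} = X'_{\text{red}}\times_{S'}T'$ at the level of the locally ringed space structure on stalks (via Lemma \ref{equality lemma} and the stalk computation of Lemma \ref{lem:stalforexref}), the fibre of $f'$ over $x'$ is $\Spec$ of $\mathcal{O}_{X'_{\text{red}},x'}\otimes_{\mathcal{O}_{S',s'}} E'$ where $E'=\mathcal{O}_{T'}(T')$, which is a nonzero finite étale $\kappa(s')$-algebra-up-to-base-change and in particular is non-empty and contains a point over $t'$ (because $t'$ is the unique point of $T'$ over $s'$, so the fibre of $T'\to S'$ over $s'$ is $\Spec\kappa(t')$, a field extension of $\kappa(s')$, and this base-changes to a nonzero ring over the local ring $\mathcal{O}_{X'_{\text{red}},x'}$). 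Chasing this point back through the identifications produces the desired point of $X\times_S T$ over $x$ and $t$.

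The main obstacle I anticipate is the bookkeeping in the reduction step: making precise that the fibre product $X\times_S T$ can be computed after the étale localizations $S'\to S$, $X'\to X$ (this uses the universal property carefully, together with the fact that $f\colon X\times_S T\to X$ is étale so base change along $X'\to X$ behaves well), and identifying the fibre of $f'$ over $x'$ with the spectrum of the expected stalk ring — the honest point being that $Y'_{\text{red}}$ near a point over $x'$ is controlled by $B^+$ modulo the vanishing ideal, which by the argument in the proof of Proposition \ref{prop:exifibprodspecmor} (the factorization through $C'$) is exactly $\mathcal{O}_{X'_{\text{red}},x'}\otimes_{D'}E'$ on stalks. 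Once the stalk of $Y'_{\text{red}}$ at points over $x'$ is pinned down, non-emptiness of the relevant fibre and existence of a point over $t'$ are formal from the finite étale picture for schemes.
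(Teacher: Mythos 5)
Your overall architecture matches the paper's: reduce to the affine/affinoid standard-étale case, invoke the thinning-out Lemma \ref{lem:fulfomash} to replace $T\to S$ by a finite étale $T'\to S'$ with a unique point $t'$ over $s'$ and $\kappa(s')=\kappa(s)$, form a finite étale fibre product via Lemma \ref{lem:fineetafibpr}, map it to $X\times_S T$ by the universal property, and use that finite morphisms are specializing together with uniqueness of $t'$ over $s'$. But there is a genuine gap at the pivotal step where you lift through $S'\to S$. You assert that because $\kappa(s')=\kappa(s)$, the local homomorphism $\mathcal{O}_{S,s}\to\mathcal{O}_{X_{\text{red}},x}$ ``factors through the residue field'' and hence lifts to $\mathcal{O}_{S',s'}\to\mathcal{O}_{X_{\text{red}},x}$, which you then spread out to a specialization morphism from a neighbourhood of $x$ to $S'$. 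A local homomorphism does not factor through the residue field, and lifting a local map along an étale extension with trivial residue field extension (\cite[Tag 08HQ]{stacks-project}) requires the \emph{target} local ring to be henselian. The target here is $\mathcal{O}_{X_{\text{red}},x}\simeq k(x)^{+}/k(x)^{\circ\circ}$ (Lemma \ref{lem:stalforexref}), a valuation ring that is in general \emph{not} henselian along its maximal ideal (Corollary \ref{residue plus is henselian} only gives henselianity of the pair $(k(x)^+,k(x)^{\circ\circ})$, not of the quotient as a local ring; at a rank-$2$ point of a disc the quotient can be a non-henselian DVR). So the lift you need does not exist in general, and the subsequent construction of $X'\to S'$ collapses.

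The paper's proof supplies exactly the missing ingredient: instead of lifting a whole neighbourhood of $x$, it lifts only the ``point'', and it first replaces $k(x)^{+}$ by its henselization $k(x)^{+h}$, forming the analytic affinoid field $\Spa(k(x)^{h},k(x)^{+h})$. The reduced stalk of this space is a quotient of the henselian local ring $k(x)^{+h}$, hence henselian, so \cite[Tag 08HQ]{stacks-project} applies and the specialization morphism $\Spa(k(x)^{h},k(x)^{+h})\to S$ factors uniquely through $S'$ with closed point going to $s'$. One then forms $P'_x=\Spa(k(x)^{h},k(x)^{+h})\times_{S'}T'$, checks it is affinoid (this itself takes an argument, via the disjoint-union trick of Remark \ref{rem:somkinadho} --- a point your proposal also glosses over for $X'\times_{S'}T'$), applies Lemma \ref{lem:fineetafibpr}, and maps $P'_x$ into $X\times_S T$ by the universal property; a point over the closed point exists because finite morphisms are specializing, and it must sit over $t'$ by uniqueness. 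If you repair your argument by replacing ``a neighbourhood of $x$'' with $\Spa(k(x)^{h},k(x)^{+h})$ and justifying the lift via henselianity of $k(x)^{+h}$, you recover the paper's proof; your concluding fibre computation then becomes unnecessary.
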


\begin{proof}
The condition is obviously necessary. We prove the converse. The proof in \cite[Lemma 3.5.1(i)]{hub96} relies on a comparison between the \emph{surjectivity} of $\Spec (f)$ and $\Spv (f)$ for a morphism of rings $f \colon R_1 \to R_2$, which we avoid (cf. \cite[Proposition 2.1.3]{hubkne}). In contrast, we make use of the universal property of fibre products. 

Suppose $x$ and $t$ lie above the same point $s \in S$ and assume without loss of generality that $S$ and $T$ are affine and $g$ is standard   étale. The point $x$ determines a morphism $\Spa (k(x), k(x)^{+}) \to X$, whose image of the closed point in $\Spa (k(x), k(x)^{+})$ is $x$. Let $k(x)^{+h}$ denote the henselization of the local ring $k(x)^{+}$ and set $k(x)^{h}$ to be the fraction field of $k(x)^{+h}$. Now $k(x)^{+h}$ is again a valuation ring with the same value group as $k(x)^+$ (cf. \cite[Tag 0ASK]{stacks-project}). Equipping $k(x)^{h}$ with the valuation topology of $k(x)^{+h}$, makes the pair $(k(x)^{h}, k(x)^{+h})$ an analytic affinoid field. The extension $k(x)^{+} \to k(x)^{+h}$ is local and so in particular the image of the closed point via $\Spa (k(x)^{h}, k(x)^{+h}) \to \Spa ((k(x), k(x)^{+}))$ is again closed. 

There exists a commutative diagram
\[
 \xymatrix{
 X \times_S T \ar@/^2pc/[drrr] \ar@/_2pc/[rrddd] &&& 
  \\ 
 & P_x' \ar@{.>}[ul] \ar@{->}[dd] \ar@{->}[r] & T' \ar@{->}[d]^{g'} \ar@{->}[r] & T \ar@{->}[dd]^{g} \\
 && S' \ar@{->}[rd] & \\
 & \Spa (k(x)^h, k(x)^{+h}) \ar@{->}[r] \ar@{.>}[ru] & X  \ar@{->}[r]^{\alpha} & S.
 } 
\]
We take a moment to explain its features:
\begin{enumerate}
\item the square 
\[
 \xymatrix{
 T' \ar@{->}[d]^{g'} \ar@{->}[r] & T 
 \ar@{->}[d]^{g} \\ 
 S'  \ar@{->}[r] & 
 S
 } 
\]
satisfies the conditions of Lemma \ref{lem:fulfomash} and hereafter we use the notation setup there,
\item since the  étale morphism $S' \to S$ induces an isomorphism on the residue fields $\kappa(s') = \kappa(s)$, it follows that the specialization morphism 
$$\Spa (k(x)^h, k(x)^{+h}) \to S$$
factors uniquely through $S'$ such that the image of the closed point in $\Spa (k(x)^h, k(x)^{+h})$ is $s' \in S'$ (cf. \cite[Tag 08HQ]{stacks-project}),
\item by (2), we can define the fibre product $P_x' := \Spa (k(x)^{h}, k(x)^{+h}) \times_{S'} T'$. Note that $P'_x$ must be affinoid because there exists an adic space $Z$ (defined as $\Spa (k(x)^{h}, k(x)^{+h}) \times_{S'} T''$ where $T''$ in the notation of \cite[Tag 00UJ]{stacks-project} is $\Spec(B)$) such that the disjoint union of $Z$ and $P'_x$ is affinoid (the disjoint union is just $\Spa (k(x)^{h}, k(x)^{+h}) \times_{S} T$ which is affinoid by Remark \ref{rem:somkinadho}). Therefore by Lemma \ref{lem:fineetafibpr} the projection morphism $P_x' \to \Spa (k(x)^{h}, k(x)^{+h})$ is finite  étale,
\item by the universal property, there is a unique morphism $P_x' \to X \times_S T$ rendering the diagram commutative.
\end{enumerate}

Now since $P_x' \to \Spa (k(x)^{h}, k(x)^{+h})$ is finite (in particular it is specializing), there is a point $y \in P_x'$ which lies over the closed point of $\Spa (k(x)^{h}, k(x)^{+h})$. Since there exists a unique point $t' \in T'$ over $s'$, it follows that $y$ must lie over $t' \in T'$. Therefore the image of $y$ in $X \times_S T$ satisfies the conditions of the proposition.

\end{proof}

%

\begin{cor}
Let $S$ be a scheme, $X$ be an analytic adic space and $\alpha \colon X \to S$ a specialization morphism. The functor
\begin{align*}
\alpha_{\text{ét}} \colon S_{\text{ét}} &\to X_{\text{ét}}\\
T &\mapsto X \times_S T
\end{align*}
induces a morphism of sites $\alpha_{\text{ét}} \colon X_{\text{ét}} \to S_{\text{ét}}$.
\end{cor}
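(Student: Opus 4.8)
The plan is to verify that the functor $\alpha_{\text{\'et}}\colon S_{\text{\'et}}\to X_{\text{\'et}}$, $T\mapsto X\times_S T$, satisfies the standard criterion for inducing a morphism of sites $X_{\text{\'et}}\to S_{\text{\'et}}$ (cf.\ \cite[Tag 00XL]{stacks-project}): namely that $S_{\text{\'et}}$ has fibre products and a final object, that $\alpha_{\text{\'et}}$ preserves both, and that $\alpha_{\text{\'et}}$ sends covering families to covering families. Preservation of covers gives continuity of $\alpha_{\text{\'et}}$, and preservation of finite limits gives exactness of the associated inverse image functor on sheaves; together these give the desired morphism of sites.

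First I would check that $\alpha_{\text{\'et}}$ is a well-defined functor into $X_{\text{\'et}}$. On objects this is Proposition \ref{prop:exifibprodspecmor}, which produces $X\times_S T$ equipped with an \'etale projection $f\colon X\times_S T\to X$ and a specialization morphism $\beta\colon X\times_S T\to T$. On a morphism $\phi\colon T\to T'$ of $S_{\text{\'et}}$, the composite specialization morphism $\phi\circ\beta\colon X\times_S T\to T'$ (in the sense of Remark \ref{rem:compmoretad}) together with $f$ is compatible over $S$, so the universal property of $X\times_S T'$ supplies a unique morphism $X\times_S T\to X\times_S T'$ over $X$; it is \'etale since both sides are \'etale over $X$, and compatibility with compositions and identities follows from the uniqueness clause. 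Next I would record preservation of finite limits: $X\times_S S\cong X$ because $(\mathrm{id}_X,\alpha)$ has the defining universal property, and $X$ is the final object of $X_{\text{\'et}}$; and for $T_1\to T_0\leftarrow T_2$ in $S_{\text{\'et}}$, the scheme $T_1\times_{T_0}T_2$ is again \'etale over $S$, so $X\times_S(T_1\times_{T_0}T_2)$ is defined, and unwinding the universal property of Proposition \ref{prop:exifibprodspecmor} --- a morphism into $X\times_S T$ is the same datum as a morphism to $X$ together with a compatible specialization morphism to $T$ --- identifies it with the fibre product $(X\times_S T_1)\times_{X\times_S T_0}(X\times_S T_2)$ formed in analytic adic spaces. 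The same bookkeeping yields the associativity isomorphism $X\times_S T'\cong(X\times_S T)\times_T T'$ whenever $T'\to T\to S$ are \'etale, which I will use in the next step.

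The main step is to show that $\alpha_{\text{\'et}}$ carries a covering family $\{T_i\to T\}$ in $S_{\text{\'et}}$ to a covering family $\{X\times_S T_i\to X\times_S T\}$ in $X_{\text{\'et}}$. Each $X\times_S T_i\to X\times_S T$ is a morphism between adic spaces that are \'etale over $X$, hence \'etale. For joint surjectivity, fix $v\in X\times_S T$, let $\tau\in T$ be the image of $v$ under $\beta\colon X\times_S T\to T$, and --- using that $\bigsqcup_i T_i\to T$ is surjective --- pick an index $i$ and a point $\tau_i\in T_i$ lying over $\tau$. Now I would apply Proposition \ref{lem:surjrnk1po} to the specialization morphism $\beta\colon X\times_S T\to T$, the \'etale morphism $T_i\to T$, and the points $v$ and $\tau_i$, which lie over the common point $\tau$ of $T$: this produces a point of $(X\times_S T)\times_T T_i$ lying over $v$, and transporting it along the isomorphism $(X\times_S T)\times_T T_i\cong X\times_S T_i$ (compatible with the projections to $X\times_S T$) gives the desired point of $X\times_S T_i$ over $v$. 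This finishes the verification of the criterion. The only genuinely non-formal ingredient is Proposition \ref{lem:surjrnk1po}; I expect the sole delicate point in writing this up to be consistently identifying the ``mixed'' fibre products produced by Proposition \ref{prop:exifibprodspecmor} (a specialization morphism against an \'etale morphism of schemes) with fibre products computed inside $X_{\text{\'et}}$, which is handled once and for all by that proposition's universal property.
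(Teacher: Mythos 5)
Your proposal is correct and follows essentially the same route as the paper: verify preservation of (fibre products and) finite limits via the universal property of Proposition \ref{prop:exifibprodspecmor}, deduce exactness of the pullback from this, and obtain preservation of coverings from Proposition \ref{lem:surjrnk1po}. The extra care you take in identifying $(X\times_S T)\times_T T_i$ with $X\times_S T_i$ is a detail the paper leaves implicit, not a genuinely different argument.
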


\begin{proof}
   By \cite[Tag 00X1]{stacks-project}, we must show the following. 
  \begin{enumerate}
  \item The functor $\alpha_{\text{ét}}$ preserves fibre products. 
  \item The functor $\alpha_{\text{ét}}$ preserves coverings i.e. if $T \in S_{\text{ét}}$ and $\{T_i \to T\}_i$ is an étale cover of $T$ then 
           $\{X \times_S T_i \to X \times_S T\}_i$ is an étale cover of $X \times_S T$. 
   \item  The pullback functor $\alpha_{\text{ét}}^*$ is exact. 
  \end{enumerate}  
Property (1) can be deduced from the universal property of fibre products in Proposition \ref{prop:exifibprodspecmor}. 
Property (2) is a consequence of Proposition \ref{lem:surjrnk1po}.
 Property (3) is a consequence of  \cite[Tag 00X6]{stacks-project} and (1) above. 
\end{proof}

\begin{prop} \label{prop:compmorrepovi}
Let $\alpha \colon X \to S$ be a specialization morphism, $f \colon Y \to X$ a morphism of analytic adic spaces 
and $g \colon S \to T$ a morphism of schemes. Then $(\alpha \circ f)_{\text{ét}} = \alpha_{\text{ét}} \circ f_{\text{ét}}$ and $(g \circ \alpha)_{\text{ét}} = g_{\text{ét}} \circ \alpha_{\text{ét}}$.
\end{prop}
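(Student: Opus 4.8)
The plan is to verify the two equalities at the level of the underlying continuous functors on the \'etale site categories. Recall (from the discussion preceding the Corollary above, and from \cite[Tag 00X1]{stacks-project}) that a morphism of sites $X_{\text{\'et}} \to S_{\text{\'et}}$ is given by a continuous functor $S_{\text{\'et}} \to X_{\text{\'et}}$; two such agree (and induce the same pushforward on \'etale sheaves) as soon as the underlying functors agree up to canonical natural isomorphism. So it suffices to produce such natural isomorphisms between the two composite functors in each case.

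First I would treat $(\alpha \circ f)_{\text{\'et}} = \alpha_{\text{\'et}} \circ f_{\text{\'et}}$. Unwinding definitions, the continuous functor underlying $(\alpha\circ f)_{\text{\'et}}$ sends $T' \in S_{\text{\'et}}$ to the adic space $Y \times_S T'$ furnished by Proposition \ref{prop:exifibprodspecmor} applied to $\alpha \circ f \colon Y \to S$ and $g\colon T'\to S$, while the underlying functor of $\alpha_{\text{\'et}} \circ f_{\text{\'et}}$ sends $T'$ to $(X \times_S T') \times_X Y$. I would equip the latter with the projection to $Y$ (the base change along $f$ of the \'etale projection $X\times_S T' \to X$, which is again \'etale) and with the specialization morphism to $T'$ obtained by composing $(X\times_S T')\times_X Y \to X\times_S T'$ with $\beta\colon X\times_S T' \to T'$ in the sense of Remark \ref{rem:compmoretad}, and then check that this data satisfies the universal property characterizing $Y\times_S T'$. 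This is a diagram chase: given an adic space $Y'$ with a morphism $Y'\to Y$ and a compatible specialization morphism $Y'\to T'$, compose $Y'\to Y\to X$ and use the universal property of $X\times_S T'$ to get $Y'\to X\times_S T'$; the pair $(Y'\to X\times_S T',\, Y'\to Y)$ is compatible over $X$, so it factors uniquely through the fibre product of adic spaces $(X\times_S T')\times_X Y$, and one verifies this is the required unique factorization. Uniqueness forces the resulting isomorphism $(X\times_S T')\times_X Y \cong Y\times_S T'$ to be natural in $T'$.

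The identity $(g \circ \alpha)_{\text{\'et}} = g_{\text{\'et}} \circ \alpha_{\text{\'et}}$ is handled in exactly the same spirit. Here the underlying functor of $(g\circ\alpha)_{\text{\'et}}$ sends $U \in T_{\text{\'et}}$ to $X\times_T U$ (Proposition \ref{prop:exifibprodspecmor} applied to $g\circ\alpha\colon X\to T$), while $g_{\text{\'et}}\circ\alpha_{\text{\'et}}$ sends $U$ to $X\times_S (S\times_T U)$. Equip the latter with the \'etale projection to $X$ (the composite $X\times_S(S\times_T U)\to X\times_S S = X$ of two \'etale projections) and with the specialization morphism $X\times_S(S\times_T U)\to S\times_T U \to U$, and verify the universal property of $X\times_T U$ by invoking successively the universal property of $X\times_S(S\times_T U)$ and then the fact that $S\times_T U$ is the scheme-theoretic fibre product (the latter step being purely a statement about locally ringed spaces). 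This yields a canonical isomorphism $X\times_S(S\times_T U)\cong X\times_T U$, natural in $U$.

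The only point demanding care --- rather than a genuine obstacle --- is bookkeeping: one must consistently read all the mixed squares through Remark \ref{rem:compmoretad}, keeping straight which arrows are morphisms of adic spaces, which are morphisms of schemes, and which are specialization morphisms, so that the universal property of Proposition \ref{prop:exifibprodspecmor} is applied to the correct data. Once the two natural isomorphisms of continuous functors are established, the asserted equalities of morphisms of sites, and hence of the induced pushforward functors on \'etale sheaves, follow formally. \qed
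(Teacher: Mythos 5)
Your proposal is correct and follows essentially the same route as the paper: both identities are proved by checking that the iterated fibre product satisfies the universal property of Proposition \ref{prop:exifibprodspecmor}, with the second identity additionally using that $S\times_T U$ is a fibre product in locally ringed spaces (the paper cites \cite[Tag 01JN]{stacks-project}). The one slip is the order of steps in the second case: you must first produce the specialization morphism $Z\to S\times_T U$ from the locally-ringed-space universal property and only then invoke the universal property of $X\times_S(S\times_T U)$, since the latter takes that specialization morphism as input.
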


\begin{proof}
This is a consequence of the universal property of fibre products. We first show $(\alpha \circ f)_{\text{ét}} = \alpha_{\text{ét}} \circ f_{\text{ét}}$. Let $h \colon R \to S$ be an étale morphism of schemes and consider a commutative diagram
$$
\begin{tikzcd} [row sep = large, column sep = large] 
Z \arrow[rrrd] \arrow[rdd] &&& \\
& Y \times_X (X \times_S R) \arrow[r] \arrow[d] &
 X \times_S R \arrow[r] \arrow[d] &
R \arrow[d, "h"] \\
& Y \arrow[r, "f"] &
X \arrow[r, "\alpha"] &
S 
\end{tikzcd}
$$
where $Z$ is an analytic adic space, $Z \to Y$ a morphism of analytic adic spaces and $Z \to R$ a specialization morphism. Then by the universal property of $X \times_S R$, there exists a unique morphism of analytic adic spaces $Z \to X \times_S R$, which results in a commutative diagram. Finally one applies the universal property of $Y \times_X (X \times_S R)$ to conclude.

In a similar fashion we show $(g \circ \alpha)_{\text{ét}} = g_{\text{ét}} \circ \alpha_{\text{ét}}$. Let $h \colon R \to T$ be an étale morphism of schemes
 and consider a commutative diagram

$$
\begin{tikzcd} [row sep = large, column sep = large] 
Z \arrow[rrrd] \arrow[rdd] &&& \\
& X \times_S (S \times_T R) \arrow[r] \arrow[d] &
 S \times_T R \arrow[r] \arrow[d] &
R \arrow[d, "h"] \\
& X \arrow[r, "\alpha"] &
S \arrow[r, "g"] &
T 
\end{tikzcd}
$$
where $Z$ is a analytic adic space, $Z \to X$ a morphism of analytic adic spaces and $Z \to R$ a specialization morphism. By \cite[Tag 01JN]{stacks-project}, the fibre product $S \times_T R$ is actually a fibre product in the category of locally ringed spaces. Thus by the universal property of $S \times_T R$ (in the category of locally ringed spaces), there exists a unique specialization morphism $Z \to S \times_T R$ which results in a commutative diagram. Finally one applies the universal property of $X \times_S (S \times_T R)$ to conclude. 
\end{proof}

We close this section by relating the nearby cycles functor (the pushforward of \eqref{eq:thepsisneacu}) and the pushforward of specialization morphisms (cf. \eqref{eq:nearbycuspe}).

\begin{lem} \label{lem:comsquspemoareneacu}
Let $\mathfrak{f} \colon \mathfrak{Y} \to \mathfrak{X}$ be an étale morphism of type (S) formal schemes. Then the diagram
$$
\begin{tikzcd} [row sep = large, column sep = large] 
\mathfrak{Y}_{\eta} \arrow[r, "\lambda_{\mathfrak{Y}}"] \arrow[d, "\mathfrak{f}_{\eta}"] &
 \mathfrak{Y}_s \arrow[d, "\mathfrak{f}_s"] \\
\mathfrak{X}_{\eta} \arrow[r, "\lambda_{\mathfrak{X}}"] &
\mathfrak{X}_s  
\end{tikzcd}
$$
is cartesian in the sense of Proposition \ref{prop:exifibprodspecmor}.
\end{lem}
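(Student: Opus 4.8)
The plan is to reduce to the affine/affinoid situation and then invoke the explicit construction and universal property from Proposition \ref{prop:exifibprodspecmor}. Since being cartesian is a local property (on all of $\mathfrak{Y}$, $\mathfrak{X}$, $\mathfrak{X}_s$, $\mathfrak{Y}_s$ — all sharing their underlying topological spaces with the formal schemes), and since an \'etale morphism of formal schemes is locally standard \'etale, I may assume $\mathfrak{X} = \Spf(A_0)$ with $A_0$ an adic ring having an ideal of definition generated by a topologically nilpotent element $s$, that $\mathfrak{Y} = \Spf(B_0)$ with $B_0$ a standard \'etale $A_0$-algebra (completed), and correspondingly $\mathfrak{X}_s = \Spec(A_0/\sqrt{(s)})$, $\mathfrak{Y}_s = \Spec(B_0/\sqrt{sB_0})$. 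In Huber's notation $\mathfrak{X}_\eta = \Spa(A, A^+)$ where $A = A_0 \otimes_{A_0^{\circ\circ}} (\text{something making it Tate})$ — more precisely $A_0[1/s]$ completed, $A^+$ the integral closure of $A_0$; and $\mathfrak{Y}_\eta$ is likewise built from $B_0$.

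First I would check that the square literally has the shape of the fibre products constructed in Proposition \ref{prop:exifibprodspecmor}: the bottom specialization morphism $\lambda_{\mathfrak{X}} \colon \mathfrak{X}_{\eta,\text{red}} \to \mathfrak{X}_s$ corresponds (via Definition \ref{defi:spemorrepov} and \cite[Tag 01I1]{stacks-project}) to the ring map $A_0/\sqrt{(s)} \to \mathcal{O}_{\mathfrak{X}_{\eta,\text{red}}}(\mathfrak{X}_\eta)$, which one identifies using Lemma \ref{lem:stalforexref} and Corollary \ref{cor:henspairlif}; the \'etale morphism $\mathfrak{Y}_s \to \mathfrak{X}_s$ is $g\colon T\to S$. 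Then the adic space $Y$ produced by the proposition is $\Spa$ of $(A,A^+)\langle X_1,\dots,X_n\rangle/(g_1,\dots,g_n)$ for lifts $g_i$ of the defining polynomials; I must match this with $\mathfrak{Y}_\eta = \Spf(B_0)_\eta$. The key point is that $B_0$, being standard \'etale over $A_0$, has a presentation $A_0[X_1,X_2]/(g_1, X_2 h - 1)$ with $g_1 \in A_0[X_1]$ monic-ish and $h \in A_0[X_1]$, and that completing and passing to the generic fibre commutes with this finite-type construction precisely because almost all coefficients of anything in $A_0\langle X\rangle$ are topologically nilpotent — exactly the mechanism exploited in Lemma \ref{equality lemma}. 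So $\mathfrak{Y}_\eta \cong \Spa((A,A^+)\langle X_1,X_2\rangle/(g_1, X_2 h - 1))$, which is the $Y$ of the proposition.

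With these identifications in place, the square being cartesian in the sense of Proposition \ref{prop:exifibprodspecmor} is essentially a matter of checking the two morphisms out of $\mathfrak{Y}_\eta$ — namely $\mathfrak{f}_\eta \colon \mathfrak{Y}_\eta \to \mathfrak{X}_\eta$ and $\lambda_{\mathfrak{Y}} \colon \mathfrak{Y}_{\eta,\text{red}} \to \mathfrak{Y}_s$ — agree with $f$ and $\beta$ from the proposition; once that is verified, the universal property is inherited. The comparison of $\mathfrak{f}_\eta$ with $f$ is immediate since both come from the ring map $A \to B = A\langle X\rangle/(g_i)$. The comparison of $\lambda_{\mathfrak{Y}}$ with $\beta$ requires unwinding Huber's definition of $\lambda_{\mathfrak{Y}}$: it is induced by $B_0 \to \mathcal{O}^+_{\mathfrak{Y}_\eta}$ followed by the quotient to $\mathcal{O}_{\mathfrak{Y}_{\eta,\text{red}}}$, and then one uses that this factors through $B_0/\sqrt{sB_0} = \mathfrak{Y}_s$ — which is precisely the route by which $\beta$ was constructed in the proof of the proposition, via diagrams \eqref{d1} and the factorization \eqref{simple but imp} through $C' = \mathcal{O}_{X_{\text{red}}}(X)\otimes_D E$.

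The main obstacle I anticipate is bookkeeping rather than a genuine mathematical difficulty: carefully matching Huber's construction of $\mathfrak{X}_\eta$, $\mathfrak{X}_s$, and $\lambda_{\mathfrak{X}}$ from \cite[\S 1.9]{hub96} with the notation $(A,A^+)$, $D = \mathcal{O}_S(S)$, $C = \mathcal{O}_{X_{\text{red}}}(X)$ used in Proposition \ref{prop:exifibprodspecmor}, and in particular confirming that the specialization morphism $\lambda_{\mathfrak{X}}$ really does correspond to a ring map $D \to C$ of the required kind (i.e. that $\mathcal{O}_{\mathfrak{X}_{\eta,\text{red}}}(\mathfrak{X}_\eta)$ receives $A_0/\sqrt{(s)}$). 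A secondary subtlety is the reduction to the standard \'etale and affine-local case: one must check that forming $\mathfrak{X}_\eta$, $\mathfrak{X}_s$, and nearby cycles all commute with the relevant restrictions to opens, so that verifying cartesianness locally suffices — this should follow from the local nature of all the constructions involved and the fact that \'etale morphisms of formal schemes restrict to \'etale morphisms on generic and special fibres.
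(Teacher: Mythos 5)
Your argument is essentially the paper's argument, but only for one of the two kinds of type (S) formal schemes. By writing $\mathfrak{X} = \Spf(A_0)$ with an ideal of definition generated by a single topologically nilpotent $s$ and setting $\mathfrak{X}_\eta = \Spa(A,A^+)$ with $A = A_0[1/s]^{\wedge}$ and $A^+$ the integral closure of $A_0$, you have placed yourself in case (S)(b), where the generic fibre is affinoid and Tate. In that case your strategy is exactly what the paper does: lift the standard \'etale presentation of $\mathfrak{f}_s$ to $g_1, g_2 \in A\langle X_1,X_2\rangle$, identify $\mathfrak{Y}_\eta$ with $\Spa((A,A^+)\langle X_1,X_2\rangle/(g_1,g_2))$, and match the two maps out of it with the $f$ and $\beta$ of Proposition \ref{prop:exifibprodspecmor} via Lemma \ref{equality lemma} and diagram \eqref{d1}. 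That part is fine.

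The gap is case (S)(a), where $\mathfrak{X}_\eta = \Spa(A,A)_a$ is only the analytic locus of the (non-analytic) adic space $\Spa(A,A)$; it need not be affinoid, need not be quasi-compact, and is not of the form $\Spa(A_0[1/s]^{\wedge}, \cdot)$, so the identification "$\mathfrak{Y}_\eta$ is the $Y$ of Proposition \ref{prop:exifibprodspecmor}" does not apply as stated. You cannot simply localize your way out of this either, because the affinoid opens of $\Spa(A,A)_a$ do not come with a preferred presentation matching the lifted $g_i$, and Proposition \ref{prop:exifibprodspecmor} is only formulated for analytic affinoid sources. The paper handles this case by a separate direct verification of the universal property: given a test object $Z$ over $\mathfrak{X}_\eta$ and $\mathfrak{Y}_s$, one first produces (by the Hensel's-lemma argument of Lemma \ref{unique morphism lemma}) a unique continuous map $B \to \mathcal{O}_Z^+(Z)$, hence a map $Z \to \Spa(B,B)$, and then observes that this map is adic (since $Z \to \Spa(A,A)$ is) and therefore factors through the open analytic locus $\mathfrak{Y}_\eta \subseteq \Spa(B,B)$. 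Your write-up needs this extra step, or at least an explicit splitting into the (a) and (b) cases.
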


\begin{proof}
We can assume $\mathfrak{X} = \Spf(A)$ and $\mathfrak{Y} = \Spf (B)$ and $\mathfrak{f}_s \colon \mathfrak{Y}_s \to \mathfrak{X}_s$ is standard \'etale. Then 
\[
\mathfrak{Y}_s =  \Spec(\mathcal{O}_{\mathfrak{X}_s}(\mathfrak{X}_s)[x_1,x_2]/(f_1,f_2)).
\]
 Let $g_i$ be any lifts of $f_i$ along $A \langle X_1, X_2 \rangle \to \mathcal{O}_{\mathfrak{X}_s}(\mathfrak{X}_s)[x_1, x_2]$. Then 
\[
\Spf (B) \simeq \Spf (A \langle X_1, X_2 \rangle/(g_1,g_2)).
\]
In the case $\mathfrak{X}$ is of type (S)(b), the lemma is now clear from the construction of the fibre product in Proposition \ref{prop:exifibprodspecmor} (in this case $\mathfrak{X}_{\eta}$ and $\mathfrak{Y}_{\eta}$ are both affinoid and the ring of global functions coincides with the one constructed in Proposition \ref{prop:exifibprodspecmor}). Suppose now $\mathfrak{X}$ is of type (S)(a) (in this case we do not know if $\mathfrak{X}_{\eta} = \Spa(A,A)_{a}$ is affinoid). This means $\mathfrak{Y}$ is also of type (S)(a). In this case we check universality directly. Let $Z$ be an analytic adic space making the following diagram commutative
$$
\begin{tikzcd} [row sep = large, column sep = large] 
Z \arrow[r] \arrow[d] &
 \mathfrak{Y}_s \arrow[d, "\mathfrak{f}_s"] \\
\mathfrak{X}_{\eta} \arrow[r, "\lambda_{\mathfrak{X}}"] &
\mathfrak{X}_s.
\end{tikzcd}
$$
It suffices to show that there exists a unique morphism $Z \to \Spa(B,B)$ making the following diagram commutative: 
$$
\begin{tikzcd} [row sep = large, column sep = large] 
Z \arrow[rrrd] \arrow[rrd] \arrow[rdd] &&& \\
& \mathfrak{Y}_\eta \arrow[r] \arrow[d] &
 \Spa(B,B) \arrow[r] \arrow[d] &
\mathfrak{Y}_s \arrow[d, "\mathfrak{f}_s"] \\
& \mathfrak{X}_{\eta} \arrow[r] &
\Spa(A,A) \arrow[r] &
\mathfrak{X}_s. 
\end{tikzcd}
$$
This is because such a morphism $Z \to \Spa(B,B)$ must be adic (since the composition $Z \to \Spa(B,B) \to \Spa(A,A)$ is adic) and therefore factors through the open subspace $\mathfrak{Y}_{\eta} \subseteq \Spa(B,B)$. Let us remark that the morphisms $\Spa(A,A) \to \mathfrak{X}_s$ and $\Spa(B,B) \to \mathfrak{Y}_s$ are defined in the same way as in Definition \ref{defi:spemorrepov} (the only difference in the current situation being that $\Spa(A,A)$ and $\Spa(B,B)$ are not necessarily analytic). To construct a morphism $Z \to \Spa(B,B)$, it suffices to construct a continuous morphism $B \to \mathcal{O}^{+}_{Z}(Z)$. For this one proceeds as in Lemma \ref{unique morphism lemma}.
\end{proof}

\section{Compactifications of specialization morphisms}\label{sec:comfpadssdoce}

In this section we construct compactifications of specialization morphisms which satisfy some finiteness conditions. Therefore we need to develop notions of specialization morphisms, which are of \emph{finite type}, \emph{separated} and \emph{partially proper}. The definition of a valuation ring of a scheme/adic space and of a center of a valuation ring of a scheme/adic space extend immediately to arbitrary locally ringed spaces. This will be the starting point for us.

\begin{defi} [valuation ring of a locally ringed space] \label{defi:valrinlrsp}

\emph{Let $(X, \mathcal{O}_X)$ be a locally ringed space. A} valuation ring \emph{of $(X, \mathcal{O}_X)$ is a pair $(x,A)$, where $x$ is a point of $X$ and $A$ is a valuation ring of the residue class field $k(x)$ of $x$.}
\end{defi}

\begin{defi} [center of a valuation ring] \label{defi:valrincenlrs}
\emph{Let $(X, \mathcal{O}_X)$ be a locally ringed space with a valuation ring $(x,A)$. A point $y \in X$ is called a} center \emph{of $(x,A)$ if there exists a morphism $\Spec(A) \to (X, \mathcal{O}_X)$ of locally ringed spaces, such that the image of the generic point (resp. closed point) of $\Spec(A)$ in $X$ is $x$ (resp. $y$). Moreover we demand that the composition $\Spec k(x) \to \Spec(A) \to (X, \mathcal{O}_X)$ induces $\id \colon \mathcal{O}_{X,x}/\mathfrak{m}_x \to k(x)$, where $\mathfrak{m}_x \subset \mathcal{O}_{X,x}$ is the maximal ideal.}
\end{defi}


\begin{lem} \label{lem:unicenqfvqr}
Let $(s,A)$ be a valuation ring of an affine scheme $S$. Then $(s,A)$ has at most one center on $S$. 
\end{lem}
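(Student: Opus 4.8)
The plan is to reduce to a statement about the valuation ring $A$ itself and the ring $D := \mathcal{O}_S(S)$. A morphism $\Spec(A) \to S$ of locally ringed spaces, where $S = \Spec(D)$ is affine, is the same datum as a ring homomorphism $\varphi \colon D \to A$ (by the adjunction between $\Spec$ and global sections, cf.\ \cite[Tag 01I1]{stacks-project}). The condition that the generic point of $\Spec(A)$ maps to $s$ and that $\Spec k(x) \to \Spec(A) \to S$ recovers $\id \colon \mathcal{O}_{S,s}/\mathfrak{m}_s \to k(s)$ pins down $\varphi$ completely: it forces $\varphi$ to be the composition $D \to \mathcal{O}_{S,s} \to k(s) \hookrightarrow A$, where the last map is the given inclusion of $k(s)$ as the fraction field of $A$ (recall $A$ is a valuation ring of $k(x) = k(s)$). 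So there is \emph{at most one} ring homomorphism $\varphi$ compatible with the data, hence at most one morphism $\Spec(A) \to S$ of the required type.

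The center $y$, if it exists, is then the image of the closed point of $\Spec(A)$ under this unique morphism, and hence is itself unique. Concretely, $\varphi^{-1}(\mathfrak{m}_A)$ is a prime ideal of $D$ and $y$ is the corresponding point of $S$; equivalently $y$ is the unique point such that the local ring $A$ dominates the image of $\mathcal{O}_{S,y}$ under the (already determined) map $\mathcal{O}_{S,y} \to A$. First I would spell out that any morphism $\Spec(A) \to S$ of locally ringed spaces hitting $s$ at the generic point factors through $\mathcal{O}_{S,s}$, i.e.\ corresponds to a local homomorphism $\mathcal{O}_{S,s} \to A$; then the normalization condition on residue fields at $x = s$ forces this local homomorphism to be the canonical one $\mathcal{O}_{S,s} \to k(s) \hookrightarrow A$ (any two local homomorphisms $\mathcal{O}_{S,s} \to A$ inducing the identity on $k(s)$ and landing in a valuation ring with fraction field $k(s)$ must agree, since $A \subseteq k(s)$ and the composite $\mathcal{O}_{S,s} \to A \hookrightarrow k(s)$ is forced).

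There is essentially no obstacle here; the only thing to be slightly careful about is the interplay of the two compatibility conditions in Definition \ref{defi:valrincenlrs} — one about where the generic point goes, one about the induced map on residue fields — and to observe that together they leave no freedom in the ring map $D \to A$. The uniqueness of the closed-point image then follows formally. I would phrase it as: the morphism $\Spec(A) \to S$ is uniquely determined, so its value on the closed point is uniquely determined, which is exactly the assertion.
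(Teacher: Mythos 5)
Your proof is correct and follows essentially the same route as the paper: both arguments reduce to the observation that a morphism $\Spec(A)\to\Spec(D)$ hitting $s$ at the generic point and normalizing the residue field corresponds to a ring map $D\to A$ whose composite with the injection $A\hookrightarrow k(s)$ is forced, so the map — and hence the image of the closed point — is unique. One small slip: the displayed factorization ``$D \to \mathcal{O}_{S,s} \to k(s) \hookrightarrow A$'' has the inclusion backwards ($A\subseteq k(s)$, not the reverse, and $D\to A$ need not factor through $k(s)$), but your subsequent sentence states the correct argument via the injectivity of $A\hookrightarrow k(s)$.
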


\begin{proof}
Let $S = \Spec(R)$ and let $\mathfrak{p} \subseteq R$ correspond to $s \in S$. Then morphisms $\Spec(A) \to \Spec(R)$ such that $(0)$ is sent to $\mathfrak{p}$ correspond to injections $R/\mathfrak{p} \hookrightarrow A$. There is at most one such morphism over $k(s)$.
\end{proof}

\begin{rem} \label{rem:substoprefpoin}
\emph{For a locally ringed space $(X, \mathcal{O}_X)$, let $X_v$ denote the set of all valuation rings of $X$. Every morphism $\alpha \colon (X, \mathcal{O}_X) \to (Y, \mathcal{O}_Y)$ induces a mapping $\alpha_v \colon X_v \to Y_v$ given by $(x,A) \mapsto (\alpha(x), A \cap k(\alpha(x)))$. We equip $X_v$ with the topology which is generated by the sets
\[
D(U,f) := \left\{ (x,A) \in X_v \text{ }\lvert \text{ } x \in U \text{ and } f(x) \in A \right\}
\]
where $U \subset X$ is open and $f \in \mathcal{O}_X(U)$.}
\end{rem}

In case of overlap, the following lemma says that the notion of a valuation ring/center of a valuation ring introduced in Definitions \ref{defi:valrinlrsp}/\ref{defi:valrincenlrs} coincide with that of Huber's definition in \cite{hub96}.

\begin{lem} \label{lem:valutopsamasadsp}
Let $X$ be an analytic adic space. Let $X_v^h$ \emph{be the} $X_v$ defined in \cite[page 52]{hub96}. We equip $X_v^h$ with the topology defined in (1.3.11) loc.cit. Then there is an isomorphism as topological spaces $(X_{\text{red}})_v \simeq X_v^h$. 
\end{lem}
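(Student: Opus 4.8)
The plan is to unwind both sides and produce a bijection that is manifestly compatible with the generating topologies. First I would describe $X_v^h$ concretely: by Huber's definition in \cite[page 52]{hub96}, a point of $X_v^h$ is a pair $(x,A)$ with $x\in X$ and $A$ a valuation ring of $k(x)$ containing (the image of) $k(x)^{+}$ — equivalently, by the order-reversing bijection of Example \ref{es:casaffielspe}, a prime ideal of $k(x)^{+}$, other than the one giving back $A=k(x)$ in the non-trivial case. On the other hand, a point of $(X_{\text{red}})_v$ is a pair $(x,B)$ with $x\in X_{\text{red}}$ (so $x\in X$ as topological spaces, since $X_{\text{red}}$ has the same underlying space as $X$) and $B$ a valuation ring of the residue field $k(x)_{\text{red}}$ of $\mathcal{O}_{X_{\text{red}},x}$. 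By Lemma \ref{lem:stalforexref}, $\mathcal{O}_{X_{\text{red}},x}\simeq k(x)^{+}/k(x)^{\circ\circ}$, whose fraction field is $k(x)^{\circ}/k(x)^{\circ\circ}$, the residue field of the rank $1$ point; and valuation rings of $\Frac(R)$ containing a valuation ring $R$ correspond to primes of $R$. Chasing the standard chain of bijections (valuation rings of $k(x)$ containing $k(x)^{+}$ $\leftrightarrow$ primes of $k(x)^{+}$ $\leftrightarrow$ primes of $k(x)^{+}/k(x)^{\circ\circ}$ together with the primes inside $k(x)^{\circ\circ}$, and the latter are handled by the rank $1$ generalization exactly as in the proof of Lemma \ref{lem:stalforexref}) gives a natural bijection of underlying sets $(X_{\text{red}})_v \simeq X_v^h$.

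Next I would check the topologies agree. On the $X_v^h$ side, Huber's topology in \cite[(1.3.11)]{hub96} is generated by sets of the form $\{(x,A): x\in U,\ f(x)\in A\}$ with $U\subseteq X$ open and $f\in\mathcal{O}_X(U)$ (and, depending on the formulation, also the condition $f(x)\neq 0$, which for analytic $X$ one can absorb). On the $(X_{\text{red}})_v$ side, the topology from Remark \ref{rem:substoprefpoin} is generated by the $D(U,g)=\{(x,B): x\in U,\ g(x)\in B\}$ with $g\in\mathcal{O}_{X_{\text{red}}}(U)=(\mathcal{O}_X^{+}/\mathfrak{m}_{\mathcal{O}_X^+})(U)$. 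I would show each generator of one side is open in the other. Given $f\in\mathcal{O}_X(U)$, on a rational subdomain of the form $\{|f|\le 1\}$ the function $f$ lands in $\mathcal{O}_X^{+}$ and hence (after reduction) in $\mathcal{O}_{X_{\text{red}}}$, and on such a subdomain $\{f(x)\in A\}$ is controlled by the corresponding $D$; conversely any $g\in\mathcal{O}_{X_{\text{red}}}(U)$ lifts locally, by surjectivity of $\mathcal{O}_X^{+}\to\mathcal{O}_{X_{\text{red}}}$, to some $\tilde g\in\mathcal{O}_X^{+}(U')\subseteq\mathcal{O}_X(U')$, and $\tilde g(x)\in A$ iff $g(x)\in B$ under the identification because the reduction map $k(x)^{+}\to k(x)^{+}/k(x)^{\circ\circ}$ is compatible with the bijection on valuation rings (a prime of $k(x)^{+}$ not contained in $k(x)^{\circ\circ}$ is determined by its image). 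Because both topologies are generated by such "evaluation in the valuation ring" conditions that match under the bijection, the bijection is a homeomorphism.

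Concretely, the key steps in order are: (1) recall/spell out the definition of $X_v^h$ and its topology from \cite[page 52, (1.3.11)]{hub96}; (2) for each $x\in X$ set up the bijection between valuation rings of $k(x)$ containing $k(x)^{+}$ and valuation rings of $k(x)_{\text{red}}:=\Frac(\mathcal{O}_{X_{\text{red}},x})$, using Lemma \ref{lem:stalforexref}, Example \ref{es:casaffielspe}, and the rank $1$ generalization argument from the proof of Lemma \ref{lem:stalforexref} to see that the discarded primes (those inside $k(x)^{\circ\circ}$) match up; (3) assemble these into a set bijection $\Phi\colon (X_{\text{red}})_v \to X_v^h$ over the identity on $X$; (4) prove $\Phi$ and $\Phi^{-1}$ are continuous by comparing the two families of generators $D(U,f)$, reducing to the local statement that lifting a function along $\mathcal{O}_X^{+}\to\mathcal{O}_{X_{\text{red}}}$ does not change the condition "value lies in the valuation ring". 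I expect step (4), specifically matching the precise generating sets for Huber's $X_v^h$-topology with those of Remark \ref{rem:substoprefpoin} — and in particular keeping careful track of the $\mathcal{O}_X$ versus $\mathcal{O}_X^{+}$ discrepancy and the role of rational subdomains where a given $f$ becomes integral — to be the main obstacle; everything else is a formal unwinding of definitions already established in the excerpt.
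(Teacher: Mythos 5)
Your proof breaks down already at the set-theoretic identification of the two sides, and the error is not cosmetic. First, Huber's $X_v$ consists of pairs $(x,A)$ with $A$ a valuation ring of $k(x)$ \emph{contained in} $k(x)^{+}$ (with fraction field $k(x)$) — these are the objects that admit \emph{centers}, i.e.\ that parametrize specializations, which is what the whole apparatus of Lemmas 1.3.6, 1.3.10 and 1.3.13 of Huber (and Definitions \ref{defi:sepspemor}--\ref{def:propsepmor} here) is built on. Valuation rings \emph{containing} $k(x)^{+}$ are a different, much smaller set: by Example \ref{es:casaffielspe} they are exactly the points of $\Spa(k(x),k(x)^{+})$, i.e.\ the generalizations of $x$, so your version of $X_v^h$ would essentially reproduce $X$ itself rather than the space of "vertical specialization data". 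Second, your description of $(X_{\text{red}})_v$ is also off: by Definition \ref{defi:valrinlrsp} the relevant field is the \emph{residue class field} of the locally ringed space at $x$, namely $\mathcal{O}_{X_{\text{red}},x}/\mathfrak{m}_x \simeq k(x)^{+}/\mathfrak{m}_{k(x)^+}$ (the residue field of the valuation ring $k(x)^{+}$), not the fraction field $k(x)^{\circ}/k(x)^{\circ\circ}$ of the stalk, and there is no containment condition imposed on the valuation ring $B$. With the correct readings the bijection is the standard composite-valuation correspondence $B \mapsto \pi^{-1}(B)$ along $\pi \colon k(x)^{+} \to k(x)^{+}/\mathfrak{m}_{k(x)^+}$, between \emph{all} valuation rings of the residue field of $k(x)^{+}$ and the valuation rings of $k(x)$ contained in $k(x)^{+}$; this is exactly what the paper's proof invokes, and it is a clean bijection with no leftover points. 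The fact that your chain of identifications forces you to discard the prime $(0)$ on one side and "handle" extra primes inside $k(x)^{\circ\circ}$ on the other is a symptom of having matched the wrong sets: a rank-$1$ point $x$ already shows the mismatch, since your recipe gives a single valuation datum there while $X_v^h$ contains one point for every valuation ring of the (typically large) residue field of $k(x)^{\circ}$.

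Your step (4) — comparing the generating opens, using rational subsets $\{\lvert f\rvert \le 1\}$ to pass from $\mathcal{O}_X$ to $\mathcal{O}_X^{+}$ and local lifting along $\mathcal{O}_X^{+}\to\mathcal{O}_{X_{\text{red}}}$ — is a sensible outline and is indeed the only part the paper leaves implicit; under the corrected bijection the condition "$\bar f(x)\in B$" for $\bar f$ a reduction of $f\in\mathcal{O}_X^{+}(U)$ translates precisely to "$f(x)\in \pi^{-1}(B)$", so that part of your plan would go through. But as written the proposal proves a statement about the wrong spaces, so the core of the argument needs to be redone.
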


\begin{proof}
It is enough to note that the canonical morphism $X_{\text{red}} \to (X,\mathcal{O}_X^{+})$ induces an isomorphism on residue fields (for $x \in X$, the residue field corresponds to the residue field of $k(x)^+$) and that there is a 1-1 correspondence between the set of all valuation rings of the residue field of $k(x)^+$ and valuation rings of $k(x)$ which are contained in $k(x)^+$.
\end{proof}

{
\begin{defi} [locally of finite type] \label{def:lftspmorref}
\emph{Let $\alpha \colon X \to S$ be a specialization morphism. Then $\alpha$ is called} locally of finite type \emph{if for every $x \in X$, there exists an affinoid open neighbourhood $U \ni x$ in $X$ and an affine open subscheme $W \subseteq S$
satisfying the following conditions. 
\begin{enumerate}
\item  We have that $\alpha(U) \subseteq W$ and if
\begin{align*}
&a \colon \mathcal{O}_S(W) \to \mathcal{O}_{X_{\text{red}}}(U) \\
&b \colon \mathcal{O}_X^+(U) \to \mathcal{O}_{X_{\text{red}}}(U)
\end{align*}
denote the natural homomorphisms then $\im(a) \subseteq \im(b)$.
\item There is a finite subset $E \subseteq \im(b)$ with 
$\im(b)$ integral over $\im(a)[E]$.
\end{enumerate}}
\emph{We will sometimes call such a pair $(U,W)$ (and the induced specialization morphism $\alpha_{\lvert U} \colon U \to W$)} good.
\end{defi}}


The reader will note (despite the choice of terminolgy) that Definition \ref{def:lftspmorref} resembles the notion of locally of $^{+}$weakly finite type morphisms between adic spaces \emph{modulo topologically nilpotent elements}.

\begin{rem} \label{rem:spmorlftrefde}
\emph{It is easy to check that if the pair $(U,W)$ satisfies the conditions in Definition \ref{def:lftspmorref} then so does $(V,W)$ for any affinoid open neighbourhood $V \ni x$ in $X$ with $V\subseteq U$. Indeed the modified morphisms $a'$ and $b'$ sit in commutative diagrams
\[
a' \colon \mathcal{O}_S(W) \xrightarrow{a} \mathcal{O}_{X_{\text{red}}}(U) \xrightarrow{\widetilde{\rho}_{UV}} \mathcal{O}_{X_{\text{red}}}(V)
\]
and}
$$
\begin{tikzcd} [row sep = large, column sep = large] 
\mathcal{O}_X^+(U) \arrow[r, "b"] \arrow[d, "\rho_{UV}"] &
\mathcal{O}_{X_{\text{red}}}(U) \arrow[d, "\widetilde{\rho}_{UV}"] \\
\mathcal{O}_X^+(V) \arrow[r, "b'"] &
\mathcal{O}_{X_{\text{red}}}(V).  
\end{tikzcd}
$$
\emph{Thus $\im(a') = \im(\widetilde{\rho}_{UV} \circ a) \subseteq \im(\widetilde{\rho}_{UV} \circ b) \subseteq \im(b')$. Since $V\to U$ is of finite type, there exists a finite subset $E_V$ of $\mathcal{O}_X(V)$ such that $\mathcal{O}_X^+(V)$ is the integral closure of $\mathcal{O}_X^+(U)[E_V \cup \mathcal{O}_X(V)^{\circ \circ}]$. 
Thus $\im(b')$ is integral over $\im(a')[b'(E_V) \cup \widetilde{\rho}_{UV}(E)]$.}
%

\emph{Similarly, it is easy to check that if the pair $(U,W)$ satisfies the above conditions then so does $(U,Z)$ for any affine open subscheme $W \subseteq Z \subseteq S$.}
\end{rem}

We now show that the notion of good pair is stable under certain base change. 

{
\begin{lem} \label{lem:base change for goodness}
Let $\alpha \colon X \to S$ be a {good} specialization morphism 
  such that $\mathcal{O}_X(X)$ is 
 a Tate ring. Let $T$ be an affine scheme and $g \colon T \to S$ be a standard \'etale morphism. 
 Let $Y := X \times_S T$ be the fibre product as constructed in Proposition \ref{prop:exifibprodspecmor} and 
 $\beta \colon Y \to T$ be the projection. {We then have that $Y$ is affinoid and
 $\beta$ is a good specialization morphism. } 
\end{lem}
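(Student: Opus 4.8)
The plan is to reduce everything to the explicit construction of $Y = X\times_S T$ from the proof of Proposition \ref{prop:exifibprodspecmor} and then verify the two assertions ($Y$ affinoid, $\beta$ good) by hand. First I would observe that since $\alpha$ is good, it is in particular locally of finite type, and we may — after possibly shrinking $X$, which is harmless because both "affinoid" and "good" can be checked on an affinoid cover and the hypotheses of Remark \ref{rem:somkinadho} are then met — assume $X = \Spa(A,A^+)$ with $A$ complete Tate (using the hypothesis that $\mathcal{O}_X(X)$ is Tate), $S = \Spec(D)$ affine, $g$ standard étale, and moreover that the image of $A^+$ in $C := \mathcal{O}_{X_{\text{red}}}(X)$ contains the coefficients of the polynomials $f_i$ defining $E$ over $D$. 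Under exactly these assumptions Remark \ref{rem:somkinadho} tells us that $Y = \Spa(B,B^+)$ with $B$ as in \eqref{eq:formforB}; in particular $Y$ is affinoid. This disposes of the first claim.

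For the second claim I need to produce, for a suitable affinoid open $U' \ni y$ in $Y$ and affine open $W' \subseteq T$, the integrality data of Definition \ref{def:lftspmorref}. The natural candidate is $U' = Y$ itself and $W' = T = \Spec(E)$. Condition (1) asks that the image of $E \to \mathcal{O}_{Y_{\text{red}}}(Y)$ land inside the image of $B^+ = \mathcal{O}_Y^+(Y) \to \mathcal{O}_{Y_{\text{red}}}(Y)$. Here I would use Lemma \ref{equality lemma}, which identifies $\mathcal{O}_{X_{\text{red}}}(X)\otimes_D E$ with $B_0\otimes_{A^+}\mathcal{O}_{X_{\text{red}}}(X)$ where $B_0 = A^+\langle X_1,\dots,X_n\rangle/(g_i)$, together with the factorization (\ref{simple but imp}) of $\mathcal{O}_{X_{\text{red}}}(X)\to\mathcal{O}_{Y_{\text{red}}}(Y)$ through $B_0\otimes_{A^+}\mathcal{O}_{X_{\text{red}}}(X)$: the map $B_0\to B^+$ of diagram (\ref{d1}) shows that the images of $X_1,\dots,X_n$ in $\mathcal{O}_{Y_{\text{red}}}(Y)$ come from $B^+$, and the images of the coefficients come from $A^+\to B^+$; since $E$ is generated over $D$ by (the images of) the $x_i$ and $\alpha$ is good so that $\im(D\to C)\subseteq\im(A^+\to C)$, everything in the image of $E$ is hit by $B^+$. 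For condition (2): $E$ is finite over $D[x_1,\dots,x_n]$ — in fact $E = D[x_1,\dots,x_n]/(f_1,\dots,f_n)$ is even a quotient, so the images $\bar x_i \in \mathcal{O}_{Y_{\text{red}}}(Y)$ generate $\im(E\to\mathcal{O}_{Y_{\text{red}}}(Y))$ as an algebra over $\im(D\to\mathcal{O}_{Y_{\text{red}}}(Y)) = \im(D\to C\to\mathcal{O}_{Y_{\text{red}}}(Y))$, hence over $\im(A^+\to\mathcal{O}_{Y_{\text{red}}}(Y))$. But I also need the \emph{whole} image of $B^+$ in $\mathcal{O}_{Y_{\text{red}}}(Y)$, not just $\im(E)$, to be integral over $\im(A^+)[E']$ for a finite set $E'$; this is where goodness of $\alpha$ enters again, since $\im(A^+\to C)$ is integral over $\im(D\to C)[E_0]$ for a finite $E_0$, and the images of $X_1,\dots,X_n$ in $B^+$ adjoin exactly the $\bar x_i$. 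So taking $E' = \{$images of $X_1,\dots,X_n$ and of $E_0\}$ works.

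The step I expect to be the main obstacle is the reduction at the start: I must be careful that after shrinking $X$ to an affinoid on which \emph{all four} of the assumptions (i)–(iv) of Remark \ref{rem:somkinadho} hold simultaneously, the fibre product $Y$ actually \emph{is} the $\Spa(B,B^+)$ of \eqref{eq:refpoviequs} — i.e. that the formation of $X\times_S T$ commutes with restricting to such an affinoid $U\subseteq X$ — and that "good" is genuinely local on $X$ in the form needed (which is the content of Remark \ref{rem:spmorlftrefde}, extended from "locally of finite type" to the existence of a single good pair $(Y, T)$; here one uses that $Y$ is quasi-compact once it is known to be affinoid, so finitely many good pairs can be merged). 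Once that bookkeeping is in place, both verifications above are essentially formal manipulations with the commutative diagrams (\ref{d1}), (\ref{d2}) and Lemma \ref{equality lemma}.
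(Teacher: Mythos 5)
Your overall route is the paper's: exhibit $Y=\Spa(B,B^+)$ with $B=A\langle X_1,\ldots,X_n\rangle/(g_i)$ via the construction in Proposition \ref{prop:exifibprodspecmor} and Remark \ref{rem:somkinadho}, then check conditions (1) and (2) of Definition \ref{def:lftspmorref} for the pair $(Y,T)$ by chasing diagram (\ref{d1}) and Lemma \ref{equality lemma}. However, the reduction you postpone to your last paragraph is a genuine gap, not mere bookkeeping, and it has a clean resolution you did not identify. Shrinking $X$ is \emph{not} harmless here: the content of the lemma is that $Y$ is affinoid globally and that $(Y,T)$ itself is a good pair, and neither can be checked on an affinoid cover of $X$ (a union of affinoid opens need not be affinoid, and goodness is a condition on global sections of a single affinoid). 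The fix is that no shrinking is needed at all: since $\alpha$ is good, $X$ is already affinoid and $S$ affine; $\mathcal{O}_X(X)$ is Tate and $g$ is standard \'etale by hypothesis; and condition (iii) of Remark \ref{rem:somkinadho} is \emph{automatic}, because the coefficients of the $f_i$ lie in $\im(D\to C)$, which is contained in $\im(A^+\to C)$ precisely by condition (1) of goodness. Hence all four hypotheses of Remark \ref{rem:somkinadho} hold for $X$ itself and $Y=\Spa(B,B^+)$ on the nose; this is exactly how the paper argues.

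There is a second, smaller gap in your verification of condition (2). Integrality of $B^+$ over $B_0=A^+\langle X_1,\ldots,X_n\rangle/(g_i)$ reduces the problem to the image of $B_0$ in $\mathcal{O}_{Y_{\text{red}}}(Y)$, but $B_0$ is a quotient of a ring of \emph{restricted power series}, whereas your finite set $E'$ only accounts for the polynomial subring generated by $\im(A^+)$ and $X_1,\ldots,X_n$. One must show these have the same image in $\mathcal{O}_{Y_{\text{red}}}(Y)$: any $f\in A^+\langle X_1,\ldots,X_n\rangle$ decomposes as a polynomial plus an element of $I\cdot A^+\langle X_1,\ldots,X_n\rangle$ for an ideal of definition $I$, and $I\subseteq\mathcal{O}_X(X)^{\circ\circ}\subseteq\mathfrak{m}_{\mathcal{O}_X^+}(X)$ (Lemma \ref{lem:prshinve}) maps to $0$ in $\mathcal{O}_{Y_{\text{red}}}(Y)$. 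This is the step the paper isolates as the equality $b'(B_1)=b'(B'_0)$. With these two repairs your argument coincides with the paper's proof.
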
 
}

\begin{proof} 
{We make use of the notation introduced in Proposition \ref{prop:exifibprodspecmor}. 
Hence, $S = \mathrm{Spec}(D)$ and $T = \mathrm{Spec}(E)$ 
where $E = D[x_1,x_2]/ (f_1,f_2)$ where $f_1 \in D[x_1]$ and 
$f_2$ is of the form $x_2h - 1$ for some $h \in D[x_1]$. 
This is due to the fact that we assumed $E$ is standard \'etale over $D$. 
Furthermore, let $(A,A^+) = (\mathcal{O}_X(X),\mathcal{O}^+_X(X))$. 

Since the specialization $\alpha$ is good,
 we deduce that we have a morphism 
$D \to A^+/\mathfrak{m}_{\mathcal{O}_X^+}(X)$.
Indeed, we have an injection 
$A^+/\mathfrak{m}_{\mathcal{O}_X^+}(X) \hookrightarrow \mathcal{O}_{X_{\text{red}}}(X)$.
Furthermore, the image of
$D$ for the morphism $D \to \mathcal{O}_{X_{\text{red}}}(X)$ is contained in 
$A^+/\mathfrak{m}_{\mathcal{O}_X^+}(X)$. 
Hence, we have a morphism $D \to A^+/\mathfrak{m}_{\mathcal{O}_X^+}(X)$.
It follows that we have a morphism
$D[x_1,x_2] \to A^+/\mathfrak{m}_{\mathcal{O}_X^+}(X)[x_1,x_2]$. 
Let $g_1,g_2 \in A^+[X_1,X_2]$ be some lifts of the images of $f_1,f_2$ respectively.

Remark \ref{rem:somkinadho} and our construction in Proposition \ref{prop:exifibprodspecmor}
 shows that 
$Y$ is of the form $\mathrm{Spa}(B,B^+)$ where $B = A \langle X_1,X_2 \rangle/(g_1,g_2)$.
Let $$B'_0 := A^+ \langle X_1,X_2 \rangle/((g_1,g_2)A \langle X_1,X_2 \rangle \cap A^+ \langle X_1,X_2 \rangle).$$ 
The preceding discussion gives the following diagram where we add labels to the natural morphisms
referred to in Definition \ref{def:lftspmorref}.}
$$
\begin{tikzcd} [row sep = large, column sep = large] 
B^+ \arrow[dr,"b'"] & B'_0 \arrow[l,hookrightarrow] \arrow[d] & A^+[X_1,X_2] \arrow[l] \arrow[d,twoheadrightarrow] &  A^+ \arrow[l]  \arrow[d, twoheadrightarrow] \arrow[dr,"b"] & \\ 
&\mathcal{O}_{Y_{\text{red}}}(Y) & A^+/\mathfrak{m}_{\mathcal{O}_X^+}(X)[x_1,x_2] \arrow[l] & 
 A^+/\mathfrak{m}_{\mathcal{O}_X^+}(X) \arrow[l] \arrow[r,hookrightarrow] & \mathcal{O}_{X_{\text{red}}}(X)   \\
& E \arrow[u,"a'"] & D[x_1,x_2] \arrow[l, twoheadrightarrow] \arrow[u]   & D \arrow[u,dashed] \arrow[l] \arrow[ur,"a"] &\\  
\end{tikzcd}
$$
{
In the diagram above, the morphism 
$A^+[X_1,X_2] \to A^+/\mathfrak{m}_{\mathcal{O}_X^+}(X)[x_1,x_2]$ sends $X_1,X_2$ to $x_1,x_2$ respectively. 
The map $A^+/\mathfrak{m}_{\mathcal{O}_X^+}(X)[x_1,x_2] \to \mathcal{O}_{Y_{\text{red}}}(Y)$ is induced by the universal property of the tensor product. 
To check the commutativity of the diagram, we must verify that the top left square is commutative since
commutativity is clear everywhere else. This reduces to checking that the images of $X_1,X_2$ for the composition
$A^+[X_1,X_2] \to B'_0 \to \mathcal{O}_{Y_{\text{red}}}(Y)$ coincides with the images of 
$x_1,x_2$ for the map $A^+/\mathfrak{m}_{\mathcal{O}_X^+}(X)[x_1,x_2] \to \mathcal{O}_{Y_{\text{red}}}(Y)$ respectively. Equivalently, 
we must show that the images of $X_1,X_2$ for the composition
$A^+[X_1,X_2] \to B'_0 \to \mathcal{O}_{Y_{\text{red}}}(Y)$ coincides with the images of 
$x_1,x_2$ for the map $a'$ respectively.
This is a consequence of the construction of the morphism 
$a'$ in Proposition \ref{prop:exifibprodspecmor} with the key point being Lemma \ref{equality lemma}.}

{We claim that 
$a'(E) \subseteq b'(B'_0)$. 
This can be verified by a simple diagram chase. This verifies condition (1) in 
Definition \ref{def:lftspmorref}.}

{We now verify condition (2) of Definition \ref{def:lftspmorref}. 
Observe that it suffices to show that
there exists a finite set $\mathfrak{E} \subset b'(B'_0)$ such that 
$b'(B'_0)$ is integral 
over $a'(E)[\mathfrak{E}]$. 
This is a consequence of the fact that $B^+$ is integral over $B'_0$.
By definition, we have that 
$$B_1 := A^+[X_1,X_2]/((g_1,g_2)A\langle X_1,X_2\rangle \cap A^+[X_1,X_2]) \subseteq B'_0.$$ 
We claim
$b'(B_1) = b'(B'_0)$. 
Indeed, 
let $A_0$ be a ring of definition such that $A_0 \subseteq A^+$
  and $I \subset A_0$ be an ideal of definition.   
 Recall that $I \subseteq \mathcal{O}^{\circ \circ}_{X}(X) \subseteq \mathfrak{m}_{\mathcal{O}^+_{X}}(X)$
 where the first inclusion is by definition and the second is due to Lemma \ref{lem:prshinve}. 
 Observe that every element $f \in A^+\langle X_1,X_2 \rangle$ can be realized 
 as $f_1 + f_2$ where $f_1 \in A^+[X_1,X_2]$ and $f_2 \in I A^+\langle X_1,X_2 \rangle$. 
 The claim is now easily deduced from this.

 The goodness of the morphism $\alpha$ implies
 that there exists a finite set $\mathfrak{D} \subset b(A^+)$ such that 
 $b(A^+)$ is integral over
 $a(D)[\mathfrak{D}]$.
  Let $\mathfrak{E}$ denote the image of $\mathfrak{D}$
   in  $\mathcal{O}_{Y_{\text{red}}}(Y)$ for the map 
   $\mathcal{O}_{X_{\text{red}}}(X) \to \mathcal{O}_{Y_{\text{red}}}(Y)$.
   Observe that $\mathfrak{E} \subset b'(B'_0)$. 
 A diagram chase then
  shows that the subring generated by the image of $A^+$ in $\mathcal{O}_{Y_{\text{red}}}(Y)$
  and $a'(E)$ 
  is integral over the ring $a'(E)[\mathfrak{E}]$.

 Furthermore, since the diagram above commutes, the images of $X_1,X_2$
 for the map $b'_{|B_1}$ are contained in
 $a'(E)$. 
  It follows that $b'(B_1)$ 
 is in fact integral over $a'(E)[\mathfrak{E}]$. 
 This completes the proof.}
\end{proof}

We now use Lemma \ref{lem:base change for goodness} to deduce that locally finite type
specializations are stable under base change. 
{
\begin{prop} \label{prop : base change for lft}
 Let $\alpha \colon X \to S$ be a specialization morphism which is locally of finite type.
  Let $g \colon T \to S$ be an \'etale morphism. 
 Let $Y := X \times_S T$ be the fibre product as constructed in Proposition \ref{prop:exifibprodspecmor} and 
 $\beta \colon Y \to T$ be the projection. {Then
 $\beta$ is a specialization morphism which is locally of finite type.}
\end{prop}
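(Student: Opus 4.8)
The plan is to reduce immediately to the affinoid/standard situation treated in Lemma \ref{lem:base change for goodness}. Since being locally of finite type is a condition checked locally at every point $x \in X$, and since the formation of the fibre product $Y = X \times_S T$ and the projection $\beta$ are compatible with passing to opens of $X$, opens of $S$ and opens of $T$ (by the local nature of the construction in Proposition \ref{prop:exifibprodspecmor} and Remark \ref{rem:somkinadho}), it suffices to produce, for an arbitrary point $y \in Y$, a good pair $(V, W_T)$ for $\beta$ with $y \in V$. So first I would fix $y \in Y$, let $x = f(y) \in X$ (where $f \colon Y \to X$ is the projection) and $t = \beta(y) \in T$, and $s = g(t) = \alpha(x) \in S$.

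Next I would shrink everything. Choose a good pair $(U, W)$ for $\alpha$ with $x \in U$; by Remark \ref{rem:spmorlftrefde} we may shrink $U$ further to an affinoid on which $\mathcal{O}_X(U)$ is Tate (using that $X$ is analytic, as in the proof of Proposition \ref{prop:exifibprodspecmor}), and shrink $W$ to be affine. Then I would choose an affine open $W_T \subseteq g^{-1}(W) \subseteq T$ containing $t$ on which $g$ restricts to a \emph{standard} \'etale morphism $W_T \to W$; this is possible by \cite[Tag 02GT]{stacks-project}. Now $\alpha_{|U} \colon U \to W$ is a good specialization morphism with $\mathcal{O}_U(U)$ Tate, and $W_T \to W$ is standard \'etale, so Lemma \ref{lem:base change for goodness} applies and tells us that $U \times_W W_T$ is affinoid and that the projection $U \times_W W_T \to W_T$ is a good specialization morphism.

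The last step is to identify $U \times_W W_T$ with an open subspace of $Y$ containing $y$ and compatible with the projections. This is a compatibility-of-fibre-products statement: because $U \hookrightarrow X$, $W \hookrightarrow S$ and $W_T \hookrightarrow T$ are all open immersions (of adic spaces resp. schemes), the universal property of Proposition \ref{prop:exifibprodspecmor} yields a canonical open immersion $U \times_W W_T \hookrightarrow X \times_S T = Y$ whose composite with $\beta$ is the projection $U \times_W W_T \to W_T \hookrightarrow T$, and one checks that its image is the open subset $f^{-1}(U) \cap \beta^{-1}(W_T)$, which contains $y$ since $x \in U$ and $t \in W_T$. Taking $V := U \times_W W_T$ viewed inside $Y$, the pair $(V, W_T)$ is then good for $\beta$ by Lemma \ref{lem:base change for goodness}, and since $y$ was arbitrary this shows $\beta$ is locally of finite type.

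The main obstacle I anticipate is precisely the last bookkeeping step: verifying that the fibre product construction of Proposition \ref{prop:exifibprodspecmor} genuinely commutes with restriction to opens on all three corners, so that $U \times_W W_T$ really is (canonically, compatibly with all the structure morphisms in the sense of Remark \ref{rem:compmoretad}) an open subspace of $Y$. This is morally clear from the universal property — open immersions are monomorphisms and the relevant diagrams glue — but it has to be spelled out carefully, in particular checking that the specialization morphism $V \to W_T$ obtained from Lemma \ref{lem:base change for goodness} agrees with the restriction of $\beta$. Everything else (the reduction to the Tate standard-\'etale case, the appeal to \cite[Tag 02GT]{stacks-project}, and the invocation of Lemma \ref{lem:base change for goodness}) is routine.
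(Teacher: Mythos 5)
Your proposal is correct and follows essentially the same route as the paper's proof: reduce to a good pair via Remark \ref{rem:spmorlftrefde}, shrink to a standard \'etale target using \cite[Tag 02GT]{stacks-project}, apply Lemma \ref{lem:base change for goodness}, and identify the local fibre product with the open subset $f^{-1}(U)\cap\beta^{-1}(W_T)$ of $Y$ via the universal property. The paper carries out exactly this argument (with $X', S', T'$ in place of your $U, W, W_T$).
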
 
\begin{proof} 
 Let $p \in Y$ and let $h \colon Y \to X$ denote the projection. 
 Since $\alpha$ is locally of finite type, there exists an affine open neighbourhood
 $S' \subseteq S$ of $\alpha(h(p))$ 
 and an affinoid open neighbourhood $X' \subseteq X$ of $h(p)$ such that 
 the restriction $\alpha_{\lvert X'} \colon X' \to S'$
  is good. 
By Remark \ref{rem:spmorlftrefde}, we can shrink $X'$ to be such that 
$\mathcal{O}_{X'}(X')$ is Tate. 
Let $T' \subset T$ be an affine open neighbourhood of $\beta(p)$ such that the 
restriction $g_{\lvert T'} \colon T' \to S'$ is standard \'etale. 
By Lemma \ref{lem:base change for goodness}, 
the map $X' \times_{S'} T' \to T'$ is good. 
Now observe that by the universal property of base change, 
$X' \times_{S'} T'$ is isomorphic as an adic space to 
$\beta^{-1}(T') \cap h^{-1}(X')$ which is open in $X \times_S T$. 
\end{proof}
}

{In the following lemma, given a site $C$, we abuse notation and use $C$ itself to 
denote the associated topos $C^{\sim}$.}

{
\begin{lem} \label{push forward commutes with filtered colimits}
Let $X$ be an analytic adic space and $S$ be a scheme. 
\begin{enumerate}
\item Then the topoi $X_{\mathrm{\acute{e}t}}$ and $S_{\mathrm{\acute{e}t}}$ are algebraic (cf. \cite[Expos\'e VI, D\'efinition 2.3]{SGA4}).
\item If $f \colon X \to S$ is a good specialization morphism  such that $\mathcal{O}_X(X)$ is Tate
 then the induced morphism of topoi
$f_{\mathrm{\acute{e}t}}  \colon X_{\mathrm{\acute{e}t}} \to S_{\mathrm{\acute{e}t}}$ is coherent (cf. \cite[Expos\'e VI, D\'efinition 3.1]{SGA4}). 
\end{enumerate} 
\end{lem}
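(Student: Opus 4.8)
### Proof proposal

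The plan is to reduce both assertions to standard facts about étale topoi of qcqs schemes and of taut (e.g. qcqs) analytic adic spaces, combined with the construction of the fibre products in Proposition \ref{prop:exifibprodspecmor}.

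For (1): the étale topos $S_{\mathrm{\acute{e}t}}$ of a scheme is algebraic because $S$ is locally qcqs, so it has a generating family of quasi-compact quasi-separated étale schemes over it; this is essentially \cite[Expos\'e VI]{SGA4}. For $X_{\mathrm{\acute{e}t}}$ one argues in the same way: $X$ being a locally noetherian analytic adic space, it is covered by affinoids $\Spa(A,A^+)$ with $A$ noetherian, each of which is quasi-compact and quasi-separated; the affinoid étale morphisms $U' \to U$ with $U',U$ affinoid form a generating family of the topos, they are stable under fibre products in $X_{\mathrm{\acute{e}t}}$ (here one uses that fibre products of affinoids over an affinoid in $X_{\mathrm{\acute{e}t}}$ are again affinoid — a standard fact from \cite{hub96}, e.g. around \cite[(2.4)]{hub96}), and the quasi-compact objects are exactly the ones admitting a finite covering by affinoids. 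This exhibits $X_{\mathrm{\acute{e}t}}$ as an algebraic topos. So the first part is a matter of quoting the right results; I would spend one or two sentences on each of the two topoi.

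For (2): by \cite[Expos\'e VI]{SGA4}, to show $f_{\mathrm{\acute{e}t}}$ is coherent it suffices to show that $f_{\mathrm{\acute{e}t}}^{*}$ sends a generating family of coherent (qcqs) objects of $S_{\mathrm{\acute{e}t}}$ to coherent objects of $X_{\mathrm{\acute{e}t}}$, and that it is compatible with the coherence structures; concretely, since $\alpha_{\mathrm{\acute{e}t}}^{*}T = X\times_S T$, I must show that for $T\to S$ an affine (hence qcqs) étale scheme the adic space $X\times_S T$ is quasi-compact and quasi-separated over $X$, and in fact affinoid when $X$ is. The key input is the explicit description of $X\times_S T$ from Proposition \ref{prop:exifibprodspecmor}: under the standing hypotheses ($X$ affinoid with $\mathcal{O}_X(X)$ Tate, $g$ standard étale, $f$ good) Remark \ref{rem:somkinadho} together with Lemma \ref{lem:base change for goodness} shows $X\times_S T = \Spa(B,B^+)$ with $B = A\langle X_1,X_2\rangle/(g_1,g_2)$ finitely generated over $A$ as a Tate algebra, so that $X\times_S T$ is affinoid and the projection $f\colon X\times_S T\to X$ is of topologically finite type, hence quasi-compact and quasi-separated. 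One then globalizes: a general $X$ is covered by affinoids with Tate rings of functions (using that $X$ is analytic), a general étale $T\to S$ is covered by standard étale morphisms between affines, and a general étale $T\to S$ with $T$ qcqs is built out of finitely many such pieces; using Proposition \ref{prop:compmorrepovi} to match these covers on both sides, $X\times_S T\to X$ is seen to be qcqs over the qcqs pieces, hence $f_{\mathrm{\acute{e}t}}$ carries coherent objects to coherent objects. Finally coherence of the morphism also requires that the pullback of a covering of a coherent object by coherent objects can be refined by a finite such covering, which follows from Proposition \ref{lem:surjrnk1po} (surjectivity is preserved) together with quasi-compactness of the $X\times_S T$ just established.

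The main obstacle is the globalization/bookkeeping in (2): one must pass from the very rigid setting of Lemma \ref{lem:base change for goodness} (affinoid $X$ with Tate $\mathcal{O}_X(X)$, standard étale $g$) to arbitrary $X$, $T$ and étale $g$, and check that "qcqs" and "affinoid" descend and glue appropriately along the covers, using Proposition \ref{prop:compmorrepovi} for compatibility of the étale pushforwards with the covers on both $S$ and $X$. The actual commutative-algebra content — that $B = A\langle X_1,X_2\rangle/(g_1,g_2)$ is topologically of finite type over $A$ — is immediate from the construction, so the difficulty is entirely in organizing the reduction, not in any single computation.
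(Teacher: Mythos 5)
Your proposal follows essentially the same route as the paper's proof: for (1) a generating family of coherent affinoid (resp.\ affine) objects verified against the criterion of \cite[Expos\'e VI, Proposition 2.2]{SGA4}, and for (2) the coherence criterion of \cite[Expos\'e VI, Proposition 3.2]{SGA4} applied to standard \'etale affines over $S$ together with Lemma \ref{lem:base change for goodness}. The only differences are cosmetic: the paper restricts the generating family for $X_{\mathrm{\acute{e}t}}$ to affinoids whose map to $X$ factors through an affinoid open (so that $Y \times_X Y$ is visibly affinoid, hence the objects are quasi-separated), and your globalization step in (2) is unnecessary, since a good specialization morphism already has $X$ affinoid and $S$ affine by Definition \ref{def:lftspmorref}.
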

\begin{proof} 
 We only show that $X_{\mathrm{\acute{e}t}}$ is an algebraic topos.
 The proof for $S_{\mathrm{\acute{e}t}}$ is similar.
 Let $\mathcal{F}$ be the full subcategory of objects 
 $Y \in X_{\mathrm{\acute{e}t}}$ such that $Y$ is affinoid and in addition the morphism $Y \to X$ factors through an open embedding
 $Z \to X$ where $Z$ is affinoid. Note that in this case,
 $Y \times_X Y = Y \times_Z Y$ and $Y \times_Z Y$ is affinoid. 
 Observe that $\mathcal{F}$ is a generating family.
 By \cite[Expos\'e VI, Proposition 2.1]{SGA4}, 
every object in $\mathcal{F}$
  is coherent. 
 By the criterion given in \cite[Expos\'e VI, Proposition 2.2(i ter)]{SGA4},
 we see that 
 $X_{\mathrm{\acute{e}t}}$ is an algebraic topos.
 
 
 Lastly, we check that the induced morphism of topoi 
$X_{\mathrm{\acute{e}t}} \to S_{\mathrm{\acute{e}t}}$
is coherent. This is a direct consequence of \cite[Expos\'e VI, Proposition 3.2]{SGA4}
applied to the generating family of affines which are standard \'etale over $S$
and Lemma \ref{lem:base change for goodness}.
\end{proof}}

\begin{defi} [finite type]
\emph{Let $\alpha \colon X \to S$ be a specialization morphism. Then $\alpha$ is called} finite type \emph{if $\alpha$ is locally of finite type and quasi-compact.}
\end{defi}

\begin{defi} [quasi-separated]
\begin{enumerate}
\item \emph{Let $(X, \mathcal{O}_X)$ be a locally ringed space. Then $(X, \mathcal{O}_X)$ is called} quasi-separated \emph{if the intersection of two quasi-compact open subsets of $X$ is quasi-compact.}
\item \emph{A morphism $\alpha \colon (X, \mathcal{O}_X) \to (Y, \mathcal{O}_Y)$ between locally ringed spaces is called} quasi-separated \emph{if for every quasi-separated open subspace $U$ of $Y$, the inverse image $\alpha^{-1}(U)$ is quasi-separated.}
\end{enumerate}
\end{defi}

{We next show (cf. Lemma \ref{lem : base change for qs}) that quasi-separated is stable under a standard \'etale base change.

\begin{lem} \label{lem : finite intersections in finite intersections}
 Let $X$ be a locally noetherian adic space. Let $Y_1,Y_2$ be affinoid open subsets of $X$ such that 
 $Y_1 \cap Y_2$ is quasi-compact. 
 Let $U_1 \subseteq Y_1$ and $U_2 \subseteq Y_2$ be affinoid open subsets. 
 Then $U_1 \cap U_2$ is quasi-compact. 
\end{lem}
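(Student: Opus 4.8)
The plan is to reduce the statement to a standard fact about adic spaces, namely that the intersection of an affinoid open with another open is a \emph{finite} union of affinoid opens (see \cite[(1.1.11), (1.1.12)]{hub96} or the fact that a locally noetherian adic space is locally spectral). First I would observe that since $Y_1 \cap Y_2$ is quasi-compact and $U_1 \subseteq Y_1$ is open, the intersection $U_1 \cap (Y_1 \cap Y_2)$ is an open subset of the affinoid $Y_1$; more to the point, $U_1 \cap U_2 = U_1 \cap Y_2 \cap U_2$, and I want to argue that each factor behaves well. The key reduction is that $U_1 \cap U_2$ is an open subset of the affinoid $U_2$ (equivalently of $U_1$), and it is contained in the quasi-compact set $Y_1 \cap Y_2$.

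The main step is the following: $Y_1 \cap Y_2$ being quasi-compact, and being an open subset of the affinoid adic space $Y_2$, can be written as a finite union $Y_1 \cap Y_2 = \bigcup_{k=1}^m V_k$ of rational (hence affinoid) subsets $V_k$ of $Y_2$; this uses that rational subsets form a basis for the topology of an affinoid adic space together with quasi-compactness. Then
$$
U_1 \cap U_2 = (U_1 \cap Y_2) \cap U_2 \subseteq (Y_1 \cap Y_2) \cap U_2 = \bigcup_{k=1}^m (V_k \cap U_2).
$$
Actually, since $U_1 \subseteq Y_1$, we have $U_1 \cap U_2 \subseteq Y_1 \cap Y_2 = \bigcup_k V_k$, hence $U_1 \cap U_2 = \bigcup_{k=1}^m (U_1 \cap U_2 \cap V_k)$. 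Now each $U_1 \cap U_2 \cap V_k$ is an open subset of the affinoid $V_k$ (a rational subset of $Y_2$), and it is also an open subset of the affinoid $U_1$. So it suffices to show that the intersection of two affinoid opens, \emph{one of which is a rational subset containing the other inside an affinoid}, is quasi-compact; but in fact each $U_1 \cap U_2 \cap V_k$ is simply an open subset of $V_k$ which is itself contained in the affinoid $U_1$. Iterating, one reduces to: the intersection of two affinoid opens $U_1$ and $V_k$, where now there is no quasi-compactness hypothesis needed because $V_k$ is rational in $Y_2 \supseteq$ nothing forcing it — so instead I would invoke directly that for a locally noetherian adic space, the intersection of two affinoid opens need not be quasi-compact in general, which is exactly why the hypothesis on $Y_1 \cap Y_2$ is present. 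The cleanest route: $U_1 \cap U_2$ is open in $U_1$, an affinoid (hence spectral) space, so it is a union of rational subsets; and it is contained in the quasi-compact $Y_1 \cap Y_2$; but a quasi-compact subset need not force the open subset to be quasi-compact. The correct argument is that $U_1 \cap U_2$ is open in $U_2$ and closed in $Y_1 \cap Y_2 \cap U_2$? No — rather, $U_1$ is open in $Y_1$, so $U_1 \cap (Y_1 \cap Y_2)$ is open in $Y_1 \cap Y_2$, which is quasi-compact and quasi-separated (being a quasi-compact open in the affinoid, hence spectral, space $Y_1$), and the same analysis shows $U_1 \cap U_2$ is a quasi-compact open in the spectral space $Y_1 \cap Y_2$ once we know it is \emph{closed under generization-free} — here I would use that $Y_1 \cap Y_2$, being a quasi-compact open in the spectral space $Y_1$, is itself spectral, and that $U_1 \cap (Y_1\cap Y_2)$ and $U_2 \cap (Y_1 \cap Y_2)$ are quasi-compact opens of it (each being a quasi-compact open subset: $U_i \cap (Y_1 \cap Y_2)$ is open in $Y_1 \cap Y_2$ and also open in the affinoid $U_i$, and one checks it is quasi-compact by covering $Y_1 \cap Y_2$ by finitely many rational subsets of $Y_1$ as above), and in a spectral space the intersection of two quasi-compact opens is quasi-compact.

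The main obstacle I anticipate is bookkeeping the spectrality claims cleanly: specifically, verifying that $Y_1 \cap Y_2$ is a spectral space and that both $U_1 \cap (Y_1 \cap Y_2)$ and $U_2 \cap (Y_1 \cap Y_2)$ are quasi-compact open subsets of it. The first follows because a quasi-compact open subspace of the spectral space $\mathrm{Spa}(\mathcal{O}(Y_1), \mathcal{O}^+(Y_1))$ is spectral (\cite[(1.1.11)]{hub96} or Hochster's theorem combined with the fact that adic spaces are locally spectral). For the second, since $U_i$ is affinoid, $U_i \cap (Y_1 \cap Y_2)$ is an open subspace of $U_i$ that is also an open subspace of the quasi-compact $Y_1 \cap Y_2$; writing $Y_1 \cap Y_2$ as a finite union of rational subsets of $Y_1$ and intersecting with $U_i$ exhibits $U_i \cap (Y_1 \cap Y_2)$ as a finite union of quasi-compact (affinoid) opens, hence quasi-compact. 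Once both are quasi-compact opens of the spectral space $Y_1 \cap Y_2$, their intersection $U_1 \cap U_2$ is quasi-compact, since the quasi-compact opens of a spectral space are stable under finite intersection. I would present this in four short moves: (1) $Y_1 \cap Y_2$ is spectral; (2) $U_i \cap (Y_1 \cap Y_2)$ is a quasi-compact open of it, for $i = 1,2$; (3) intersect; (4) note $U_1 \cap U_2 = (U_1 \cap (Y_1\cap Y_2)) \cap (U_2 \cap (Y_1 \cap Y_2))$.
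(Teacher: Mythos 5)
Your argument is correct, but it runs on a different engine than the paper's. The paper also begins with a finite affinoid cover $\{V\}$ of $Y_1\cap Y_2$, but then finishes purely inside each $V$: it shows $U_1\cap U_2\cap V$ is actually \emph{affinoid}, by twice invoking the fact that two affinoid open subsets of an affinoid adic space intersect in an affinoid (identified as the fibre product $U_1\times_{Y_1}V$, etc.). Your proof instead routes everything through spectral-space topology: $Y_1\cap Y_2$ is a quasi-compact open of the spectral space $Y_1$, hence spectral; $U_i\cap(Y_1\cap Y_2)$ is a quasi-compact open of it; and quasi-compact opens of a spectral space are stable under finite intersection. Both are valid. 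Your route is more purely topological and, once you notice that $U_i$ and $Y_1\cap Y_2$ are themselves quasi-compact opens of the spectral space $Y_i$, you do not even need the finite rational cover or the affinoid-intersection fact at all --- the whole lemma is three applications of ``intersection of quasi-compact opens of a spectral space is quasi-compact.'' The paper's route proves slightly more (each piece $U_1\cap U_2\cap V$ is affinoid, not merely quasi-compact), which costs it the fibre-product input. One editorial remark: the several abandoned attempts in your middle paragraph (``No --- rather, \dots'') should be deleted; the four-move summary at the end is the proof, and it stands on its own.
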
 
\begin{proof}
 Let $\mathcal{V}$ be a finite family of affinoid open subsets of $Y_1 \cap Y_2$ 
 such that $Y_1 \cap Y_2 = \bigcup_{V \in \mathcal{V}} V$.
 It suffices to verify that $U_1 \cap U_2 \cap V$ is affinoid for every $V \in \mathcal{V}$. 
 Given $V \in \mathcal{V}$, observe that since $U_1$ and $V$ are both affinoid open subsets of the affinoid adic space $Y_1$, 
$U_1 \cap V$ is an affinoid open subset of $V$. 
Indeed, one checks using the universal property of the fibre product that 
$U_1 \cap V = U_1 \times_{Y_1} V$. 
Similarly, $U_2 \cap V$ is an affinoid open subset of $V$.
Applying the same principle again, we get that 
$(U_1 \cap V) \cap (U_2 \cap V) = U_1 \cap  U_2 \cap V$ is an affinoid open subset of $V$. 
\end{proof} 

\begin{lem} \label{lem : condition for qs}
  Let $X$ be a locally noetherian adic space.   
  The following statements are equivalent. 
    \begin{enumerate}
  \item The space $X$ is quasi-separated. 
  \item There exists a family $\mathcal{V}$ of affinoid open subsets such that 
  $$X = \bigcup_{V \in \mathcal{V}} V$$ and 
  for every $V_1,V_2 \in \mathcal{V}$, $V_1 \cap V_2$ is quasi-compact. 
  \end{enumerate}
\end{lem}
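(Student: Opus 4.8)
The plan is to prove the two implications separately; the implication $(1)\Rightarrow(2)$ is essentially a tautology, and all the real content sits in $(2)\Rightarrow(1)$, where Lemma \ref{lem : finite intersections in finite intersections} will do the heavy lifting.

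For $(1)\Rightarrow(2)$ I would simply take $\mathcal{V}$ to be the family of \emph{all} affinoid open subsets of $X$. Since $X$ is an adic space it admits a basis of affinoid opens, so $\mathcal{V}$ covers $X$; and any two members $V_1,V_2\in\mathcal{V}$ are quasi-compact open subsets, so $V_1\cap V_2$ is quasi-compact by the very definition of quasi-separatedness. That is all that is needed.

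For $(2)\Rightarrow(1)$ I would argue in two reduction steps and then invoke Lemma \ref{lem : finite intersections in finite intersections}. First, to prove that the intersection of two quasi-compact opens $U_1,U_2$ of $X$ is quasi-compact, I would write each as a \emph{finite} union of affinoid opens of $X$ (possible because affinoid opens form a basis and $U_i$ is quasi-compact), say $U_1=\bigcup_i P_i$ and $U_2=\bigcup_j Q_j$; then $U_1\cap U_2=\bigcup_{i,j}(P_i\cap Q_j)$, so it suffices to treat two affinoid opens $P$ and $Q$. Second, I would cover $P$ by the opens $\{V\cap P : V\in\mathcal{V}\}$; each $V\cap P$ is open in the affinoid adic space $P$, hence a union of rational subsets of $P$, so by quasi-compactness of $P$ I can extract a finite subcover by rational (in particular affinoid) opens $R_1,\dots,R_N$ of $P$ with each $R_l$ contained in some $V_l\in\mathcal{V}$; doing the same for $Q$ gives $Q=\bigcup_{l'}R'_{l'}$ with each $R'_{l'}$ affinoid open in $Q$ and contained in some $V'_{l'}\in\mathcal{V}$. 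Since each $R_l$ (resp. $R'_{l'}$) is open in $X$ and contained in $V_l$ (resp. $V'_{l'}$), it is an affinoid open subset of $V_l$ (resp. $V'_{l'}$). Now $P\cap Q=\bigcup_{l,l'}(R_l\cap R'_{l'})$, and for each pair $(l,l')$ the quasi-compactness of $V_l\cap V'_{l'}$ from hypothesis $(2)$ lets me apply Lemma \ref{lem : finite intersections in finite intersections} with $Y_1=V_l$, $Y_2=V'_{l'}$, $U_1=R_l$, $U_2=R'_{l'}$ to conclude that $R_l\cap R'_{l'}$ is quasi-compact. A finite union of quasi-compact sets being quasi-compact, $P\cap Q$ is quasi-compact, and the result follows.

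The only non-formal point — and hence the main obstacle — is the second reduction step: one cannot just use the sets $V\cap P$ directly, since they need not be affinoid (their possible failure to be affinoid is precisely the phenomenon governing quasi-separatedness), so one has to refine to rational subsets of $P$ that each sit inside a single member of $\mathcal{V}$. Once this finite refinement is produced, everything else is bookkeeping plus a direct appeal to Lemma \ref{lem : finite intersections in finite intersections}.
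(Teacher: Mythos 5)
Your proposal is correct and follows essentially the same route as the paper: cover each quasi-compact open by finitely many affinoid opens each contained in a member of $\mathcal{V}$, and then apply Lemma \ref{lem : finite intersections in finite intersections} to the pairwise intersections. The intermediate reduction to two affinoid opens $P,Q$ is harmless but unnecessary, since the paper performs the subordinate finite affinoid refinement directly on the quasi-compact opens $U_1,U_2$.
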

\begin{proof}
We prove (2) implies (1) as the other implication is clear. 
 Let $U_1, U_2$ be quasi-compact open subsets of $X$. 
 For $i \in \{1,2\}$, let $\mathcal{W}_i$ be a finite
 family of affinoid open subsets which
  cover $U_i$ such that for every $W \in W_i$, 
  there exists $V \in \mathcal{V}$ and $W \subseteq V$. 
  Such a cover exists by observing that for every point 
  $p \in U_i$, there exists $V \in \mathcal{V}$ containing $p$ and
  we can take an affinoid open neighbourhood of 
  $p$ contained in $U_i \cap V$. 
  Note that 
  \[U_1 \cap U_2 = \bigcup_{W_1 \in \mathcal{W}_1, W_2 \in \mathcal{W}_2} W_1 \cap W_2.\]
  By Lemma \ref{lem : finite intersections in finite intersections} and our assumption on the 
  family $\mathcal{V}$, 
  for every $W_1 \in \mathcal{W}_1$ and $W_2 \in \mathcal{W}_2$, 
  $W_1 \cap W_2$ is quasi-compact. 
  This concludes the proof. 
\end{proof} 
%
%
\begin{lem} \label{lem : base change for qs}
 Let $f \colon X \to S$ be a specialization morphism where $X$ is an analytic adic space
  and $S$ is an affine scheme. 
 Let $g \colon T \to S$ be a standard \'etale morphism of schemes. 
 If $X$ is quasi-separated then $X \times_S T$ is quasi-separated. 
\end{lem}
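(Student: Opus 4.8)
The plan is to deduce the quasi-separatedness of $Y := X \times_S T$ from the criterion of Lemma \ref{lem : condition for qs}, exploiting that $X$ satisfies condition (2) of that lemma. First note that $Y$ is a locally noetherian analytic adic space: it is the space produced by Proposition \ref{prop:exifibprodspecmor}, and the projection $\pi \colon Y \to X$ is \'etale, so $Y$ inherits local noetherianity from $X$.

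Since $X$ is quasi-separated, Lemma \ref{lem : condition for qs} gives a family $\mathcal{V}$ of affinoid open subsets covering $X$ with $V_1 \cap V_2$ quasi-compact for all $V_1, V_2 \in \mathcal{V}$. The key move is to refine $\mathcal{V}$ to a family $\mathcal{V}'$ of affinoid open subsets covering $X$, each member $V'$ being contained in some member of $\mathcal{V}$ and additionally satisfying all of conditions (i)--(iv) of Remark \ref{rem:somkinadho} relative to $g$, so that $V' \times_S T$ is affinoid of the explicit shape described there. Such a refinement exists: conditions (i) and (ii) are automatic, since $S$ is affine and $g$ is standard \'etale (hence $T$ is affine); condition (iv) is met by choosing $V'$ to be a rational subdomain of an affinoid neighbourhood with Tate ring of sections, which exists because $X$ is analytic; and condition (iii)---that the images in $\mathcal{O}_{V'_{\mathrm{red}}}(V')$ of the finitely many coefficients occurring in a standard \'etale presentation of $\mathcal{O}_T(T)$ over $\mathcal{O}_S(S)$ lie in the image of $\mathcal{O}_{V'}^+(V')$---is arranged by further shrinking $V'$ around a given point, using that $\mathcal{O}_X^+ \to \mathcal{O}_{X_{\mathrm{red}}}$ is surjective as a morphism of sheaves. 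Moreover this refinement does not spoil the quasi-compact-intersection property: if $V'_1 \subseteq V_1$ and $V'_2 \subseteq V_2$ with $V_i \in \mathcal{V}$, then $V'_1 \cap V'_2$ is quasi-compact by Lemma \ref{lem : finite intersections in finite intersections}.

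Now, for each $V' \in \mathcal{V}'$, the universal property of the fibre product identifies $\pi^{-1}(V')$ with $V' \times_S T$, which is affinoid by the previous paragraph; as the $V'$ cover $X$, the family $\{\pi^{-1}(V') \colon V' \in \mathcal{V}'\}$ is an affinoid open cover of $Y$. For $V'_1, V'_2 \in \mathcal{V}'$ one has $\pi^{-1}(V'_1) \cap \pi^{-1}(V'_2) = \pi^{-1}(V'_1 \cap V'_2)$, where $V'_1 \cap V'_2$ is a quasi-compact open subset of $V'_1$ by the above. Covering it by finitely many rational subdomains of $V'_1$ and pulling them back along the projection $\pi^{-1}(V'_1) \to V'_1$ (note $\pi^{-1}(V'_1) = V'_1 \times_S T$ is affinoid, and preimages of rational subdomains under a morphism of affinoid adic spaces are rational subdomains, hence affinoid; equivalently, any morphism of affinoid adic spaces is quasi-compact), we conclude $\pi^{-1}(V'_1 \cap V'_2)$ is a finite union of affinoids, hence quasi-compact. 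Hence $\{\pi^{-1}(V')\}$ verifies condition (2) of Lemma \ref{lem : condition for qs} for $Y$, and therefore $Y$ is quasi-separated.

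The step I expect to be the main obstacle is the refinement in the second paragraph: one must arrange a single open cover of $X$ refining $\mathcal{V}$ on which all four hypotheses of Remark \ref{rem:somkinadho} simultaneously hold, so that the fibre products $V' \times_S T$ are genuinely affinoid and hence usable as the affinoid cover of $Y$ required by Lemma \ref{lem : condition for qs}; Lemma \ref{lem : finite intersections in finite intersections} is then exactly what guarantees that passing to this finer cover preserves quasi-compactness of pairwise intersections. The remaining points---the identification $\pi^{-1}(V') = V' \times_S T$ and the handling of preimages of quasi-compact opens---are routine.
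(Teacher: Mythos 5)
Your proposal is correct and follows essentially the same route as the paper's proof: both reduce to the criterion of Lemma \ref{lem : condition for qs}, refine the cover of $X$ so that each member (and its affinoid open subsets) has affinoid base change via Remark \ref{rem:somkinadho}, and handle pairwise intersections by covering the quasi-compact intersection with finitely many affinoids whose base changes remain affinoid. The only cosmetic difference is that you justify affinoidness of the pieces over an intersection by pulling back rational subdomains along the \'etale projection, whereas the paper invokes the affinoid-base-change property of subsets of the cover directly.
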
 
\begin{proof} 
There exists a 
family $\mathcal{V}$ of affinoid open subsets covering $X$ and
satisfying the following properties. 
\begin{enumerate}
\item For every $V_1,V_2 \in \mathcal{V}$, $V_1 \cap V_2$ is quasi-compact. 
\item If $V \in \mathcal{V}$ then for any affinoid open subset $V' \subseteq V$, 
 $V' \times_S T$ is affinoid. 
\end{enumerate}
Property (1) can be obtained using that
$X$ is quasi-separated. 
Observe that by Remark \ref{rem:somkinadho}, for every point $x \in X$, there exists an affinoid open neighbourhood $U$ of 
$x$ such that for every affinoid open set $U' \subseteq U$, $U' \times_S T$ is affinoid. 
Hence we have property (2).  


It follows that 
there exists a family $\mathcal{U}$ of affinoid open subsets that covers
$V_1 \cap V_2$ and for every $U \in \mathcal{U}$, 
  $U \times_S T$ is an affinoid open in $X \times_S T$. 
  Since $V_1 \cap V_2$ is quasi-compact, 
we can take $\mathcal{U}$ to be a finite family. 
  Note that
  for every $V_1,V_2 \in \mathcal{V}$, 
$$(V_1 \times_S T) \cap (V_2 \times_S T) = \bigcup_{U \in \mathcal{U}} U \times_S T.$$ 
 This implies that 
  $(V_1 \times_S T) \cap (V_2 \times_S T)$ is quasi-compact. 
 The lemma now follows from 
 Lemma \ref{lem : condition for qs}
 applied to $X \times_S T = \bigcup_{V \in \mathcal{V}} V \times_S T$. 
 \end{proof} }

\begin{defi} [separated] \label{defi:sepspemor}
\emph{Let $\alpha \colon X \to S$ be a specialization morphism. Then $\alpha$ is called} separated \emph{if $\alpha$ is quasi-separated and for every valuation ring $(x,A)$ of $X_{\text{red}}$ and for every center $s \in S$ of $\alpha_v((x,A))$, there exists at most one center $z \in X$ of $(x,A)$ with $\alpha(z) = s$.}
\end{defi}


\begin{defi} [partially proper]  \label{defi:parprospecmorre}
\emph{Let $\alpha \colon X \to S$ be a specialization morphism. Then $\alpha$ is called} partially proper \emph{if $\alpha$ is locally of finite type, quasi-separated and for every valuation ring $(x,A)$ of $X_{\text{red}}$ and for every center $s \in S$ of $\alpha_v((x,A))$, there exists a unique center $z \in X$ of $(x,A)$ with $\alpha(z) = s$.}
\end{defi}

\begin{defi} [proper] \label{def:propsepmor}
\emph{Let $\alpha \colon X \to S$ be a specialization morphism. Then $\alpha$ is called} proper \emph{if $\alpha$ is partially proper and quasi-compact.}
\end{defi}

As expected the definitions of a separated (resp. partially proper) specialization morphism admits the following reformulation.

\begin{lem} \label{lem:valucritsperef}
Let $\alpha \colon X \to S$ be a specialization morphism. The condition on centers of valuation rings in Definition \ref{defi:sepspemor} (resp. \ref{defi:parprospecmorre}) is equivalent to the following condition: For every commutative diagram
$$
\begin{tikzcd} [row sep = large, column sep = large] 
U \arrow[r, "i"] \arrow[d, "j"] &
X \arrow[d, "\alpha"] \\
V \arrow[r, "\beta"] &
S  
\end{tikzcd}
$$
with $U = \Spa(K,K^{+})$ and $V = \Spa(L,L^{+})$ spectra of analytic affinoid fields, $j$ a morphism of analytic
 adic spaces with $\mathcal{O}_V(V) \to \mathcal{O}_U(U)$ an isomorphism, $i$ a morphism of analytic
  adic spaces and $\beta$ a specialization morphism, then there is at most one (resp. unique) morphism of analytic adic spaces $f \colon V \to X$ such that
  the following diagram
   commutes
$$
\begin{tikzcd} [row sep = large, column sep = large] 
U \arrow[r, "i"] \arrow[d, "j"] &
X \arrow[d, "\alpha"] \\
V \arrow[r, "\beta"] \arrow[ru, "f"] &
S.
\end{tikzcd}
$$
\end{lem}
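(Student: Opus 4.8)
The plan is to translate both conditions into the language of the previous section and then to show that they are literally the same statement. We will use three structural facts: for an analytic affinoid field $\Spa(L,L^{+})$ one has $\Spa(L,L^{+})_{\text{red}}\simeq\Spec(L^{+}/L^{\circ\circ})$ with $L^{+}/L^{\circ\circ}$ a valuation ring (Example \ref{es:casaffielspe}); the stalk $\mathcal{O}_{X_{\text{red}},x}=k(x)^{+}/k(x)^{\circ\circ}$ is a valuation ring whose residue field $F_{x}$ is the residue field of the valuation $x$ (Lemma \ref{lem:stalforexref}); and $\Spa \kappa(x):=\Spa(k(x),k(x)^{+})\to X$ is a homeomorphism onto the generizations of $x$, compatibly with structure sheaves (\cite[(1.1.9)]{hub96}), so that $\Spa \kappa(x)_{\text{red}}=\Spec\mathcal{O}_{X_{\text{red}},x}\to X_{\text{red}}$ is the canonical map. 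Note also that the closed point of $\Spa(L,L^{+})$ has residue field $L$ and plus-ring $L^{+}$, so $\Spa(L,L^{+})=\Spa \kappa(\text{its closed point})$; hence any morphism of adic spaces out of $\Spa(L,L^{+})$ factors through $\Spa \kappa(z)\to X$, where $z$ is the image of the closed point.

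First I would go from a square to valuation-ring data. Given the square, $j$ being a morphism of adic spaces that is an isomorphism on global sections lets me identify $L=K$ with $L^{+}\subseteq K^{+}$ valuation rings of $K$, and $j$ then identifies $U_{\text{red}}$ with the generizations of the point $v_{0}\in V$ whose valuation ring is $K^{+}$. Factoring $i$ through $\Spa \kappa(x)\to X$ for $x:=i(\xi_{U})$ (where $\xi_{U}$ is the closed point of $U$) yields $k(x)\hookrightarrow K$ and a local map $k(x)^{+}\to K^{+}$; one checks that $A'':=k(x)^{+}\cap L^{+}$ (intersection inside $K$) is a valuation ring of $k(x)$ with fraction field $k(x)$, contains $\mathfrak{m}_{x}^{+}$, and satisfies $k(x)^{+}=(A'')_{\mathfrak{m}_{x}^{+}}$, so that $A:=A''/\mathfrak{m}_{x}^{+}$ is a valuation ring of $F_{x}$; this is $(x,A)$. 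The morphism $\beta\circ j=\alpha\circ i$, read through $\Spa \kappa(x)_{\text{red}}=\Spec\mathcal{O}_{X_{\text{red}},x}$, produces a center $s:=\beta(\xi_{V})$ of $\alpha_{v}((x,A))$ on $S$; and any lift $f$ determines, via $z:=f(\xi_{V})$ and the factorization of $f$ through $\Spa \kappa(z)$, a center $z\in X$ of $(x,A)$ with $\alpha(z)=s$.

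Conversely, starting from $(x,A)$ together with a center $s$ (and possibly $z$), I would build a canonical square: set $K:=k(x)$, $K^{+}:=k(x)^{+}$, so $U=\Spa \kappa(x)$ and $i$ is the canonical map; let $A'\subseteq k(x)^{+}$ be the preimage of $A$ under $k(x)^{+}\to F_{x}$. Then $A'$ is a valuation ring of $k(x)$ with fraction field $k(x)$, it contains $k(x)^{\circ\circ}$, its topologically nilpotent elements are exactly $k(x)^{\circ\circ}$, and $(A')_{\mathfrak{m}_{x}^{+}}=k(x)^{+}$; hence $(k(x),A')$ is an analytic affinoid field (a topologically nilpotent unit of $k(x)$ lies in $k(x)^{\circ\circ}\subseteq A'$), $V:=\Spa(k(x),A')$ has $V_{\text{red}}=\Spec(A'/k(x)^{\circ\circ})$, and $A'/k(x)^{\circ\circ}$ is precisely the preimage of $A$ under $\mathcal{O}_{X_{\text{red}},x}\twoheadrightarrow F_{x}$. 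Let $j$ be the canonical (global-section-iso) map $U\to V$. The crucial point is that ``$s$ is a center of $\alpha_{v}((x,A))$'' is exactly what makes the ring map $\mathcal{O}_{S}(S)\to\mathcal{O}_{X_{\text{red}},x}$ coming from $\alpha$ land, after projection to $F_{x}$, inside $A$; hence it factors through $A'/k(x)^{\circ\circ}$ and defines a specialization morphism $\beta\colon V\to S$ with $\beta(\xi_{V})=s$ and $\beta\circ j=\alpha\circ i$. Finally one verifies that lifts $f\colon V\to X$ of this square correspond bijectively and naturally to centers $z$ of $(x,A)$ with $\alpha(z)=s$: such an $f$ is, via its factorization through $\Spa \kappa(z)$ for $z=f(\xi_{V})$, the datum of a morphism $(k(z),k(z)^{+})\to(k(x),A')$, and the constraints $i=f\circ j$, $\alpha\circ f=\beta$ force $f(v_{0})=x$ and pin down the induced valuation ring of $F_{x}$ to be $A$ --- which is the same as exhibiting $z$ as a center of $(x,A)$ over $s$. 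Combining the two directions, ``at most one lift $f$'' $\Longleftrightarrow$ ``at most one center $z$'' gives the separated case (Definition \ref{defi:sepspemor}), and ``exactly one lift $f$'' $\Longleftrightarrow$ ``exactly one center $z$'' gives the partially proper case (Definition \ref{defi:parprospecmorre}).

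The main obstacle is the last verification in each direction: that a morphism of analytic adic spaces $f\colon\Spa(L,L^{+})\to X$ carries exactly the data of a center $z\in X$ of $(x,A)$, no more and no less, once the identifications above are in place. This rests on the compatibility of $\Spa \kappa(z)_{\text{red}}=\Spec\mathcal{O}_{X_{\text{red}},z}\to X_{\text{red}}$ with $\Spa \kappa(z)\to X$ under $(-)_{\text{red}}$, so that the adic and the scheme-theoretic pictures of ``generizations of $z$'' agree, and on the bookkeeping identities $(A')_{\mathfrak{m}_{x}^{+}}=k(x)^{+}$ and ``topologically nilpotent part of $A'$ equals $k(x)^{\circ\circ}$'', which together guarantee that $\Spec(A'/k(x)^{\circ\circ})$ sits inside $\Spec\mathcal{O}_{X_{\text{red}},x}$ as exactly the sub-chain cut out by $A$. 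Checking that the compatibilities with $\alpha$ and $\beta$ match on the two sides, and that the hypothesis ``$\mathcal{O}_{V}(V)\to\mathcal{O}_{U}(U)$ is an isomorphism'' corresponds precisely to ``$A'$ has full fraction field $k(x)$'', is then routine but must be done carefully.
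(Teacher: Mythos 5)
Your proposal is correct and follows exactly the route the paper intends: the paper's entire proof is the one-line assertion that the lemma is ``an immediate consequence of Example \ref{es:casaffielspe}'', and your argument is precisely that translation (squares of affinoid fields $\leftrightarrow$ valuation rings of $X_{\text{red}}$ with centers, via $\Spa(L,L^{+})_{\text{red}}\simeq\Spec(L^{+}/L^{\circ\circ})$ and $\mathcal{O}_{X_{\text{red}},x}=k(x)^{+}/k(x)^{\circ\circ}$) written out in full. No gaps; you have simply supplied the details the authors chose to omit.
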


\begin{proof}
This is an immediate consequence of Example \ref{es:casaffielspe}.
\end{proof}

\begin{rem} \label{rem : certain base change for proper}
\emph{If $f \colon X \to S$
 is a proper specialization morphism 
 and 
 $U \subseteq S$ is open then the induced morphism $f_{\lvert f^{-1}(U)} \colon f^{-1}(U) \to U$ is proper. 
 This is an easy consequence of Proposition \ref{prop : base change for lft}.}
\end{rem}

{
\begin{lem} \label{lem : base change for proper}
 Let $f \colon X \to S$ be a specialization morphism where $X$ is an analytic adic space
  and $S$ is an affine scheme. 
 Let $g \colon T \to S$ be a standard \'etale morphism of schemes. 
 If $f$ is (partially) proper then $f_T$ is (partially) proper where $f_T \colon X \times_S T \to T$ is the base change morphism. 
\end{lem}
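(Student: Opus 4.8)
The plan is to verify directly that the base change $f_T \colon X \times_S T \to T$ — which exists as an analytic adic space by Proposition \ref{prop:exifibprodspecmor} — satisfies each clause in the definition of a (partially) proper specialization morphism, reducing each clause to a result already established. Write $p \colon X \times_S T \to X$ for the projection morphism of adic spaces, so that $f \circ p = g \circ f_T$ by Proposition \ref{prop:exifibprodspecmor}. That $f_T$ is locally of finite type is immediate from Proposition \ref{prop : base change for lft}, since a partially proper $f$ is in particular locally of finite type and a standard \'etale $g$ is in particular \'etale. For quasi-separatedness: $S$ is affine, hence quasi-separated, so $X = f^{-1}(S)$ is quasi-separated because $f$ is; Lemma \ref{lem : base change for qs} then shows $X \times_S T$ is quasi-separated, and since every open subspace of a quasi-separated space is quasi-separated, $f_T^{-1}(W)$ is quasi-separated for every open $W \subseteq T$. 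In the proper case one also needs $f_T$ quasi-compact: $f$ quasi-compact and $S$ affine force $X$ quasi-compact; a quasi-compact open of the affine scheme $T$ is a finite union of principal opens $D(e) = \Spec E_e$, and $f_T^{-1}(D(e)) \cong X \times_S D(e)$ for the composite \'etale morphism $D(e) \hookrightarrow T \xrightarrow{g} S$; refining $D(e)$ into finitely many affine opens which are standard \'etale over $S$ and applying Remark \ref{rem:somkinadho} to cover $X$ over each such piece by finitely many affinoid opens with affinoid base change exhibits $f_T^{-1}(D(e))$ as a finite union of affinoids, hence quasi-compact.

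The remaining clause — the condition on centers of valuation rings — is, by Lemma \ref{lem:valucritsperef}, equivalent to the valuative criterion. So fix a commutative square over $f_T$ with $U = \Spa(K,K^{+})$ and $V = \Spa(L,L^{+})$ spectra of analytic affinoid fields, $j \colon U \to V$ inducing an isomorphism on global sections, $i \colon U \to X \times_S T$ a morphism of adic spaces, $\beta \colon V \to T$ a specialization morphism, and $f_T \circ i = \beta \circ j$. Then $p \circ i \colon U \to X$ and the specialization morphism $g \circ \beta \colon V \to S$ (composites as in Remark \ref{rem:compmoretad}) form a commutative square over $f$, since $f \circ p \circ i = g \circ f_T \circ i = g \circ \beta \circ j$. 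As $f$ is partially proper, Lemma \ref{lem:valucritsperef} yields a unique morphism of adic spaces $\widetilde{f} \colon V \to X$ with $\widetilde{f} \circ j = p \circ i$ and $f \circ \widetilde{f} = g \circ \beta$. Since $f \circ \widetilde{f} = g \circ \beta$, the universal property of $X \times_S T$ (Proposition \ref{prop:exifibprodspecmor}) produces a unique morphism of adic spaces $f' \colon V \to X \times_S T$ with $p \circ f' = \widetilde{f}$ and $f_T \circ f' = \beta$. One checks $f' \circ j = i$ by comparing the two morphisms $U \to X \times_S T$ after composing with $p$ (both give $p \circ i$) and with $f_T$ (both give $\beta \circ j = f_T \circ i$) and invoking the uniqueness in the universal property of $X \times_S T$ applied to the source $U$. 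Finally $f'$ is the only such filler: any other $f''$ with $f'' \circ j = i$ and $f_T \circ f'' = \beta$ has $p \circ f''$ a filler of the $f$-square over $(p \circ i, g \circ \beta)$, hence $p \circ f'' = \widetilde{f}$ by uniqueness in Lemma \ref{lem:valucritsperef}, whence $f'' = f'$ by the universal property of $X \times_S T$ again. This settles the partially proper case, hence (together with the previous paragraph) the proper case; retaining only the uniqueness assertions throughout gives the separated variant as well.

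I expect the main obstacle to be organizational rather than conceptual: one must keep careful track of the three flavours of morphism in play — morphisms of adic spaces, morphisms of schemes, and specialization morphisms — and of their composites in the sense of Remark \ref{rem:compmoretad}, so that both the universal property of Proposition \ref{prop:exifibprodspecmor} and the valuative criterion of Lemma \ref{lem:valucritsperef} are applied to genuinely commuting squares. The only other technical point is the quasi-compactness step, where one must first refine the base to standard \'etale affine pieces before Remark \ref{rem:somkinadho} can be invoked.
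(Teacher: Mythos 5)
Your proof is correct and follows essentially the same route as the paper: locally of finite type via Proposition \ref{prop : base change for lft}, quasi-separatedness via Lemma \ref{lem : base change for qs}, quasi-compactness by covering with good affinoids whose base changes remain affinoid, and the center condition via Lemma \ref{lem:valucritsperef}. The only difference is that you spell out in full the valuative-criterion argument (lifting through $X$ by partial properness of $f$ and then applying the universal property of $X \times_S T$), which the paper leaves as ``easily deduced''.
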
 
\begin{proof}
Suppose $f$ is partially proper.
By Lemma \ref{prop : base change for lft}, the morphism $f_T$ is locally of finite type.
Since $S$ is quasi-separated and $f$ is quasi-separated, $X$ is quasi-separated.
By Lemma \ref{lem : base change for qs}, 
$X_T$ is quasi-separated. In particular, the morphism $f_T$ is quasi-separated. 
The condition on centers of valuation rings in Definition \ref{defi:parprospecmorre}
is easily deduced from
Lemma \ref{lem:valucritsperef}. 
Hence $f_T$ is partially proper. 

Suppose in addition that $f$ is quasi-compact. 
We claim that $f_T$ is quasi-compact. 
We show first that $X_T$ is quasi-compact. Indeed, $X$ is quasi-compact and hence 
there exists a finite family $\mathcal{U}$ of affinoid open subsets $U$ of $X$ such that
$\mathcal{O}_U(U)$ is Tate, the restriction $f_{|U}$ is good and 
$X = \bigcup_{U \in \mathcal{U}} U$.
By Lemma \ref{lem:base change for goodness},
for every $U \in \mathcal{U}$,
 $U_T := U \times_S T$ is 
affinoid as well. Since $X_T = \bigcup_{U \in \mathcal{U}} U_T$,
 we deduce that $X_T$ is quasi-compact. 
By using the same argument, we deduce that if $W \subseteq T$ is standard open then 
$f_T^{-1}(W) = X_T \times_T W$ is quasi-compact.
This verifies the claim and concludes the proof. 
\end{proof}}

For the notion of taut, we mean \cite[Definition 5.1.2]{hub96}. 

\begin{prop} \label{prop:commorinvaprofrrep}
The properties of locally of ($^+$weakly) finite type, finite type, quasi-separated, separated, partially proper, proper and taut are stable under compositions of specialization morphisms with  morphisms of analytic adic spaces or morphisms of schemes, when they make sense.
\end{prop}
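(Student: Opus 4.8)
The statement bundles together stability of many properties (locally of finite type, $^+$weakly finite type, finite type, quasi-separated, separated, partially proper, proper, taut) under composition of a specialization morphism $\alpha\colon X\to S$ with either a morphism of adic spaces $f\colon Y\to X$ on the source side or a morphism of schemes $g\colon S\to T$ on the target side. Since each property is defined locally, the plan is to reduce to the affinoid/affine situation and then verify each property one at a time, treating the two directions of composition separately. For most properties the work has essentially already been done in the preceding lemmas (Remark \ref{rem:spmorlftrefde}, Proposition \ref{prop : base change for lft}, Lemma \ref{lem : base change for qs}, Lemma \ref{lem:valucritsperef}), so the bulk of the proof is careful bookkeeping rather than new ideas.

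\textbf{Source-side composition $\alpha\circ f\colon Y\to S$.} First I would treat \emph{locally of finite type}. Given $y\in Y$, set $x=f(y)$ and choose a good pair $(U,W)$ for $\alpha$ with $x\in U$. Shrinking, I may assume $Y\to X$ restricts to a morphism $V\to U$ of affinoids with $V$ containing $y$, and then invoke the fact (as in Remark \ref{rem:spmorlftrefde}) that $\mathcal{O}_Y^+(V)$ is integral over $\mathcal{O}_X^+(U)[E_V\cup\mathcal{O}_Y(V)^{\circ\circ}]$ for a finite set $E_V$; composing the integrality relations $\im(a)\subseteq\im(b)$ for $\alpha$ with this one gives a good pair for $\alpha\circ f$. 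The $^+$weakly finite type and finite type cases follow by the same argument plus quasi-compactness bookkeeping. \emph{Quasi-separated}: this is immediate from the definition since $(\alpha\circ f)^{-1}(U)=f^{-1}(\alpha^{-1}(U))$ and preimages of quasi-separated opens under morphisms of adic spaces are quasi-separated (a standard fact about adic spaces, or reduce to Lemma \ref{lem : condition for qs}). \emph{Separated} and \emph{partially proper}: use the valuative criterion reformulation of Lemma \ref{lem:valucritsperef}. Given a diagram testing $\alpha\circ f$ against a pair of analytic affinoid fields $U=\Spa(K,K^+)$, $V=\Spa(L,L^+)$, I compose with $f$ to get a lifting/uniqueness problem for $\alpha$, solve it there, and then use the valuative criterion for the morphism $f$ of adic spaces (Huber's Lemma 1.3.10) to lift back along $f$; uniqueness propagates the same way. \emph{Taut} is stable because $f$ is spectral and taut is a condition on the topology defined via quasi-compactness of closures of quasi-compact opens, preserved under the relevant maps (cf.\ \cite[\S5.1]{hub96}).

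\textbf{Target-side composition $g\circ\alpha\colon X\to T$.} Here \emph{quasi-separated} is trivial: $(g\circ\alpha)^{-1}(U)=\alpha^{-1}(g^{-1}(U))$ and $g^{-1}(U)$ is a quasi-separated open of $S$ whenever $U\subseteq T$ is quasi-separated open, since $g$ is a morphism of schemes. For \emph{locally of finite type}, given $x\in X$ pick a good pair $(U,W)$ for $\alpha$ with $W\subseteq S$ affine; since $g$ is a morphism of schemes, the image $g(W)$ lies in an affine open $W'\subseteq T$, and $\mathcal{O}_T(W')\to\mathcal{O}_S(W)$ composed with $\im(a)\subseteq\im(b)$ and the existing integrality data gives a good pair $(U,W')$ for $g\circ\alpha$ — the only subtlety being that one may need to shrink $W$ so that it maps into $W'$, which is harmless by the ``stable under enlarging $W$'' half of Remark \ref{rem:spmorlftrefde}. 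Finite type and $^+$weakly finite type again follow with quasi-compactness bookkeeping. For \emph{separated}/\emph{partially proper} I again use Lemma \ref{lem:valucritsperef}: a testing diagram for $g\circ\alpha$ gives, after composing the scheme morphisms, the data needed to apply the criterion for $\alpha$, producing the required (unique) $f\colon V\to X$; that $g\circ\beta$ composes correctly is automatic. \emph{Taut} on the target side is preserved since $g$ is continuous and the relevant closure-of-quasi-compact condition pulls back along $\alpha$ exactly as in Huber's treatment.

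\textbf{Expected main obstacle.} The genuinely delicate point is the \emph{locally of finite type} case under source-side composition: one must simultaneously track the integral-closure description of $\mathcal{O}_Y^+$ over $\mathcal{O}_X^+$ (which involves adjoining topologically nilpotent elements, as in the proof of Remark \ref{rem:spmorlftrefde}) and the integrality of $\im(b)$ over $\im(a)[E]$ coming from goodness of $\alpha$, and splice these into a single finite integral generating set for $\im(b')$ over $\im(a')[E']$ after passing through $\mathcal{O}_{Y_{\text{red}}}$. This is a finite chain of standard facts (transitivity of integrality, that a topologically nilpotent element dies in the reduction by Lemma \ref{lem:prshinve}) but it is the one place where the argument is not purely formal; everything else is either a direct unwinding of definitions or an application of the valuative criterion in Lemma \ref{lem:valucritsperef}.
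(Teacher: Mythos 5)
Your proposal is correct and follows essentially the same route as the paper: a property-by-property check in which the only substantive case is locally of finite type under source-side composition, handled exactly as you describe by shrinking to a good pair via Remark \ref{rem:spmorlftrefde}, killing the topologically nilpotent generators in the reduction via Lemma \ref{lem:prshinve}, and splicing the two integrality relations by transitivity, while separated and partially proper are reduced to Lemma \ref{lem:valucritsperef} and the corresponding valuative criteria for adic spaces and schemes. The paper is even terser than you are on quasi-separated, taut, and the target-side finite type case, dismissing them as obvious or clear from the definitions.
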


\begin{proof}
Let us fix $\alpha \colon X \to S$ a specialization morphism, $f \colon Y \to X$ a morphism of analytic 
adic spaces and $g \colon S \to T$ a morphism of schemes.
\begin{enumerate}
\item (locally of finite type) Suppose $\alpha$ is locally of finite type and $f$ is locally of $^{+}$weakly finite type. We show $\alpha \circ f$ is locally of finite type. Let $y \in Y$. Since $\alpha$ is locally of finite type, there exists an affinoid open neighbourhood $U \ni f(y)$ in $X$ and an affine open subscheme $W \subseteq S$ such that $\alpha(U) \subseteq W$ and for the natural homomorphisms 
\begin{align*}
&a \colon \mathcal{O}_S(W) \to \mathcal{O}_{X_{\text{red}}}(U) \\
&b \colon \mathcal{O}_X^+(U) \to \mathcal{O}_{X_{\text{red}}}(U)
\end{align*}
we have that $\im(a) \subseteq \im(b)$ and there is a finite subset $E \subseteq \im(b)$ with $\im(b)$ integral over $\im(a)[E]$. 
Shrinking $U$ if necessary (cf. Remark \ref{rem:spmorlftrefde}), since $f$ is locally of $^{+}$weakly finite type we can assume
 that there exists an open affinoid $V \ni y$ of $Y$ such that $f(V) \subseteq U$ and $\mathcal{O}^+_{Y}(V)$ is integral over $\mathcal{O}^+_{X}(U)[E' \cup \mathcal{O}_Y(V)^{\circ \circ}]$ for some finite subset $E' \subseteq \mathcal{O}^+_{Y}(V)$. 

We claim that the composition $V \to U \to W$ is good. 
Indeed, we have the following commutative diagram where we add appropriate labels. 
 $$
\begin{tikzcd} [row sep = large, column sep = large] 
\mathcal{O}_X^+(U) \arrow[d,"b"] \arrow[r] &   \mathcal{O}_Y^+(V)  \arrow[d,"b'"]  \\ 
\mathcal{O}_{X_{\text{red}}}(U) \arrow[r]& \mathcal{O}_{Y_{\text{red}}}(V)   \\
 \mathcal{O}_S(W) \arrow[u,"a"] \arrow[ur,"a'"] \\  
\end{tikzcd}
$$
A diagram chase shows that $\mathrm{im}(a') \subseteq \mathrm{im}(b')$. 
We deduce from Lemma \ref{lem:prshinve} that $\mathrm{im}(b')$ is integral over the image of $\mathcal{O}^+_{X}(U)[E']$ 
in $\mathcal{O}_{Y_{\text{red}}}(V)$ . Let $\widetilde{E'}$ denote the image in $\mathcal{O}_{Y_{\text{red}}}(V)$
of $E'$. Similarly, let ${E_V}$ denote the image of $E$ in $\mathcal{O}_{Y_{\text{red}}}(V)$.
We see that the image of $\mathcal{O}^+_{X}(U)$ in $\mathcal{O}_{Y_{\text{red}}}(V)$
is integral over $\mathrm{im}(a')[{E}_V]$. Hence, $\mathrm{im}(b')$ is integral 
over $\mathrm{im}(a')[{E}_V \cup \widetilde{E'}]$. 
This verifies the claim. 

%
%
%
\item (finite type) We just need to check the following case: $\alpha$ and $g$ are locally of finite type
 then so is $g \circ \alpha$. But this
  is clear from the definitions. 
   The remaining cases follow from (1), since the condition of quasi-compactness is a topological one.
\item (quasi-separated) This is obvious.
\item (separated) This follows immediately from Lemma \ref{lem:valucritsperef} and the analogous statement for schemes/adic spaces.
\item (partially proper) This follows immediately from Lemma \ref{lem:valucritsperef} and the analogous statement for adic spaces.
\item (proper) This follows from (5).
\item (taut) This is obvious. 
\end{enumerate}
\end{proof}

{\begin{lem} \label{lem : affinoid to affine sep and taut}
Let $f \colon X \to S$ be a specialization morphism where $X$ is affinoid and $S$ is affine.
Then $f$ is separated.
If in addition $f$ is good and $\mathcal{O}_X(X)$ is Tate 
then $f$ is taut.
\end{lem}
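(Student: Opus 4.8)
The plan is to treat the two assertions separately and reduce each to routine facts about affinoid adic spaces; the hypotheses that $f$ is good and that $\mathcal{O}_X(X)$ is Tate are needed only for tautness. Write $X=\Spa(A,A^+)$. For separatedness I would use the valuative reformulation of Lemma~\ref{lem:valucritsperef}, so it suffices to establish its ``at most one'' clause. Given a commutative square there with $U=\Spa(K,K^+)$, $V=\Spa(L,L^+)$ and $j\colon U\to V$ inducing an isomorphism $L=\mathcal{O}_V(V)\xrightarrow{\sim}\mathcal{O}_U(U)=K$, any lift $f\colon V\to X$ is, since $V$ and $X$ are affinoid, the morphism attached to a unique continuous ring homomorphism $\varphi_f\colon A\to L$ carrying $A^+$ into $L^+$; the constraint $f\circ j=i$ says exactly that $\varphi_f$ followed by the isomorphism $L\xrightarrow{\sim}K$ is the ring homomorphism underlying $i$, and since $L\to K$ is invertible this pins down $\varphi_f$, hence $f$. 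So there is at most one such $f$, which is what separatedness demands (goodness and the Tate hypothesis play no role here).

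For tautness, assume moreover that $f$ is good and that $A:=\mathcal{O}_X(X)$ is Tate; write $S=\Spec(D)$ and let $a\colon D\to\mathcal{O}_{X_{\text{red}}}(X)=:C$ and $b\colon A^+\to C$ be the structural maps. The heart of the argument is that $\alpha^{-1}(V)$ is quasi-compact for every quasi-compact open $V\subseteq S$. As $V$ is a finite union of principal opens $D(g)$, $g\in D$, and $\alpha^{-1}$ commutes with unions, it suffices to treat $V=D(g)$. Goodness gives $\im(a)\subseteq\im(b)$, so choose $h\in A^+$ with $b(h)=a(g)$. Reading off the underlying continuous map of $\alpha$ from the local homomorphisms $D\to\mathcal{O}_{X_{\text{red}},x}$ and using $\mathcal{O}_{X_{\text{red}},x}\cong k(x)^+/k(x)^{\circ\circ}$ (Lemma~\ref{lem:stalforexref}), one sees that $x\in\alpha^{-1}(D(g))$ precisely when the image of $h$ in the valuation ring $k(x)^+$ is a unit, i.e. $|h(x)|=1$; since $h\in A^+$ forces $|h(x)|\le1$ throughout $X$, this locus equals $\{x\in X:|h(x)|\ge1\}$, which is the rational subdomain of $X$ with numerator $1$ and denominator $h$, and therefore an affinoid open, in particular quasi-compact. (Completeness and the Tate hypothesis enter here so that the usual theory of rational subdomains of $\Spa(A,A^+)$ applies.) Hence $\alpha^{-1}(V)$ is a finite union of quasi-compact opens, so quasi-compact.

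It remains to match this with Huber's \cite[Definition~5.1.2]{hub96}. First, $\alpha$ is quasi-separated: for any open $U\subseteq S$ the preimage $\alpha^{-1}(U)$ is an open subspace of the affinoid --- hence quasi-separated --- space $X$, and open subspaces of quasi-separated adic spaces are quasi-separated. Second, given a quasi-compact open $V\subseteq S$ and a quasi-compact open $U\subseteq\alpha^{-1}(V)$, the closure of $U$ in $\alpha^{-1}(V)$ is a closed subset of the quasi-compact space $\alpha^{-1}(V)$ (quasi-compact by the previous paragraph), hence quasi-compact. These two points are exactly the defining conditions for $\alpha$ to be a taut morphism.

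I expect the only subtle step to be the computation identifying $\alpha^{-1}(D(g))$ with a rational subdomain: one has to be careful about how the continuous map underlying a specialization morphism is extracted from the local maps $D\to\mathcal{O}_{X_{\text{red}},x}=k(x)^+/k(x)^{\circ\circ}$, and to recognize that condition~(1) in the definition of a good morphism (the inclusion $\im(a)\subseteq\im(b)$) is precisely what produces the lift $h\in A^+$ of $a(g)$. Everything else is formal bookkeeping.
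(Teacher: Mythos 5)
Your proof is correct in substance but takes a genuinely different route from the paper's. For separatedness, the paper quotes Huber's result on uniqueness of centers of valuations on affinoid adic spaces (\cite[Lemma 1.3.6(ii)]{hub96}) together with Lemma~\ref{lem:valutopsamasadsp}, whereas you run the valuative criterion of Lemma~\ref{lem:valucritsperef} through the universal property of affinoid adic spaces ($\Hom(V,\Spa(A,A^+))=\Hom_{\mathrm{cont}}((A,A^+),(L,L^+))$); both work, and yours is arguably more self-contained. For quasi-compactness of $f$, the paper identifies $f^{-1}(\Spec(D_g))$ with the fibre product $X\times_S\Spec(D_g)$, which is affinoid by Remark~\ref{rem:somkinadho} (goodness enters there to lift the coefficient $g$ into $A^+$); you instead lift $a(g)$ directly to $h\in A^+$ and compute $\alpha^{-1}(D(g))=\{x:\lvert h(x)\rvert=1\}=X(\tfrac{1}{h})$, a rational subdomain. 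Your computation is correct (the point is that $b(h)$ and $a(g)$ have the same stalk in $\mathcal{O}_{X_{\text{red}},x}=k(x)^+/k(x)^{\circ\circ}$, and units lift along the local surjection $\mathcal{O}_{X,x}^+\to k(x)^+/k(x)^{\circ\circ}$), and it buys a more elementary argument that sidesteps the fibre-product machinery entirely; as a bonus it does not actually use that $\mathcal{O}_X(X)$ is Tate.

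Two small points to tighten. First, Definition~\ref{defi:sepspemor} makes quasi-separatedness part of ``separated,'' so the quasi-separatedness observation you defer to the tautness paragraph should be stated up front with the first assertion (it uses none of the extra hypotheses, so nothing breaks). Second, your closing sentence misquotes \cite[Definition 5.1.2]{hub96}: a taut \emph{morphism} requires $f^{-1}(V)$ to be taut for every \emph{taut} open $V\subseteq S$, not merely that closures of quasi-compact opens inside $f^{-1}(V)$ be quasi-compact for $V$ quasi-compact. This is easily repaired: having shown $f$ quasi-compact and quasi-separated, either run the standard bootstrapping argument (a quasi-compact open $U\subseteq f^{-1}(V)$ has image inside a quasi-compact open $V'\subseteq V$, its closure lies in $f^{-1}(\overline{V'}\cap V)$, which is closed in a quasi-compact set) or simply invoke \cite[Lemma 5.1.3(i),(iii)]{hub96} as the paper does.
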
 
\begin{proof}  
 By \cite[Theorem 3.5(i)]{HuCv}, the space $X$ is spectral.
 It follows that $X$ is quasi-separated.  We deduce from this that the morphism $f$ is quasi-separated as well. 
  The fact that the morphism $f$ is separated can be deduced from
  \cite[Lemma 1.3.6(ii)]{hub96} and
  Lemma \ref{lem:valutopsamasadsp}.  
  
  We now prove the second assertion of the lemma assuming in addition that 
  $f$ is good and $\mathcal{O}_X(X)$ is Tate. 
 We claim the morphism $f$ is quasi-compact. Indeed, let $W \subseteq S$ be
 a quasi-compact open subset. We 
 must verify that $f^{-1}(W)$ is quasi-compact.
 Let $R$ be a ring such that $S = \mathrm{Spec}(R)$.
 Since every quasi-compact open subset of $S$ can be covered by finitely many 
 affine open subsets of the form $\mathrm{Spec}(R_f)$ for $f \in R$, we can assume 
 without loss
 of generality that
 $W$ is of the form $\mathrm{Spec}(R_f)$ where $f \in R$. 
  By Remark \ref{rem:somkinadho}, 
  $X \times_S W$ is affinoid. One checks easily that 
  $f^{-1}(W)$ is isomorphic as an adic space to $X \times_S W$. 
  In particular, it must be quasi-compact. 
  Observe that as $f$ is quasi-compact and $X$ is quasi-separated,
   $f$ must be spectral. 
  By \cite[Lemma 5.1.3(i),(iii)]{hub96}, we see that $f$ is taut. 
 
  \end{proof} 
}

The next proposition gives a crucial example of a proper specialization morphism.

\begin{prop} \label{prop:specmorarproprepov}
Let $\mathfrak{X}$ be a type (S) formal scheme. The specialization morphism (cf. \eqref{eq:nearbycuspe})
\[
\lambda_{\mathfrak{X}} \colon \mathfrak{X}_{\eta} \to \mathfrak{X}_s
\]
is proper in the sense of Definition \ref{def:propsepmor}.
\end{prop}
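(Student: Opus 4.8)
The plan is to verify the two requirements of Definition \ref{def:propsepmor}: that $\lambda_{\mathfrak{X}}$ is partially proper (Definition \ref{defi:parprospecmorre}) and quasi-compact, i.e.\ that it is locally of finite type, quasi-separated, quasi-compact, and satisfies the existence-and-uniqueness center condition, the last of which I will check via the valuative criterion of Lemma \ref{lem:valucritsperef}. All of these except the existence half of the valuative criterion are local on $\mathfrak{X}$, so for them one reduces to $\mathfrak{X} = \Spf(A)$ with $I = (a_1,\dots,a_n)$ a finitely generated ideal of definition. Recall then that $\mathfrak{X}_s = \Spec(A_0)$ with $A_0 = A/\sqrt{I}$, that $\mathfrak{X}_{\eta} = d(\Spf A)$ is the analytic locus of $\Spa(A,A)$ --- hence the \emph{finite} union of the affinoid rational subsets $U_i = \{x : |a_j(x)| \le |a_i(x)| \ne 0 \ \forall j\}$ (in type (S)(b) over $k^{\circ}$ one has $I=(t)$ and a single chart $\mathfrak{X}_{\eta} = \Spa(A[1/t],\widehat{A})$, $\widehat{A}$ the integral closure of $A$ in $A[1/t]$) --- and that on global sections $\lambda_{\mathfrak{X}}$ is induced by the continuous map $A \to \mathcal{O}^+_{\mathfrak{X}_{\eta}}(\mathfrak{X}_{\eta})$ of \eqref{eq:thepsisneacu} precomposed with $A \twoheadrightarrow A_0$.

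\emph{Locally of finite type, quasi-separated, quasi-compact.} For the first, I claim each $(U_i,\mathfrak{X}_s)$ is a good pair (Definition \ref{def:lftspmorref}). Writing $\mathcal{O}^+_{\mathfrak{X}_{\eta}}(U_i) = A_i^+$, the integral closure of $A\langle a_1/a_i,\dots,a_n/a_i\rangle$ in $\mathcal{O}_{\mathfrak{X}_{\eta}}(U_i)$, the maps $a\colon A_0 \to \mathcal{O}_{(U_i)_{\text{red}}}(U_i)$ and $b\colon A_i^+ \to \mathcal{O}_{(U_i)_{\text{red}}}(U_i)$ both factor the reduction of $A \to A_i^+$, so $\im(a)\subseteq\im(b)$; and since every element of $A\langle a_1/a_i,\dots,a_n/a_i\rangle$ differs from an $A$-polynomial in the $a_j/a_i$ by an element of $I\cdot A\langle a_1/a_i,\dots,a_n/a_i\rangle$, which maps to $0$ in $\mathcal{O}_{(U_i)_{\text{red}}}(U_i)$ (as $I$ consists of topologically nilpotent elements, Lemma \ref{lem:prshinve}), the ring $\im(b)$ is integral over $\im(a)[E_i]$ where $E_i$ is the finite set of images of the $a_j/a_i$ (in type (S)(b) one may take $E=\varnothing$ since $\widehat{A}$ is already integral over $A$). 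For quasi-separatedness, $\mathfrak{X}_{\eta}$ is covered by the affinoid open subspaces $\mathfrak{X}'_{\eta}$ of the affine formal opens $\mathfrak{X}'\subseteq\mathfrak{X}$, whose pairwise intersections are quasi-compact --- finite unions of affinoid rational subsets, by the description above applied to a finite affine cover of $\mathfrak{X}'\cap\mathfrak{X}''$ --- so Lemma \ref{lem : condition for qs} shows $\mathfrak{X}_{\eta}$ is quasi-separated, hence so is the morphism $\lambda_{\mathfrak{X}}$. Finally $\lambda_{\mathfrak{X}}$ is quasi-compact since the preimage of an affine open $W\subseteq\mathfrak{X}_s$ is the generic fibre of the corresponding affine formal open $\Spf(A')$ (again of type (S)), which is a finite union of affinoid rational subsets, hence quasi-compact.

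\emph{The center condition.} Given a square as in Lemma \ref{lem:valucritsperef} with $U=\Spa(K,K^+)$, $V=\Spa(L,L^+)$ and $\mathcal{O}_V(V)\xrightarrow{\sim}\mathcal{O}_U(U)$, identify $L=K$ as topological rings; then $L^{\circ \circ}=K^{\circ \circ}$, and $L^+\subseteq K^+$ because $j$ is a morphism of adic spaces. Since $\Spa(K,K^+)$ has a unique closed point whose image lies in some affine formal open of $\mathfrak{X}$, through which (by commutativity of the square and Example \ref{es:casaffielspe}) the image of $V_{\text{red}}=\Spec(L^+/L^{\circ \circ})$ factors as well, we may again take $\mathfrak{X}=\Spf(A)$. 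Then $i$ corresponds to a continuous $\varphi\colon A\to K^+$ with $\varphi(I)\ne 0$ (this being what places the image in the analytic locus $\mathfrak{X}_{\eta}$), and $\beta$ to a homomorphism $A_0\to L^+/L^{\circ \circ}$; commutativity forces the reduction $A\to K^+/K^{\circ \circ}$ of $\varphi$ to land in the image of the injection $L^+/L^{\circ \circ}\hookrightarrow K^+/K^{\circ \circ}$, whence $\varphi(A)\subseteq L^++K^{\circ \circ}=L^+$ (using $K^{\circ \circ}\subseteq L^+$). So $\varphi$ factors through a continuous $\psi\colon A\to L^+=\mathcal{O}^+_V(V)$ with $\psi(I)\ne 0$, which defines the required lift $f\colon V\to\mathfrak{X}_{\eta}$ with $f\circ j=i$ and $\lambda_{\mathfrak{X}}\circ f=\beta$; it is unique because $\psi$ is recovered from $\varphi$ via $L^+\hookrightarrow K^+$. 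This establishes partial properness, and with quasi-compactness, properness. The technical heart --- and the step I expect to be the main obstacle --- is precisely this last one: the reduction to the affine case and the exact dictionary between morphisms $\Spa(K,K^+)\to\mathfrak{X}_{\eta}$ and continuous homomorphisms $A\to K^+$ with image in the analytic locus; one must also bear in mind that $\mathfrak{X}_{\eta}$ need not be affinoid in type (S)(a), so the finiteness and quasi-compactness arguments genuinely depend on the finite affinoid cover $\{U_i\}$ rather than a single chart.
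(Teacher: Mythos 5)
Your proposal is correct in substance but takes a genuinely more hands-on route than the paper. The paper outsources almost everything to Huber: quasi-compactness and quasi-separatedness are quoted from (1.9.3) of \cite{hub96}, and the entire center condition is disposed of in one line by citing the universal property of the specialization map in \cite[Proposition 1.9.1(c)]{hub96}; only the locally-of-finite-type verification is done by hand, and in case (S)(a) it is organized differently from yours, namely via the factorization $\mathfrak{X}_{\eta} \to \Spa(A,A) \to \mathfrak{X}_s$ (with a footnote extending Definition \ref{def:lftspmorref} to the non-analytic space $\Spa(A,A)$, where $\im(a) = \im(b)$ because higher coherent cohomology on affine formal schemes vanishes) rather than via your finite rational cover $\{U_i\}$. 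Your computation on the charts $U_i$ --- that $\im(b)$ is integral over $\im(a)[E_i]$ with $E_i$ the images of the $a_j/a_i$, since $I\cdot A\langle a_1/a_i,\dots,a_n/a_i\rangle$ dies in the reduction --- is correct and avoids the non-analytic detour; your explicit proof that $\varphi(A)\subseteq L^+$ makes transparent exactly what Huber's universal property is doing, at the cost of redoing it. Three points to tighten: (i) in reducing the valuative criterion to one chart, choose the affine formal open to contain the image under $\beta$ of the \emph{closed point of $V$} (not merely of $j(\text{closed point of }U)$, which is only a generalization of it); stability of opens under generalization then captures all of $\beta(V_{\text{red}})$ and hence, by commutativity, all of $i(U)$, and the same device shows any two lifts land in a common chart, which your uniqueness argument tacitly needs; (ii) in type (S)(b) you must also check $\varphi(B)\subseteq L^+$ for $B$ the integral closure of $A$ in $A[1/s]$, which is automatic because $L^+$ is integrally closed in $L=K$; (iii) your quasi-separatedness argument uses that $\mathfrak{X}'\cap\mathfrak{X}''$ is quasi-compact, which is not automatic for an arbitrary type (S) formal scheme but does hold after restricting to the preimage of a quasi-separated open of $\mathfrak{X}_s$, which is all the definition of a quasi-separated morphism requires. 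None of these is a genuine gap.
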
 

\begin{proof}
Topologically the morphism $\lambda_{\mathfrak{X}}$ is constructed in \cite[Proposition 1.9.1]{hub96}. By (1.9.3) in loc.cit.,  $\lambda_{\mathfrak{X}}$ is quasi-compact and quasi-separated (for quasi-separated one also uses Lemma \ref{lem : condition for qs}). Thus it remains to show that $\lambda_{\mathfrak{X}}$ is locally of finite type and satisfies the valuative criterion for a partially proper morphism (cf. Definition \ref{defi:parprospecmorre}).

We begin by proving $\lambda_{\mathfrak{X}}$ is locally of finite type. We can assume $\mathfrak{X} = \Spf (A)$. For this we treat the type (S)(a) and (S)(b) cases separately. Suppose first $\mathfrak{X}$ is of type (S)(a) (in particular this means $\mathfrak{X}_{\eta} = \Spa(A,A)_{a}$ is an open subspace of $\Spa(A,A)$). In this case it suffices to prove the specialization morphism $\Spa(A,A) \to \mathfrak{X}_s$ is locally of finite type\footnote{Strictly speaking, as $\Spa(A,A)$ is not necessarily analytic, we did not define what it means for such a morphism to be locally of finite type, but the same definition as in \ref{def:lftspmorref} works and as in Proposition \ref{prop:commorinvaprofrrep}, compositions (whenever they make sense) of locally of finite type morphisms are again locally of finite type. So once $\Spa(A,A) \to \mathfrak{X}_s$ is proven to be locally of finite type, then so is the composition $\mathfrak{X}_{\eta} \to \Spa(A,A) \to \mathfrak{X}_s$, as desired.}. We have a commutative diagram
$$
\begin{tikzcd} [row sep = large, column sep = large] 
\mathcal{O}_{\mathfrak{X}}(\mathfrak{X}) \arrow[r, "i"] \arrow[d, "q"] &
\mathcal{O}^{+}_{\Spa(A,A)}(\Spa(A,A)) \arrow[d, "b"] \\
\mathcal{O}_{\mathfrak{X}_s}(\mathfrak{X}_s) \arrow[r, "a"] &
\mathcal{O}^{+}_{\Spa(A,A)_{\text{red}}}(\Spa(A,A)) 
\end{tikzcd}
$$
where $i$ is the identity map and $q$ is a surjection by vanishing of higher (coherent) cohomology on affine spaces. Thus $\im(a) = \im(b)$ and the map $\Spa(A,A) \to \mathfrak{X}_s$ is clearly locally of finite type. Suppose now $\mathfrak{X}$ is of type (S)(b). In this case $\mathfrak{X}_{\eta} = \Spa(A[\tfrac{1}{s}],B)$, where $s \in A$ is such that $sA$ is an ideal of definition of $A$ and $B$ is the integral closure of $A$ in $A[\tfrac{1}{s}]$. We have a commutative diagram 
$$
\begin{tikzcd} [row sep = large, column sep = large] 
\mathcal{O}_{\mathfrak{X}}(\mathfrak{X}) \arrow[r, "i"] \arrow[d, "q"] &
\mathcal{O}^{+}_{\mathfrak{X}_{\eta}}(\mathfrak{X}_{\eta}) \arrow[d, "b"] \\
\mathcal{O}_{\mathfrak{X}_s}(\mathfrak{X}_s) \arrow[r, "a"] &
\widetilde{\mathcal{O}_{\mathfrak{X}_{\eta}}^+}(\mathfrak{X}_{\eta}) 
\end{tikzcd}
$$
where again by vanishing of higher cohomology $q$ is a surjection. Thus $\im(a) = \im(a \circ q) \subseteq \im(b)$. Moreover the map $i \colon A \to B$ is integral (as remarked above). Thus $\im(b)$ is integral over $\im(a)$ and this shows $\lambda_{\mathfrak{X}}$ is locally of finite type.

Finally we show $\lambda_{\mathfrak{X}}$ satisfies the valuative criterion for partially proper morphisms. In fact this is an immediate consequence of the universal property satisfied by $\text{sp}$. Indeed given a commutative diagram (as in Lemma \ref{lem:valucritsperef})
$$
\begin{tikzcd} [row sep = large, column sep = large] 
U \arrow[r, "i"] \arrow[d, "j"] &
\mathfrak{X}_{\eta} \arrow[d, "\lambda_{\mathfrak{X}}"] \\
V \arrow[r, "\beta"] &
\mathfrak{X}_s  
\end{tikzcd}
$$
by \cite[Proposition 1.9.1(c)]{hub96}, there exists a unique morphism $f \colon V \to \mathfrak{X}_{\eta}$, which makes the relevant diagram commute. 
\end{proof}

For later use we remark the following results.

\begin{lem} \label{lem:hub1313}
Let $\alpha \colon X \to S$ be a partially proper specialization morphism. Then for every quasi-compact subset $T$ of $X$, the closure $\overline{T}$ of $T$ in $X$ is quasi-compact.
\end{lem}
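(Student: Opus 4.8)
The plan is to reduce to the case where $X$ is affinoid (and $\alpha$ good, $\mathcal{O}_X(X)$ Tate), where one can hope to argue with the valuative criterion and the structure of the specialization morphism. First I would reduce to the situation where $S$ is affine: since $T$ is quasi-compact, $\alpha(T)$ is contained in a finite union of affine opens $W_i \subseteq S$, and replacing $T$ by $T \cap \alpha^{-1}(W_i)$ (still quasi-compact) and $X$ by $\alpha^{-1}(W_i)$ (still partially proper over $W_i$ by Remark~\ref{rem : certain base change for proper}), it suffices to treat each piece; note $\overline{T} = \bigcup_i \overline{T \cap \alpha^{-1}(W_i)}$ up to the usual care, and a finite union of quasi-compact sets is quasi-compact. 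So assume $S$ affine. Then, using that $\alpha$ is locally of finite type, cover a neighbourhood of the quasi-compact set $T$ by finitely many good pairs $(U_j, W)$ with $U_j$ affinoid and $\mathcal{O}_{U_j}(U_j)$ Tate (shrinking via Remark~\ref{rem:spmorlftrefde}), and again reduce to the case $X = U_j$ affinoid, $\alpha$ good, $\mathcal{O}_X(X)$ Tate, and $T$ quasi-compact open (or just quasi-compact).

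The key step is then to show that for a \emph{proper} specialization morphism $\alpha \colon X \to S$ with $X$ affinoid, $S$ affine, $\alpha$ good and $\mathcal{O}_X(X)$ Tate, the space $X$ itself is quasi-compact and spectral (Lemma~\ref{lem : affinoid to affine sep and taut} already gives tautness, and $X$ affinoid is spectral by \cite[Theorem 3.5(i)]{HuCv}), so that \emph{every} subset has quasi-compact closure inside $X$ — in fact $X$ being spectral means $\overline{T}$ is the closure of a quasi-compact set in a spectral space, which is quasi-compact (closures of quasi-compact sets in spectral spaces, or even just the whole spectral space, are quasi-compact). Actually the cleanest route: in a spectral space every closed subset is quasi-compact, and $\overline{T}$ is closed. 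So once we are reduced to $X$ affinoid, the statement is immediate. The content is therefore entirely in the reduction, and the only real subtlety is checking that after the reductions we genuinely land in the affinoid/good/Tate situation — this is where partial properness (via Remark~\ref{rem : certain base change for proper} and Lemma~\ref{lem:valucritsperef}) and local finiteness of type (via Remark~\ref{rem:spmorlftrefde}) are used, together with Lemma~\ref{lem : affinoid to affine sep and taut}.

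The main obstacle I anticipate is that restricting $T$ to the affinoid pieces $U_j$ replaces $\overline{T}$ (closure in all of $X$) with the closures $\overline{T \cap U_j}$ taken in $U_j$, and these need not obviously reassemble: a priori $\overline{T}$ in $X$ could have points in some $U_k$ that are limits of the part of $T$ lying in a different $U_j$. This is precisely the failure of closures to be computed locally, and it is where partial properness should do real work — the valuative criterion of Definition~\ref{defi:parprospecmorre} (equivalently Lemma~\ref{lem:valucritsperef}) controls specializations of points of $X$ lying over a fixed point of $S$, and one should be able to argue that any point in $\overline{T}$ is a specialization of a point of $T$ within a single good affinoid, because specializations in $X$ project to specializations in the (affine, hence spectral) scheme $S$ and can be lifted. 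Concretely: given $x \in \overline{T}$, pass to $\Spa(k(x), k(x)^+)$, find a rank-one generization $y$ with $y \in T$ and an affinoid neighbourhood containing both, using tautness of $\alpha$ (Lemma~\ref{lem : affinoid to affine sep and taut}) to ensure such a neighbourhood of the quasi-compact set $\{x\} \cup (\text{generizations})$ can be taken quasi-compact. I expect the cited compactification machinery (Theorem~\ref{thm:compspemor}, or the Huber analogue \cite[Lemma 5.1.4]{hub96}) is what one really invokes here, so if available I would instead embed $X$ into a proper $\bar X$ over $S$ and use that $\bar X$ is quasi-compact, whence $\overline{T}$, being closed in the quasi-compact $\bar X \cap X = X$... — but absent that, the valuative-criterion argument above is the fallback.
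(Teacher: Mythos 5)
Your reduction to the affinoid case does not close, and you have in fact correctly diagnosed why: the closure $\overline{T}$ is taken in $X$, and for a partially proper but non-quasi-compact $\alpha$ the space $X$ itself is not quasi-compact, so $\overline{T}$ can a priori contain specializations of points of $T$ escaping any finite union of affinoids covering $T$. Neither of your two fallbacks repairs this. The pointwise valuative argument (``given $x \in \overline{T}$, find a quasi-compact neighbourhood containing $x$ and a generization in $T$'') only shows that each point of $\overline{T}$ admits such a neighbourhood; it produces no finite subcover of $\overline{T}$, which is the entire content of the lemma. The compactification route is worse: a partially proper specialization morphism is its own universal compactification, so $\overline{X}^{/S} = X$ and nothing is gained; moreover the fact that the compactification of a quasi-compact $\alpha$ is quasi-compact is exactly Corollary~\ref{cor:compmorcompmor}, which the paper deduces \emph{from} the present lemma, so invoking it here would be circular.

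The missing idea --- and the paper's actual argument, following \cite[Lemma 1.3.13]{hub96} --- is to parametrize $\overline{T}$ globally rather than locally. After reducing to $S$ affine and $T$ quasi-compact open, one sets
$Y := \left\{ (x,A) \in (X_{\text{red}})_v : x \in T \text{ and } \alpha_v((x,A)) \text{ has a center on } S \right\}$.
Partial properness together with Lemma~\ref{lem:unicenqfvqr} gives each $(x,A) \in Y$ a unique center on $X$, defining a map $c \colon Y \to X$ whose image is $\overline{T}$. One then shows that $Y$ is quasi-compact for the valuation topology of Remark~\ref{rem:substoprefpoin} (via Lemma~\ref{lem:valutopsamasadsp} and Huber's argument), and --- this is where ``locally of finite type'' genuinely enters, through the finite set $E$ furnished by Remark~\ref{rem:spmorlftrefde} --- that $c$ is continuous, by identifying $c^{-1}(U)$ for affinoid $U$ with the basic open set $\left\{ (x,A) \in Y : x \in U \text{ and } e(x) \in A \text{ for all } e \in E \right\}$. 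Then $\overline{T} = c(Y)$ is the continuous image of a quasi-compact space, hence quasi-compact. Your proposal contains none of these three steps (quasi-compactness of $Y$, continuity of $c$, surjectivity of $c$ onto $\overline{T}$), so as written it does not prove the lemma.
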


\begin{proof}
It is enough to prove the lemma for $S$ affine. We may assume that $T$ is open in $X$. Put 
\[
Y := \left\{ (x,A) \in (X_{\text{red}})_{v} \text{ } \lvert \text{ } x \in T \text{ and } \alpha_v((x,A)) \text{ has a center on } S \right\}.
\]
By Lemma \ref{lem:unicenqfvqr} and Definition \ref{defi:parprospecmorre}, every $(x,A) \in Y$ has a unique center on $X$. Let $c \colon Y \to X$ be the map which assigns to each $(x,A) \in Y$ the center of $(x,A)$ on $X_{\text{red}}$. We equip $Y$ with the subspace topology of $(X_{\text{red}})_{v}$ (cf. Remark \ref{rem:substoprefpoin}). By Lemma \ref{lem:valutopsamasadsp}, just as in the proof of \cite[Lemma 1.3.13]{hub96}, we obtain that $Y$ is quasi-compact.

We now show that $c$ is continuous. Let $U$ be an affinoid open subset of $X$. We have to show $c^{-1}(U)$ is open. Since $\alpha$ is in particular locally of finite type, by Remark \ref{rem:spmorlftrefde} the natural homomorphisms 
\begin{align*}
&a \colon \mathcal{O}_S(S) \to \mathcal{O}_{X_{\text{red}}}(U) \\
&b \colon \mathcal{O}_X^+(U) \to \mathcal{O}_{X_{\text{red}}}(U)
\end{align*}
are such that $\im(a) \subseteq \im(b)$ and there is a finite subset $F \subseteq \im(b)$ with $\im(b)$ integral over $\im(a)[F]$. Let $E \subseteq \mathcal{O}_X^{+}(U)$ be any finite subset such that $b(E) = F$. Then by By Lemma \ref{lem:valutopsamasadsp} and \cite[Lemma 1.3.6(ii)]{hub96}
\[
c^{-1}(U) = \left\{ (x,A) \in Y \text{ } \lvert \text{ }x\in U \text{ and } e(x) \in A \text{ for every }e \in E \right\}.
\]
Hence $c^{-1}(U)$ is open. The rest of the proof is the same as \cite[Lemma 1.3.13]{hub96}.
\end{proof}

\begin{lem} \label{lem:extmoralrepov}
Let $S$ be a scheme, $X'$ and $Y$ be analytic adic spaces equipped with specialization morphisms $\alpha \colon X' \to S$ and 
$\beta \colon Y \to S$, $X$ an open subspace of 
$X'$ and $f \colon X \to Y$ a morphism of analytic adic spaces sitting in a commutative diagram
$$
\begin{tikzcd} [row sep = large, column sep = large] 
X \arrow[r, "f"] \arrow[d, "\alpha_{\lvert X}"] &
Y \arrow[dl, "\beta"] \\
S. &  
\end{tikzcd}
$$
\begin{enumerate}
\item If every point of $X'$ is a specialization point of $X$ and $\beta$ is separated, then there is at most one morphism $g \colon X' \to Y$ (over $S$) such that $g_{\lvert X} = f$.
\item Suppose $\beta$ is locally of finite type, the inclusion $X \to X'$ is quasi-compact and for every open subset $V$ of $X'$, the restriction mapping $\mathcal{O}_{X'}(V) \to \mathcal{O}_X(V \cap X)$ is an isomorphism of topological rings. Let $x' \in X'$, $y \in Y$ such that there exists a valuation ring $(x,A)$ of $(X, \mathcal{O}_X^{+})$ such that $x'$ is a center of $(x,A)$ on $(X', \mathcal{O}_{X'}^+)$, $y$ is a center of $f_v((x,A))$ on $(Y, \mathcal{O}_Y^{+})$, $x'$ and $y$ lie over the same point on $S$, and $x$ is the closed point of the set of generalizations of $x'$ in $X$. Then there exists an open neighbourhood $U$ of $x'$ in $X'$ and a morphism of analytic adic spaces $g \colon U \to Y$ (over $S$) with $g_{\lvert U \cap X} = f_{ \lvert U \cap X}$ and $y = g(x')$.
\item Let $g \colon X' \to Y$ be a morphism of analytic adic spaces (over $S$) with $g_{\lvert X} = f$. If every point of $X'$ is a specialization point of $X$, $\alpha$ is separated, $X'$ is quasi-compact, $Y$ is quasi-separated, $f$ is injective and for every $x \in X$, the residue class field $k(f(x))$ is dense in the residue class field $k(x)$, then $g$ is a homeomorphism onto its image. 
\end{enumerate}
\end{lem}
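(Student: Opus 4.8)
All three assertions are the specialization-morphism analogues of the classical extension lemmas for morphisms of adic spaces (compare \cite[\S 1.3]{hub96}), and in each case the plan is to reduce to the adic statement via the valuative criteria of Lemma \ref{lem:valucritsperef} and the identification of valuation spectra in Lemma \ref{lem:valutopsamasadsp} and Example \ref{es:casaffielspe}. For (1), I would argue as in the adic case. Let $g_1,g_2\colon X'\to Y$ be two morphisms over $S$ restricting to $f$ on $X$, and fix $x'\in X'$. Since $X$ is open in $X'$ it is stable under generization, so $x'$ has a generization $x\in X$; realize the pair $(x,x')$ by a commutative square of the type occurring in Lemma \ref{lem:valucritsperef}, with a spectrum of an analytic affinoid field mapping into $X$ with image point $x$, an extension to a larger analytic affinoid field inducing an isomorphism on global sections, and a morphism of the latter into $X'$ sending its closed point to $x'$. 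Composing with $g_1$ and with $g_2$ produces two fillings of the resulting square over $S$; since $\beta$ is separated they coincide, hence $g_1$ and $g_2$ agree at $x'$ and on the local ring there. As $x'$ was arbitrary, $g_1=g_2$.

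For (2), localize the target: pick an affinoid open $V\ni y$ in $Y$ and an affine open $W\ni\beta(y)$ in $S$ making $\beta$ good near $y$ in the sense of Definition \ref{def:lftspmorref}, so that the image of $\mathcal{O}_Y^+(V)$ in $\mathcal{O}_{Y_{\text{red}}}(V)$ is integral over the image of $\mathcal{O}_S(W)$ together with a finite set of adjoined elements. Since $f(x)$ generizes $y$ and open subsets of adic spaces are stable under generization, $f(x)\in V$, so $f^{-1}(V)$ is an open neighbourhood of $x$ in $X$; and because $x$ is the closed point of the set of generizations of $x'$ in $X$, the open neighbourhoods of $x'$ in $X'$ meet $X$ in a cofinal system of neighbourhoods of $x$ (the same observation as in \cite[Lemma 1.3.13]{hub96}). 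Choose an affinoid open $U\ni x'$ in $X'$ with $U\cap X\subseteq f^{-1}(V)$. The morphism $f_{\lvert U\cap X}\colon U\cap X\to V$ is given by a continuous ring homomorphism $\mathcal{O}_Y(V)\to\mathcal{O}_X(U\cap X)$, and the hypothesis on restriction maps identifies $\mathcal{O}_X(U\cap X)$ with $\mathcal{O}_{X'}(U)$, so we obtain a continuous homomorphism $\varphi\colon\mathcal{O}_Y(V)\to\mathcal{O}_{X'}(U)$. To upgrade $\varphi$ to a morphism of adic spaces $g\colon U\to V\hookrightarrow Y$ one must check that $\varphi$ carries $\mathcal{O}_Y^+(V)$ into $\mathcal{O}_{X'}^+(U)$; this is where the finiteness hypothesis is used, since $\mathcal{O}_Y^+(V)$ is integral over a subring whose image lies in $\mathcal{O}_{X'}^+(U)$ (the image of $\mathcal{O}_S(W)$ because $\varphi$ is compatible with the maps to $S$, the finitely many extra generators because $f$ respects the integral structure sheaves and by the identification above, and the topologically nilpotent elements because they remain so), while $\mathcal{O}_{X'}^+(U)$ is integrally closed in $\mathcal{O}_{X'}(U)$. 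Finally $g$ is over $S$ by the restriction hypothesis, $g_{\lvert U\cap X}=f_{\lvert U\cap X}$ by construction, and $g(x')=y$ is forced by the requirement that $y$ be the center of $f_v((x,A))$, which determines the local homomorphism $\mathcal{O}_{Y,y}\to\mathcal{O}_{X',x'}$. The hard part here, and the technical heart of the lemma, is precisely the verification that ``$\beta$ locally of finite type'' makes $\varphi$ continuous and land in the correct integral structure.

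For (3), the statement about underlying spaces is formal once $g$ is known to be injective: a morphism of adic spaces with quasi-compact source and quasi-separated target is quasi-compact, hence spectral, so $g(X')$ is pro-constructible in $Y$ and $g\colon X'\to g(X')$ is a quasi-compact continuous bijection of spectral spaces, hence a homeomorphism by a standard spectral-spaces argument (as in \cite[\S 1.3]{hub96}). So the content is the injectivity of $g$. Suppose $g(x_1')=g(x_2')=:y$; choosing generizations $x_i\in X$ of $x_i'$, continuity of $g$ shows $f(x_1)$ and $f(x_2)$ are generizations of $y$ in $Y$, hence comparable, and using that $f$ is injective and that $k(f(x))$ is dense in $k(x)$ one reduces to the case $f(x_1)=f(x_2)$, so that $x_1=x_2=:x$ is a common generization of $x_1'$ and $x_2'$ in $X'$. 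Since $\alpha(x_1')=\beta(y)=\alpha(x_2')$, one then produces a valuation ring $(x,A_0)$ of $X'_{\text{red}}$ (built from $\mathcal{O}_{X'_{\text{red}},x}$, using Lemma \ref{lem:stalforexref}) having $x_1'$ and $x_2'$ as centers lying over a common point of $S$, and separatedness of $\alpha$ (Definition \ref{defi:sepspemor}, via Lemma \ref{lem:valucritsperef}) gives $x_1'=x_2'$. I expect the delicate step to be this reduction to a common generization of $x_1'$ and $x_2'$, which is where the density hypothesis enters; for it I would follow the corresponding argument in \cite{hub96} closely.
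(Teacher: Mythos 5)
Your overall strategy coincides with the paper's: all three parts are reduced to Huber's Lemma 1.3.14(i)--(iii) via the dictionary of Lemma \ref{lem:valutopsamasadsp} and the valuative reformulation of Lemma \ref{lem:valucritsperef}, with the only genuinely new content being the use of the ``locally of finite type'' structure of $\beta$ in part (2). Your treatments of (1) and (3) match the paper's (which simply invokes Huber's proofs, noting for (3) that separatedness of $\alpha$ supplies the valuative criterion), so there is nothing to add there.

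In part (2), however, one step is justified by an argument that would fail. You claim that the finitely many extra generators $e\in E'$ satisfy $\varphi(e)\in\mathcal{O}_{X'}^+(U)$ ``because $f$ respects the integral structure sheaves and by the identification'' $\mathcal{O}_X(U\cap X)\cong\mathcal{O}_{X'}(U)$. That identification is an isomorphism of topological rings only; it does \emph{not} carry $\mathcal{O}_X^+(U\cap X)$ into $\mathcal{O}_{X'}^+(U)$ (the latter is cut out by the valuations at \emph{all} points of $U$, including those of $U\smallsetminus X$, and is in general strictly smaller). Knowing $|\varphi(e)(z)|\le 1$ for $z\in U\cap X$ says nothing about $z\in U\smallsetminus X$ --- indeed, if this reasoning worked one could transport the whole plus-ring compatibility of $f^{\#}$ for free, and neither the finite-type hypothesis on $\beta$ nor the hypothesis that $y$ is a center of $f_v((x,A))$ would be needed. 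The correct move, which is the step of Huber's 1.3.14(iii) that the paper reuses with $E'$ in place of $E$, is to \emph{shrink} $U$ to the rational subset $\{z\in U:\ |\varphi(e)(z)|\le 1\ \forall e\in E'\}$; the point $x'$ lies in this locus precisely because $x'$ is a center of $(x,A)$ on $X'$ and $y$ is a center of $f_v((x,A))$ on $Y$, so that $\mathcal{O}^+_{Y,y}$ and $\mathcal{O}^+_{X',x'}$ both map to $A$ compatibly. Only after this shrinking does the integrality of $\im(b)$ over $\im(a)[F]$ (applied as in the paper, via a monic relation $m=y^n+a_{n-1}y^{n-1}+\cdots+a_0$ with $m\in\mathfrak{m}_{\mathcal{O}_Y^+}(V)$, and using that $|m|<1$ everywhere) propagate the bound $|\cdot|\le 1$ from the generators to all of $\mathcal{O}_Y^+(V)$. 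With that correction your argument for (2) agrees with the paper's.
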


\begin{proof}
\begin{enumerate}
\item By Lemma \ref{lem:valutopsamasadsp}, this is the same proof as the proof of \cite[Lemma 1.3.14(i)]{hub96}.
\item Since $\beta$ is locally of finite type, there exists an affinoid open neighbourhood $L \ni y$ in $Y$ and an affine open subscheme $M \subseteq S$ such that $\beta(L) \subseteq M$ and for the natural homomorphisms 
\begin{align*}
&a \colon \mathcal{O}_S(M) \to \mathcal{O}_{Y_{\text{red}}}(L) \\
&b \colon \mathcal{O}_Y^+(L) \to \mathcal{O}_{Y_{\text{red}}}(L)
\end{align*}
we have that $\im(a) \subseteq \im(b)$ and there is a finite subset $F \subseteq \im(b)$ with $\im(b)$ integral over $\im(a)[F]$. Let $E' \subseteq \mathcal{O}_Y^+(L)$ be any finite set such that $b(E') = F$. Then the rest of the proof is the same as the proof of \cite[Lemma 1.3.14(iii)]{hub96} with $E'$ playing the role of $E$ in loc.cit. Indeed employing the notation of loc.cit. the only thing we need to check is $\psi (\mathcal{O}_Y^{+}(L)) \subseteq \mathcal{O}_{X'}^{+}(U)$.

Indeed take $y \in \mathcal{O}_Y^{+}(L)$. Since $\im(b)$ is integral over $\im(a)[F]$, we see that there exists $m \in \mathfrak{m}_{\mathcal{O}_Y^+}(L)$ and $a_0, a_1, \ldots, a_{n-1} \in b^{-1}(\im(a)[F])$ such that
\[
m = y^n+a_{n-1}y^{n-1}+ \cdots + a_0.
\]
It is easy to conclude that if $\lvert \psi(a_i)(x) \lvert \leq 1$ for all $0 \leq i \leq n$ and $x \in U$, then this implies $\lvert \psi(y)(x) \lvert \leq 1$.
\item This is the same proof as \cite[Lemma 1.3.14(ii)]{hub96}. The key point is that since $\alpha$ is separated, then $g$ satisfies the valuative criterion for separatedness\footnote{One can prove that $g$ is quasi-separated, however this is not needed.} by Lemma \ref{lem:valucritsperef}. 
\end{enumerate}
\end{proof}

\begin{rem} \label{rem:difrembueasen}
\emph{Let us remark that in the statement of Lemma \ref{lem:extmoralrepov}(2) if $f$ is in addition locally of $^{+}$weakly finite type, then so is $g$. This follows from Lemma \cite[Lemma 3.3(ii), (iv)]{hub93}.}
\end{rem}

Now we construct compactifications of specialization morphisms. Just as in the case of adic spaces (or more generally morphisms of $v$-stacks, cf. \cite{SchEtdia}), there will exist \emph{canonical} compactifications.

\begin{defi} [compactification] \label{def:compmorrefpoiofvie}
\emph{Let $\alpha \colon X \to S$ be a specialization morphism.}
\begin{enumerate}
 \item \emph{A} compactification \emph{of $\alpha$ is a commutative triangle} 
$$
\begin{tikzcd} [row sep = large, column sep = large] 
X \arrow[r, "j", hookrightarrow] \arrow[d, "\alpha"] &
Y \arrow[dl, "\beta"] \\
S &  
\end{tikzcd}
$$ 
\emph{where $Y$ is an analytic adic space, $\beta$ a partially proper specialization morphism and $j$ a quasi-compact open embedding of analytic adic spaces.}
\item \emph{A} universal compactification \emph{of $\alpha$ is a compactification $(Y, \beta, j)$ of $\alpha$ such that if}
$$
\begin{tikzcd} [row sep = large, column sep = large] 
X \arrow[r, "h"] \arrow[d, "\alpha"] &
Z \arrow[dl, "\gamma"] \\
S &  
\end{tikzcd}
$$
\emph{is a commutative triangle where $Z$ is an analytic adic space, $\gamma$ a partially proper specialization morphism and $h$ a morphism of analytic 
adic spaces, then there exists a unique morphism of analytic adic spaces $i \colon Y \to Z$ such that the diagram commutes}
$$
\begin{tikzcd} [row sep = large, column sep = large] 
X \arrow[r, "j", hookrightarrow] \arrow[rr, "h", bend left] \arrow[d, "\alpha"] &
Y \arrow[dl, "\beta"] \arrow[r, "i"] &
Z \arrow[dll, "\gamma"] \\
S. &&
\end{tikzcd}
$$
\end{enumerate}
\end{defi}

\begin{lem} \label{lem:firredrefspemor}
Let 
$$
\begin{tikzcd} [row sep = large, column sep = large] 
X \arrow[r, "j", hookrightarrow] \arrow[d, "\alpha"] &
Y \arrow[dl, "\beta"] \\
S. &  
\end{tikzcd}
$$
be a commutative diagram where $\alpha$ 
is a specialization morphism, $\beta$ is a partially proper specialization,
and $j$ is a quasi-compact open embedding such that every point of $Y$ is a specialization of a point of $j(X)$ and $\mathcal{O}_Y \to j_*\mathcal{O}_X$ is an isomorphism of sheaves of topological rings. Then for every open subset $U \subseteq S$, the induced diagram
$$
\begin{tikzcd} [row sep = large, column sep = large] 
\alpha^{-1}(U) \arrow[r, "j_{\lvert \alpha^{-1}(U)}", hookrightarrow] \arrow[d, "\alpha_{\lvert \alpha^{-1}(U)}"] &
\beta^{-1}(U) \arrow[dl, "\beta_{\lvert \beta^{-1}(U)}"] \\
U &  
\end{tikzcd}
$$
is a universal compactification of $\alpha_{\lvert \alpha^{-1}(U)}$.
\end{lem}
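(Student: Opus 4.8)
The plan is to check the two requirements of Definition~\ref{def:compmorrefpoiofvie}. First, that the restricted triangle is a \emph{compactification} of $\alpha_{|\alpha^{-1}(U)}$. Since $\alpha=\beta\circ j$ on underlying spaces, $\alpha^{-1}(U)=j^{-1}(\beta^{-1}(U))$, so $\beta^{-1}(U)$ is an open subspace of $Y$ (hence an analytic adic space) and $j_{|\alpha^{-1}(U)}\colon\alpha^{-1}(U)\hookrightarrow\beta^{-1}(U)$ is the restriction of the quasi-compact open embedding $j$ to the open $\beta^{-1}(U)$, hence again a quasi-compact open embedding. Exactly as in Remark~\ref{rem : certain base change for proper}, the \emph{partially} proper morphism $\beta$ restricts to a partially proper $\beta_{|\beta^{-1}(U)}\colon\beta^{-1}(U)\to U$: being locally of finite type is local on source and target, quasi-separatedness passes to the restriction, and the center condition of Definition~\ref{defi:parprospecmorre} for $\beta_{|\beta^{-1}(U)}$ is a special case of the one for $\beta$ (a center $z\in Y$ of a valuation ring $(x,A)$ with $\beta(z)\in U$ automatically lies in $\beta^{-1}(U)$). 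Moreover, since open subsets are stable under generalization, the two hypotheses on $j$ restrict: every point of $\beta^{-1}(U)$ is a specialization of a point of $j(X)\cap\beta^{-1}(U)=j(\alpha^{-1}(U))$, and $\mathcal{O}_{\beta^{-1}(U)}\to(j_{|\alpha^{-1}(U)})_{*}\mathcal{O}_{\alpha^{-1}(U)}$ is an isomorphism of sheaves of topological rings. In particular, replacing $(X',X,Y,S)$ of Lemma~\ref{lem:extmoralrepov} by $(\beta^{-1}(U),\alpha^{-1}(U),Z,U)$, the global hypotheses of parts (1)--(3) of that lemma are in force for any $Z$ appearing below.

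For the universal property, let $h\colon\alpha^{-1}(U)\to Z$ be a morphism of analytic adic spaces and $\gamma\colon Z\to U$ a partially proper specialization morphism with $\gamma\circ h=\beta_{|\alpha^{-1}(U)}$. We must produce a unique $i\colon\beta^{-1}(U)\to Z$ over $U$ with $i\circ j_{|\alpha^{-1}(U)}=h$. Uniqueness is immediate from Lemma~\ref{lem:extmoralrepov}(1): every point of $\beta^{-1}(U)$ is a specialization of a point of $\alpha^{-1}(U)$, and $\gamma$, being partially proper, is separated. For existence we construct $i$ locally and glue. Fix $x'\in\beta^{-1}(U)$. By \cite[(1.1.9)]{hub96} the set of generalizations of $x'$ in $\beta^{-1}(U)$ is homeomorphic to $\Spa(k(x'),k(x')^{+})$ with $x'$ as closed point; its intersection with $\alpha^{-1}(U)$ is open there, nonempty (as $x'$ is a specialization of some point of $\alpha^{-1}(U)$) and stable under generalization, hence has a closed point $x\in\alpha^{-1}(U)$, which comes equipped with a valuation ring $A$ of $k(x)$ for which $x'$ is a center of $(x,A)$ on $\beta^{-1}(U)$ with its $\mathcal{O}^{+}$-structure.

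Now $\gamma\circ h=\beta_{|\alpha^{-1}(U)}$ gives $\gamma_{v}(h_{v}((x,A)))=\beta_{v}((x,A))$ by functoriality of $(-)_{v}$, and $\beta(x')$ is a center of the latter on $U$; since $\gamma$ is partially proper, the valuative criterion of Definition~\ref{defi:parprospecmorre} (in the form of Lemma~\ref{lem:valucritsperef}) provides a center $y\in Z$ of $h_{v}((x,A))$ with $\gamma(y)=\beta(x')$. The data $(x',y,x,A)$ then satisfy the pointwise hypotheses of Lemma~\ref{lem:extmoralrepov}(2), which yields an open neighbourhood $U_{x'}$ of $x'$ in $\beta^{-1}(U)$ and a morphism $g_{x'}\colon U_{x'}\to Z$ over $U$ with $g_{x'|U_{x'}\cap\alpha^{-1}(U)}=h_{|U_{x'}\cap\alpha^{-1}(U)}$. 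For $x',x''\in\beta^{-1}(U)$, both $g_{x'}$ and $g_{x''}$ restrict on $U_{x'}\cap U_{x''}$ to a morphism over $U$ extending $h$ on $(U_{x'}\cap U_{x''})\cap\alpha^{-1}(U)$; since every point of $U_{x'}\cap U_{x''}$ is a specialization of a point of $(U_{x'}\cap U_{x''})\cap\alpha^{-1}(U)$ and $\gamma$ is separated, Lemma~\ref{lem:extmoralrepov}(1) forces $g_{x'}$ and $g_{x''}$ to agree there. Hence the $g_{x'}$ glue to the required $i\colon\beta^{-1}(U)\to Z$, and with the uniqueness already noted this establishes the universal property.

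The main obstacle is the construction of the point $y\in Z$ in the third paragraph: one must pick the \emph{correct} generalization $x$ of $x'$ inside $\alpha^{-1}(U)$ — the closed point of the set of its generalizations lying in $\alpha^{-1}(U)$, whose existence rests on $\alpha^{-1}(U)$ being open in, and generalization-dense in, $\beta^{-1}(U)$ — and then extract $y$ from the valuative criterion for the partially proper morphism $\gamma$; this is the only place where partial properness of $\gamma$ (rather than mere separatedness) is used. Everything else is the formal gluing argument powered by the uniqueness statement of Lemma~\ref{lem:extmoralrepov}(1).
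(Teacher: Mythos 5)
Your proof is correct and follows essentially the same route as the paper, which handles uniqueness via Lemma~\ref{lem:extmoralrepov}(1) and then delegates existence to the argument of \cite[Lemma 5.1.7]{hub96} with Lemma~\ref{lem:extmoralrepov}(2) replacing Lemma 1.3.14(iii) there; you have simply written out the pointwise construction and gluing that this reference contains. The one point worth tightening is that the existence of the closed point $x$ of the set of generalizations of $x'$ lying in $\alpha^{-1}(U)$ relies on the quasi-compactness of the inclusion $\alpha^{-1}(U)\hookrightarrow\beta^{-1}(U)$ (openness and generalization-stability alone do not suffice), a hypothesis of Lemma~\ref{lem:extmoralrepov}(2) that you do verify in your first paragraph but do not invoke at that step.
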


\begin{proof}
Let $\gamma \colon Z \to U$ be a partially proper specialization morphism and $h \colon \alpha^{-1}(U) \to Z$ a morphism of analytic adic spaces (over $U$). We have to show that there exists a unique morphism $i \colon \beta^{-1}(U) \to Z$ (over $U$), which extends $h$. Uniqueness is a consequence of Lemma \ref{lem:extmoralrepov}(1). By Lemma \ref{lem:valutopsamasadsp}, the rest of the proof is the same as the proof of \cite[Lemma 5.1.7]{hub96} with Lemma \ref{lem:extmoralrepov}(2) playing the role of Lemma 1.3.14(iii) in loc.cit.
\end{proof}

\begin{thm} \label{thm:compspemor}
Let $\alpha \colon X \to S$ be a separated, taut and locally of finite type specialization morphism. Then $\alpha$ has a universal compactification
$$
\begin{tikzcd} [row sep = large, column sep = large] 
X \arrow[r, "j", hookrightarrow] \arrow[d, "\alpha"] &
Y \arrow[dl, "\beta"] \\
S. &  
\end{tikzcd}
$$ 
Every point of $Y$ is a specialization of a point of $j(X)$ and $\mathcal{O}_Y \to j_*\mathcal{O}_X$ is an isomorphism of sheaves of topological rings.
\end{thm}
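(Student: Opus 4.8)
The plan is to run, in the present framework, Huber's construction of the universal compactification from \cite[Theorem 5.1.5]{hub96}, using the translation between the two settings furnished by Lemma \ref{lem:valutopsamasadsp} (identifying $(X_{\text{red}})_v$ with Huber's $X_v^h$) and by Lemma \ref{lem:extmoralrepov}, Lemma \ref{lem:hub1313}, which are the analogues of \cite[Lemma 1.3.14]{hub96} and \cite[Lemma 1.3.13]{hub96}. First I would reduce to the case $S=\Spec(D)$ affine. If for every affine open $W\subseteq S$ we have built a universal compactification $(Y_W,\beta_W,j_W)$ of $\alpha_{|\alpha^{-1}(W)}$ enjoying the two extra properties of the statement, then Lemma \ref{lem:firredrefspemor} shows that the restriction of $(Y_W,\beta_W,j_W)$ over any smaller open is again a universal compactification; by uniqueness of universal objects these restrictions glue canonically (the cocycle condition being automatic), producing $Y$, $\beta\colon Y\to S$, $j\colon X\hookrightarrow Y$. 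Partial properness of $\beta$, quasi-compactness of $j$, the two extra properties and the universal property are all local on $S$ (using Remark \ref{rem : certain base change for proper}), so they follow. Assuming $S$ affine, tautness and local finiteness of $\alpha$ let me cover $X$ by affinoid opens $U_i=\Spa(A_i,A_i^+)$ with $A_i$ Tate on which $\alpha$ restricts to a \emph{good} specialization morphism (Remark \ref{rem:spmorlftrefde}).

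\emph{The affinoid building block.} For a good $\alpha_{|U}\colon U=\Spa(A,A^+)\to S=\Spec(D)$ with $A$ Tate and a finite $E\subseteq\im(b)$ as in Definition \ref{def:lftspmorref}, I would construct $\overline U$ by replacing $A^+$ with a smaller open integrally closed subring $C\subseteq A^+$ — morally the integral closure in $A$ of the image of $D$ together with $A^{\circ\circ}$ — and setting $\overline U:=\Spa(A,C)$; as in Huber's affinoid construction this requires shrinking $U$ (as in the proof of Proposition \ref{prop:exifibprodspecmor}) so that the relevant elements of $D$ can be represented inside $A^+$, the point being that here the target carries no topology. Then: (a) $U$ is a quasi-compact open subspace of $\overline U$, because a short argument using $\ker(b)\cap A^+=A^{\circ\circ}\subseteq C$ shows that $A^+$ is the integral closure of $C[E']$ in $A$ for any finite lift $E'$ of $E$, so that $U=\{v\in\overline U : |e'(v)|\le 1,\ e'\in E'\}$; (b) the specialization morphism $\overline U\to S$ is locally of finite type, separated and quasi-separated (Lemma \ref{lem : affinoid to affine sep and taut}), and partially proper — by Lemma \ref{lem:valucritsperef} and Lemma \ref{lem:valutopsamasadsp} the valuative criterion reduces to the unique-center statement for valuation rings over $\Spec(D)$, existence holding because $C$ is integral over the image of $D$ and uniqueness by Lemma \ref{lem:unicenqfvqr}/Definition \ref{defi:sepspemor}; (c) every point of $\overline U$ is a specialization of a point of $U$, since a continuous valuation on a Tate ring is $\le 1$ on all power-bounded elements, so the rank-$1$ generization of any point of $\overline U$ already lies in $\Spa(A,A^+)=U$; and (d) $\mathcal O_{\overline U}\to j_*\mathcal O_U$ is an isomorphism of topological sheaves, trivially on affinoid opens and in general by (c).

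\emph{Gluing and the universal property.} Over an overlap $U_i\cap U_j$ I would cover by good affinoids $U_{ijk}$ over $S$, form the $\overline{U_{ijk}}$, and — using the uniqueness of extensions from Lemma \ref{lem:extmoralrepov}(1) (valid since each $\overline{U_{ijk}}\to S$ is separated and every point of it specializes from $U_{ijk}$) — identify canonically and compatibly the corresponding open pieces of $\overline{U_i}$ and $\overline{U_j}$; tautness of $\alpha$ guarantees that the closures and intersections that appear are quasi-compact, so these identifications are over quasi-compact opens and form a genuine gluing datum, yielding an analytic adic space $Y$ with a quasi-compact open embedding $j\colon X\hookrightarrow Y$ and a specialization morphism $\beta\colon Y\to S$ extending $\alpha$. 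Working locally on $S$ and $Y$ and invoking the base-change results (Proposition \ref{prop : base change for lft}, Lemma \ref{lem : base change for qs}, Lemma \ref{lem : base change for proper}) together with Lemma \ref{lem:valucritsperef} one checks $\beta$ is partially proper; properties (c)--(d) for the pieces give the two stated extra properties for $Y$, and Lemma \ref{lem:hub1313} confirms $j$ is quasi-compact. For the universal property, given $h\colon X\to Z$ with $\gamma\colon Z\to S$ partially proper, the uniqueness of an extension $i\colon Y\to Z$ is Lemma \ref{lem:extmoralrepov}(1) (as $\gamma$ is separated and every point of $Y$ specializes from $j(X)$), and its existence is obtained by extending $h$ in a neighbourhood of each point of $Y$ via Lemma \ref{lem:extmoralrepov}(2) — whose hypotheses are exactly ``$\gamma$ locally of finite type'' and the identification $\mathcal O_Y(V)\cong\mathcal O_X(V\cap X)$, i.e. property (d) — and gluing these local extensions by the uniqueness just used.

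\emph{Main obstacle.} As in \cite[Theorem 5.1.5]{hub96}, the delicate step is the gluing: organizing the local compactifications $\overline{U_{ijk}}$ over all intersections into one coherent gluing datum on an analytic adic space, and it is precisely here that the hypothesis that $\alpha$ is taut is indispensable — without it the closures produced by the construction, and hence the open pieces being patched, need not be quasi-compact, and the gluing fails to be finite in nature. A secondary, setting-specific point is that the transition maps between affinoid charts of $X$ interact with the enlarged rings of integral elements $C$ through base-change identities of the kind proved in Lemma \ref{equality lemma}, so one must track carefully that ``good'' and the constructed $C$ behave functorially under restriction; Remark \ref{rem:spmorlftrefde} does that bookkeeping.
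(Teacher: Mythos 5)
Your overall strategy --- reduce to $S$ affine, compactify each good affinoid chart $U=\Spa(A,A^+)$ by shrinking $A^+$ to the integral closure $C$ of $b_U^{-1}(\im(a_U))$ in $A$, verify the analogues of Huber's Proposition 4.4.3(5) for the resulting $\overline U=\Spa(A,C)$, and then glue --- is the same as the paper's (which itself follows \cite[Theorem 5.1.5]{hub96}), and your affinoid building block, including points (a)--(d), matches Lemma \ref{properties of universal compactification} in substance. However, there is a genuine gap in your gluing step, and it is not the one you flag.

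You propose to glue the full local compactifications $\overline{U_i}$ along the compactifications $\overline{U_{ijk}}$ of pieces of the overlaps. This cannot work as stated: since each $\overline{U_{ijk}}\to S$ and $\overline{U_i}\to S$ is partially proper, the canonical map $\overline{U_{ijk}}\to\overline{U_i}$ is a homeomorphism onto the \emph{closure} of $U_{ijk}$ in $\overline{U_i}$ (this is exactly Lemma \ref{lem : reduce from closed subset to affinoid}(1)), so you would be gluing adic spaces along closed, not open, subsets. More fundamentally, $U_c=\overline U$ is too big: it contains the center of every valuation ring $(x,A)$ of $U_{\text{red}}$ integral over $S$, including those $(x,A)$ for which some intermediate valuation ring $A\subseteq B\subseteq k(x)^+/m_x$ has a center lying in $X\smallsetminus U$ but none in $U$; such centers must be discarded, or the charts fail to agree on overlaps. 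The paper handles this by introducing the sets $R(U)$, $S(U)$ and $T(U)$ of valuations, defining $U_d:=c_U(T(U))\subseteq U_c$, proving that $U_d$ is \emph{open} in $U_c$ and contains $U$, and that for $U\subseteq V$ the transition map $\psi_{V,U}$ restricts to an open embedding $U_d\to V_d$ (using Lemma \ref{lem:extmoralrepov}(3)); only the $U_d$, not the $U_c$, are glued. This is the heart of the construction and is entirely absent from your proposal. Tautness does enter (to control quasi-compactness, e.g.\ via Lemma \ref{lem:hub1313}, and so that good affinoids cover $X$), but it does not substitute for the $T(U)$/$U_d$ device; without it the gluing datum you describe is not a gluing of adic spaces along open subspaces and does not produce $Y$.
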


\begin{proof}
We follow closely the proof of \cite[Theorem 5.1.5]{hub96}. By Lemma \ref{lem:firredrefspemor}, we can assume $S$ is affine. Let $\sF$ be the set of all open affinoid subsets $U$ of $X$ such that the natural homomorphisms  
\begin{align*}
&a_U \colon \mathcal{O}_S(S) \to \mathcal{O}_{X_{\text{red}}}(U) \\
&b_U \colon \mathcal{O}_X^+(U) \to \mathcal{O}_{X_{\text{red}}}(U)
\end{align*}
satisfy $\im(a_U) \subseteq \im(b_U)$ and there is a finite subset $E_U \subseteq \im(b_U)$ with  $\im(b_U)$ integral over $\im(a_U)[E_U]$. For every $U \in \sF$, put $U_c := \Spa(O_X(U), I(U))$, where $I(U)$ is the smallest ring of integral elements containing $b_U^{-1}(\im(a_U))$ (i.e. $I(U)$ is the integral closure\footnote{Note that $\ker(b_U) = \mathfrak{m}_{\mathcal{O}_X^+}(U)$ which contains $\mathcal{O}_X(U)^{\circ \circ}$ and so $I(U)$ is indeed open in $\mathcal{O}_X(U)$.} of $b_U^{-1}(\im(a_U))$ in $\mathcal{O}_X(U)$). We have a diagram (the squares being commutative)

\begin{equation} \label{main fig}
\begin{tikzcd} [row sep = large, column sep = large] 
0 \arrow[r] & 
\mathfrak{m}_{\mathcal{O}_{U_c}^+}(U_c) \arrow[r] \arrow[d, equal] &
\mathcal{O}^{+}_{U_c}(U_c) \arrow[r, "b_U'"] \arrow[d, hookrightarrow] &
\mathcal{O}_{U_{c, \text{red}}}(U_c) \arrow[d] & \\
0 \arrow[r] & 
\mathfrak{m}_{\mathcal{O}_X^+}(U) \arrow[r] &
\mathcal{O}_X^+(U) \arrow[r, "b_U"] &
\mathcal{O}_{X_{\text{red}}}(U) &
\mathcal{O}_S(S). \arrow[l, "a_U"] \arrow[lu, "a_U'", dashed]  
\end{tikzcd}
\end{equation}

{We explain the equality 
$$\mathfrak{m}_{\mathcal{O}_{X}^+}(U) = \mathfrak{m}_{\mathcal{O}_{U_c}^+}(U_c).$$ 
Note that the identity $O_X(U) \to O_X(U)$
 induces an injective morphism $\varphi_U \colon U \to U_c$.
  We claim that every point of $U_c$
   is a specialization point of $\varphi_U(U)$.
    Indeed this follows by the same proof as in \cite[Proposition 4.4.3(5)(ii)]{hub96}.
   This implies the equality 
$\mathfrak{m}_{\mathcal{O}_{X}^+}(U) =\mathfrak{m}_{\mathcal{O}_{U_c}^+}(U_c)$.}

We need to show that there is a morphism $a_U' \colon \mathcal{O}_S(S) \to \mathcal{O}_{U_{c, \text{red}}}(U_c)$ (indicated by the dashed line in the above diagram), which makes the relevant triangle commutative. This is a simple diagram chase. Let $s \in \mathcal{O}_S(S)$. Since $\im(a_U) \subseteq \im(b_U)$, there exists $y \in \mathcal{O}_X^+(U)$ such that $b_U(y) = a_U(s)$. By construction $y \in \mathcal{O}^{+}_{U_c}(U_c)$ and is unique up to a choice of element $x \in \mathfrak{m}_{\mathcal{O}_{U_c}^+}(U_c)$. Thus $b_U'(y)$ depends only on $x$ and we can set $a_U'(s) := b_U'(y)$. By \cite[Tag 01I1]{stacks-project} the morphism $a_U'$ corresponds to a specialization morphism $\gamma_U \colon U_c \to S$.
{In summary we have a commutative triangle}
$$
\begin{tikzcd} [row sep = large, column sep = large] 
U \arrow[r, "\varphi_U"] \arrow[d] &
U_c \arrow[dl, "\gamma_U"] \\
S. &  
\end{tikzcd}
$$ 
We claim that

\begin{lem} \label{properties of universal compactification}
\begin{enumerate}
\item $\varphi_U$ gives an isomorphism from $U$ onto a rational subspace of $U_c$.
\item $\mathcal{O}_{U_c} \to \varphi_{U*}\mathcal{O}_U$ is an isomorphism of sheaves of topological rings.
\item {The specialization morphism $\gamma_U$ is good (in fact it is tight, cf. Definition \ref{def : tight morphism}).}
\end{enumerate}
\end{lem}

\begin{proof}
Let us prove the first point. Let $E_U^{+} \subset \mathcal{O}_X^{+}(U)$ be any finite subset such that $b_U(E_U^{+}) = E_U$. Then we claim that $\varphi_U$ is an isomorphism from $U$ onto the rational subspace 
$$\left\{ x \in U_c \text{ } \lvert \text{ } \lvert e^{+}(x) \lvert \leq 1 \text{ } \forall e^{+} \in E_U^{+} \right\}.$$
Indeed take $y \in \mathcal{O}_X^{+}(U)$. Since $\im(b_U)$ is integral over $\im(a_U)[E_U]$, we see that there exists $m \in \mathfrak{m}_{\mathcal{O}_X^+}(U)$ and $a_0, a_1, \ldots, a_{n-1} \in b_U^{-1}(\im(a_U)[E_U])$ such that
\[
m = y^n+a_{n-1}y^{n-1}+ \cdots + a_0.
\]
It is easy to conclude that if $\lvert a_i(x) \lvert \leq 1$ for all $0 \leq i \leq n$ and $x \in U_c$, then this implies $\lvert y(x) \lvert \leq 1 $ (the point is that $\mathfrak{m}_{\mathcal{O}_{U_c}^+}(U_c) = \mathfrak{m}_{\mathcal{O}_X^+}(U)$). 
{The second point follows the same proof as \cite[Proposition 4.4.3(5)(iii)]{hub96}.}

 {We now proceed to verify assertion (3) of the lemma.  
 Recall the natural homomorphisms 
\begin{align*}
&a'_U \colon \mathcal{O}_S(S) \to \mathcal{O}_{U_{c,\text{red}}}(U_c) \\
&b'_U \colon \mathcal{O}_{U_c}^+(U_c) \to \mathcal{O}_{U_{c,\text{red}}}(U_c)
\end{align*}
associated to $\gamma_U$.
By construction of the morphism $a'_U$, we see that
$\im(a'_U) \subseteq \im(b'_U)$. 
Equivalently, the image of 
$\mathcal{O}_{U_c}^+(U_c)/\mathfrak{m}_{\mathcal{O}_{U_c}^+}(U_c)$ in 
$\mathcal{O}_{U_{c,\text{red}}}(U_c)$ contains $a'_U(\mathcal{O}_S(S))$.
By definition, the image of $\mathcal{O}_{U_c}^+(U_c)$ in $\mathcal{O}_{X_{\text{red}}}(U)$ is 
integral over $a_U(\mathcal{O}_S(S))$ where 
$a_U$ is the morphism $\mathcal{O}_S(S) \to \mathcal{O}_{X_{\text{red}}}(U)$ associated to the specialization $U \to S$. 
Observe from diagram (\ref{main fig}) that 
we have an injection 
$$\mathcal{O}_{U_c}^+(U_c)/\mathfrak{m}_{\mathcal{O}_{U_c}^+}(U_c) \hookrightarrow \mathcal{O}_{X_{\text{red}}}(U).$$
We deduce from this and the commutativity of diagram (\ref{main fig}) that
$\mathcal{O}_{U_c}^+(U_c)/\mathfrak{m}_{\mathcal{O}_{U_c}^+}(U_c)$ is integral over 
$a'_U(\mathcal{O}_S(S))$.}
\end{proof}

Via the open embedding $\varphi_U$, we consider $U$ as an open subspace of $U_c$. For every $U \in \mathscr{F}$, we define subsets of the space $(X_{\text{red}})_v$ as follows:
\begin{enumerate}
\item $R(U) := \left\{ (x,A) \in (U_{\text{red}})_v \text{ } \lvert \text{ } \varphi_{U,v}((x,A)) \text{ has a center on } U_{c, \text{red}} \right\}$
\item $S(U) := \left\{ (x,A) \in U_v \text{ } \lvert \text{ } (x,A) \text{ has a center on } X \right\}$
\item $T(U) \subseteq R(U)$ denotes the set $(x,A) \in R(U)$ such that for every valuation ring $A \subseteq B \subseteq k(x)^{+}/m_x$ (here $m_x \subseteq k(x)^{+}$ is the maximal ideal of $k(x)^{+}$), either $(x,B) \in (U_{\text{red}})_v$ has no center on $X_{\text{red}}$ or $(x,B)$ has a center on $U_{\text{red}}$.  
\end{enumerate}
Let $c_U \colon R(U) \to U_c$ be the mapping which assigns to $(x,A) \in R(U)$ the center of $(x,A)$ on $U_{c, \text{red}}$ (since $U_c$ is affinoid, $c_U$ is well defined). By Lemma \ref{lem:valutopsamasadsp}, the rest of the proof follows almost verbatim as the proof of \cite[Theorem 5.1.5]{hub96}. We provide a brief summary of the proof as follows:
\begin{enumerate}
\item We put $U_d := c_U(T(U)) \subseteq U_c$ and one shows that $U_d$ is open in $U_c$ and $U \subseteq U_d$. For $U,V \in \mathscr{F}$ with $U \subseteq V$, let $\psi_{V,U} \colon U_c \to V_c$ be the morphism of adic spaces which is induced by the restriction mapping $\mathcal{O}_X(V) \to \mathcal{O}_X(U)$. Then $\psi_{V,U}(U_d) \subseteq V_d$ and the induced morphism $\psi_{V,U} \colon U_d \to V_d$ is an open embedding of adic spaces. One needs Lemma \ref{lem:extmoralrepov}(3) for this.
\item We proceed to glue the family $U_d$ for $U \in \mathscr{F}$ in two steps.
 \begin{enumerate}
 \item 
 Suppose we are given a sub-family $\sF_0 \subset \sF$ such that if $U,V \in \sF_0$ then $U \cap V$ belongs to $\sF_0$.
  Given, $U,V \in \sF_0$,
 the set $Q_{U,V} := \psi_{U,U \cap V}((U \cap V)_d) \subseteq U_d$ is open in $U_d$ and the induced morphism $\psi_{U, U \cap V} \colon (U \cap V)_d \to Q_{U,V}$ is an isomorphism. For $U,V \in \sF_0$, we put $\lambda_{V,U} := \psi_{V, U \cap V} \circ \psi_{U,U \cap V}^{-1} \colon Q_{U,V} \xrightarrow{\sim} Q_{V,U}$. Then the 
 $\{U_d\}_{U \in \sF_0}$ glue along the $\lambda_{V,U}$ (i.e. the $\lambda_{V,U}$ satisfy the cocycle condition) to give an analytic adic space over $S$.
\item In the general situation, we proceed as follows. Let $U, V \in \sF$ and $\{W_1,\ldots,W_m\}$ be a finite family of 
affinoid open subsets of $U \cap V$ such that $U \cap V = \bigcup_i W_i$.
Let $\sF_0$ denote the family obtained by taking intersections of the elements of $\{W_1,\ldots,W_m\}$.
Observe that these intersections are affinoid spaces and hence by Remark \ref{rem:spmorlftrefde} $\sF_0 \subset \sF$. Hence,
by (2a), we get that the $W_{i,d}$ glue together to give $(U \cap V)_d$. 
An important point to note is that this construction is independent of the choice of 
the family $\sF_0$. 
 We deduce that
 the set $Q_{U,V} := \psi_{U,U \cap V}((U \cap V)_d) \subseteq U_d$ is open in $U_d$ and the induced map $\psi_{U, U \cap V} \colon (U \cap V)_d \to Q_{U,V}$ is an isomorphism. We put $\lambda_{V,U} := \psi_{V, U \cap V} \circ \psi_{U,U \cap V}^{-1} \colon Q_{U,V} \xrightarrow{\sim} Q_{V,U}$. Then the 
 $\{U_d\}_{U \in \sF}$ glue along the $\lambda_{V,U}$ (i.e. the $\lambda_{V,U}$ satisfy the cocycle condition) to give the analytic adic space $Y$ over $S$.
\end{enumerate}
\item We denote $\beta \colon Y \to X$ and $j \colon X \hookrightarrow Y$ the natural inclusion. Then every point of $Y$ is a specialization point of $j(X)$ and $\mathcal{O}_Y \to j_{*}\mathcal{O}_X$ is an isomorphism of sheaves of topological rings. Moreover $j$ is a  quasi-compact open embedding and $\beta$ is a partially proper specialization morphism. 
\end{enumerate}
\end{proof}

{\begin{rem} \label{description of compactification}
 \emph{Observe from the proof of Theorem \ref{thm:compspemor} that if $f \colon U \to S$ is a good specialization morphism with $U$ affinoid and $S$ affine then 
 $\mathcal{O}_{U_c}(U_c) = \mathcal{O}_U(U)$ and 
 there exists $e_1,\ldots,e_m \in  \mathcal{O}^+_U(U)$ for some $m \in \mathbb{N}$ such that 
 $U = \{x \in U_c | |e_i(x)| \leq 1, 1 \leq i \leq m \}$.}
\end{rem} }

\begin{cor} \label{cor:compmorcompmor}
In the notations of Theorem \ref{thm:compspemor} if $\alpha$ is quasi-compact, then $\beta$ is proper.
\end{cor}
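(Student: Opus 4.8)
Since a proper specialization morphism is by Definition \ref{def:propsepmor} nothing but a partially proper one that is in addition quasi-compact, and since $(Y,\beta,j)$ is a compactification of $\alpha$ (so that $\beta$ is already partially proper by Definition \ref{def:compmorrefpoiofvie}), the whole content of the corollary is to check that $\beta$ is quasi-compact. The plan is to fix a quasi-compact open $W \subseteq S$ and prove that $\beta^{-1}(W)$ is quasi-compact by exhibiting it as the closure of a quasi-compact subset under a partially proper morphism, so that Lemma \ref{lem:hub1313} applies.

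First I would note that $\alpha^{-1}(W) = j^{-1}(\beta^{-1}(W))$ is quasi-compact because $\alpha$ is quasi-compact, and hence, $j$ being an open embedding, $j(\alpha^{-1}(W)) = j(X) \cap \beta^{-1}(W)$ is a quasi-compact subset of $\beta^{-1}(W)$. The crucial observation is that every point of $\beta^{-1}(W)$ is a specialization of a point of $j(\alpha^{-1}(W))$: given $y \in \beta^{-1}(W)$, Theorem \ref{thm:compspemor} provides $x \in j(X)$ with $y \in \overline{\{x\}}$, whence $\beta(y) \in \overline{\{\beta(x)\}}$ by continuity; since $\beta(y) \in W$ and open subsets are stable under generization, $\beta(x) \in W$, so $x \in j(X) \cap \beta^{-1}(W) = j(\alpha^{-1}(W))$ and $y$ lies in the closure of $\{x\}$ taken inside the open subspace $\beta^{-1}(W)$. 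Therefore $\beta^{-1}(W)$ equals the closure of $j(\alpha^{-1}(W))$ taken in $\beta^{-1}(W)$.

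Next I would observe that the restriction $\beta_{\lvert \beta^{-1}(W)} \colon \beta^{-1}(W) \to W$ is again partially proper; this is the evident analogue for partially proper morphisms of Remark \ref{rem : certain base change for proper}, since the properties of being locally of finite type and quasi-separated, as well as the valuative criterion of Definition \ref{defi:parprospecmorre}, all pass to the restriction of $\beta$ along the open immersion $W \hookrightarrow S$ — in the last case because the image of a $\Spec(A)$ realizing a center lies in $\beta^{-1}(W)$, being a chain of generizations of a point of $\beta^{-1}(W)$. Then Lemma \ref{lem:hub1313}, applied to $\beta_{\lvert \beta^{-1}(W)}$ and to the quasi-compact subset $T := j(\alpha^{-1}(W))$, shows that $\overline{T} = \beta^{-1}(W)$ is quasi-compact. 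Letting $W$ range over quasi-compact opens of $S$ (it suffices to take $W$ affine and use that finite unions of quasi-compact subsets are quasi-compact) gives that $\beta$ is quasi-compact, hence proper.

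The only step requiring any care is the second paragraph: one must correctly combine the specialization property furnished by Theorem \ref{thm:compspemor} with the elementary fact that open sets are stable under generization, and then recognise that Lemma \ref{lem:hub1313} must be invoked for the restricted morphism $\beta_{\lvert \beta^{-1}(W)}$ rather than for $\beta$ itself (a subset of a quasi-compact space need not be quasi-compact, so applying the lemma to $\beta$ and $T$ would only place $\beta^{-1}(W)$ inside a quasi-compact set). Granting those two points, the corollary is a formal consequence of Definition \ref{def:propsepmor}, Theorem \ref{thm:compspemor} and Lemma \ref{lem:hub1313}.
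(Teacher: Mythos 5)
Your proof is correct and follows essentially the same route as the paper's: reduce to showing $\beta^{-1}(W)$ is quasi-compact for $W\subseteq S$ quasi-compact open, use the specialization property from Theorem \ref{thm:compspemor} together with quasi-compactness of $j(\alpha^{-1}(W))$, and conclude via Lemma \ref{lem:hub1313}. Your extra care in applying Lemma \ref{lem:hub1313} to the restricted morphism $\beta_{\lvert\beta^{-1}(W)}$ (so that the closure is taken inside $\beta^{-1}(W)$ rather than in all of $Y$) fills in a detail the paper's one-line conclusion leaves implicit, and is exactly the right way to make the argument airtight.
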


\begin{proof}
We need to show $\beta$ is quasi-compact (by Theorem \ref{thm:compspemor} it is already partially proper). Let $U$ be a quasi-compact open subset of $S$. It suffices to show $\beta^{-1}(U)$ is quasi-compact. By Theorem \ref{thm:compspemor} every point of $\beta^{-1}(U)$ is a specialization of a point of $j(\alpha^{-1}(U))$. By assumption $\alpha^{-1}(U)$ is quasi-compact and therefore $j(\alpha^{-1}(U))$ is quasi-compact. The result now follows from Lemma \ref{lem:hub1313}. 
\end{proof}

{We introduce notation regarding compactifications of specialization morphisms. 
Let $\alpha : X \to S$ be a specialization morphism which is separated, taut and locally of finite type. 
Recall from Theorem \ref{thm:compspemor} that we have a universal compactification,  
for which we use the following notation.}
$$
\begin{tikzcd} [row sep = large, column sep = large] 
X \arrow[r, "j", hookrightarrow] \arrow[d, "\alpha"] &
\overline{X}^{/S} \arrow[dl, "\overline{\alpha}^{/S}"] \\
S. &  
\end{tikzcd}
$$ 

For the purposes of proving proper base change (cf. Theorem \ref{proper base change}), we
record that universal compactifications behave well with respect to base change.

{\begin{lem} \label{compactification commutes with fibre products}
 Let $f \colon U \to S$ be a good specialization morphism.
 We assume in addition that $\mathcal{O}_U(U)$ is Tate.
  Let $W$ be an affine scheme and $g \colon W \to S$ be a standard \'etale morphism. 
  We then have that 
  \[ \overline{U \times_S W}^{/W} \simeq (\overline{U}^{/S} \times_S W).\]
\end{lem}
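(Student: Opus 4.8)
The strategy is to verify that the explicit model for the universal compactification furnished by Theorem~\ref{thm:compspemor} (as summarized in Remark~\ref{description of compactification}) is compatible with the standard \'etale base change $g\colon W\to S$. Write $(A,A^+)=(\mathcal{O}_U(U),\mathcal{O}_U^+(U))$ with $A$ Tate, $S=\Spec(D)$, $W=\Spec(E)$ with $E=D[x_1,x_2]/(f_1,f_2)$ standard \'etale, and let $Y:=U\times_S W=\Spa(B,B^+)$ with $B=A\langle X_1,X_2\rangle/(g_1,g_2)$ as constructed in Proposition~\ref{prop:exifibprodspecmor}. By Lemma~\ref{lem:base change for goodness}, $Y$ is affinoid and $\beta\colon Y\to W$ is good, and by Lemma~\ref{lem : affinoid to affine sep and taut} both $f$ and $\beta$ are separated and taut, so both universal compactifications exist. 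The first key step is to identify, using Remark~\ref{description of compactification}, that $\mathcal{O}_{U_c}(U_c)=A$, that $U=\{x\in U_c\mid |e_i(x)|\le 1\}$ for some $e_1,\dots,e_m\in A^+$, and that the integral elements $I(U)\subseteq A$ defining $U_c=\Spa(A,I(U))$ are the integral closure of $b_U^{-1}(\im(a_U))$ where $a_U\colon D\to\mathcal{O}_{U_{\mathrm{red}}}(U)$, $b_U\colon A^+\to\mathcal{O}_{U_{\mathrm{red}}}(U)$; and similarly $Y_c=\Spa(B,I(Y))$ with $I(Y)$ the integral closure of $(b'_U)^{-1}(\im(a'_U))$ in $B$, where $a'_U,b'_U$ are the maps attached to $\beta$ that appear in the proof of Lemma~\ref{lem:base change for goodness}.

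\emph{Constructing the comparison morphism.} Next I would construct a morphism $\overline{U\times_S W}^{/W}\to \overline{U}^{/S}\times_S W$ and check it is an isomorphism. There are two natural routes. The cleaner one uses the universal properties: $\overline{U}^{/S}\times_S W$ is partially proper over $W$ by Lemma~\ref{lem : base change for proper} (applied to $\overline{\alpha}^{/S}\colon\overline{U}^{/S}\to S$, which is partially proper, hence in particular its standard \'etale base change is), and it receives an open embedding from $U\times_S W$ since $j\colon U\hookrightarrow\overline{U}^{/S}$ is a quasi-compact open embedding and open embeddings are stable under the base change of Proposition~\ref{prop:exifibprodspecmor} (the projection $U\times_S W\to U$ is \'etale and one pulls back the open subspace). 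Thus the universal property of $\overline{U\times_S W}^{/W}$ yields a unique morphism $\overline{U\times_S W}^{/W}\to\overline{U}^{/S}\times_S W$ over $W$ extending the identity on $U\times_S W$. Conversely, $\overline{U\times_S W}^{/W}$ is partially proper over $W$, hence (post-composing with $g$) partially proper over $S$ in the sense of Remark~\ref{rem:compmoretad} composition, and it receives $U\times_S W$; applying the universal property of $\overline{U}^{/S}$ produces a morphism $\overline{U\times_S W}^{/W}\to\overline{U}^{/S}$, which together with the structure map to $W$ and the universal property of the fibre product $\overline{U}^{/S}\times_S W$ gives a morphism back. The composites in both directions restrict to the identity on the dense (in the specialization sense) open $U\times_S W$, so by Lemma~\ref{lem:extmoralrepov}(1) — using that every point of each compactification is a specialization of a point of the open part (Theorem~\ref{thm:compspemor}) and that the relevant targets are separated over $W$ — both composites are the identity.

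\emph{Main obstacle.} The delicate point is the very last comparison: to apply Lemma~\ref{lem:extmoralrepov}(1) I need to know that \emph{every} point of $\overline{U}^{/S}\times_S W$ is a specialization of a point of $j(U)\times_S W$, i.e. that the ``specialization-density'' property of the compactification is preserved by the \'etale base change of Proposition~\ref{prop:exifibprodspecmor}. This is not formal: one must show that for a point $p$ of $X\times_S T$ lying over $x\in X$ and $t\in T$, and for the closed point $x_0$ of the set of generalizations of $x$ (a rank-one generalization), the corresponding point over $x_0$ and (the unique lift of) $t$ again lies in $X\times_S T$ — this is precisely the kind of statement handled by Proposition~\ref{lem:surjrnk1po} together with the fact that the fibre of $\beta$ over a point of $X$ is the (finite \'etale, after the reductions of Lemma~\ref{lem:fulfomash}) base change to $\Spa k(x)$, which is specializing. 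So I would interpose a lemma: \emph{the base change (in the sense of Proposition~\ref{prop:exifibprodspecmor}) of a partially proper specialization morphism along a standard \'etale $g$ remains partially proper and ``$X$-specialization-dense'' open embeddings pull back to such}; the first half is Lemma~\ref{lem : base change for proper}, the second half I would extract from Proposition~\ref{lem:surjrnk1po} and the finiteness of fibres. With that in hand the universal-property juggling above closes the argument; the remaining verifications (that $Y_c$ really is $U_c\times_S W$ at the level of Huber pairs, if one prefers the hands-on route) reduce to Lemma~\ref{equality lemma} and the description of $B^+$ via integral closures, which are routine.
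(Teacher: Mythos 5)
Your setup, your identification of the two nontrivial verifications (partial properness of $\overline{U}^{/S}\times_S W\to W$ and the specialization-density of $U\times_S W$ inside it), and your first comparison map are all sound. The genuine gap is in your ``converse'' direction: you never actually produce a morphism $\overline{U}^{/S}\times_S W\to\overline{U\times_S W}^{/W}$. Both of your universal-property applications output maps with source $\overline{U\times_S W}^{/W}$, i.e.\ maps in the same direction as the first one. The universal property of $\overline{U}^{/S}$ (Definition \ref{def:compmorrefpoiofvie}(2)) only produces maps \emph{out of} $\overline{U}^{/S}$, and to invoke it you would in any case need a morphism from $U$ itself (not from $U\times_S W$) into a space partially proper over $S$, which you do not have. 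Moreover the composite $\overline{U\times_S W}^{/W}\to W\to S$ is not partially proper: Proposition \ref{prop:commorinvaprofrrep} requires the scheme morphism to satisfy the existence half of the valuative criterion, which a merely \'etale $g$ does not. Consequently the concluding step ``the composites in both directions restrict to the identity, so by Lemma \ref{lem:extmoralrepov}(1) both are the identity'' has nothing to apply to.

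The repair --- and the paper's actual argument --- is to avoid constructing an inverse altogether: one checks that the triangle $U\times_S W\hookrightarrow \overline{U}^{/S}\times_S W\to W$ satisfies \emph{all} the hypotheses of Lemma \ref{lem:firredrefspemor}, namely that $j_W$ is a quasi-compact open embedding; that the projection to $W$ is partially proper (via Lemma \ref{properties of universal compactification}(3), Lemma \ref{lem:base change for goodness}, Lemma \ref{lem : affinoid to affine sep and taut} and Lemma \ref{lem:valucritsperef}, or via Lemma \ref{lem : base change for proper} as you suggest); that $\mathcal{O}_{\overline{U}^{/S}\times_S W}\simeq j_{W*}\mathcal{O}_{U\times_S W}$, which follows from Remark \ref{description of compactification} because $U\times_S W$ is cut out of $\overline{U}^{/S}\times_S W$ by the same inequalities $\lvert e_i\rvert\le 1$; and the specialization-density. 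That lemma then says $\overline{U}^{/S}\times_S W$ \emph{is} a universal compactification of $U\times_S W\to W$, and uniqueness of universal compactifications gives the isomorphism. You have essentially all of these verifications in hand but never invoke the statement that converts them into universality. For the density, the paper's route is also simpler than the one you sketch via Proposition \ref{lem:surjrnk1po}: given $x\in\overline{U}^{/S}\times_S W$, project to $y\in\overline{U}^{/S}$, pick $y'\in U$ of which $y$ is a specialization, and lift the generalization along the \'etale projection $\overline{U}^{/S}\times_S W\to\overline{U}^{/S}$ using \cite[Lemma 1.1.10(v)]{hub96}; the preimage of $U$ under this projection is exactly $U\times_S W$, so the lifted point lies where you need it.
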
 }
Note that 
by Lemma \ref{lem : affinoid to affine sep and taut}, the morphism 
$f$ satisfies the conditions of Theorem \ref{thm:compspemor}. Hence,
$\overline{U}^{/S}$ exists. 
By 
Lemma \ref{lem:base change for goodness},
${U \times_S W} \to W$ is a good specialization morphism. 
Hence, for the same reason, $\overline{U \times_S W}^{/W}$ exists. 

\begin{proof} 
{
Let $j$ denote the quasi-compact
 open embedding $U \to \overline{U}^{/S}$.
 It suffices to show that the diagram }
 $$
\begin{tikzcd} [row sep = large, column sep = large] 
U \times_S W \arrow[r, "j_W", hookrightarrow] \arrow[d, "f_W"] &
\overline{U}^{/S} \times_S W \arrow[dl, "\overline{f}^{/S}_{W}"] \\
W &  
\end{tikzcd}
$$ 
{where 
  the subscript $W$ is used to indicate the morphisms induced by base change to $W$,
 satisfies the hypotheses of Lemma \ref{lem:firredrefspemor}. 
 
 We have the following cartesian diagram:} 
 $$
\begin{tikzcd} [row sep = large, column sep = large] 
U \times_S W \arrow[r, "j_W", hookrightarrow] \arrow[d] & \overline{U}^{/S} \times_S W \arrow[d] \\
U \arrow[r, "j", hookrightarrow] &  \overline{U}^{/S}.
\end{tikzcd}
$$ 
{By \cite[Corollary 1.2.3(iii)(b)]{hub96} and the fact that open embeddings are stable under base change, 
the morphism $j_W$ is a quasi-compact open embedding. 
By Lemma \ref{properties of universal compactification}(3),
the specialization $\overline{U}^{/S} \to S$ is good.
Hence by Lemma \ref{lem:base change for goodness}, 
$\overline{U}^{/S} \times_S W$ is affinoid and the projection morphism
$\overline{U}^{/S} \times_S W \to W$ is good.
By Lemma \ref{lem : affinoid to affine sep and taut}, 
the projection 
$\overline{U}^{/S} \times_S W \to W$ is separated.
We deduce easily using Lemma \ref{lem:valucritsperef} that 
the specialization morphism $\overline{U}^{/S} \times_S W \to W$ is partially proper.

 By Remark \ref{description of compactification}, there exists 
 $e_1,\ldots,e_m \in \mathcal{O}_{U}^+(U)$
  such that 
 $$U = \{x \in \overline{U}^{/S} | |e_i(x)| \leq 1, 1 \leq i \leq m\}.$$ 
It follows that 
$U \times_S W = \{ x \in \overline{U}^{/S} \times_S W | |e_i(x)| \leq 1, 1 \leq i \leq m\}$
where we have abused notation and denoted by $e_i$ their images in $\mathcal{O}_{U \times_S W}(U \times_S W)$. 
We deduce that
$$\mathcal{O}_{\overline{U}^{/S} \times_S W} \simeq j_{W*}\mathcal{O}_{U \times_S W}.$$


It remains to verify that every point in $\overline{U}^{/S} \times_S W$ is the specialization of a point 
in the image of the morphism $j_W$. 
Let $x \in \overline{U}^{/S} \times_S W$.
Let $y$ denote the image of $x$ for the projection 
$\overline{U}^{/S} \times_S W \to \overline{U}^{/S}$.
Since $\overline{U}^{/S}$ is a universal compactification of the 
map $U \to S$, there exists a point $y' \in U$ which specializes to $y$. 
By \cite[Lemma 1.1.10(v)]{hub96}, there exists a point $x' \in \overline{U}^{/S} \times_S W$
such that $x'$ specializes to $x$ and maps to $y'$.
Note that by the universal property of fibre products, the preimage of 
$U$ for the projection
$\overline{U}^{/S} \times_S W \to \overline{U}^{/S}$ is 
$U \times_S W$. Hence $x' \in U \times_S W$.}
%
%
%
\end{proof}

\section{Smooth base change} \label{section : smooth base change}

In order to develop a well behaved theory of cohomology with compact support for specialization morphisms, we need some preparation. In this section we establish a variant of a \emph{smooth} base change result for specialization morphisms. This is a formal consequence of the commutativity of fiber products of specialization morphisms and morphisms between schemes, cf. Proposition \ref{prop:compmorrepovi}. For the remainder of the paper, we fix a torsion ring $A$.

\begin{lem} \label{lem:smoobaschanrefpovi}
Let
$$
\begin{tikzcd} [row sep = large, column sep = large] 
X \arrow[r, "j_1"] \arrow[d, "\alpha"] &
Y \arrow[d, "\beta"] \\
S \arrow[r, "j_2"] & T  
\end{tikzcd}
$$ 
be a cartesian diagram where $\alpha$ and $\beta$ are specialization morphisms, and $j_2$ (and hence also $j_1$ by 
Proposition \ref{prop:exifibprodspecmor}) an étale morphism.
 Then there is a 
natural equivalence from $\mathcal{D}^+(Y_{\text{ét}},A) \to \mathcal{D}^+(S_{\text{ét}},A)$
\[
j_{2}^{*}\circ R^{+}\beta_{*} \xrightarrow{\sim} R^{+}\alpha_{*}\circ j_{1}^{*}.
\]
\end{lem}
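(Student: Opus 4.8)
The plan is to reduce the statement to the standard smooth (étale) base change isomorphism for the étale topos, using the functoriality of the étale pushforward along specialization morphisms established in Proposition \ref{prop:compmorrepovi}. First I would form the base change morphism $\alpha_{\text{ét}} \colon X_{\text{ét}} \to S_{\text{ét}}$, together with the pullback functors $j_{1,\text{ét}}^{*}$ and $j_{2,\text{ét}}^{*}$ coming from the étale morphisms $j_1$ and $j_2$. Since the square is cartesian in the sense of Proposition \ref{prop:exifibprodspecmor}, Proposition \ref{prop:compmorrepovi} gives the two equalities $(\beta \circ j_1)_{\text{ét}} = \beta_{\text{ét}} \circ j_{1,\text{ét}}$ and $(j_2 \circ \alpha)_{\text{ét}} = j_{2,\text{ét}} \circ \alpha_{\text{ét}}$; together with the identification $X \times_S T = Y \times_T S$ from the universal property (both compute $X \times_T T$), this yields a commuting square of morphisms of sites, hence a canonical base change transformation
\[
j_{2}^{*}\circ R^{+}\beta_{*} \;\longrightarrow\; R^{+}\alpha_{*}\circ j_{1}^{*}
\]
by the usual adjunction argument (apply the unit $\mathrm{id}\to R\alpha_* \alpha^*$ after $j_2^*$, commute $j_2^*$ past using the equality of composites, and use the counit $j_1^* R\beta_* \to \ldots$; equivalently this is just the Beck--Chevalley map).

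Next I would check this transformation is an isomorphism, and here the key observation is that \emph{no genuine smoothness input is needed}: the morphisms $j_1$ and $j_2$ are étale, so $j_{1}^{*}$ and $j_{2}^{*}$ have exact left adjoints $j_{1,!}$ and $j_{2,!}$ (extension by zero along an étale morphism of sites), and $j_1^*, j_2^*$ are themselves restriction functors that are computed ``objectwise'' on the site. Concretely, for an étale morphism $j \colon U \to V$ of sites arising from objects of the site, $j^*$ is the restriction to the localized site and commutes with arbitrary small limits and colimits; moreover for a morphism of sites $f$ and an étale localization $j$ on the target, one has the identification of the localized morphism $f_{/j}$ with the base change. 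So the claim is the statement that for the cartesian square of sites above, $j_2^* R\beta_* \mathcal{F} \cong R\alpha_* j_1^* \mathcal{F}$ for $\mathcal{F} \in \mathcal{D}^+(Y_{\text{ét}},A)$, which is the completely formal base change along an open/étale localization. I would verify it by checking on stalks, or more cleanly by: (a) reducing to a single sheaf in degree $0$ and its injective resolution, (b) using that $j_2^*$ of an injective is acyclic for $\alpha_*$ — this uses that $j_2^*$ sends injectives to $\alpha_*$-acyclics, which follows because $j_1^*$ (being a localization) sends injectives to injectives and $j_2^* \beta_* = \alpha_* j_1^*$ on underlying sheaves (the unbounded version of Proposition \ref{prop:compmorrepovi}, which is the content of $\alpha_{\text{ét}}$ being a morphism of sites compatible with the localizations).

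The one point requiring care — and the step I expect to be the main obstacle — is establishing the \emph{sheaf-level} (non-derived) base change $j_{2}^{*}\circ \beta_{*} \xrightarrow{\sim} \alpha_{*}\circ j_{1}^{*}$ together with the statement that $j_{2,\text{ét}}^{*}$ preserves injective $A$-module sheaves (so that a $\beta_*$-injective resolution pulls back to an $\alpha_*$-acyclic resolution). The injectivity preservation is standard once we know $j_{2,\text{ét}}^{*}$ admits an exact left adjoint $j_{2,!}$, which is the case because $j_2$ is étale (localization of a site), so this reduces to the bookkeeping of showing $\alpha_{\text{ét}} \colon X_{\text{ét}} \to S_{\text{ét}}$ restricted over $j_2$ is precisely $\beta_{\text{ét}}$ — i.e. that $X_{\text{ét}}/j_1 \simeq (S_{\text{ét}}/j_2) \times_{S_{\text{ét}}} Y_{\text{ét}}$ as morphisms of sites. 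This is exactly what Proposition \ref{prop:compmorrepovi} (the compatibility $(\beta\circ f)_{\text{ét}} = \beta_{\text{ét}}\circ f_{\text{ét}}$ and $(g\circ\alpha)_{\text{ét}} = g_{\text{ét}}\circ\alpha_{\text{ét}}$, applied with $f$ and $g$ étale) provides, once we note that the fibre product site $X_{\text{ét}} \times_S T$ as a localization of $S_{\text{ét}}$ agrees with $Y_{\text{ét}}$ as a localization of $X_{\text{ét}}$. Granting this, the sheaf-level isomorphism is immediate by evaluating both sides on objects of $S_{\text{ét}}$ étale over the fixed étale $S$, and passing to the derived category is then the formal argument above.
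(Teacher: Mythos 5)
Your proposal is correct, and it reaches the same conclusion by a genuinely different (more formal) route than the paper. The paper argues directly on cohomology sheaves: it reduces to a single sheaf $\mathscr{F}$ in degree $0$, computes the stalk of $j_2^{*}R^{n}\beta_{*}\mathscr{F}$ at a geometric point $\overline{s}$ of $S$ as a filtered colimit of $H^{n}(U\times_T Y,\mathscr{F})$ over \'etale neighbourhoods $U$ of $\overline{s}$ over $S$ (cofinal among those over $T$ since $j_2$ is \'etale), and then identifies $U\times_T Y$ with $U\times_S X$ via Proposition \ref{prop:compmorrepovi}. You instead exploit that $j_1$ and $j_2$ are \'etale, so $X_{\text{\'et}}$ and $S_{\text{\'et}}$ are localizations of $Y_{\text{\'et}}$ and $T_{\text{\'et}}$: the pullbacks are exact restrictions with exact left adjoints $j_{!}$, hence preserve injectives, the non-derived identity $j_2^{*}\beta_{*}=\alpha_{*}j_1^{*}$ holds by evaluating sections and using the same fibre-product compatibility, and the derived statement follows by resolving by injectives. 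Both arguments hinge on exactly the same key input, namely the identification $X\times_S U\simeq Y\times_T U$ supplied by Propositions \ref{prop:exifibprodspecmor} and \ref{prop:compmorrepovi}; your version has the mild advantage of producing the comparison map and its invertibility in one stroke at the level of complexes (no stalk-by-stalk check), while the paper's stalk computation avoids any discussion of localized sites and injectivity preservation. One cosmetic remark: your description of the construction of the base change map via units and counits is garbled as written (the cleaner formulation is to adjoint the composite $R^{+}\beta_{*}\to R^{+}\beta_{*}Rj_{1*}j_1^{*}=Rj_{2*}R^{+}\alpha_{*}j_1^{*}$ across $(j_2^{*},Rj_{2*})$), but since in the localization picture the map is the canonical identification anyway, this does not affect the validity of the argument.
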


\begin{proof} 
It's enough to show that for an étale sheaf $\sF$ on $Y$ in $A$-modules, the base change morphism
\[
j_{2}^{*}\circ R^{n}\beta_{*}\sF \to R^{n}\alpha_{*}\circ j_{1}^{*}\sF
\]
is bijective. Let $\overline{s} \to S$ be a geometric point of $S$. We compute
\begin{align*}
(j_{2}^{*}\circ R^{n}\beta_{*}\sF)_{\overline{s}} &\overset{(i)}{=} \varinjlim_{(U,u)} R^{n}\beta_{*}\sF(U) \\
&\overset{(ii)}{=} \varinjlim_{(U,u)} H^{n}(U \times_T Y, \sF) \\
&\overset{(iii)}{=} \varinjlim_{(U,u)} H^{n}(U \times_S X, j_{1}^{*}\sF) \\
&\overset{(iv)}{=} (R^{n}\alpha_{*}\circ j_{1}^{*}\sF)_{\overline{s}}
\end{align*}
where the (direct) limit in (i) is over all étale neighbourhoods of $\overline{s}$ over $S$, (ii) and (iv) follow by definition of higher direct image, and (iii) follows from Proposition \ref{prop:compmorrepovi}.
\end{proof}

\section{Cohomology with compact support} \label{sec:ciohomocompsup}

If $\alpha \colon X \to S$ is a specialization morphism which admits a compactification in the sense of Definition \ref{def:compmorrefpoiofvie}, one can define the functor $R^{+}\alpha_!$. In this section we will do so for the class of morphisms that interests us. Armed with Theorem \ref{thm:compspemor} and Corollary \ref{cor:compmorcompmor}, we make the following definition. 

\begin{defi} \label{defi:lowershrfurepovi}
\emph{Let $\alpha \colon X \to S$ be a separated and finite type specialization morphism. Write $\alpha$ as the composite of the open immersion $j \colon X \hookrightarrow  \overline{X}^{/S}$ and the proper map $\beta \colon \overline{X}^{/S} \to S$ (here the triple $(\overline{X}^{/S}, \beta, j)$ is the universal compactification of $\alpha$). Then we define}
\[
R^{+}\alpha_! := R^{+}\beta_{*} \circ j_{!} \colon \mathcal{D}^+(X_{\text{ét}},A) \to \mathcal{D}^+(S_{\text{ét}},A).
\]
\end{defi} 

We now investigate the behaviour of $R^{+}\alpha_{!}$ with respect to composition of morphisms in the sense of Remark \ref{rem:compmoretad}. First we show that $R^{+}\alpha_!$ can be defined on any compactification.

\begin{lem} \label{lem:indlowshrirepovi}
Let $\alpha \colon X \to S$ be a separated and finite type specialization morphism. Let 
$$
\begin{tikzcd} [row sep = large, column sep = large] 
X \arrow[r, "j'", hookrightarrow] \arrow[d, "\alpha"] &
Y \arrow[dl, "\beta'"] \\
S &  
\end{tikzcd}
$$
be a compactification of $\alpha$. Then
\[
R^{+}\alpha_! = R^{+}\beta'_{*} \circ j'_{!}.
\]
\end{lem}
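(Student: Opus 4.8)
The plan is to compare the given compactification $(Y,\beta',j')$ with the universal one by means of a proper morphism, and then to transport the definition of $R^{+}\alpha_!$ along that morphism.

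First, apply the universal property of the universal compactification $(\overline{X}^{/S},\beta,j)$ of $\alpha$ (Definition \ref{def:compmorrefpoiofvie}(2)) to the commutative triangle $X\xrightarrow{\,j'\,}Y\xrightarrow{\,\beta'\,}S$: since $j'$ is a morphism of analytic adic spaces and $\beta'$ is a partially proper specialization morphism, there is a unique morphism of analytic adic spaces $i\colon\overline{X}^{/S}\to Y$ over $S$ (in the sense of Remark \ref{rem:compmoretad}) with $i\circ j=j'$ and $\beta'\circ i=\beta$.

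Second, I would establish the two assertions: (1) $i$ is proper; and (2) the square
$$\begin{tikzcd}[row sep=large,column sep=large]
X\arrow[r,"j",hookrightarrow]\arrow[d,equal] & \overline{X}^{/S}\arrow[d,"i"]\\
X\arrow[r,"j'",hookrightarrow] & Y
\end{tikzcd}$$
is cartesian, i.e. $i^{-1}(j'(X))=j(X)$ and $i$ restricts to an isomorphism $j(X)\xrightarrow{\sim}j'(X)$. For (1): $\alpha$ is of finite type, so $\beta$ is proper by Corollary \ref{cor:compmorcompmor}; since $\beta'$ is partially proper it is in particular separated (Definitions \ref{defi:parprospecmorre}, \ref{defi:sepspemor}), and from $\beta=\beta'\circ i$ one deduces that $i$ is proper by the usual cancellation argument, carried out through the valuative criterion of Lemma \ref{lem:valucritsperef} and its adic-space counterpart. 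For (2): the relation $i\circ j=j'$ already shows that $i$ maps the open subspace $j(X)$ isomorphically onto the open subspace $j'(X)$; the reverse inclusion $i^{-1}(j'(X))\subseteq j(X)$ follows because, by Theorem \ref{thm:compspemor}, every point of $\overline{X}^{/S}$ is a specialization of a point of $j(X)$ and $\mathcal{O}_{\overline{X}^{/S}}\to j_*\mathcal{O}_X$ is an isomorphism, so that the proper (hence closed) morphism $i^{-1}(j'(X))\to j'(X)$ is an isomorphism over the dense open $j(X)$, induces an isomorphism on structure sheaves, and is therefore an isomorphism. This is the analytic-adic analogue of the properties of canonical compactifications used in \cite[\S5.1--5.2]{hub96}.

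Finally, I would conclude. Write $\iota\colon Y\setminus j'(X)\hookrightarrow Y$ for the complementary closed immersion; by (2) we have $i^{-1}(Y\setminus j'(X))=\overline{X}^{/S}\setminus j(X)$, on which $j_!\mathcal{F}$ vanishes for every $\mathcal{F}\in\mathcal{D}^{+}(X_{\mathrm{\acute{e}t}},A)$. Hence proper base change (Theorem \ref{proper base change}), applied to the cartesian square above and along $\iota$, yields $j'^{*}R^{+}i_{*}j_{!}\mathcal{F}\simeq\mathcal{F}$ and $\iota^{*}R^{+}i_{*}j_{!}\mathcal{F}\simeq 0$; the recollement triangle $j'_{!}j'^{*}\to\mathrm{id}\to\iota_{*}\iota^{*}$ applied to $R^{+}i_{*}j_{!}\mathcal{F}$ then produces a natural isomorphism $j'_{!}\mathcal{F}\simeq R^{+}i_{*}j_{!}\mathcal{F}$. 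Using the Leray spectral sequence for $\beta'\circ i$ and the identity $(\beta'\circ i)_{\mathrm{\acute{e}t}}=\beta'_{\mathrm{\acute{e}t}}\circ i_{\mathrm{\acute{e}t}}$ (Proposition \ref{prop:compmorrepovi}), we get
$$R^{+}\beta'_{*}\circ j'_{!}\;\simeq\;R^{+}\beta'_{*}\circ R^{+}i_{*}\circ j_{!}\;\simeq\;R^{+}(\beta'\circ i)_{*}\circ j_{!}\;=\;R^{+}\beta_{*}\circ j_{!}\;=\;R^{+}\alpha_{!},$$
the last equality being Definition \ref{defi:lowershrfurepovi}. The main obstacle is assertion (2) together with the properness cancellation in (1): one has to reproduce, in the setting of specialization morphisms, Huber's argument that the canonical compactification maps properly onto every compactification and isomorphically over the open part, which requires some care with density of open subspaces, fibre products of adic spaces, and the valuative criteria.
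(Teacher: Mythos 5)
Your overall strategy agrees with the paper's up to and including the construction of the comparison map $i\colon\overline{X}^{/S}\to Y$ from the universal property and the verification that $i$ is proper (the paper carries out the "cancellation" in detail: locally of $^{+}$weakly finite type via Remark \ref{rem:difrembueasen}, quasi-compactness and quasi-separatedness by covering arguments, and the valuative criterion via Lemma \ref{lem:valucritsperef}). Where you genuinely diverge is in how you identify $j'_{!}$ with $R^{+}i_{*}\circ j_{!}$: the paper simply writes $j'_{!}=(i\circ j)_{!}=R^{+}i_{!}\circ j_{!}=R^{+}i_{*}\circ j_{!}$, using Huber's composition theorem for lower shriek of adic spaces \cite[Theorem 5.4.3]{hub96} together with properness of $i$, and then concludes with $R^{+}\beta'_{*}\circ R^{+}i_{*}=R^{+}\beta_{*}$ from Proposition \ref{prop:compmorrepovi}. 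Your route instead establishes that the square with $j$, $i$, $j'$ is cartesian and then runs a recollement/proper-base-change argument. That works, but it is strictly more than is needed, and note that the base change you need here is Huber's proper base change for morphisms of adic spaces, not Theorem \ref{proper base change} of this paper, which concerns specialization morphisms to a scheme.

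The one step of yours that is not adequately justified as written is the inclusion $i^{-1}(j'(X))\subseteq j(X)$. The inference ``proper, isomorphism over a dense open, isomorphism on structure sheaves, therefore an isomorphism'' is not a valid implication in this generality. The claim itself is true, but the correct argument is valuative: any $z\in\overline{X}^{/S}$ is the center of some valuation ring $(x,A)$ with $x\in j(X)$ (Theorem \ref{thm:compspemor}); if $i(z)\in j'(X)$ then $i(z)$ is a center of $(j'(x),A)$ lying in $j'(X)\simeq X$, so $(x,A)$ acquires a center on $j(X)\subseteq\overline{X}^{/S}$, and uniqueness of centers over a fixed point of $S$ (separatedness of $\beta$, Definition \ref{defi:sepspemor}) forces $z\in j(X)$. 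If you repair that step your proof goes through; alternatively, invoking \cite[Theorem 5.4.3]{hub96} as the paper does lets you bypass the cartesianness claim entirely.
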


\begin{proof}
By the universal property there exists a unique morphism $i \colon \overline{X}^{/S} \to Y$ such that the diagram commutes
$$
\begin{tikzcd} [row sep = large, column sep = large] 
X \arrow[r, "j", hookrightarrow] \arrow[rr, "j'", hookrightarrow, bend left] \arrow[d, "\alpha"] &
\overline{X}^{/S} \arrow[dl, "\beta"] \arrow[r, "i"] &
Y \arrow[dll, "\beta'"] \\
S. &&
\end{tikzcd}
$$
We claim that $i$ is proper. By Remark \ref{rem:difrembueasen}, it is locally of $^{+}$weakly finite type. 
Let $T \subseteq S$ be an affine open subset.
By Corollary \ref{cor:compmorcompmor}, 
$\beta$ is proper and hence 
$\beta^{-1}(T)$ is quasi-compact. 
Since $\beta'$ is quasi-separated, we get that
$\beta'^{-1}(T)$ is quasi-separated. 
It follows that the 
morphism 
$i_{|\beta^{-1}(T)}$ is quasi-compact. 
We deduce from this that there exists a cover $\{W_t\}$ of
open subsets of $Y$ such that $i_{|i^{-1}(W_t)}$ is 
quasi-compact and hence $i$ is quasi-compact.  
%
Since $\beta' \circ i = \beta$ is proper (and $\beta'$ is in particular separated), it follows from applying Lemma \ref{lem:valucritsperef}, that $i$ satisfies the valuative criterion for partially properness. It remains to show that $i$ is quasi-separated.  Let $V \subseteq Y$ be an affinoid open which maps into an affine open of $S$. Let $U_1, U_2 \subseteq \overline{X}^{/S}$ be affinoid opens which map into $V$. Then $U_1 \cap U_2$ is a finite union of affinoid opens because $U_1, U_2$ map into a common affine open of $S$. Covering $Y$ by affinoid opens like $V$ gives that $i$ is quasi-separated. This proves the claim. 

We now compute 
\begin{align*}
R^{+}\beta'_{*} \circ j'_{!} &\overset{(i)}{=} R^{+}\beta'_{*} \circ (i \circ j)_{!} \\
&\overset{(ii)}{=} R^{+}\beta'_{*} \circ R^{+}i_{!} \circ j_{!} \\
&\overset{(iii)}{=} R^{+}\beta'_{*} \circ R^{+}i_{*} \circ j_{!} \\
&\overset{(iv)}{=} R^{+}\beta_{*} \circ j_{!} 
\end{align*}
where (i) follows from commutativity, (ii) follows from \cite[Theorem 5.4.3]{hub96}, (iii) because $i$ is proper, and (iv) follows from Proposition \ref{prop:compmorrepovi}. 
\end{proof}

\begin{lem} \label{lem:lowshrcomprefpovi}
Let $\alpha \colon X \to S$ be a separated and finite type specialization morphism. Let $f \colon Y \to X$ be a separated and $^{+}$weakly finite type
 morphism of analytic adic spaces. Then $\alpha \circ f$ is a separated and finite type specialization morphism. Moreover there is a natural equivalence 
\[
R^{+}(\alpha \circ f)_{!} \simeq R^{+}\alpha_{!} \circ R^{+}f_{!}
\]
of functors $\mathcal{D}^+(Y_{\text{ét}},A) \to \mathcal{D}^+(S_{\text{ét}},A)$.
\end{lem}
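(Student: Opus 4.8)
That $\alpha\circ f$ is again separated, finite type and taut (so that $R^{+}(\alpha\circ f)_{!}$ is defined via Definition \ref{defi:lowershrfurepovi}) is immediate from Proposition \ref{prop:commorinvaprofrrep}. For the comparison isomorphism the plan is to work with three compactifications and reduce everything to a single base change statement. First I would write $\alpha$ as the composite $X\overset{j}{\hookrightarrow}\overline{X}^{/S}\xrightarrow{\beta}S$ with $\beta$ proper (Corollary \ref{cor:compmorcompmor}) and $f$ as $Y\overset{j_{f}}{\hookrightarrow}\overline{Y}^{/X}\xrightarrow{\bar f}X$ with $\bar f$ partially proper (\cite[Theorem 5.1.5]{hub96}), so that $R^{+}\alpha_{!}=R^{+}\beta_{*}\circ j_{!}$ (Definition \ref{defi:lowershrfurepovi}, or Lemma \ref{lem:indlowshrirepovi}) and $R^{+}f_{!}=R^{+}\bar f_{*}\circ j_{f!}$ (the analogous statement for morphisms of adic spaces, \cite[\S 5.4]{hub96}). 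Finally let $Y\overset{\iota}{\hookrightarrow}\overline{Y}^{/S}\xrightarrow{\bar\gamma}S$ be the universal compactification of $\alpha\circ f$ (Theorem \ref{thm:compspemor}), with $\bar\gamma$ proper (Corollary \ref{cor:compmorcompmor}), so that $R^{+}(\alpha\circ f)_{!}=R^{+}\bar\gamma_{*}\circ\iota_{!}$.

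Next I would produce a morphism relating these compactifications. The morphism of adic spaces $j\circ f\colon Y\to\overline{X}^{/S}$ is a morphism over $S$ and $\beta$ is partially proper, so the universal property of $\overline{Y}^{/S}$ yields a unique morphism of analytic adic spaces $\Phi\colon\overline{Y}^{/S}\to\overline{X}^{/S}$ over $S$ with $\Phi\circ\iota=j\circ f$ and hence $\beta\circ\Phi=\bar\gamma$. From $\bar\gamma$ proper and $\beta$ separated one deduces, via the valuative criterion (Lemma \ref{lem:valucritsperef}), that $\Phi$ is partially proper. The crucial point is that the commutative square
\[
\begin{tikzcd}[row sep = large, column sep = large]
\overline{Y}^{/X} \arrow[r, "\ell", hookrightarrow] \arrow[d, "\bar f"'] &
\overline{Y}^{/S} \arrow[d, "\Phi"] \\
X \arrow[r, "j", hookrightarrow] &
\overline{X}^{/S}
\end{tikzcd}
\]
is cartesian, where $\ell$ denotes the open immersion of $\Phi^{-1}(X)$ into $\overline{Y}^{/S}$. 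To prove this I would check that $\Phi^{-1}(X)$, equipped with its projection to $X$ and the open immersion of $Y$, is a compactification of $f$: it is partially proper over $X$ (base change of the partially proper $\Phi$); the isomorphism $\mathcal{O}_{\overline{Y}^{/S}}\to\iota_{*}\mathcal{O}_{Y}$ restricts to the open subspace $\Phi^{-1}(X)$; and every point of $\Phi^{-1}(X)$ is a specialization of a point of $\iota(Y)$, since a generalization of a point of the open subspace $X$ again lies in $X$, combined with the corresponding property of $\overline{Y}^{/S}$ from Theorem \ref{thm:compspemor}. By the uniqueness of the compactification with these two properties (Lemma \ref{lem:extmoralrepov}(1) together with \cite[Theorem 5.1.5]{hub96}) this compactification is canonically identified with $\overline{Y}^{/X}$, which is precisely the assertion that the square is cartesian; under the identification one also has $\ell\circ j_{f}=\iota$.

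Finally I would invoke base change. Since $\Phi$ is partially proper and the square above is cartesian, partially proper base change for morphisms of adic spaces (\cite[\S 4.4]{hub96}; here the statement is elementary because $j$ is an open immersion) gives a natural isomorphism $j_{!}\circ R^{+}\bar f_{*}\xrightarrow{\ \sim\ }R^{+}\Phi_{*}\circ\ell_{!}$. Using $(\beta\circ\Phi)_{\mathrm{\acute{e}t}}=\beta_{\mathrm{\acute{e}t}}\circ\Phi_{\mathrm{\acute{e}t}}$ (Proposition \ref{prop:compmorrepovi}), and hence $R^{+}\bar\gamma_{*}=R^{+}\beta_{*}\circ R^{+}\Phi_{*}$, we conclude
\begin{align*}
R^{+}\alpha_{!}\circ R^{+}f_{!} &= R^{+}\beta_{*}\circ j_{!}\circ R^{+}\bar f_{*}\circ j_{f!} \\
&\xrightarrow{\ \sim\ } R^{+}\beta_{*}\circ R^{+}\Phi_{*}\circ\ell_{!}\circ j_{f!} \\
&= R^{+}\bar\gamma_{*}\circ\iota_{!} \;=\; R^{+}(\alpha\circ f)_{!}.
\end{align*}
The hard part is the cartesianness of the square: it rests on the two structural properties of the universal compactification furnished by Theorem \ref{thm:compspemor} and on the uniqueness statements in Lemma \ref{lem:extmoralrepov}. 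Everything after that is formal, once one has checked that the separatedness, tautness and finiteness hypotheses required to apply the cited compactification and base change results are in force --- which follows routinely from Proposition \ref{prop:commorinvaprofrrep}.
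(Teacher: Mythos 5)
Your proposal is correct in substance but follows a genuinely different route from the paper. The paper never identifies $\overline{Y}^{/X}$ with an open subspace of $\overline{Y}^{/S}$: instead it forms a \emph{third} compactification $Z$ of the composite $\overline{Y}^{/X}\to\overline{X}^{/S}$ (via \cite[Theorem 5.1.5]{hub96}), observes that the outer triangle $(Z,\beta\circ f_2, j_2\circ j_1)$ is \emph{a} compactification of $\alpha\circ f$ so that Lemma \ref{lem:indlowshrirepovi} lets it compute $R^{+}(\alpha\circ f)_{!}$ there, and then invokes \cite[Lemma 5.4.2]{hub96} for the exchange $R^{+}f_{2*}\circ j_{2!}=j_{3!}\circ R^{+}f_{1*}$. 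You instead prove the structural compatibility $\overline{Y}^{/X}\simeq\Phi^{-1}(X)=\overline{Y}^{/S}\times_{\overline{X}^{/S}}X$ (the analogue of the cartesianness of compactifications familiar from the diamond setting) and deduce the result from the composition theorem for adic spaces; this is a stronger and independently interesting statement, but it forces you to verify that $\Phi$ is proper (local $^{+}$weak finiteness via Remark \ref{rem:difrembueasen}, quasi-separatedness and quasi-compactness exactly as for the morphism $i$ in Lemma \ref{lem:indlowshrirepovi}) and to run the uniqueness characterization of the universal compactification, whereas the paper sidesteps all of this by allowing a non-universal compactification. Two small points to tidy up: the valuative criterion you invoke for $\Phi$ should be Huber's \cite[Lemma 1.3.10]{hub96} rather than Lemma \ref{lem:valucritsperef} (which is stated for specialization morphisms, while $\Phi$ is a morphism of adic spaces); and the exchange $j_{!}\circ R^{+}\bar f_{*}\simeq R^{+}\Phi_{*}\circ\ell_{!}$ is most cleanly obtained not from a ``partially proper base change'' but by noting that $\Phi$ is proper and that both sides compute $R^{+}(j\circ\bar f)_{!}=R^{+}(\Phi\circ\ell)_{!}$ via \cite[Theorem 5.4.3]{hub96} --- which is the same engine the paper uses.
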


\begin{proof}
The fact that the composition $\alpha \circ f$ is a separated and finite type specialization morphism follows from Proposition \ref{prop:commorinvaprofrrep}. For the second part consider the following diagram
$$
\begin{tikzcd} [row sep = large, column sep = large] 
Y \arrow[r, "j_1", hookrightarrow] \arrow[rd, "f"] &
\overline{Y}^{/X} \arrow[d, "f_1"] \arrow[r, "j_2", hookrightarrow] &
Z \arrow[d, "f_2"] \\
& X \arrow[r, "j_3", hookrightarrow] \arrow[rd, "\alpha"] &
\overline{X}^{/S} \arrow[d, "\beta"] \\
&& S
\end{tikzcd}
$$
where the top left triangle is the universal compactification of $f$ via \cite[Theorem 5.1.5]{hub96}, the bottom right triangle is the universal compactification of $\alpha$ via Theorem \ref{thm:compspemor}. Let us explain the top right square. The morphism $j_3 \circ f_1 \colon \overline{Y}^{/X} \to \overline{X}^{/S}$ is separated and $^{+}$weakly of finite type. Thus again by \cite[Theorem 5.1.5]{hub96}, it admits a universal compactification which we denote by the triple $(Z, f_2, j_2)$. In particular all the horizontal arrows (in the diagram) are quasi-compact open embeddings and the vertical arrows are proper. Therefore the composition $j_2 \circ j_1$ is a quasi-compact open embedding of analytic adic spaces and by Proposition \ref{prop:commorinvaprofrrep}, the composition $\beta \circ f_2$ is a proper specialization morphism. Thus the outer triangle is a compactification of $\alpha \circ f$.
 
Then 
$$R^{+}\alpha_{!} \circ R^{+}f_{!} = R^{+}\beta_{*}\circ j_{3!}\circ R^{+}f_{1*} \circ j_{1!}$$
and 
\begin{align*}
R^{+}(\alpha \circ f)_{!} &\overset{(i)}{=} R^{+}(\beta \circ f_2)_{*} \circ (j_2 \circ j_1)_{!} \\
&\overset{(ii)}{=} R^{+}\beta_{*} \circ R^{+}f_{2*} \circ j_{2!} \circ j_{1!} \\
&\overset{(iii)}{=} R^{+}\beta_{*} \circ j_{3!} \circ R^{+}f_{1*} \circ j_{1!}
\end{align*}
where (i) follows from Lemma \ref{lem:indlowshrirepovi}, (ii) follows from Proposition \ref{prop:compmorrepovi} and \cite[Theorem 5.4.3]{hub96}, and (iii) follows from \cite[Lemma 5.4.2]{hub96}. This completes the proof.
\end{proof}

\section{Proper base change}\label{sec:propbasersuil}

       Our goal in this section is to prove a base change theorem for 
       proper specialization morphisms. 
       Owing to the fact that we only construct fibre products for specialization 
       morphisms when the base change map is \'etale and the fact that arbitrary projective 
       limits of adic spaces do not exist, we formulate a 
       version of proper base change in Theorem \ref{proper base change} that takes into account these challenges.
       In the case when the target of a proper specialization map is strictly local then we 
       have the familiar statement as in Corollary \ref{proper base change to strict henselian local rings}.
       
       We then employ Theorem \ref{proper base change} to 
       deduce the crucial identity in Corollary \ref{lem:lowshricomutwicomrepov}
       from which can be easily deduced the commutativity of the nearby cycles and lower shriek functors.

{
\begin{rem} \label{rem : pseudo-adic space}
\emph{In what follows, we make use of the notion of (pre)pseudo-adic space.
Recall from \cite[\S 1.10]{hub96} that a {prepseudo-adic space} $Z$ is given by a pair $(\underline{Z},|Z|)$ 
  where $\underline{Z}$ is an adic space and $|Z|$ is a subset of $\underline{Z}$.
  When $|Z|$ is convex and locally pro-constructible in $\underline{Z}$, it is said to be a {pseudo-adic space}. The conditions are satisfied for instance when $|Z|$ is closed in $\underline{Z}$. 
  Given a specialization morphism 
  $f \colon X \to W$ and a point $w \in W$, we write 
  $f^{-1}(w)$ for the prepseudo-adic space 
  with $\underline{f^{-1}(w)} = X$ and 
 $|f^{-1}(w)| = \{x \in X|f(x) = w\}$.}
 \end{rem}}
 
 {We also require the notion of \'etale neighbourhood of a geometric point in the context of schemes. The relevant definitions 
and proofs can be found in \cite[Tag 03PN]{stacks-project}. We calculate all cohomology groups in the relevant 
\'etale topos.}

\begin{sit} \label{the main situation}
\emph{Let $f \colon X \to S$ be a specialization morphism.
 {Let $s \in S$ be a closed point and $\overline{s} \to S$ be a geometric
  point over $s$. 
   Let $\sF$ be a torsion abelian sheaf on $X_{\text{\'et}}$.  
 For every
 $n \in \mathbb{N}$, we have a natural map
  \begin{equation}\label{main situation equation}
[R^nf_*(\sF)]_{\overline{s}} \to \varinjlim_{(W,\overline{w})} H^n(f_W^{-1}(w),\sF)
\end{equation}
 where the colimit on the right runs over all \'etale neighbourhoods $(W,\overline{w})$ 
 of $(S,\overline{s})$, $w \in W$ is the image of $\overline{w}$, 
 $f_W \colon X_W \to W$ is the morphism induced by base change and  
 $f_W^{-1}(w)$ is the pseudo-adic space
  as defined in Remark \ref{rem : pseudo-adic space}.  
Indeed, for every 
\'etale neighbourhood $(W,\overline{w})$ 
 of $(S,\overline{s})$, we have a natural morphism 
 $H^n(X_W,\sF) \to H^n(f_W^{-1}(w),\sF)$ and hence we get a natural map}
 \[[R^nf_*(\sF)]_{\overline{s}} = \varinjlim_{(W,\overline{w})} H^n(X_W,\sF) \to \varinjlim_{(W,\overline{w})} H^n(f_W^{-1}(w),\sF).\]}
\end{sit}

{
\begin{thm}\label{proper base change}
In Situation \ref{the main situation}, if in addition $f$ is proper
then the natural morphism $(\ref{main situation equation})$ is an isomorphism. 
%
\end{thm}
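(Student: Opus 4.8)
The plan is to reduce to the classical proper base change theorem for schemes by exploiting the universal compactification machinery together with the fact that $\lambda_{\mathfrak{X}}$-type specialization morphisms are "topologically proper". First I would observe that the statement is local on $S$, so by Lemma \ref{lem : base change for proper} and Remark \ref{rem : certain base change for proper} we may assume $S = \Spec(R)$ is affine and that $f$ is good with $\mathcal{O}_X(X)$ Tate; since $f$ is quasi-compact this requires covering $X$ by finitely many good affinoids and a Mayer--Vietoris/spectral-sequence argument to patch the local statements, so strictly speaking one first proves it for $f$ good and affinoid $X$ and then glues. In the good affinoid case, write $\alpha = f$ and use Theorem \ref{thm:compspemor}: since $f$ is separated (Lemma \ref{lem : affinoid to affine sep and taut}), taut (same lemma) and locally of finite type, it is already partially proper and quasi-compact, i.e. $f$ \emph{is} its own universal compactification — so there is nothing to compactify and $X$ itself plays the role of $\overline{X}^{/S}$. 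The content is therefore genuinely a proper base change statement, not a reduction to $j_!$.

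The core of the argument is to compare the étale topos of $X$ with a limit of étale topoi of "fibres". I would set $W$ ranging over affine étale neighbourhoods $(W,\overline w)$ of $(S,\overline s)$, form $X_W := X\times_S W$ (good affinoid by Lemma \ref{lem:base change for goodness}), and use Lemma \ref{lem:smoobaschanrefpovi} (étale/smooth base change along $W \to S$) to identify $[R^n f_*\sF]_{\overline s}$ with $\varinjlim_{(W,\overline w)} H^n(X_W,\sF)$ — which is exactly the left side of \eqref{main situation equation} unwound in Situation \ref{the main situation}. The real work is to show the colimit of the restriction maps $H^n(X_W,\sF) \to H^n(f_W^{-1}(w),\sF)$ is an isomorphism, where $f_W^{-1}(w)$ is the pseudo-adic fibre. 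For this I would invoke the proper-ness of $f$ in the following topological guise: because $f$ is proper (partially proper + quasi-compact), the map $c\colon Y \to X$ assigning centres is available as in Lemma \ref{lem:hub1313}, and more importantly the set of points of $X$ lying over a generization-closed neighbourhood of $s$ in $S$ is "cofinally controlled" by the $f_W^{-1}(w)$. Concretely, one shows that $\{f_W^{-1}(w)\}_{(W,\overline w)}$, viewed inside $X_{\text{ét}}$ via the étale maps $X_W \to X$, forms a pro-system whose limit (as a site/pseudo-adic space) is the pseudo-adic space $X\times_S \Spec(\mathcal{O}_{S,\overline s})$ restricted to the fibre over $\overline s$, and then that cohomology commutes with this cofiltered limit of qcqs objects — this is where one cites the finiteness/coherence input of Lemma \ref{push forward commutes with filtered colimits}: the topoi are algebraic, $f_{\text{ét}}$ is coherent, hence $R^n f_*$ commutes with the relevant filtered colimits, and Huber's comparison $H^n(\varprojlim, \sF) = \varinjlim H^n(-,\sF)$ for cofiltered limits of spectral pseudo-adic spaces (cf.\ \cite[\S 2.4, \S 4.1]{hub96}) applies.

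Having reduced to a single pseudo-adic fibre $f_{\overline S}^{-1}(\overline s)$ over the strictly henselian $\Spec(\mathcal{O}_{S,\overline s})$, the last step is to prove that $H^n(f_{\overline S}^{-1}(\overline s),\sF) \xrightarrow{\sim} H^n$ of the special fibre — but in fact the target of \eqref{main situation equation} \emph{is} this limit, so once the comparison-of-limits step is done the theorem follows formally; no residual "classical proper base change" input is needed because properness of a specialization morphism is precisely the valuative/topological condition that makes the fibre cohomology the correct limit. The anticipated main obstacle is the middle step: justifying that cohomology commutes with the cofiltered limit $\varprojlim_{(W,\overline w)} f_W^{-1}(w)$ of pseudo-adic spaces, since $X_W$ is affinoid but the fibre $f_W^{-1}(w)$ is only a pseudo-adic space (a closed, convex, locally pro-constructible subset), and one must check the limit is taken over a system of \emph{spectral} spaces with \emph{spectral} transition maps so that Huber's limit theorem \cite[(2.4.6), (4.1.?)]{hub96} and the coherence of $f_{\text{ét}}$ genuinely apply; here the goodness hypothesis and Lemma \ref{lem : affinoid to affine sep and taut} (spectrality of $X$, spectrality of $f$) are exactly what is needed, and one must be careful that base change $X_W \to X$ along an étale — hence not quasi-compact in general — map still yields spectral transition maps on the relevant pro-constructible subsets, which follows since each individual $X_W \to X$ restricted over a quasi-compact open is quasi-compact.
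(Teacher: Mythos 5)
Your reduction skeleton (localize to $S$ affine, cover $X$ by finitely many good affinoids with Tate global sections, patch with a closed-cover spectral sequence, pass to compactifications) matches the paper's, but there are two problems, one of which is fatal. First, a local inaccuracy: after restricting to a good affinoid piece $U$, the morphism $U \to S$ is separated, taut and locally of finite type but \emph{not} partially proper, so it is not its own universal compactification; the paper must genuinely pass to $\overline{U}^{/S}$ (a strictly larger affinoid, namely $\Spa(\mathcal{O}_X(U), I(U))$ with a smaller ring of integral elements) via Lemma \ref{lem : reduce from closed subset to affinoid II}, and the point of doing so is that $\overline{U}^{/S} \to S$ is \emph{tight} (Lemma \ref{properties of universal compactification}(3)), i.e.\ the image of $\mathcal{O}^+$ in $\mathcal{O}_{\text{red}}$ is integral over the image of $\mathcal{O}_S(S)$. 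That integrality is not a convenience; it is the hypothesis that drives the final step.

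The fatal gap is in your core argument. You claim that once one knows cohomology commutes with the cofiltered limit of the pseudo-adic fibres $f_W^{-1}(w)$, "the theorem follows formally; no residual classical proper base change input is needed." But commuting cohomology with the limit only rewrites the right-hand side of \eqref{main situation equation} as the cohomology of a single limit object; it does nothing to compare it with the left-hand side $\varinjlim_{(W,\overline w)} H^n(X_W,\sF)$, where each restriction $H^n(X_W,\sF)\to H^n(f_W^{-1}(w),\sF)$ is restriction from an affinoid to a proper closed (pro-special) subset. There is no formal reason this becomes an isomorphism in the colimit — for schemes the analogous statement over a henselian local base \emph{is} the proper base change theorem. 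The paper's actual engine is Proposition \ref{using prospecial}: after reducing the sheaf to $i_*(G)$ for a special closed subset, both sides are converted via Huber's comparison theorem \cite[Theorem 3.2.1]{hub96} into étale cohomology of $\Spec$ of two \emph{different} henselizations of $A_W^+[e_1,\dots,e_n]$ (one along the ideal cutting out the special subset, one along the larger ideal that additionally imposes $|g|<1$ for $g$ lifting $\mathfrak{p}_W$), and the theorem reduces to the purely algebraic claim that these henselizations agree after taking the colimit over étale neighbourhoods of $\overline s$. That claim is proved in Lemma \ref{lem : BJ is henselian} and rests precisely on tightness: the integrality of $R_W \to A_W^+/\mathfrak{m}_{\mathcal{O}^+_{X_W}}(X_W)$ allows one to invoke \cite[Tag 09XK]{stacks-project} to show the colimit pair is already henselian along the larger ideal, so the second henselization changes nothing. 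Your proposal is missing this entire mechanism, which is where the properness hypothesis is actually consumed.
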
}

 Our strategy to prove Theorem \ref{proper base change} is to reduce to 
 when $X$ is affinoid, $S$ is affine and the morphism $f$ is \emph{tight}. This situation can then be dealt with by 
 Proposition \ref{using prospecial} which makes crucial use of \cite[Theorem 3.2.1]{hub96}. 
 
We introduce the following definition which plays a crucial role in Proposition \ref{using prospecial}.

\begin{defi} \label{def : tight morphism}
 \emph{Let $Y$ be an analytic affinoid adic space and $T$ be an affine scheme.
 A specialization morphism $f \colon Y \to T$ induces a morphism
 \[a \colon \mathcal{O}_T(T) \to \mathcal{O}_{Y_{\text{red}}}(Y).\]
 Furthermore, we have the morphism
  \[b \colon \mathcal{O}_Y^+(Y) \to \mathcal{O}_{Y_{\text{red}}}(Y).\]
  We say that the
 specialization morphism $f$ is} tight \emph{if the following conditions are satisfied.}
 \begin{enumerate}
 \item \emph{$\mathrm{im}(a) \subseteq \mathrm{im}(b)$.}
 \item \emph{$\mathrm{im}(b)$ is integral over $\mathrm{im}(a)$.}
 \end{enumerate}
 \end{defi}
 
{\begin{lem} \label{lem : tight implies partially proper}
  Let $f \colon X \to S$ be a tight specialization morphism.
  We suppose that $\mathcal{O}_X(X)$ is Tate.
   Then $f$ is partially proper. 
 \end{lem}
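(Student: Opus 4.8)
The plan is to check the three defining properties of a partially proper specialization morphism (Definition~\ref{defi:parprospecmorre}): locally of finite type, quasi-separated, and the valuative criterion producing a \emph{unique} center. The first two come for free. Since $f$ is tight, the pair $(X,S)$ is good in the sense of Definition~\ref{def:lftspmorref}: condition~(1) for a tight morphism is exactly condition~(1) there, and condition~(2) for a tight morphism gives condition~(2) with $E=\emptyset$. As $X$ is affinoid, taking $U=X$, $W=S$ for every point shows $f$ is locally of finite type. Also, $X$ is affinoid and $S$ is affine, so by Lemma~\ref{lem : affinoid to affine sep and taut} the morphism $f$ is separated, hence quasi-separated; and by Lemma~\ref{lem:valucritsperef} separatedness already supplies the ``at most one center'' half of the valuative criterion. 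So the whole content is the \emph{existence} of centers.

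By Lemma~\ref{lem:valucritsperef}, existence of centers amounts to the following lifting property: for every commutative square
$$
\begin{tikzcd}[row sep = large, column sep = large]
U \arrow[r, "i"] \arrow[d, "j"] & X \arrow[d, "f"] \\
V \arrow[r, "\beta"] & S
\end{tikzcd}
$$
with $U=\Spa(K,K^+)$, $V=\Spa(L,L^+)$ spectra of analytic affinoid fields, $j$ inducing an isomorphism $\mathcal{O}_V(V)\xrightarrow{\sim}\mathcal{O}_U(U)$, $i$ a morphism of adic spaces, and $\beta$ a specialization morphism, there is a morphism $f'\colon V\to X$ with $i=f'\circ j$ and $f\circ f'=\beta$. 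I would identify $L=K$ through $j^{\#}$; then $L^+\subseteq K^+$ are valuation rings of this field and $L^{\circ\circ}=K^{\circ\circ}$ (the subspace $U$ of generizations cut out in $V$ carries the same topology). Writing $X=\Spa(B,B^+)$, $i$ is a continuous ring map $i^{\#}\colon B\to K$ with $i^{\#}(B^+)\subseteq K^+$; the only candidate $f'$ is the one induced by $i^{\#}$ regarded as a map to $L$, and since by Example~\ref{es:casaffielspe} the ring $\mathcal{O}_{V_{\text{red}}}(V)=L^+/L^{\circ\circ}$ injects into $\mathcal{O}_{U_{\text{red}}}(U)=K^+/K^{\circ\circ}$, commutativity of the square forces $f\circ f'=\beta$ once $f'$ is defined. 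Hence everything reduces to showing $i^{\#}(B^+)\subseteq L^+$.

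For this I would use two observations. Let $a\colon\mathcal{O}_S(S)\to\mathcal{O}_{X_{\text{red}}}(X)$ and $b\colon\mathcal{O}_X^+(X)\to\mathcal{O}_{X_{\text{red}}}(X)$ be the maps attached to $f$. First, $\ker b=\mathfrak{m}_{\mathcal{O}_X^+}(X)$ (left exactness of global sections), and compatibility of $i$ with $i_{\text{red}}$ gives $i^{\#}(\ker b)\subseteq\mathfrak{m}_{\mathcal{O}_U^+}(U)=K^{\circ\circ}=L^{\circ\circ}$. Second, chasing the square through $\mathcal{O}_{U_{\text{red}}}(U)$ shows that if $y\in B^+$ has $b(y)\in\im(a)$, then the class of $i^{\#}(y)$ in $K^+/K^{\circ\circ}$ lies in the subring $L^+/L^{\circ\circ}$, i.e. $i^{\#}(y)\in L^+$. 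Now fix $z\in B^+$: by tightness~(2), $b(z)$ is integral over $\im(a)$, so there are $d_0,\dots,d_{m-1}\in\mathcal{O}_S(S)$ and, by tightness~(1), elements $y_j\in B^+$ with $b(y_j)=a(d_j)$ such that $z^m+y_{m-1}z^{m-1}+\cdots+y_0\in\ker b$. Applying $i^{\#}$ and setting $w:=i^{\#}(z)$, $c_j:=i^{\#}(y_j)\in L^+$, we get $w^m+c_{m-1}w^{m-1}+\cdots+c_0\in L^{\circ\circ}$. If $w\notin L^+$ then $w^{-1}$ lies in the maximal ideal $\mathfrak{n}$ of the valuation ring $L^+$; dividing by $w^m$ gives $1+\sum_{0\le j<m}c_j(w^{-1})^{m-j}=\varepsilon(w^{-1})^m$ with $\varepsilon\in L^{\circ\circ}$, whose left side lies in $1+\mathfrak{n}$ (a unit) and whose right side lies in $L^{\circ\circ}\subseteq\mathfrak{n}$ by Lemma~\ref{lem:prshinve} — a contradiction. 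So $i^{\#}(z)\in L^+$, and as $z$ is arbitrary $i^{\#}(B^+)\subseteq L^+$. This produces $f'$, completes the valuative criterion, and shows $f$ is partially proper.

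The routine parts are ``tight $\Rightarrow$ good $\Rightarrow$ locally of finite type'' and the appeal to Lemma~\ref{lem : affinoid to affine sep and taut} for separatedness; the substantive step, and the only place that needs care, is the last one — using the integral dependence of $\im(b)$ on $\im(a)$ together with the fact that $L^+$ is a valuation ring to force $i^{\#}(B^+)\subseteq L^+$. The one mild subtlety to pin down carefully is the identification, via $j$, of $K$ with $L$ as topological fields and of $K^{\circ\circ}$ with $L^{\circ\circ}$, for which one views $U$ as the open subspace of generizations in $V$ determined by the isomorphism $\mathcal{O}_V(V)\xrightarrow{\sim}\mathcal{O}_U(U)$.
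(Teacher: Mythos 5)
Your proof is correct, but it takes a different route from the paper's. The paper does not verify the valuative criterion directly: it shows that $X$ coincides with its own universal compactification $\overline{X}^{/S}$ from Theorem \ref{thm:compspemor}, by checking that $\mathcal{O}_X^+(X)$ is already contained in the ring $I(X)$ (the integral closure of $b^{-1}(\im(a))$ in $\mathcal{O}_X(X)$) used in the construction of $X_c$ --- for $x\in\mathcal{O}_X^+(X)$ one lifts the monic relation for $b(x)$ over $\im(a)$ to a monic polynomial $g'$ over $b^{-1}(\im(a))$, notes $g'(x)\in\mathfrak{m}_{\mathcal{O}_X^+}(X)=\ker(b)\subseteq b^{-1}(\im(a))$, and absorbs this error term into the constant coefficient; then $X_c=X_d=X=\overline{X}^{/S}$ and partial properness is inherited from the compactification theorem. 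You instead verify the criterion of Lemma \ref{lem:valucritsperef} by hand: separatedness (hence ``at most one center'') from Lemma \ref{lem : affinoid to affine sep and taut}, and existence of centers by showing $i^{\#}(B^+)\subseteq L^+$ via the same lifted monic relation, now read inside the valuation ring $L^+$ (if $w\notin L^+$ then $w^{-1}$ lies in the maximal ideal and the relation forces a unit to lie in $L^{\circ\circ}$). The algebraic heart --- the monic relation over $b^{-1}(\im(a))$ modulo $\ker(b)=\mathfrak{m}_{\mathcal{O}_X^+}(X)$, which is exactly where both tightness conditions enter --- is identical in the two arguments; the difference is where it is deployed. Your version is more self-contained, since it does not lean on the internals of the proof of Theorem \ref{thm:compspemor}, at the cost of redoing the valuation-theoretic bookkeeping (the identification $K=L$, $K^{\circ\circ}=L^{\circ\circ}$, and the diagram chase showing $b(y)\in\im(a)\Rightarrow i^{\#}(y)\in L^+$) that the compactification machinery packages once and for all. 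Both are sound.
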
 
 \begin{proof} 
Since $f$ is tight, it is in particular good. 
By Lemma \ref{lem : affinoid to affine sep and taut},
$f$ satisfies the conditions of Theorem \ref{thm:compspemor}. Hence,
$\overline{X}^{/S}$ and $\overline{f}^{/S}$ exist.  
 We claim that $X = \overline{X}^{/S}$ and $f = \overline{f}^{/S}$. 
 Let
$a \colon \mathcal{O}_S(S) \to {\mathcal{O}_{X_{\text{red}}}}(X)$
be the morphism induced by $f$ and 
let
$b$ denote the map
 $\mathcal{O}_X^+(X) \to {\mathcal{O}_{X_{\text{red}}}}(X)$.
 We follow the construction of $\overline{X}^{/S}$ in the proof 
 of Theorem \ref{thm:compspemor}. 
 Let $X_c := \mathrm{Spa}(\mathcal{O}_X(X),I(X))$ where 
 $I(X)$ is the integral closure of
 $b^{-1}(\mathrm{im}(a))$ in $\mathcal{O}_X(X)$.
 We show $X_c = X$. 
 Indeed, 
 suppose $x \in \mathcal{O}_X^+(X)$. Since $f$ is tight,
 there exists a monic polynomial $g$ with coefficients 
 in $\mathrm{im}(a)$ such that 
 $g(b(x)) = 0$. 
 Since $f$ is tight, $\mathrm{im}(a) \subseteq \mathrm{im}(b)$. 
 It follows that we can choose a monic lift $g'$ of $g$ 
 with coefficients in $b^{-1}(\mathrm{im}(a))$.
 Note that $g'(x) \in \mathfrak{m}_{\mathcal{O}_X^+}(X)$. 
 Let $r := g'(x)$. 
 Hence, the monic polynomial $g'' := g' - r$ has coefficients in 
 $b^{-1}(\mathrm{im}(a))$ and is such that $g''(x) = 0$. 
 Since $I(X)$ is integrally closed, we get that $x \in I(X)$. 
 
 Let $\sF$ be 
 as defined in the beginning of the proof of Theorem 
 \ref{thm:compspemor}.
 Recall that we obtain $\overline{X}^{/S}$ by glueing together 
 the $U_d$ for $U \in \sF$ where $U_d$ is as defined 
 in the end of the proof of Theorem \ref{thm:compspemor}.
 Observe that $X \in \sF$ and $X_d = X$. 
 Furthermore, for every $U \in \sF$, $U_d \subseteq X_d = X$. 
 Since $\overline{X}^{/S}$ is obtained by glueing $U_d$ as 
 $U$ varies along $\sF$,
  it follows that 
  $\overline{X}^{/S} = X$. 
 \end{proof} }

\begin{lem} \label{lem : finite + tight = tight}
Let $f \colon X \to S$ be a tight specialization morphism. 
Let $p \colon Y \to X$ be a finite morphism of adic spaces.
Then the specialization $f \circ p$ is tight. 
\end{lem}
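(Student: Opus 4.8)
The plan is to verify the two conditions of Definition \ref{def : tight morphism} for $f \circ p$ directly, reducing them to the corresponding conditions for $f$ via the ring maps induced by $p$ together with transitivity of integral dependence.

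First I would record the structural facts. Since $f$ is tight, $X$ is an analytic affinoid adic space, say $X = \Spa(A,A^{+})$, and $S = \Spec(D)$ is affine. As $p$ is finite and $X$ is affinoid, $Y$ is affinoid, say $Y = \Spa(B,B^{+})$ with $B$ module-finite over $A$, and moreover $B^{+}$ is integral over the image of $A^{+}$ in $B$ (cf. \cite[\S 1.4]{hub96}). Finiteness also preserves analyticity (a topologically nilpotent unit of $A$ maps to one of $B$), so $Y$ is an analytic affinoid adic space and $f \circ p \colon Y \to S$ is a specialization morphism in the sense of Remark \ref{rem:compmoretad}. Write $a_{f} \colon D \to \mathcal{O}_{X_{\text{red}}}(X)$ and $b_{f} \colon A^{+} \to \mathcal{O}_{X_{\text{red}}}(X)$ for the maps attached to $f$, and $a \colon D \to \mathcal{O}_{Y_{\text{red}}}(Y)$ and $b \colon B^{+} \to \mathcal{O}_{Y_{\text{red}}}(Y)$ for those attached to $f \circ p$. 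Let $\phi \colon \mathcal{O}_{X_{\text{red}}}(X) \to \mathcal{O}_{Y_{\text{red}}}(Y)$ be the map induced by $p_{\text{red}}$ and $\psi \colon A^{+} \to B^{+}$ the map on $\mathcal{O}^{+}$; by construction $a = \phi \circ a_{f}$ and $b \circ \psi = \phi \circ b_{f}$.

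Condition (1) is then a short chase: $\im(a) = \phi(\im(a_{f})) \subseteq \phi(\im(b_{f})) = b(\psi(A^{+})) \subseteq \im(b)$, where the middle inclusion is tightness of $f$. For condition (2) I would argue by transitivity of integrality in two steps. Applying the ring homomorphism $\phi$ to the statement ``$\im(b_{f})$ is integral over $\im(a_{f})$'' (tightness of $f$) gives that $\phi(\im(b_{f})) = b(\psi(A^{+}))$ is integral over $\phi(\im(a_{f})) = \im(a)$. Applying the ring homomorphism $b$ to the statement ``$B^{+}$ is integral over $\psi(A^{+})$'' gives that $\im(b) = b(B^{+})$ is integral over $b(\psi(A^{+}))$. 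Combining the two, $\im(b)$ is integral over $\im(a)$, which is condition (2), and this completes the proof. The only point requiring an external input rather than a diagram chase is the integrality of $B^{+}$ over the image of $A^{+}$; I expect this to be the (minor) crux, and it is exactly the description of finite morphisms of affinoid adic spaces in \cite[\S 1.4]{hub96}.
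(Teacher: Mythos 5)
Your proof is correct and follows essentially the same route as the paper's: the same commutative diagram of the maps $a, b$ for $f$ and $a', b'$ for $f \circ p$, the same diagram chase for condition (1), and the same two-step transitivity-of-integrality argument for condition (2), using that finiteness of $p$ makes $B^{+}$ integral over the image of $A^{+}$. Nothing is missing.
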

\begin{proof} 
Note that since $Y$ is finite over $X$ and $X$ is analytic affinoid, $Y$ is analytic affinoid as well.
Let $(A,A^+) := (\mathcal{O}_X(X),\mathcal{O}^+_X(X))$ 
and $(B,B^+) := (\mathcal{O}_Y(Y),\mathcal{O}^+_Y(Y))$. 
Let $R := \mathcal{O}_S(S)$. 
 We have the following commutative diagram where we add appropriate labels. 
 $$
\begin{tikzcd} [row sep = large, column sep = large] 
A^+ \arrow[d,"b"] \arrow[r] &   B^+  \arrow[d,"b'"]  \\ 
\mathcal{O}_{X_{\text{red}}}(X) \arrow[r]& \mathcal{O}_{Y_{\text{red}}}(Y)   \\
 R \arrow[u,"a"] \arrow[ur,"a'"] \\  
\end{tikzcd}
$$
A diagram chase shows that $\mathrm{im}(a') \subseteq \mathrm{im}(b')$. 
Since $Y$ is finite over $X$, $B^+$ is integral over $A^+$. 
It follows that
$\mathrm{im}(b')$ is
integral over the image of $A^+$ in $\mathcal{O}_{Y_{\text{red}}}(Y)$. 
Since the diagram is commutative and the specialization morphism $f$ is tight, 
we see that the image of $A^+$ in $\mathcal{O}_{Y_{\text{red}}}(Y)$
is integral over $\mathrm{im}(a')$. 
Thus, $\mathrm{im}(b')$ is integral over $\mathrm{im}(a')$. 
This concludes the proof. 
\end{proof}

{
\begin{lem} \label{lem:base change for tightness}
Let $\alpha \colon X \to S$ be a {tight} specialization morphism 
  such that $\mathcal{O}_X(X)$ is 
 a Tate ring. Let $T$ be an affine scheme and $g \colon T \to S$ be a standard \'etale morphism. 
 Let $Y := X \times_S T$ be the fibre product as constructed in Proposition \ref{prop:exifibprodspecmor} and 
 $\beta \colon Y \to T$ be the projection. {We then have that $Y$ is affinoid and
 $\beta$ is a tight specialization morphism. } 
\end{lem}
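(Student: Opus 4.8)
The plan is to deduce this from Lemma \ref{lem:base change for goodness} by running its proof in the special case where the auxiliary finite sets are empty. Since a tight specialization morphism is in particular good, Lemma \ref{lem:base change for goodness} already gives that $Y = X \times_S T$ is affinoid and that $\beta \colon Y \to T$ is good; in particular, writing $a' \colon \mathcal{O}_T(T) \to \mathcal{O}_{Y_{\text{red}}}(Y)$ and $b' \colon \mathcal{O}_Y^+(Y) \to \mathcal{O}_{Y_{\text{red}}}(Y)$ for the canonical homomorphisms as in that proof, condition (1) of Definition \ref{def : tight morphism} holds, i.e. $\mathrm{im}(a') \subseteq \mathrm{im}(b')$. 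It therefore remains only to verify condition (2): that $\mathrm{im}(b')$ is integral over $\mathrm{im}(a')$.

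For this I would keep the notation and constructions from the proofs of Proposition \ref{prop:exifibprodspecmor} and Lemma \ref{lem:base change for goodness}: write $S = \mathrm{Spec}(D)$, $T = \mathrm{Spec}(E)$ with $E = D[x_1,x_2]/(f_1,f_2)$ standard \'etale, $(A,A^+) = (\mathcal{O}_X(X),\mathcal{O}^+_X(X))$ with $A$ Tate, choose lifts $g_1,g_2 \in A^+[X_1,X_2]$, and set $B = A\langle X_1,X_2\rangle/(g_1,g_2)$, $B'_0 = A^+\langle X_1,X_2\rangle/\big((g_1,g_2)A\langle X_1,X_2\rangle \cap A^+\langle X_1,X_2\rangle\big)$, and $B_1 = A^+[X_1,X_2]/\big((g_1,g_2)A\langle X_1,X_2\rangle \cap A^+[X_1,X_2]\big) \subseteq B'_0$. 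As recorded in the proof of Lemma \ref{lem:base change for goodness}, $B^+$ is integral over $B'_0$ and $b'(B_1) = b'(B'_0)$; hence it suffices to prove that $b'(B_1)$ is integral over $\mathrm{im}(a') = a'(E)$.

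The key point is that tightness of $\alpha$ makes the finite sets $\mathfrak{D}$ and $\mathfrak{E}$ appearing in that proof superfluous. Indeed, by condition (2) of Definition \ref{def : tight morphism} for $\alpha$, the image of $A^+$ in $\mathcal{O}_{X_{\text{red}}}(X)$ is integral over $\mathrm{im}(a)$; pushing forward along the homomorphism $\mathcal{O}_{X_{\text{red}}}(X) \to \mathcal{O}_{Y_{\text{red}}}(Y)$ and using the commutativity of the large diagram in the proof of Lemma \ref{lem:base change for goodness} (which identifies $D \to \mathcal{O}_{X_{\text{red}}}(X) \to \mathcal{O}_{Y_{\text{red}}}(Y)$ with $D \to E \xrightarrow{a'} \mathcal{O}_{Y_{\text{red}}}(Y)$), we find that the image of $A^+$ in $\mathcal{O}_{Y_{\text{red}}}(Y)$ is integral over the image of $D$ in $\mathcal{O}_{Y_{\text{red}}}(Y)$, which is contained in $a'(E)$; hence it is integral over $a'(E)$. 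On the other hand, commutativity of that same diagram (with Lemma \ref{equality lemma} as the crucial input) shows that the images of $X_1,X_2$ under $b'|_{B_1}$ equal $a'(x_1),a'(x_2)$ and so lie in $a'(E)$. Since $B_1$ is generated as a ring by the image of $A^+$ together with $X_1,X_2$, the ring $b'(B_1)$ is contained in the subring of $\mathcal{O}_{Y_{\text{red}}}(Y)$ generated by $a'(E)$ and the image of $A^+$; as every element of that subring is integral over $a'(E)$, it follows that $b'(B_1)$ is integral over $a'(E)$. Therefore $\mathrm{im}(b') = b'(B^+)$ is integral over $b'(B'_0) = b'(B_1)$, and so over $a'(E) = \mathrm{im}(a')$, which establishes condition (2) and finishes the proof.

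I do not expect a genuine obstacle here: the argument is exactly the proof of Lemma \ref{lem:base change for goodness} with the auxiliary finite sets taken to be empty, and the only points requiring care --- that the image of $A^+$ in $\mathcal{O}_{Y_{\text{red}}}(Y)$ is integral over $a'(E)$, and that $b'$ carries $X_1,X_2$ into $a'(E)$ --- are precisely the diagram chases already carried out there.
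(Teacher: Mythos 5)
Your proposal is correct and is exactly the argument the paper intends: the paper's own proof of this lemma consists of the single remark that one runs the proof of Lemma \ref{lem:base change for goodness} with $\mathfrak{D}$ (and hence $\mathfrak{E}$) taken to be empty, which is precisely what you have spelled out. The details you supply (integrality of the image of $A^+$ over $a'(E)$ via tightness of $\alpha$, the reduction $b'(B'_0)=b'(B_1)$, and the fact that $b'$ sends $X_1,X_2$ into $a'(E)$) match the diagram chases already carried out in that earlier proof.
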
 
}

\begin{proof} 
{The proof is the same as that of Lemma 3.8, with $\mathfrak{D}$ (and hence $\mathfrak{E}$) being the empty sets there}. 

\end{proof}

{
\begin{lem} \label{lem : fibre is pro-special}
{Let $f : X \to S$ be a specialization morphism such that 
 the following properties are satisfied. 
 \begin{enumerate}
 \item $X$ is affinoid. 
 \item  $S$ is affine.
 \item With respect to the natural morphisms 
     \begin{align*}
&a \colon \mathcal{O}_S(S) \to \mathcal{O}_{X_{\text{red}}}(X) \\
&b \colon \mathcal{O}_X^+(X) \to \mathcal{O}_{X_{\text{red}}}(X)
\end{align*}
we have $\mathrm{im}(a) \subseteq \mathrm{im}(b)$.
\end{enumerate}

 Let $s \in S$ be a Zariski closed point. 
  Then $f^{-1}(s)$ is a
   pro-special subset of $X$. }
\end{lem}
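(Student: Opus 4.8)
The plan is to produce an explicit description of $f^{-1}(s)$ as a filtered intersection of special subsets of $X$, using the valuation-theoretic description of the stalks of $\mathcal{O}_{X_{\text{red}}}$ from Lemma \ref{lem:stalforexref}. Write $X = \Spa(A,A^+)$ with $A^+ = \mathcal{O}_X^+(X)$, and $S = \Spec(R)$ with $R = \mathcal{O}_S(S)$; since $s$ is Zariski closed it corresponds to a \emph{maximal} ideal $\mathfrak{p} \subseteq R$. Condition (3) says exactly that for every $r \in \mathfrak{p}$ one may choose a lift $\widetilde{r} \in A^+$ with $b(\widetilde{r}) = a(r)$; two such lifts differ by an element of $\ker(b) = \mathfrak{m}_{\mathcal{O}_X^+}(X)$, whose value at every point of $X$ is $< 1$, so the condition ``$|\widetilde{r}(x)| < 1$'' used below is independent of the chosen lift.

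First I would identify the fibre. For $x \in X$ the point $f(x) \in S$ is, by definition of a morphism of locally ringed spaces, the preimage under $R \to \mathcal{O}_{X_{\text{red}},x}$ of the maximal ideal of $\mathcal{O}_{X_{\text{red}},x}$. By Lemma \ref{lem:stalforexref} we have $\mathcal{O}_{X_{\text{red}},x} \cong k(x)^{+}/k(x)^{\circ\circ}$, a valuation ring whose maximal ideal is the image of $\mathfrak{m}_x^{+} \subset \mathcal{O}_{X,x}^{+}$; hence the image of $\widetilde{r} \in A^+$ in $\mathcal{O}_{X_{\text{red}},x}$ lies in the maximal ideal if and only if $|\widetilde{r}(x)| < 1$. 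Since $\mathfrak{p}$ is maximal and $f(x)$ is a proper prime ideal of $R$, we get $f(x) = s$ if and only if $\mathfrak{p} \subseteq f(x)$, i.e.\ if and only if $|\widetilde{r}(x)| < 1$ for every $r \in \mathfrak{p}$. Therefore
\[
f^{-1}(s) = \bigcap_{r \in \mathfrak{p}} \{\, x \in X \mid |\widetilde{r}(x)| < 1 \,\}.
\]

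Next I would observe that each set on the right is special. For $\widetilde{r} \in A^+$ the subset $\{\, x \in X \mid |\widetilde{r}(x)| \geq |1(x)| \,\}$ is a rational subset of the affinoid $X$, hence open and quasi-compact; its complement $\{\, x \in X \mid |\widetilde{r}(x)| < 1 \,\}$ is therefore a closed and constructible, in particular special, subset of the spectral space $X$ (and, $s$ being a closed point of $S$, one sees in passing that $f^{-1}(s)$ is in any case closed in $X$). As finite intersections of special subsets are special, the subsets $\bigcap_{r \in F} \{\, |\widetilde{r}(x)| < 1 \,\}$, with $F$ ranging over finite subsets of $\mathfrak{p}$, form a filtered family of special subsets of $X$ whose intersection is $f^{-1}(s)$. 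This exhibits $f^{-1}(s)$ as a pro-special subset of $X$, as required.

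The one point requiring care is the identification of the fibre: it rests on Lemma \ref{lem:stalforexref} together with the observation that condition (3) is precisely what allows the purely algebraic locus ``$\mathfrak{p} \subseteq f(x)$'' to be transported to the valuation-theoretic locus ``$|\widetilde{r}(x)| < 1$ for all $r \in \mathfrak{p}$'', which is manifestly pro-special. The remaining steps are routine manipulations with rational subsets of affinoid adic spaces, and the Zariski-closedness of $s$ is used only to reduce the description of the fibre to the single inclusion $\mathfrak{p} \subseteq f(x)$ (so that the defining conditions are of the form $|\widetilde{r}(x)| < 1$).
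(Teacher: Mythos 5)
Your proposal is correct and follows essentially the same route as the paper: both use the closedness of $s$ to reduce $f(x)=s$ to the containment $\mathfrak{p}\subseteq f(x)$, the stalk identification $\mathcal{O}_{X_{\text{red}},x}\simeq k(x)^{+}/k(x)^{\circ\circ}$ to translate this into $\lvert \widetilde{r}(x)\lvert<1$ for lifts $\widetilde{r}\in A^{+}$ supplied by condition (3), and then read off pro-speciality from the resulting description of the fibre (the paper intersects over the full preimage $d^{-1}(e(\mathfrak{p})\cdot A^{+}/\mathfrak{m}_{\mathcal{O}_X^+}(X))$ rather than one chosen lift per generator, but as you note the condition is independent of the lift, so the two descriptions agree). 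The only cosmetic quibble is that each set $\{x : \lvert\widetilde{r}(x)\lvert<1\}$ is special directly because it has the form $\{\lvert e_1\lvert<\lvert e_0\lvert\}$ with $e_0=1$, not because it is closed and constructible.
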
}
\begin{proof}
{ 
 Let $(A,A^+) := (\mathcal{O}_X(X),\mathcal{O}^+_X(X))$
 and $R$ be such that $S = \mathrm{Spec}(R)$. 
 Let 
 $\mathfrak{p}$ be the maximal ideal corresponding to the point $s$.}
Let $x \in X$. 
We then have the following commutative diagram. 
$$
\begin{tikzcd} [row sep = large, column sep = large] 
& A^+ \arrow[d,"b"] \arrow[r] & k(x)^+ \arrow[d] \\
R \arrow[r,"a"] & \mathcal{O}_{X_{\text{red}}}(X) \arrow[r] & \mathcal{O}_{X_{\text{red}},x}.
\end{tikzcd}
$$
Recall that we have an injection
$A^+/\mathfrak{m}_{\mathcal{O}^+_X}(X) \hookrightarrow \mathcal{O}_{X_{\text{red}}}(X)$.
    By condition (3), we have that 
    the image of $a$ is contained in $A^+/\mathfrak{m}_{\mathcal{O}^+_X}(X)$.
    We can hence remake the diagram above as follows. 
    $$
\begin{tikzcd} [row sep = large, column sep = large] 
& A^+ \arrow[d, "d"] \arrow[r] & k(x)^+ \arrow[d] \\
R \arrow[r, "e"] &A^+/\mathfrak{m}_{\mathcal{O}^+_X}(X) \arrow[r,"e'_x"] & \mathcal{O}_{X_{\text{red}},x}.
\end{tikzcd}
$$

Recall that $\mathcal{O}_{X_{\text{red}},x} = k(x)^+/k(x)^{\circ \circ}$. 
Let $\mathfrak{p}_x := \{g \in k(x)| |g(x)| < 1\}$. Observe
that $\mathfrak{p}_x/k(x)^{\circ \circ}$ is the maximal ideal of $\mathcal{O}_{X_{\text{red}},x}$.
Let $\lambda_x$ denote the composition $R \to A^+/\mathfrak{m}_{\mathcal{O}^+_X}(X) \to \mathcal{O}_{X_{\text{red}},x}$. 
By \cite[Tag 01HY]{stacks-project}, we get that 
$f(x) = s$ if and only if 
\begin{align*}
\mathfrak{p} &\subseteq \lambda_x^{-1}(\mathfrak{p}_x/k(x)^{\circ \circ})
\end{align*}
Note that the containment above is because the point $s$ is closed. 
We deduce from this that 
$f(x) = s$ if and only if 
\begin{align*}
e(\mathfrak{p}) &\subseteq {e'_x}^{-1}(\mathfrak{p}_x/k(x)^{\circ \circ}).
\end{align*}
By construction, ${e'_x}^{-1}(\mathfrak{p}_x/k(x)^{\circ \circ})$ is an ideal
in $A^+/\mathfrak{m}_{\mathcal{O}_X^+}(X)$. It follows that 
$f(x) = s$ if and only if 
\begin{align*}
e(\mathfrak{p})\cdot A^+/\mathfrak{m}_{\mathcal{O}_X^+}(X) &\subseteq {e'_x}^{-1}(\mathfrak{p}_x/k(x)^{\circ \circ}).
\end{align*}
Equivalently, $x \in f^{-1}(s)$ if and only if 
for every $g \in e(\mathfrak{p})\cdot A^+/\mathfrak{m}_{\mathcal{O}_X^+}(X)$, 
$e'_x(g) \in \mathfrak{p}_x/k(x)^{\circ \circ}.$

 Let $g' \in A^+$.  
 We then have that $e'_x(d(g')) \in \mathfrak{p}_x/k(x)^{\circ \circ}$ if and only if 
 $|g'(x)| < 1$. This can be deduced from the commutative diagram above. 
         Hence we see that $x \in f^{-1}(s)$ if and only if 
     for every $g' \in d^{-1}(e(\mathfrak{p})\cdot A^+/\mathfrak{m}_{\mathcal{O}_X^+}(X))$, 
     $|g'(x)| < 1$. 
{Thus
\[
f^{-1}(s) = \left\{ x \in X \text{ }\lvert \text{ } \lvert g'(x) \lvert <1 \text{ }\forall g' \in d^{-1}(e(\mathfrak{p}) \cdot A^+/\mathfrak{m}_{\mathcal{O}^+_X}(X)) \right\}
\]
which confirms that $f^{-1}(s)$ is a pro-special subset of $X$.} 
\end{proof}

{\begin{sit} \label{minor situation} 
\emph{Let $f \colon X \to S$ be a partially proper specialization morphism with 
 $S$ affine.  
 Let $U$ be an affinoid open subset of $X$ such that 
 $f_{|U}$ is good and $\mathcal{O}_{X}(U)$ is Tate. 
By Lemma \ref{lem : affinoid to affine sep and taut}, the morphism 
  $f_{|U} \colon U \to S$ satisfies the
   conditions 
  of Theorem \ref{thm:compspemor}. Hence
  the universal compactification 
  $f'\colon \overline{U}^{/S} \to S$ exists
  and 
  $\overline{U}^{/S}$ is affinoid. 
Let $g \colon \overline{U}^{/S} \to X$ denote the unique map that satisfies the 
 universal property associated to $\overline{U}^{/S}$.}
\end{sit}  }

{\begin{lem} \label{lem : reduce from closed subset to affinoid} 
 In Situation \ref{minor situation}, the following hold. 
 \begin{enumerate} 
 \item The morphism $g$ induces a homeomorphism 
 from $|\overline{U}^{/S}|$ onto $\overline{|U|}$ where 
 $\overline{|U|}$ is the closure of $|U|$ in $|X|$. 
 \item Let $v \in \overline{U}^{/S}$. There exists affinoid
 open neighbourhoods $P \subseteq \overline{U}^{/S}$ 
 and $Q \subseteq X$ of $v$ and $g(v)$ respectively such that 
       $g(P) \subseteq Q$ and the induced morphism 
       $\mathcal{O}_{X}(Q) \to \mathcal{O}_{\overline{U}^{/S}}(P)$ has dense image.
 \end{enumerate}      
\end{lem}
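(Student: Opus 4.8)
Both assertions follow from the properties of the universal compactification recorded in Theorem~\ref{thm:compspemor}, together with Lemma~\ref{lem:extmoralrepov} for (1) and the explicit description in Remark~\ref{description of compactification} for (2). Throughout, $f'\colon\overline{U}^{/S}\to S$ and $g\colon\overline{U}^{/S}\to X$ are as in Situation~\ref{minor situation}, so that $f\circ g=f'$ and $g\circ j=(U\hookrightarrow X)$, where $j\colon U\hookrightarrow\overline{U}^{/S}$ is the canonical quasi-compact open embedding.

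\textbf{Part (1).} First observe that $X$ is quasi-separated as a topological space: $f$ is partially proper, hence quasi-separated, and $S$ is affine, hence quasi-separated, so $X=f^{-1}(S)$ is quasi-separated. Since $g\circ j=(U\hookrightarrow X)$, the morphism $g$ restricts on $j(U)$ to the open immersion $U\hookrightarrow X$; in particular $g$ is injective on $j(U)$ with image $|U|$, is an isomorphism on residue fields there, and $g^{-1}(|U|)=j(U)$. I would now apply Lemma~\ref{lem:extmoralrepov}(3) with $X'=\overline{U}^{/S}$ (equipped with $f'$), $Y=X$ (equipped with $f$), open subspace $j(U)\cong U$, and morphism $U\hookrightarrow X$: by Theorem~\ref{thm:compspemor} every point of $\overline{U}^{/S}$ is a specialization of a point of $j(U)$; $f'$ is partially proper, hence separated; $\overline{U}^{/S}$ is affinoid, hence quasi-compact; $X$ is quasi-separated; and $U\hookrightarrow X$ is injective with $k(u)=k(u)$ for every $u$. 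Hence $g$ is a homeomorphism onto its image. To identify the image with $\overline{|U|}$, note that $j(U)$ is dense in $\overline{U}^{/S}$ (each point being a specialization of a point of $j(U)$), so by continuity $g(\overline{U}^{/S})\subseteq\overline{g(j(U))}=\overline{|U|}$. For the reverse inclusion it suffices to know that $g$ is closed; this follows exactly as in the proof of Lemma~\ref{lem:indlowshrirepovi}: $g$ is locally of $^{+}$weakly finite type (Remark~\ref{rem:difrembueasen}), quasi-separated (as $\overline{U}^{/S}$ is affinoid and $X$ is quasi-separated), and satisfies the valuative criterion for partial properness because $f\circ g=f'$ is partially proper while $f$ is separated (Lemma~\ref{lem:valucritsperef}); together with quasi-compactness of $\overline{U}^{/S}$ this makes $g$ proper, hence closed. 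Thus $g(\overline{U}^{/S})$ is a closed subset of $X$ containing $|U|$, so $g(\overline{U}^{/S})\supseteq\overline{|U|}$, and equality holds.

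\textbf{Part (2).} If $g(v)\in j(U)$ the claim is trivial: $g$ is an isomorphism from $j(U)$ onto the open subspace $|U|$ of $X$, so one takes $P$ and $Q$ to be the same rational subdomain of $U$ containing $v$. Assume then $v\in\overline{U}^{/S}\setminus j(U)$. Using $\mathcal{O}_{\overline{U}^{/S}}\simeq j_*\mathcal{O}_U$ (Theorem~\ref{thm:compspemor}) one gets, for every open $W\subseteq\overline{U}^{/S}$, a canonical identification $\mathcal{O}_{\overline{U}^{/S}}(W)=\mathcal{O}_X(W\cap j(U))$, where $W\cap j(U)$ is regarded as an open subset of $U\subseteq X$; under this identification, for any affinoid open $Q\subseteq X$ with $g(W)\subseteq Q$ the homomorphism $\mathcal{O}_X(Q)\to\mathcal{O}_{\overline{U}^{/S}}(W)$ induced by $g$ is simply the restriction map $\mathcal{O}_X(Q)\to\mathcal{O}_X(W\cap j(U))$ (note $W\cap j(U)\subseteq Q$). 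So (2) is equivalent to producing an affinoid open $Q\ni g(v)$ in $X$ and an affinoid open $P\ni v$ in $\overline{U}^{/S}$ with $g(P)\subseteq Q$ for which the restriction map $\mathcal{O}_X(Q)\to\mathcal{O}_X(P\cap j(U))$ has dense image.

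To produce such $P$ and $Q$ I would use the explicit shape of the compactification: by Remark~\ref{description of compactification} one may take $\overline{U}^{/S}=\Spa(\mathcal{O}_X(U),I(U))$ with $j(U)=\{x\mid|e_i(x)|\le 1,\ 1\le i\le m\}$ a Weierstrass subdomain defined by $e_1,\dots,e_m\in\mathcal{O}_X^+(U)$, and, in the notation of the proof of Theorem~\ref{thm:compspemor}, $v$ lies in an open subspace $\overline{V}^{/S}\subseteq\overline{U}^{/S}$ with $V\in\mathscr{F}$, $V\subseteq U$ affinoid. One then chooses $Q\ni g(v)$ small enough that $Q\cap V$ is a rational subdomain of $Q$ (possible since $V$ is an affinoid open of $X$), and $P$ a suitable rational neighbourhood of $v$ in $\overline{V}^{/S}\subseteq\overline{U}^{/S}$ contained in $g^{-1}(Q)$, so that $P\cap j(U)$ is realised as a rational subdomain of $Q$ whose denominator is a unit of $\mathcal{O}_X(Q)$, i.e.\ a Weierstrass subdomain of $Q$; for such a subdomain the restriction map on global sections has dense image, which is all that is needed. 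I expect the principal obstacle to be precisely this last coordination of $P$ and $Q$, namely controlling, across the boundary $\overline{U}^{/S}\setminus j(U)$, which rational neighbourhoods of $v$ pull back from Weierstrass subdomains of affinoid opens of $X$; this should be carried out along the lines of the analysis of the canonical partially proper construction in \cite[Proposition 4.4.3]{hub96}, on which the proof of Theorem~\ref{thm:compspemor} is modelled, and where the tautness built into the existence of $\overline{U}^{/S}$ is used.
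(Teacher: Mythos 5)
Your proposal follows essentially the same route as the paper's: part (1) rests on Lemma \ref{lem:extmoralrepov}(3) for the homeomorphism onto the image together with quasi-compactness of $\overline{U}^{/S}$ and a specializing/valuative-criterion argument for closedness of the image (the paper checks directly that $g$ is specializing by lifting centers through $f$ and $f'$, which is the same mechanism as your ``$g$ is proper, hence closed'' step), and part (2) is reduced, exactly as in the paper, to the corresponding statement (7) of \cite[Proposition 4.4.3]{hub96}, with Remark \ref{description of compactification} and the isomorphism $\mathcal{O}_{\overline{U}^{/S}} \simeq j_*\mathcal{O}_U$ (i.e.\ Lemma \ref{properties of universal compactification}(1)--(2)) supplying the local inputs. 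The argument is correct.
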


\begin{proof}
 Since $\overline{U}^{/S}$ is affinoid, it is quasi-compact. 
 Furthermore, since $f'$ is partially proper, it is separated and by 
 Theorem \ref{thm:compspemor}, every point of 
 $\overline{U}^{/S}$ is a specialization of a point of $U$. 
Therefore, by Lemma \ref{lem:extmoralrepov}(3), $g$ is a homeomorphism 
from $|\overline{U}^{/S}|$ onto
its image in $X$. Observe that this image must be contained in 
 $\overline{|U|}$. 
 
 We show that 
 the image of $g$ coincides with
 $\overline{|U|}$. 
It suffices to verify that $\mathrm{im}(g)$ is closed. 
Observe firstly that $g$ is quasi-compact. 
This is a consequence of the fact that $\overline{U}^{/S}$ is quasi-compact and 
$X$ is quasi-separated. 
Hence $\mathrm{im}(g)$ is closed if 
$g$ is specializing. 
Let $y \in X$ be a point which generalizes to a point 
$g(x)$ for some $x \in U$. 
Let $A \subseteq k(x)^+$ be a valuation ring such that 
$y$ is the center of $(x,A)$. 
Note that $g$ lifts centers uniquely because 
both $f$ and $f'$ lift centers uniquely. 
It follows that there exists a unique center $y'$ 
of $(x,A)$ in $\overline{U}^{/S}$ and $g(y') = y$. 
This verifies that $g$ is specializing. 

       The proof of assertion (2) in the statement of the lemma
        is identical to the proof of the similar statement in
       (7) of \cite[Proposition 4.4.3]{hub96} where we use Lemma 
       \ref{properties of universal compactification}(1)-(2) in place of
        5(i) and 5(iii) of loc.cit., respectively.
\end{proof} }

{\begin{lem} \label{lem : reduce from closed subset to affinoid II}
 We work within the context of Situation \ref{minor situation}. 
  Let $\sF$ be a torsion abelian sheaf on $X_{\text{\'et}}$. 
   Let $\mathscr{J}$ denote the category of standard
 \'etale neighbourhoods of $(S,\overline{s})$
 and 
 given $(W,\overline{w}) \in \mathscr{J}$, 
 let $g_W \colon \overline{U}^{/S} \times_S W \to X_W$ 
 denote the induced morphism.
 The natural morphism 
  $$\varinjlim_{(W,\overline{w}) \in \mathscr{J}} H^q(({X_W},\overline{|U_W|}),\sF)
  \to \varinjlim_{(W,\overline{w}) \in \mathscr{J}} H^q(({X_W},\overline{|U_W|} \cap f_W^{-1}(w)),\sF)$$
  is an isomorphism if and only if the natural morphism
    $$\varinjlim_{(W,\overline{w}) \in \mathscr{J}} H^q(\overline{U}^{/S} \times_S W, \sF)
  \to \varinjlim_{(W,\overline{w})  \in \mathscr{J}} H^q({f'_W}^{-1}(w),\sF)$$
 is an isomorphism where $f'_W \colon \overline{U}^{/S} \times_S W \to W$ is the projection to $W$.  
 \end{lem}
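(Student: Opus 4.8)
The plan is to prove the \emph{iff} by producing, for every $(W,\overline{w}) \in \mathscr{J}$, an isomorphism between the two colimit systems that is compatible with the two restriction maps, and then passing to the filtered colimit over $\mathscr{J}$. First I would record that all the hypotheses that are needed survive base change along an object of $\mathscr{J}$. Fix $(W,\overline{w}) \in \mathscr{J}$ and write $U_W := U \times_S W \subseteq X_W$. By Lemma~\ref{lem : base change for proper} the morphism $f_W \colon X_W \to W$ is partially proper, by Lemma~\ref{lem:base change for goodness} the restriction $f_W|_{U_W} \colon U_W \to W$ is good and $\mathcal{O}_{X_W}(U_W)$ is Tate, and by Lemma~\ref{compactification commutes with fibre products} there is a canonical identification $\overline{U}^{/S} \times_S W \simeq \overline{U_W}^{/W}$ under which $g_W$ becomes the map of Situation~\ref{minor situation} attached to $(f_W, U_W)$. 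Hence Lemma~\ref{lem : reduce from closed subset to affinoid} applies to $(f_W, U_W)$: the map $g_W$ is a homeomorphism of $|\overline{U}^{/S}\times_S W|$ onto the closed subset $\overline{|U_W|}$ of $|X_W|$, and locally around each point the induced homomorphism of structure sheaves has dense image.

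The crucial step is the comparison: regarding $\overline{U}^{/S}\times_S W$ as a pseudo-adic space with its full underlying set, the morphism $g_W \colon \overline{U}^{/S}\times_S W \to (X_W, \overline{|U_W|})$ of pseudo-adic spaces induces an equivalence of \'etale topoi. This is the expected consequence of the homeomorphism together with the density of structure sheaves (which guarantees that $g_W$ preserves completed stalks), exactly along the lines of Huber's study of canonical compactifications in \cite[\S 4.4]{hub96} (compare the proof of Lemma~\ref{lem : reduce from closed subset to affinoid}(2), which ports \cite[Proposition 4.4.3(7)]{hub96} to our context). Granting this, for every torsion abelian sheaf on the pseudo-adic space $(X_W,\overline{|U_W|})$ — in particular for the (pullbacks of the) sheaf $\sF$ — pullback along $g_W$ gives $H^q((X_W,\overline{|U_W|}),\sF) \xrightarrow{\sim} H^q(\overline{U}^{/S}\times_S W,\sF)$. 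Next, since $f_W \circ g_W = f'_W$ and $w$ is a \emph{closed} point of $W$ (so that $f_W^{-1}(w)$, and hence $\overline{|U_W|}\cap f_W^{-1}(w)$, are closed, hence legitimate pseudo-adic subspaces in the sense of Remark~\ref{rem : pseudo-adic space}), the map $g_W$ restricts to a homeomorphism $|{f'_W}^{-1}(w)| \xrightarrow{\sim} \overline{|U_W|}\cap f_W^{-1}(w)$; restricting the above equivalence along these compatible closed immersions of pseudo-adic spaces (equivalently, applying the same comparison to the restriction of $g_W$, whose homeomorphism and density properties are inherited) yields $H^q((X_W,\overline{|U_W|}\cap f_W^{-1}(w)),\sF) \xrightarrow{\sim} H^q({f'_W}^{-1}(w),\sF)$.

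I would then observe that, for each $(W,\overline{w})$, the square
$$
\begin{tikzcd} [row sep = large, column sep = large]
H^q((X_W,\overline{|U_W|}),\sF) \arrow[r] \arrow[d, "\cong"] & H^q((X_W,\overline{|U_W|}\cap f_W^{-1}(w)),\sF) \arrow[d, "\cong"] \\
H^q(\overline{U}^{/S}\times_S W,\sF) \arrow[r] & H^q({f'_W}^{-1}(w),\sF)
\end{tikzcd}
$$
commutes, the horizontal maps being restriction along the inclusions of closed pseudo-adic subspaces and the vertical maps the isomorphisms just constructed; commutativity is immediate from functoriality of pullback and the identity $f_W \circ g_W = f'_W$. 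Moreover every ingredient (the identification $\overline{U}^{/S}\times_S W \simeq \overline{U_W}^{/W}$ of Lemma~\ref{compactification commutes with fibre products}, the map $g_W$, and the comparison isomorphisms) is compatible with further base change along morphisms $W'\to W$ in $\mathscr{J}$, so these squares assemble into a morphism of filtered diagrams indexed by $\mathscr{J}$. Passing to the colimit over $\mathscr{J}$ we obtain a commutative square with isomorphic vertical arrows, whence its top arrow is an isomorphism if and only if its bottom arrow is; this is exactly the assertion of the lemma.

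The step I expect to be the main obstacle is the core comparison of the second paragraph: promoting the homeomorphism-plus-density data furnished by Lemma~\ref{lem : reduce from closed subset to affinoid} to an actual equivalence of \'etale topoi $(\overline{U}^{/S}\times_S W)_{\text{\'et}} \simeq (X_W,\overline{|U_W|})_{\text{\'et}}$, and checking that it is compatible with passage to the closed subsets cut out by $f_W^{-1}(w)$. This amounts to revisiting Huber's pseudo-adic formalism (\cite[\S 1.10, \S 2.3, \S 4.4]{hub96}) and transplanting the relevant arguments to the present setting; the remainder of the proof is formal colimit bookkeeping.
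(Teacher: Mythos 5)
Your proposal is correct and follows essentially the same route as the paper: identify $\overline{U}^{/S}\times_S W$ with $\overline{U_W}^{/W}$ via Lemma \ref{compactification commutes with fibre products}, use Lemma \ref{lem : reduce from closed subset to affinoid} to get the homeomorphism-plus-dense-image data, convert that into compatible isomorphisms on \'etale cohomology, and pass to the filtered colimit. The ``core comparison'' you flag as the main obstacle is exactly what the paper disposes of by citing \cite[Proposition 2.3.7]{hub96}, which states that such a morphism induces an isomorphism on \'etale cohomology, so no new argument is needed there.
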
} 
 \begin{proof}
 {Let $j$ denote the open embedding 
     $U \hookrightarrow \overline{U}^{/S}$.
        By
      Lemma \ref{compactification commutes with fibre products},
      $(\overline{U}^{/S} \times_S W,f'_W,j_W)$ is a universal compactification 
      of ${f_W}_{|U_W} \colon U_W \to W$ where 
      $j_W$ is induced by base change.  
The map $g_W \colon \overline{U}^{/S} \times_S W \to X_W$ which is induced from 
$g \colon \overline{U}^{/S} \to X$ is the unique map such that the following diagram}
       $$
\begin{tikzcd} [row sep = large, column sep = large] 
U_W \arrow[r,"j_W"] \arrow[drr, "{f_W}_{|U_W}", bend right] & \overline{U}^{/S} \times_S W \arrow[dr,"f'_W"] \arrow[r, "g_W"] & X_W \arrow[d,"f_W"] \\
& &W 
\end{tikzcd}
$$
{commutes. 

  By 
Lemma \ref{lem : reduce from closed subset to affinoid}
 and \cite[Proposition 2.3.7]{hub96}, we 
    get 
    compatible isomorphisms
      $$H^q(({X_W},\overline{|U_W|}),\sF) \simeq  H^q(\overline{U}^{/S} \times_S W,\sF)$$ and 
          $$H^q(({X_W},\overline{|U_W|} \cap f_W^{-1}(w)),\sF) \simeq  H^q({f'_W}^{-1}(w),\sF).$$
This is sufficient to conclude the proof.}
 \end{proof}

{
\begin{prop} \label{using prospecial}
In Situation \ref{the main situation},
if in addition we assume 
\begin{enumerate} 
\item The morphism $f$ is tight and $\mathcal{O}_X(X)$ is a Tate ring. 
\item Let $Y$ be the pseudo-adic space $(X,|Y|)$ 
  where 
  $$|Y| := \{x \in {X} | |e_i(x)| < |e_0(x)| \mbox{ for } 1 \leq i \leq n\}$$
 and $e_0,\ldots,e_n \in \mathcal{O}_{{X}}({X})$ are
  such that 
  $\mathcal{O}_{{X}}({X}) = \sum_i e_i\mathcal{O}_{{X}}({X})$.
  Let
  $i \colon Y \hookrightarrow X$ be the natural closed embedding. 
  The sheaf 
  $\sF$ is of the form $i_*(G_Y)$  where 
$G_Y$ is the constant sheaf on $Y_{\text{\'et}}$ associated to an
abelian torsion group $G$. 
\end{enumerate} 
Then (\ref{main situation equation}) is an isomorphism. 
\end{prop}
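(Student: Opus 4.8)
The plan is to deduce the statement from Huber's limit/base‑change theorem \cite[Theorem 3.2.1]{hub96} after reducing everything to a computation with pro‑special (pseudo‑adic) fibres. Observe first that, under hypothesis (1), $X$ is affinoid and $S$ is affine; by Lemma \ref{lem : tight implies partially proper} and Lemma \ref{lem : affinoid to affine sep and taut} the morphism $f$ is proper (separated, taut, locally of finite type and quasi‑compact), and by Lemma \ref{lem:base change for tightness} all of these properties persist after base change along a standard \'etale morphism of schemes. So the hypotheses of Situation \ref{the main situation} are preserved throughout.

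\textbf{Step 1: reduction to standard \'etale neighbourhoods and pro‑special fibres.} Since every \'etale neighbourhood of $(S,\overline{s})$ is refined by a standard \'etale one, the colimit in \eqref{main situation equation} may be computed over the cofinal category $\mathscr{J}$ of standard \'etale neighbourhoods of $(S,\overline{s})$. For $(W,\overline{w})\in\mathscr{J}$ write $f_W\colon X_W:=X\times_S W\to W$ for the base change. By Lemma \ref{lem:base change for tightness}, $X_W$ is affinoid, $\mathcal{O}_{X_W}(X_W)$ is Tate and $f_W$ is again tight, so condition (3) of Lemma \ref{lem : fibre is pro-special} holds for $f_W$; moreover, since $\overline{w}$ lies over the closed point $s$ and $W\to S$ is \'etale, $w\in W$ is Zariski closed. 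Hence Lemma \ref{lem : fibre is pro-special} gives that $|f_W^{-1}(w)|$ is a pro‑special subset of $X_W$ (and this is compatible, for varying $W$, with the base‑change maps $X_{W'}\to X_W$, the pro‑special description being functorial in the construction of Lemma \ref{lem : fibre is pro-special}).

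\textbf{Step 2: reduction to constant coefficients on the closed subspace.} By hypothesis (2) we have $\sF=i_*(G_Y)$ with $i\colon Y\hookrightarrow X$ a closed immersion of pseudo‑adic spaces and $G_Y$ the constant sheaf associated to $G$. Pushforward along a closed immersion of pseudo‑adic spaces is exact on \'etale sheaves (\cite{hub96}, \S 2), and for each $(W,\overline{w})\in\mathscr{J}$ the base change $i_W\colon Y_W:=Y\times_X X_W\hookrightarrow X_W$ is again a closed immersion, with $|Y_W|$ given by the same inequalities $|e_i|<|e_0|$ read off in $\mathcal{O}_{X_W}(X_W)$ (the $e_i$ still generating the unit ideal there). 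Thus there are natural isomorphisms, compatible with the transition maps of $\mathscr{J}$,
\[
H^n(X_W,\sF)\cong H^n(Y_W,G),\qquad H^n(f_W^{-1}(w),\sF)\cong H^n\!\left(Y_W\cap f_W^{-1}(w),\,G\right),
\]
where, by Step 1, $Y_W\cap f_W^{-1}(w)$ is the intersection of the closed pseudo‑adic subspace $Y_W$ with the pro‑special subset $f_W^{-1}(w)$, hence pro‑special in $X_W$. Consequently \eqref{main situation equation} becomes the assertion that the natural map
\[
\varinjlim_{(W,\overline{w})\in\mathscr{J}} H^n(Y_W,G)\ \longrightarrow\ \varinjlim_{(W,\overline{w})\in\mathscr{J}} H^n\!\left(Y_W\cap f_W^{-1}(w),\,G\right)
\]
is an isomorphism.

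\textbf{Step 3: passage to the limit, and the main obstacle.} The systems $\{Y_W\}$ and $\{Y_W\cap f_W^{-1}(w)\}$, indexed by $\mathscr{J}$, are cofiltered, and the coefficient group $G$ is defined uniformly along them; this is exactly the configuration to which \cite[Theorem 3.2.1]{hub96} applies. Applying it identifies $\varinjlim_{(W,\overline{w})} H^n(Y_W\cap f_W^{-1}(w),G)$ with the cohomology of the limit pseudo‑adic space $Y_{\overline{s}}$ (the fibre of $Y$ over $\overline{s}$), and, since the closed point is the only point over $s$ in the relevant limit of the $W$'s, the inclusions $Y_W\cap f_W^{-1}(w)\hookrightarrow Y_W$ also have limit $Y_{\overline{s}}$, so the theorem identifies $\varinjlim_{(W,\overline{w})} H^n(Y_W,G)$ with $H^n(Y_{\overline{s}},G)$ as well, compatibly with the map between them. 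This yields the isomorphism in Step 2 and hence the proposition. The delicate point — and the place where all the preparatory lemmas are spent — is precisely the application of \cite[Theorem 3.2.1]{hub96}: one must check that the pro‑special descriptions of the $f_W^{-1}(w)$ furnished by Lemma \ref{lem : fibre is pro-special} are genuinely compatible across the base changes $X_{W'}\to X_W$, that the resulting cofiltered systems of pseudo‑adic spaces satisfy the finiteness/tautness hypotheses of loc.\ cit.\ (these come from tightness of $f$ via Lemma \ref{lem : affinoid to affine sep and taut} and Lemma \ref{lem:base change for tightness}), and that the limit pseudo‑adic spaces are the expected fibres; the exactness and base‑change compatibility of $i_*$ in Step 2 and the reduction in Step 1 are then formal.
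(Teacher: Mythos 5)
Your Steps 1 and 2 track the paper's reductions (restriction to standard \'etale neighbourhoods, use of Lemma \ref{lem : fibre is pro-special}, and the identification $H^n(X_W,\sF)\cong H^n(Y_W,G)$ via the closed immersion $i_W$). The genuine gap is in Step 3, at exactly the point you flag as ``delicate'' but then dismiss as a compatibility check. Huber's \cite[Theorem 3.2.1]{hub96} does not produce a ``limit pseudo-adic space $Y_{\overline{s}}$'' --- arbitrary projective limits of adic spaces do not exist (this is stated in the introduction to \S\ref{sec:propbasersuil} and is the reason the theorem is formulated the way it is). What Theorem 3.2.1 actually gives, for each fixed $W$, are \emph{two different} scheme-theoretic models: $H^n(Y_W,G)\cong H^n(\Spec(A_W(Y_W)),G)$, where $A_W^+(Y_W)$ is the henselization of $A_W^+[e_1,\ldots,e_n]$ along the ideal generated by $\{e_1,\ldots,e_n\}\cup A_W^{\circ\circ}$, and $H^n(Y_W\cap f_W^{-1}(w),G)\cong H^n(\Spec(A_W^{h(I_W)}),G)$, where $I_W$ is the \emph{strictly larger} ideal that also contains $d_W^{-1}(\mathfrak{p}_W\cdot A_W^+/\mathfrak{m}_{\mathcal{O}^+_{X_W}}(X_W))$. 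After passing to colimits (at the level of rings, via \cite[Tag 09YQ]{stacks-project} and \cite[Tag 01YW]{stacks-project}), the proposition reduces to the purely algebraic claim $\varinjlim_W A_W^{h(I_W)} = \varinjlim_W A_W(Y_W)$, i.e.\ that in the colimit over all \'etale neighbourhoods of $\overline{s}$ the extra henselization along $\mathfrak{p}$ becomes redundant. Your assertion that ``the closed point is the only point over $s$ in the relevant limit of the $W$'s, so the inclusions also have limit $Y_{\overline{s}}$'' is precisely this claim, and it is not formal: it is the content of Lemma \ref{lem : BJ is henselian} together with the identification of $B=\varinjlim A_W^+(Y_W)$ as a henselization via \cite[Remark 3.19]{CMM21}. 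The proof of that lemma is where the two standing hypotheses are actually consumed: Corollary \ref{cor:henspairlif} gives that $(A_W^+,\mathfrak{m}_{\mathcal{O}^+_{X_W}}(X_W))$ is henselian, and \emph{tightness} gives that $R_W\to A_W^+/\mathfrak{m}_{\mathcal{O}^+_{X_W}}(X_W)$ is integral, so that by \cite[Tag 09XK]{stacks-project} the colimit quotient is henselian along the maximal ideal of the strictly henselian ring $\varinjlim R_W$. Your proposal never invokes the integrality coming from tightness (you use tightness only to propagate properness under base change), so the argument as written would apply equally to a non-tight good morphism, for which the conclusion can fail.

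A secondary omission: Theorem 3.2.1 is applied to subsets of the form $\{\,|e_i(x)|<1\,\}$, so one must first reduce to $e_0=1$. The paper does this by replacing $X$ with the universal compactification $\overline{U}^{/S}$ of $U=\{\,|e_i(x)|\le|e_0(x)|\,\}$ (Lemmas \ref{lem : reduce from closed subset to affinoid} and \ref{lem : reduce from closed subset to affinoid II}), on which $e_0$ becomes invertible and which is again tight by Lemma \ref{properties of universal compactification}(3); this step also justifies why one may assume $X$ itself is a universal compactification. Your proposal skips this reduction entirely.
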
 }
{Note that in what follows, when there is no ambiguity, we simplify notation and write 
$G$ for the constant sheaf $G_X$ on $X_{\text{\'et}}$ for $X$ a pseudo-adic space or scheme.} 
\begin{proof}
{Recall that 
 $$[R^nf_*(\sF)]_{\overline{s}} =  \varinjlim_{(W,\overline{w})} H^n(X_W,\sF)$$
 where the 
 colimit runs over \'etale neighbourhoods of $(S,\overline{s})$ and we must show that 
 the natural morphism 
 $$\varinjlim_{(W,\overline{w})} H^n(X_W,\sF)
  \to \varinjlim_{(W,\overline{w})} H^n(f_W^{-1}(w),\sF)$$
  is an isomorphism. 
 Note that 
 we can restrict to when the colimits above run over affine \'etale neighbourhoods of $(S,\overline{s})$. 
 By \cite[Tag 02GT]{stacks-project}, we can further restrict
 so that the colimits are over standard \'etale neighbourhoods of $(S,\overline{s})$.   
Let $\mathscr{J}$ denote the category of standard
 \'etale neighbourhoods of $(S,\overline{s})$
 and let $(W,\overline{w}) \in \mathscr{J}$.
 Observe that 
 by Lemma \ref{lem:base change for tightness}, 
 $X_W$ is affinoid. 
 For the remainder of the proof, 
all limits under consideration will run over pairs $(W,\overline{w}) \in \mathscr{J}$. 

 \begin{lem} \label{lem : structure of Y_W}
 \begin{enumerate}
 \item We have a well defined pseudo-adic space 
 $Y_W := Y \times_X X_W$ such that 
 the morphism $i_W \colon Y_W \to X_W$ induced 
 by base change is a closed embedding.
 If 
 for every $0 \leq i \leq n$, $e'_i$ denotes
  the image of $e_i$ for the 
   morphism $\mathcal{O}_X(X) \to \mathcal{O}_{X_W}(X_W)$ 
  then $|Y_W| = \{x \in X_W | |e'_i(x)| < |e'_0(x)| \mbox{ for } 1 \leq i \leq n\}.$ 
\item We have that
$i_{W_*}(G) = \sF_{|X_W}$. 
\item If $\epsilon_W$ denotes the closed embedding 
$(X_W,(f_W^{-1}(w) \cap |Y_W|)) \hookrightarrow f_W^{-1}(w)$ of pseudo-adic spaces then 
$$\sF_{|f_W^{-1}(w)} = \epsilon_{W*}(G).$$
\end{enumerate}
 \end{lem}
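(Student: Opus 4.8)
The proof is essentially bookkeeping within Huber's formalism of (pre)pseudo-adic spaces and their \'etale sites (cf.\ \cite[\S 1.10, \S 2.3, \S 2.6]{hub96}); the plan is to obtain all three assertions by base change from the closed immersion $i \colon Y \hookrightarrow X$. For (1): by Proposition \ref{prop:exifibprodspecmor} the projection $h \colon X_W \to X$ is an \'etale morphism of analytic adic spaces, and by Lemma \ref{lem:base change for tightness} (applicable since $f$ is tight and $\mathcal{O}_X(X)$ is Tate) $X_W$ is affinoid. One forms the fibre product $Y \times_X X_W$ of prepseudo-adic spaces along $\underline{i} = \mathrm{id}_X$ and $h$; its underlying adic space is $X \times_X X_W = X_W$ and its underlying subset is $h^{-1}(|Y|)$. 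Since $|Y|$ is convex and locally pro-constructible and $h$ is \'etale, $h^{-1}(|Y|)$ is again convex and locally pro-constructible, so $Y_W$ is a genuine pseudo-adic space; as base change of the closed immersion $i$, the projection $i_W \colon Y_W \to X_W$ is a closed immersion. For the explicit description, for $x \in X_W$ with image $y := h(x)$ the morphism of adic spaces $h$ induces a local homomorphism $\mathcal{O}_{X,y} \to \mathcal{O}_{X_W,x}$ compatible with the valuations, so $|e_i'(x)| = |e_i(y)|$ in the common value group, where $e_i'$ denotes the image of $e_i$ in $\mathcal{O}_{X_W}(X_W)$; hence $x \in h^{-1}(|Y|)$ if and only if $|e_i'(x)| < |e_0'(x)|$ for $1 \leq i \leq n$. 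Applying the ring map $\mathcal{O}_X(X) \to \mathcal{O}_{X_W}(X_W)$ to a relation $\sum_i e_i g_i = 1$ shows that $e_0', \ldots, e_n'$ generate the unit ideal, so $Y_W$ is again of the shape considered in Proposition \ref{using prospecial}(2).

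For (2): since $\sF = i_*(G)$ (with $G$ the constant sheaf on $Y_{\text{\'et}}$, per the convention above) we have $\sF_{|X_W} = h^* i_* G$, where $h^*$ denotes restriction along the \'etale morphism $h$. The square with top and bottom edges $i_W, i$ and vertical edges the \'etale projections $h' \colon Y_W \to Y$ and $h \colon X_W \to X$ is cartesian by (1); as pullback along an \'etale morphism commutes with pushforward, we obtain $h^* i_* G \simeq i_{W*}\,(h')^* G$. Since $h'$ is \'etale, $(h')^* G$ is again the constant sheaf $G$ on $Y_{W,\text{\'et}}$, whence $i_{W_*}(G) = \sF_{|X_W}$.

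For (3): let $\iota \colon f_W^{-1}(w) \hookrightarrow X_W$ be the canonical morphism of prepseudo-adic spaces with $\underline{f_W^{-1}(w)} = X_W$ and subset $\{x \in X_W : f_W(x) = w\}$ (see Remark \ref{rem : pseudo-adic space}), so that $\sF_{|f_W^{-1}(w)} = \iota^*\big(\sF_{|X_W}\big)$. The fibre product $Y_W \times_{X_W} f_W^{-1}(w)$ has underlying adic space $X_W$ and underlying subset $|Y_W| \cap \{f_W = w\} = f_W^{-1}(w) \cap |Y_W|$, so it equals the prepseudo-adic space $\big(X_W, f_W^{-1}(w) \cap |Y_W|\big)$, and the projection onto $f_W^{-1}(w)$ is precisely $\epsilon_W$. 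Because $i_W$ is a closed immersion, its pushforward commutes with the base change along $\iota$; combining this with (2) gives $\sF_{|f_W^{-1}(w)} = \iota^* i_{W*}(G) \simeq \epsilon_{W*}\big((\iota')^* G\big) = \epsilon_{W*}(G)$, where $\iota' \colon \big(X_W, f_W^{-1}(w) \cap |Y_W|\big) \to Y_W$ is the other projection and $(\iota')^* G = G$ again.

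The main obstacle is foundational rather than computational: one must verify carefully that the relevant squares of (pre)pseudo-adic spaces are cartesian in Huber's sense, and that \'etale base change (used in (2)) and base change along the closed immersion $i_W$ (used in (3)) hold at the level of the pseudo-adic \'etale topoi. Both facts are standard — \'etale pullback is exact and commutes with all pushforwards, and pushforward along a closed immersion commutes with arbitrary base change since closed immersions are proper — but translating them into the pseudo-adic language, and keeping track of which subsets are closed/convex/locally pro-constructible, is where the care is needed.
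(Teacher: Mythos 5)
Your proposal is correct and follows essentially the same route as the paper: part (1) is the existence and explicit form of the fibre product of pseudo-adic spaces along the locally-of-finite-type closed immersion $i$ (the paper simply cites \cite[Lemma 1.10.6(a)]{hub96}, whose proof is the construction you spell out), and parts (2)–(3) are deduced from the fact that $i$ is proper (\cite[Lemma 1.10.17(i)]{hub96}) together with base change for pushforward along proper morphisms (\cite[Proposition 4.1.2(b)]{hub96}), which is exactly the fact you invoke for base change along $\iota$. The only cosmetic difference is that for (2) you use étale localization directly instead of the proper base change statement, and your convexity/pro-constructibility check can be shortcut by noting that $|Y|$ is closed in $X$, so its preimage is closed.
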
 
\begin{proof} 
The morphism $i \colon Y \to X$ is locally of finite type. 
By \cite[Lemma 1.10.6(a)]{hub96}, $Y_W$ exists and from the proof of loc.cit.,
it must be of the form described in part (1) of the lemma.
 Similarly, $(X_W,(f_W^{-1}(w) \cap |Y_W|)) = f_W^{-1}(w) \times_X Y$. 
 By \cite[Lemma 1.10.17(i)]{hub96}, 
 $i$ is proper. 
 Parts (2) and (3) follow from \cite[Proposition 4.1.2(b)]{hub96}. 
 \end{proof}  
 
   We first reduce to when $e_0 = 1$.
  Let $$U := \{x \in {X} | |e_i(x)| \leq |e_0(x)| \mbox{ for } i = 1,\ldots,n\}.$$ 
  Then $U$ is an affinoid open subset of $X$.   
  Observe that it suffices to show that the natural morphism
  $$\varinjlim_{(W,\overline{w}) \in \mathscr{J}} H^n((X_W,\overline{|U_W|}),\sF)
  \to \varinjlim_{(W,\overline{w}) \in \mathscr{J}} H^n((X_W, \overline{|U_W|} \cap f_W^{-1}(w)),\sF)$$
  is an isomorphism where 
  $U_W := U \times_S W$. 
  Indeed, 
  by Lemma \ref{lem : structure of Y_W}(1,2), $i_W \colon Y_W \hookrightarrow X_W$ is a closed embedding and $\sF_{|X_W} = i_{W*}(G)$. Hence, 
  \begin{align} \label{equn : F restricted to Y}
  H^n(X_W,\sF) &= H^n(Y_W,G)  \\
                         &= H^n((X_W,\overline{|U_W|}),\sF). \nonumber
  \end{align}
 Similarly, by Lemma \ref{lem : structure of Y_W}(3), we deduce that
  \begin{align} \label{equn : F restricted to Y II}
H^n(f_W^{-1}(w),\sF) &= H^n((X_W,|Y_W| \cap f_W^{-1}(w)),G) \\ 
                                   &= H^n((X_W, \overline{|U_W|} \cap f_W^{-1}(w)),\sF). \nonumber
\end{align}
 
 By Lemma \ref{lem : tight implies partially proper}, $f$ is partially proper and
     by Remark \ref{rem:spmorlftrefde}, $f_{|U}$ is good. 
 We are hence in the context of Situation \ref{minor situation}. 
 By Lemma \ref{lem : reduce from closed subset to affinoid II},
 it suffices to verify the proposition 
 after replacing $X$ with $\overline{U}^{/S}$. 
 Since $\mathcal{O}_X(U) = \mathcal{O}_{\overline{U}^{/S}}(\overline{U}^{/S})$,
  $e_0$ is invertible on $\overline{U}^{/S}$ 
 and after replacing $e_i$ with $e_i/e_0$ 
 for every $1 \leq i \leq n$, we can suppose $e_0 = 1$ in the 
 definition of $|Y|$.  }

{Let $(A,A^+) := (\mathcal{O}_X(X),\mathcal{O}^+_X(X))$.
  We begin by simplifying the left hand side of (\ref{main situation equation}).  
  By Lemma \ref{lem:base change for tightness},
 $X_W$ is affinoid and
 let $(A_W,A_W^+) := (\mathcal{O}_{X_W}(X_W),\mathcal{O}^+_{X_W}(X_W))$. 
 Associated to 
 the special subset $Y_W \subseteq X_W$, we define the  
 algebra $A_W(Y_W)$ as follows. 
 Let $A^+_W(Y_W)$ denote the henselization of 
 $A^+_W[e_1,\ldots,e_n]$ with respect to the ideal 
 generated by $\{e_1,\ldots,e_n\} \cup A_W^{\circ \circ}$
 where we abuse notation and use $e_i$ to denote the image of $e_i$ for the 
morphism $A \to A_W$. 
 We define $A_W(Y_W) := A^+_W(Y_W) \otimes_{A^+_W[e_1,\ldots,e_n]} A_W$.   
By 
(\ref{equn : F restricted to Y})
 and \cite[Theorem 3.2.1]{hub96}, 
we have a natural isomorphism
\begin{equation} \label{natural isomorphism adic and schemes}
H^n(X_W,\sF) \simeq H^n(\mathrm{Spec}(A_W(Y_W)),G).
\end{equation} 

 Hence we have that
\begin{align} 
\varinjlim_{(W,\overline{w})} H^n(X_W,\sF) &\overset{(i)}{\simeq} \varinjlim_{(W,\overline{w})} H^n(\mathrm{Spec}(A_W(Y_W)),G)  \nonumber  \\ 
&  \overset{(ii)}{=} H^n(\varprojlim_{(W,\overline{w})} \mathrm{Spec}(A_W(Y_W)),G) \nonumber \\
 &\overset{(iii)}{=} H^n(\mathrm{Spec}(\varinjlim_{(W,\overline{w})}A_W(Y_W)),G) \label{LHS}
 \end{align} 
 where the colimit and limit runs over all standard \'etale neighbourhoods of $(S,\overline{s})$.
Observe that (i) is a consequence of the naturality of the isomorphism (\ref{natural isomorphism adic and schemes}). 
 Note that (ii) follows from \cite[Tag 09YQ]{stacks-project} and (iii)
is due to \cite[Tag 01YW]{stacks-project}.

  We now focus on the right hand side of (\ref{main situation equation}).
 Observe that by Lemma \ref{lem:base change for tightness}, 
 the morphism $f_W$ is tight. 
 Then by Lemma \ref{lem : fibre is pro-special}, $f_W^{-1}(w)$ is a pro-special subset 
  of $X_W$. 
Furthermore, if $R$ and $R_W$ are such that $S = \mathrm{Spec}(R)$ 
 and $W = \mathrm{Spec}(R_W)$, and 
 $w \in W$ corresponds to a maximal ideal $\mathfrak{p}_W$ in $R_W$
  then
 we have the following diagram}
 $$
\begin{tikzcd} [row sep = large, column sep = large] 
&
A_W^+ \arrow[d, "d_W"] \\
R_W \arrow[r, "e_W"] & A^+_W/\mathfrak{m}_{\mathcal{O}^+_{X_W}}(X_W).
\end{tikzcd}
$$
{We abuse notation and write $\mathfrak{p}_W$ for $e_W(\mathfrak{p}_W) \subset A_W^{+}/\mathfrak{m}_{\mathcal{O}_{X_W}^+}(X_W)$. 
As in the proof of Lemma \ref{lem : fibre is pro-special}, 
\[
f_W^{-1}(w) = \left\{ x \in X_W \text{ }\lvert \text{ } \lvert a(x) \lvert <1 \text{ }\forall a \in d_W^{-1}(\mathfrak{p}_W \cdot A_W^{+}/\mathfrak{m}_{\mathcal{O}_{X_W}^+}(X_W)) \right\}.
\]
Let $I_W$ denote the ideal generated by 
$d_W^{-1}(\mathfrak{p}_W \cdot A_W^{+}/\mathfrak{m}_{\mathcal{O}_{X_W}^+}(X_W)) \bigcup \{e_1,\ldots,e_n\}$
in $A^+_W[e_1,\ldots,e_n]$
and let ${A^+_W}^{h(I_W)}$ denote the henselization of $A^+_W[e_1,\ldots,e_n]$ along 
$I_W$.

Define 
$$A_W^{h(I_W)} := {A^+_W}^{h(I_W)} \otimes_{A^+_W[e_1,\ldots,e_n]} A_W.$$
Note that $$A_W^{h(I_W)} = {A^+_W}^{h(I_W)} \otimes_{A^+[e_1,\ldots,e_n]} A.$$ 
The equality above is a consequence of the fact that $A$ is
 Tate and hence $A = A^+[1/\pi]$ for some
pseudo-uniformizer $\pi \in A$.
By  (\ref{equn : F restricted to Y II})
and 
\cite[Theorem 3.2.1]{hub96}, we have a natural isomorphism
\[H^n(f_W^{-1}(w),\sF) \simeq H^n(\mathrm{Spec}(A_W^{h(I_W)}),G).\] 
By a similar calculation as made for (\ref{LHS}), 
\begin{align}
\nonumber \varinjlim_{(W,\overline{w})} H^n(f_W^{-1}(w),\sF) &\simeq \varinjlim_{(W,\overline{w})} H^n(\mathrm{Spec}(A_W^{h(I_W)}),G) \\
\nonumber  &=  H^n(\varprojlim_{(W,\overline{w})} \mathrm{Spec}(A_W^{h(I_W)}),G) \\ \label{RHS}
  &=  H^n(\mathrm{Spec}(\varinjlim_{(W,\overline{w})} A_W^{h(I_W)}),G). 
\end{align}  
 We claim that 
 \[\varinjlim_{(W,\overline{w})} A_W^{h(I_W)} = \varinjlim_{(W,\overline{w})} A_W(Y_W).\]
  
 Let $J_W \subset A^+_W(Y_W)$ denote the ideal generated 
 by the image of 
$I_W$
for the map 
 $A^+_W[e_1,\ldots,e_n] \to A^+_W(Y_W)$. 
A diagram chase implies that if $(V,\overline{v})$ is a standard \'etale neighbourhood of $(S,\overline{s})$ such that 
the map $(V,\overline{v}) \to (S,\overline{s})$ factors through 
$(W,\overline{w}) \to (S,\overline{s})$ 
then
$$J_W \cdot A^+_V(Y_V) \subseteq J_V$$ 
where $J_W$ on the left is the image of the ideal $J_W \subset A^+_W(Y_W)$ for the morphism $A^+_W(Y_W) \to A^+_V(Y_V)$.  

We set $B :=  \varinjlim_{(W,\overline{w})} A^+_W(Y_W)$.
Let 
$J \subset B$
 denote the ideal $\varinjlim_{(W,\overline{w})} J_W$.
Let
$J'_W \subset A^+_W(Y_W)$ be the ideal generated by the image of
$\mathfrak{m}_{\mathcal{O}^+_{X_W}}(X_W) \cup \{e_1,\ldots,e_n\}$ for the morphism 
$A^+_W[e_1,\ldots,e_n] \to A^+_W(Y_W)$
and set $J' := \varinjlim_{(W,\overline{w})} J'_W \subset B$.
Finally, let 
$J''_W \subset A^+_W(Y_W)$ be the ideal generated by the image of
$A_W^{\circ \circ} \cup \{e_1,\ldots,e_n\}$ for the morphism 
$A^+_W[e_1,\ldots,e_n] \to A^+_W(Y_W)$ 
and set $J'' := \varinjlim_{(W,\overline{w})} J''_W \subset B$. }

{ \begin{lem} \label{lem : BJ is henselian}
\begin{enumerate} 
\item The pair $(B,J')$ is henselian. 
\item The pair $(B/J',J/J')$ is henselian.
\item The pair $(B,J)$ is henselian. 
\end{enumerate}
\end{lem}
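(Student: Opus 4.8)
The plan is to deduce (3) from (1) and (2) by a standard dévissage for henselian pairs, so the real content lies in (1) and (2). First I would record that $J''\subseteq J'$ (since $A_W^{\circ\circ}\subseteq\mathfrak{m}_{\mathcal{O}^+_{X_W}}(X_W)$ by Lemma \ref{lem:prshinve}) and that in fact $J''=J'$: since $X_W$ is affinoid, hence quasi-compact, the presheaf $\mathcal{O}^{\circ\circ}$ is already a sheaf on $X_W$ (cf.\ the remark following Proposition \ref{prop:sheafsameasmshe}), so $A_W^{\circ\circ}=\mathfrak{m}_{\mathcal{O}^+_{X_W}}(X_W)$; consequently $J''_W=J'_W$ and it suffices to work with $J''$ throughout. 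For (1): by construction $A^+_W(Y_W)$ is the henselization of $A^+_W[X_1,\dots,X_n]$ along the ideal $I''_W$ generated by $\{e_1,\dots,e_n\}\cup A_W^{\circ\circ}$, and $J''_W=I''_W A^+_W(Y_W)$, so $(A^+_W(Y_W),J''_W)$ is a henselian pair by definition of henselization. Since henselian pairs are stable under filtered colimits (\cite[Tag 0FWT]{stacks-project}) and the transition maps carry $J''_W$ into $J''_V$, the pair $(B,J'')=(B,J')$ is henselian.

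For (2) the key input is tightness. By Lemma \ref{lem:base change for tightness} every base-changed morphism $f_W\colon X_W\to W$ is again tight, so with respect to its associated maps $a_W,b_W$ we have $\im(a_W)\subseteq\im(b_W)$ and $\im(b_W)=A_W^+/\mathfrak{m}_{\mathcal{O}^+_{X_W}}(X_W)=:\bar A_W^+$ is integral over $\im(a_W)$, the image of $R_W$. On the other hand $A^+_W(Y_W)/J''_W=A^+_W[X_1,\dots,X_n]/I''_W=\bar A_W^+$, so $B/J''=\varinjlim_W\bar A_W^+$, and this is integral over $\varinjlim_W\im(a_W)=\im(\mathcal{O}_{S,\overline{s}}\to B/J'')$, where $\mathcal{O}_{S,\overline{s}}:=\varinjlim_W R_W$ is the strict henselization of $\mathcal{O}_{S,s}$ at $\overline{s}$ (the colimit over the cofinal system of standard étale neighbourhoods computes it, since one may always replace $W$ by a basic open $\Spec((R_W)_h)$ with $h\notin\mathfrak{p}_W$). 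Chasing ideals, $J_W/J''_W$ is the ideal of $\bar A_W^+$ generated by $e_W(\mathfrak{p}_W)$, and in the colimit $J/J''$ becomes the ideal of $B/J''$ generated by the image of the maximal ideal $\mathfrak{m}_{\overline{s}}$ of $\mathcal{O}_{S,\overline{s}}$. Now $(\mathcal{O}_{S,\overline{s}},\mathfrak{m}_{\overline{s}})$ is henselian, hence so is $(\im(\mathcal{O}_{S,\overline{s}}\to B/J''),\,\mathfrak{m}_{\overline{s}}\cdot\im(\mathcal{O}_{S,\overline{s}}\to B/J''))$ (quotients of henselian pairs are henselian), and since $B/J''$ is integral over this image, $(B/J'',J/J'')$ is henselian because integral ring extensions preserve henselian pairs. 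This gives (2), and then (3) follows: $(B,J'')$ is henselian, $J''\subseteq J$, and $(B/J'',J/J'')$ is henselian, so $(B,J)$ is henselian by the standard criterion that for an ideal $J\supseteq I$ with $(A,I)$ henselian one has $(A,J)$ henselian if and only if $(A/I,J/I)$ is henselian.

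The main obstacle is part (2), and concretely the bookkeeping with the colimits: verifying that $\varinjlim_W R_W$ really is the strict henselization $\mathcal{O}_{S,\overline{s}}$, that the integrality furnished by tightness of each $f_W$ (Lemma \ref{lem:base change for tightness}) survives passage to the filtered colimit, and that $\varinjlim_W(J_W/J''_W)$ is exactly the ideal generated by $\mathfrak{m}_{\overline{s}}$ in $B/J''$. Once these identifications are settled, (2) is immediate from the two cited structural facts about henselian pairs (stability under integral extensions; the quotient criterion), and (1) and (3) are formal. I would also double-check that the transition maps $A^+_W(Y_W)\to A^+_V(Y_V)$ carry $J_W,J'_W,J''_W$ into $J_V,J'_V,J''_V$ respectively, but the relevant compatibility for $J$ was already observed just before the statement, and the cases of $J'$ and $J''$ are entirely analogous.
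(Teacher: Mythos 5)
Your overall architecture matches the paper's: part (1) via stability of henselian pairs under filtered colimits, part (2) via integrality (coming from tightness) over the strictly henselian local ring $\varinjlim_{(W,\overline{w})} R_W$, and part (3) via the two-out-of-three criterion for henselian pairs. The difference is that you collapse the two ideals by asserting $J'=J''$, i.e.\ $A_W^{\circ\circ}=\mathfrak{m}_{\mathcal{O}^+_{X_W}}(X_W)$ for the affinoid $X_W$, and this is where there is a genuine gap. The remark following Proposition \ref{prop:sheafsameasmshe} does not establish this: it only exhibits one example where non-quasi-compactness makes $\mathcal{O}_X^{\circ\circ}$ fail to be a sheaf, and neither asserts the converse nor would ``being a sheaf'' by itself identify the presheaf sections with the sections of the sheafification. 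The identity you want is in fact true, but it is a maximum-modulus-type statement (if $\lvert f(x)\rvert<1$ at every point of the quasi-compact space $\Spa(A,A^+)$, one covers $X$ by the increasing rational subsets $\{\lvert f(x)\rvert^n\le\lvert t(x)\rvert\}$ using the rank-one maximal generalizations, extracts a finite subcover, and concludes $f^N\in tA^+$, hence $f$ is topologically nilpotent); nothing of this sort appears in your argument, and the claim is not merely cosmetic: without it your part (1) only yields that $(B,J'')$ is henselian, and in part (2) the tightness of $f_W$ gives integrality of $A_W^+/\mathfrak{m}_{\mathcal{O}^+_{X_W}}(X_W)$ over $R_W$, not of $A_W^+/A_W^{\circ\circ}$, so the quotient you actually work with is not the one the hypothesis controls.

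The paper circumvents this entirely, using only the easy inclusion $A_W^{\circ\circ}\subseteq\mathfrak{m}_{\mathcal{O}^+_{X_W}}(X_W)$ from Lemma \ref{lem:prshinve}: it first gets $(B,J'')$ henselian as you do, then shows $(B/J'',J'/J'')$ is henselian by combining Corollary \ref{cor:henspairlif} (each pair $(A_W^+,\mathfrak{m}_{\mathcal{O}^+_{X_W}}(X_W))$ is henselian) with the quotient criterion \cite[Tag 0DYD]{stacks-project} and stability under filtered colimits, and only then concludes $(B,J')$ is henselian by a second application of Tag 0DYD. If you either supply the maximum-modulus argument above or replace your identification by this two-step dévissage through $J''$, the rest of your proof (including the treatment of the transition maps and of $J/J'$ as the ideal generated by the maximal ideal of $\varinjlim_{(W,\overline{w})}R_W$, which the paper handles via Lemma \ref{lem:base change for tightness} and \cite[Tag 09XK]{stacks-project}) goes through as written.
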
 }
\begin{proof} 
{Let us verify part (1) of the lemma. 
By construction, $A^+_W(Y_W)$ is henselian along $J''_W$. 
Using \cite[Tag 0FWT]{stacks-project}, we deduce that
  $B$ is henselian along $J''$. We check that 
  the pair $(B/J'',J'/J'')$ is henselian. 
  Since filtered colimits are exact, 
  $$(B/J'',J'/J'') = (\varinjlim_{(W,\overline{w})} A^+_W(Y_W)/J''_W, \varinjlim_{(W,\overline{w})} J'_W/J''_W).$$
Since $A^+_W(Y_W)$ is the henselization of $A^+_W[e_1,\ldots,e_n]$ along the ideal 
$\langle A_W^{\circ \circ} \cup \{e_1,\ldots,e_n\} \rangle$, 
we get that 
\begin{equation} \label{equation : important simplification}
A^+_W(Y_W)/J''_W = A^+_W/A^{\circ \circ}_W.
\end{equation}
Furthermore, $J'_W/J''_W = \mathfrak{m}_{\mathcal{O}^+_{X_W}}(X_W)/A^{\circ \circ}_W$. 
Hence, 
$$(B/J'',J'/J'') = (\varinjlim_{(W,\overline{w})} A^+_W/A^{\circ \circ}_W, \varinjlim_{(W,\overline{w})} \mathfrak{m}_{\mathcal{O}^+_{X_W}}(X_W)/A^{\circ \circ}_W).$$
By Corollary \ref{cor:henspairlif} and \cite[Tag 0DYD and Tag 0FWT]{stacks-project},
we deduce that $(B/J'',J'/J'')$ is a henselian pair.
Since $(B,J'')$ is a henselian pair, applying \cite[Tag 0DYD]{stacks-project} 
once again implies that $(B,J')$ is henselian.

We now verify part (2) of the lemma. 
Since filtered colimits are exact, 
 \begin{equation} \label{trivial equality} 
 B/J' = \varinjlim_{(W,\overline{w})} A^+_W(Y_W)/J'_W.
 \end{equation} 
Similarly, 
 \[\varinjlim_{(W,\overline{w})} A^+_W/\varinjlim_{(W,\overline{w})}  \mathfrak{m}_{\mathcal{O}^+_{X_W}}(X_W) = \varinjlim_{(W,\overline{w})} A^+_W/\mathfrak{m}_{\mathcal{O}^+_{X_W}}(X_W).\]
We have the following commutative diagram.}

$$
\begin{tikzcd} [row sep = large, column sep = large] 
& \varinjlim_{(W,\overline{w})} A^+_W \arrow[d, "\varinjlim d_W"] \arrow[r] & B \arrow[d] \\
\varinjlim_{(W,\overline{w})} R_W \arrow[r, "\varinjlim e_W"] & \varinjlim_{(W,\overline{w})} A^+_W/\mathfrak{m}_{\mathcal{O}^+_{X_W}}(X_W) \arrow[r,equal] &
\varinjlim_{(W,\overline{w})}A^+_W(Y_W)/J'_W.
\end{tikzcd}
$$
{The equality in the above diagram is because
by construction,
$A^+_W(Y_W)/J'_W = A^+_W/\mathfrak{m}_{\mathcal{O}^+_{X_W}}(X_W).$

Observe that $\varinjlim_{(W,\overline{w})} R_W$ is a 
strictly henselian local ring whose maximal ideal is
$\mathfrak{p}_0 := \varinjlim_{(W,\overline{w})} \mathfrak{p}_W$. 
By Lemma \ref{lem:base change for tightness},
the morphism
$$e_W \colon R_W \to A^+_W/\mathfrak{m}_{\mathcal{O}^+_{X_W}}(X_W)$$ is integral
 and hence 
by the equality in the diagram, the induced map
$$R_W \to A^+_W(Y_W)/J'_W$$ 
 is integral as well.
By \cite[Tag 09XK]{stacks-project},
$\varinjlim_{(W,\overline{w})} A^+_W(Y_W)/J'_W$ is henselian along the 
ideal $\mathfrak{p}_0 \cdot \varinjlim_{(W,\overline{w})} A^+_W(Y_W)/J'_W = \varinjlim_{(W,\overline{w})} \mathfrak{p}_W \cdot A^+_W(Y_W)/J'_W $. 
Using that filtered colimits are exact, we deduce that                     
$$J/J' = \varinjlim_{(W,\overline{w})} J_W/J'_W.$$
A diagram chase shows that 
$J_W/J'_W = \mathfrak{p}_W \cdot A_W^+(Y_W)/J'_W.$
The equality (\ref{trivial equality}) now
implies that $B/J'$ is henselian along the ideal 
$J/J'$. 

 Part (3) of the lemma is a direct consequence of
Parts (1)-(2) and \cite[Tag 0DYD]{stacks-project}.}
\end{proof} 

Note that 
 \[\varinjlim_{(W,\overline{w})} A_W^{h(I_W)} = (\varinjlim_{(W,\overline{w})} {A_W^+}^{h(I_W)}) \otimes_{A ^+[e_1,\ldots,e_n]} A.\]
 and similarly, 
 \[\varinjlim_{(W,\overline{w})} A_W(Y_W) = (\varinjlim_{(W,\overline{w})} {A_W^+}(Y_W)) \otimes_{A ^+[e_1,\ldots,e_n]} A.\]
 Hence, to show that 
 $\varinjlim_{(W,\overline{w})} A_W^{h(I_W)} \simeq \varinjlim_{(W,\overline{w})} A_W(Y_W)$, it 
 suffices to verify that 
 $\varinjlim_{(W,\overline{w})} {A^+_W}^{h(I_W)} \simeq \varinjlim_{(W,\overline{w})} A^+_W(Y_W)$.

{
To complete the proof, 
we apply \cite[Remark 3.19]{CMM21}
with $S = \varinjlim_{(W,\overline{w})} A_W^+[e_1,\ldots,e_n]$, 
$I = \varinjlim_{(W,\overline{w})} I_W$ 
and $\tilde{S} = B$. Note that 
by \cite[Tag 0A04]{stacks-project},
$B$ is the henselization of 
$\varinjlim_{(W,\overline{w})} A^+_W[e_1,\ldots,e_n]$
along the ideal 
$\varinjlim_{(W,\overline{w})} \langle A_W^{\circ \circ} \cup \{e_1,\ldots,e_n\} \rangle$.
It follows that $B$ is a filtered colimit of \'etale $S$-algebras. 
Since $(B,J)$ is henselian by Lemma \ref{lem : BJ is henselian}(3), we can apply 
\cite[Remark 3.19]{CMM21}
to get that 
$(B,J)$ is the henselization of 
$(\varinjlim_{(W,\overline{w})} {A_W^+}[e_1,\ldots,e_n], \varinjlim_{(W,\overline{w})} I_W)$.
By  \cite[Tag 0A04]{stacks-project},
$\varinjlim_{(W,\overline{w})} {A^+_W}^{h(I_W)}$ is the henselization of 
$\varinjlim_{(W,\overline{w})} A^+_W[e_1,\ldots,e_n]$ along 
$\varinjlim_{(W,\overline{w})} I_W$. 
It follows that the natural 
morphism 
$B \to \varinjlim_{(W,\overline{w})} {A^+_W}^{h(I_W)}$ is an
isomorphism.  
%
Equations (\ref{LHS}), (\ref{RHS}) and the naturality of the isomorphism in 
\cite[Theorem 3.2.1]{hub96} allow us to conclude the proof. }


\end{proof}

{\begin{lem} \label{lem : reduce to finite cover of closed subsets}
 Let $\mathscr{W}$ be a cofiltered category. Let $\mathcal{A}$ be the 
 category of pseudo-adic spaces. Let 
 $p \colon \mathscr{W} \to \mathcal{A}$
 and 
 $q \colon \mathscr{W} \to \mathcal{A}$ 
 be functors such 
 for any $W \in \mathscr{W}$,
 $\underline{q(W)} = \underline{p(W)}$ and 
 $|q(W)| \subseteq |p(W)|$.
Suppose $\mathscr{W}$ has a final object $0$ and let $X_0 := p(0)$. 
Let $\sF_0$ be a sheaf of abelian groups on $X_{0,\text{\'et}}$
and for 
every $W \in \mathscr{W}$, let 
$\sF_W$ denote the pullback of $\sF_0$ to $p(W)$. 
Assume that we have a finite index set $\mathscr{I}$ such that 
$|X_0| = \bigcup_{j \in \mathscr{I}} U_j$ where 
for every $j \in \mathscr{I}$, 
 $U_j$ is a
subspace of $|X_0|$.  
For every $j \in \mathscr{I}$, let
$U_{j,W}$ be the preimage of $U_j$ for the morphism $|p(W)| \to |X_0|$.
Then 
the natural map 
 \[\varinjlim_{W \in \mathscr{W}} H^n(p(W),\sF_W) \to \varinjlim_{W \in \mathscr{W}} H^n(q(W),\sF_W)\]
  is an isomorphism if
  for every non-empty subset $J \subseteq \mathscr{I}$, 
  the natural morphism 
\begin{equation} \label{hypothesis of the lemma}  
\varinjlim_{W \in \mathscr{W}} H^n(\bigcap_{j \in J} \overline{U_{j,W}} ,\sF_W) \to \varinjlim_{W \in \mathscr{W}} H^n(\bigcap_{j \in J} \overline{U_{j,W}} \cap q(W), \sF_W).
\end{equation} 
is an isomorphism where 
by $\overline{U_{j,W}}$, we mean the closure of $U_{j,W}$ in $|p(W)|$. 
\end{lem}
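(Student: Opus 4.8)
The plan is to reduce the statement to a term-by-term comparison by applying \emph{Mayer--Vietoris for a finite closed cover} simultaneously to $p(W)$ and to $q(W)$, and then to pass to the (filtered) colimit over $\mathscr{W}$. First I would record the combinatorial observation that makes this work. Writing $\overline{U_{j,W}}$ for the closure of $U_{j,W}$ in $|p(W)|$, the equality $\bigcup_{j\in\mathscr{I}}U_j=|X_0|$ forces $\bigcup_{j\in\mathscr{I}}U_{j,W}=|p(W)|$, so $\{\overline{U_{j,W}}\}_{j\in\mathscr{I}}$ is a finite closed cover of $|p(W)|$; intersecting with $|q(W)|\subseteq|p(W)|$ then shows that $\{\overline{U_{j,W}}\cap|q(W)|\}_{j\in\mathscr{I}}$ is a finite closed cover of $|q(W)|$. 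For a non-empty $J\subseteq\mathscr{I}$ the set $Z_{J,W}:=\bigcap_{j\in J}\overline{U_{j,W}}$ is precisely the pseudo-adic space occurring in \eqref{hypothesis of the lemma}; note that $\overline{U_{j,W}}$ is in general \emph{not} the preimage of $\overline{U_j}$, but this causes no trouble since only the restriction of $\sF_W$ to these subsets intervenes. I will prove the assertion in all cohomological degrees at once --- this is the form in which the lemma is applied (Proposition \ref{using prospecial}), and, as will be apparent, the argument inherently mixes degrees.

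Next I would build the Čech resolution of $\sF_W$ attached to the closed cover $\{\overline{U_{j,W}}\}_j$, i.e. the exact complex of \'etale sheaves $0\to\sF_W\to\bigoplus_j\iota_{j*}(\sF_W|_{\overline{U_{j,W}}})\to\bigoplus_{j<k}\iota_{jk*}(\sF_W|_{Z_{\{j,k\},W}})\to\cdots$, exactness being checked on stalks, where it reduces to the acyclicity of the augmented simplicial chain complex of a non-empty (hence contractible) simplex. Applying $R\Gamma(p(W),-)$ and using $R\Gamma(p(W),\iota_{j*}-)=R\Gamma(\overline{U_{j,W}},-)$ gives a functorial quasi-isomorphism $R\Gamma(p(W),\sF_W)\simeq\mathrm{Tot}(C^{\bullet}_{p,W})$, where $C^{\bullet}_{p,W}$ is the complex of complexes $\bigoplus_j R\Gamma(\overline{U_{j,W}},\sF_W)\to\bigoplus_{j<k}R\Gamma(Z_{\{j,k\},W},\sF_W)\to\cdots$ placed in horizontal degrees $0,1,\dots,|\mathscr{I}|-1$; the same cover-by-closed-subspaces argument applied to $\{\overline{U_{j,W}}\cap|q(W)|\}_j$ produces $R\Gamma(q(W),\sF_W)\simeq\mathrm{Tot}(C^{\bullet}_{q,W})$. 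The restriction maps $R\Gamma(Z_{J,W},-)\to R\Gamma(Z_{J,W}\cap q(W),-)$ assemble into a morphism $C^{\bullet}_{p,W}\to C^{\bullet}_{q,W}$ compatible with the two quasi-isomorphisms, and, since the Čech differentials are alternating sums of restriction maps, the whole construction is functorial in $W$. Now I would pass to $\varinjlim_{W\in\mathscr{W}}$: as $\mathscr{W}$ is cofiltered these are filtered colimits, hence exact, and they commute with the cohomology functors and with the totalisation of a complex supported in the finitely many columns $0\le a\le|\mathscr{I}|-1$ and bounded below. By \eqref{hypothesis of the lemma} the induced morphism $\varinjlim_W C^{\bullet}_{p,W}\to\varinjlim_W C^{\bullet}_{q,W}$ is a quasi-isomorphism in each of its finitely many columns, and a morphism of such double complexes that is bounded below and a column-wise quasi-isomorphism induces an isomorphism on the cohomology of the totalisations (equivalently, the associated map of convergent Čech-to-cohomology spectral sequences is an isomorphism on $E_1$, hence on abutments). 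This gives that the natural map $\varinjlim_W H^n(p(W),\sF_W)\to\varinjlim_W H^n(q(W),\sF_W)$ is an isomorphism for every $n$, as desired.

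The step I expect to be the main obstacle is the input that licenses the Čech machinery in the pseudo-adic setting: that $\overline{U_{j,W}}$, its finite intersections, and their traces on $|q(W)|$ are genuine pseudo-adic subspaces of $\underline{p(W)}$ with well-behaved \'etale cohomology, and that pushforward along a closed immersion of pseudo-adic spaces is exact on \'etale sheaves, so that $R\Gamma(p(W),\iota_{j*}-)$ indeed has no higher terms. Both belong to the \'etale formalism for pseudo-adic spaces developed in \cite[\S 2]{hub96} --- the closure of a (pro-)constructible subset is again of the admissible type, and closed immersions behave as for schemes. Once these are in hand the remainder is routine homological algebra: constructing the resolution, noting the naturality of the Čech differentials in $W$, and running the standard double-complex comparison after the exact filtered colimit.
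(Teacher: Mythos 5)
Your proof is correct and follows essentially the same route as the paper: the paper invokes Huber's Mayer--Vietoris spectral sequence for a finite closed cover (\cite[Corollary 2.6.10]{hub96}) with $E_1$-terms $\prod_{\alpha}H^s(\bigcap_i \overline{U_{\alpha_i,W}},\sF_W)$, passes to the exact filtered colimit over $\mathscr{W}$, and compares the two spectral sequences term by term using the hypothesis --- which is exactly the content of your Čech double-complex argument, merely with the spectral sequence constructed by hand rather than cited. Your explicit remark that the argument mixes cohomological degrees (so the hypothesis is really needed in all degrees) is a point the paper leaves implicit.
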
 
\begin{proof}
 
  By \cite[Corollary 2.6.10]{hub96}, we have a spectral sequence 
   \[\prod_{\alpha \in \mathscr{I}^{r+1}} H^s(\bigcap_{0 \leq i \leq r} \overline{U_{\alpha_i,W}},\sF_W) \implies H^{r+s}(p(W),\sF_W)\]
    where $\alpha = (\alpha_i)_{0 \leq i \leq r}$ and for every $i$, $\alpha_i \in \mathscr{I}$. 
    Note that $\mathscr{I}^{r+1}$ is a finite set.
    
     Since $\varinjlim_{W \in \mathscr{W}}$ is a filtered colimit,
    it is exact by \cite[Tag 04B0]{stacks-project}. 
    Note that finite products are coproducts in the category of abelian groups. 
    It follows that we have a spectral sequence 
    \[ \prod_{\alpha \in \mathscr{I}^{r+1}} \varinjlim_{W \in \mathscr{W}} 
    H^s( \bigcap_{0 \leq i \leq r} \overline{U_{\alpha_i,W}},\sF_W) 
    \implies \varinjlim_{W \in \mathscr{W}}  H^{r+s}(p(W),\sF_W).\]
   Similar arguments then show that we have a spectral sequence 
    \[\prod_{\alpha \in \mathscr{I}^{r+1}} \varinjlim_{W \in \mathscr{W}}
      H^s(\bigcap_{0 \leq i \leq r} \overline{U_{\alpha_i,W}} \cap q(W),\sF_W) \implies \varinjlim_{W \in \mathscr{W}}
    H^{r+s}(q(W),\sF_W).\]
    Note that the natural morphisms 
     \[\prod_{\alpha \in \mathscr{I}^{r+1}} \varinjlim_{W \in \mathscr{W}}
      H^s(\bigcap_{0 \leq i \leq r} \overline{U_{\alpha_i,W}},\sF_W)
   \to
   \prod_{\alpha \in \mathscr{I}^{r+1}} \varinjlim_{W \in \mathscr{W}} 
    H^s(\bigcap_{0 \leq i \leq r} \overline{U_{\alpha_i,W}} \cap q(W),\sF_W)\]
   are compatible with the differentials and hence they induce the natural map
   \[ \varinjlim_{W \in \mathscr{W}}  H^{r+s}(p(W),\sF_W) \to  \varinjlim_{W \in \mathscr{W}} H^{r+s}(q(W),\sF).\]
 The hypothesis (\ref{hypothesis of the lemma}) shows that this is
  sufficient to conclude the proof. 
\end{proof}}

\begin{proof} { (of Theorem \ref{proper base change})
{We can reduce to when 
  $S$ is an affine scheme. 
  Indeed, this is a consequence of 
  Lemma \ref{lem:smoobaschanrefpovi} and 
  Remark \ref{rem : certain base change for proper}.
 Let $\mathscr{J}$ denote the category of standard
 \'etale neighbourhoods of $(S,\overline{s})$
 and let $(W,\overline{w}) \in \mathscr{J}$.
  Note that the category $\mathscr{J}$ is cofiltered. 
 Observe that it suffices
 to prove (\ref{main situation equation}) when the colimit on the right runs 
 over $\mathscr{J}$.

 We proceed as in \cite[Proposition 4.4.3]{hub96}. 
 We first reduce to the case when $X$ is affinoid and $\mathcal{O}_X(X)$ is a Tate ring.
  Since $f$ is a quasi-compact morphism and 
  $S$ is quasi-compact, we get that $X$ is quasi-compact. 
  Let $\{U_i\}_{i \in I}$ be a finite covering of $X$ by affinoid open subsets
  such that for every $i \in I$, the morphism $U_i \to S$ is good. 
  By Remark \ref{rem:spmorlftrefde}, we
  can assume in addition that $\mathcal{O}_{U_i}(U_i)$ is a Tate ring.
 By Lemma \ref{lem:base change for goodness},
$\{U_{i,W} := U_i \times_S W\}_{i \in I}$ is a finite covering of $X_W$ by affinoid open subsets.
  
By Lemma \ref{lem : reduce to finite cover of closed subsets}, 
we deduce that 
to verify (\ref{main situation equation}),
 it suffices to prove that 
  for every non-empty subset $J \subseteq I$ 
  and every $q \in \mathbb{N}$, the natural morphism
  $$\varinjlim_{(W,\overline{w}) \in \mathscr{J}} H^q(({X_W},\bigcap_{j \in J} \overline{|U_{j,W}|}),\sF) \xrightarrow{} \varinjlim_{(W,\overline{w}) \in \mathscr{J}}  H^q(({X_W},\bigcap_{j \in J}\overline{|U_{j,W}|} \cap f_W^{-1}(w)),\sF)$$
  is an isomorphism.} 
Let $J \subseteq I$ be non-empty. 
  Let 
  $\{V_{i}\}_{i \in I_J}$ be a finite affinoid cover of 
  $\bigcap_{j \in J} U_j$.
 Hence, 
  $\{\overline{|V_{i,W}|}\}_{i \in I_J}$ is a finite cover 
  of 
  $\bigcap_{j \in J} \overline{|U_{j,W}|}$     
  because 
  $\overline{\bigcap_{j \in J} |U_{j,W}|} = \bigcap_{j \in J} \overline{|U_{j,W}|}$.
   This is a consequence of 
    \cite[Lemma 1.1.10(i)]{hub96}.    
  Then, as before,
  it suffices to show that for any $J' \subseteq I_J$,
  the natural morphism
   $$\varinjlim_{(W,\overline{w}) \in \mathscr{J}} H^q(({X_W},\bigcap_{j' \in J'} \overline{|V_{j',W}|}),\sF) \xrightarrow{} \varinjlim_{(W,\overline{w}) \in \mathscr{J}} H^q((X_W,\bigcap_{j' \in J'} \overline{|V_{j',W}|} \cap f_W^{-1}(w)),\sF)$$
   is an isomorphism.

      We choose an index $j_0 \in J$. 
   Let $J' \subseteq I_J$. 
  Observe that 
  for every $j' \in J'$, $V_{j'} \subseteq U_{j_0}$. 
  It follows that 
  $\bigcap_{j' \in J'} V_{j'}$ is an affinoid space.
  Furthermore, by Remark \ref{rem:spmorlftrefde}, 
  the composition $\bigcap_{j' \in J'} V_{j'} \to U_{j_0} \to S$ is good. 
  In summary, it suffices to prove that 
  the natural map
  $$\varinjlim_{(W,\overline{w}) \in \mathscr{J}} H^q( ({X_W},\overline{|U_W|}),\sF) \to \varinjlim_{(W,\overline{w}) \in \mathscr{J}} H^q(({X_W},\overline{|U_W|} \cap f_W^{-1}(s)),\sF)$$
  is an isomorphism
  where $U$ is an affinoid open subset of $X$ and $U \to S$ is good. 
  In addition, we have that $\mathcal{O}_U(U)$ is a Tate ring.  

 Observe that $f_{|U}$ is good. We are hence in the context of Situation 
 \ref{minor situation}. 
 By Lemma \ref{lem : reduce from closed subset to affinoid II},
 we can suppose $X$ is affinoid and 
     is the universal compactification of 
    $U \to S$ with $\mathcal{O}_U(U)$ Tate. By Lemma \ref{properties of universal compactification}(3),
    $f \colon X \to S$ is tight and 
    $\mathcal{O}_X(X) = \mathcal{O}_U(U)$ is Tate.

     Suppose $\sF \simeq \varinjlim_i \sF_i$ is a filtered colimit of torsion abelian sheaves.
     By Lemma \ref{push forward commutes with filtered colimits}
     and \cite[Expos\'e VI, Theorem 5.1]{SGA4},
     we deduce that 
     \[(R^nf_*\sF)_{\overline{s}} \simeq \varinjlim_i ((R^nf_*\sF_i)_{\overline{s}}).\]
     By Lemma \ref{lem:base change for goodness},
     $X_W$ is affinoid. 
     Since $w$ is a closed point, $f_W^{-1}(w)$ is a closed subspace of $X_W$. 
 Observe that
      $f_W^{-1}(w)$ is a closed subspace of a spectral space.
     It is hence quasi-separated and quasi-compact. 
      By \cite[Lemma 2.3.13(i)]{hub96},
      \[H^n(f_W^{-1}(w),\sF) \simeq \varinjlim_i H^n(f_W^{-1}(w),\sF_i).\]
      
      Just as in (8) of the proof of \cite[Proposition 4.4.3]{hub96},
      we can reduce to when $\sF = p_*G$
     where $p \colon Y \to X$  is a finite morphism of pseudo-adic spaces and
     $G$ is a constant sheaf
      of $\mathbb{Z}/m\mathbb{Z}$-modules of
     finite type for some $m \in \mathbb{N}$.
     
 { \begin{lem} 
  We can reduce to the case where
  there exists $e_0,\ldots,e_n \in \mathcal{O}_{\underline{Y}}(\underline{Y})$ 
  such that 
  $\mathcal{O}_{\underline{Y}}(\underline{Y}) = \sum_i e_i\mathcal{O}_{\underline{Y}}(\underline{Y})$ and 
  $|Y| = \{y \in \underline{Y} | |e_i(y)| < |e_0(y)| \mbox{ for } i = 1,\ldots,n\}$.   
 \end{lem}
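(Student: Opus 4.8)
The plan is to prove the reduction in two stages. \emph{First}, I would use that $p$ is a finite morphism of pseudo-adic spaces to pass to the case in which the underlying adic space of $Y$ is the ambient space. The morphism $\underline p \colon \underline Y \to X$ is finite, so $\underline Y$ is affinoid and $\mathcal O_{\underline Y}(\underline Y)$ is module-finite over the Tate ring $\mathcal O_X(X)$, hence Tate, and by Lemma \ref{lem : finite + tight = tight} the composite $f \circ \underline p \colon \underline Y \to S$ is again tight. Factoring $p$ as $Y \hookrightarrow (\underline Y, \underline Y) \xrightarrow{\underline p} X$ and using that finite morphisms of (pseudo-)adic spaces have no higher direct images and are stable under the standard \'etale base changes $W \to S$, together with the compatibility of the pseudo-adic fibre with base change (cf. Remark \ref{rem : pseudo-adic space} and \cite[\S 1.10]{hub96}), one identifies the natural map $(\ref{main situation equation})$ attached to $(f,\sF)$ with the one attached to $(f \circ \underline p,\ i_* G)$, where $i \colon (\underline Y, |Y|) \hookrightarrow \underline Y$ is the inclusion of the closed constructible subset $|Y| \subseteq \underline Y$. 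So, renaming $\underline Y$ to $X$ and $f \circ \underline p$ to $f$, we may assume $\sF = i_* G$ for the inclusion $i \colon (X, |Z|) \hookrightarrow X$ of a closed constructible (hence convex, pro-constructible) subset $|Z|$, with $X$ affinoid, $\mathcal O_X(X)$ Tate and $f$ tight.

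\emph{Second}, I would exhibit $|Z|$ as a finite union of subsets of the required form, \emph{without leaving the affinoid} $X$ (so as not to lose tightness of $f$), and then conclude with Lemma \ref{lem : reduce to finite cover of closed subsets}. Writing $|Z| = X \setminus U$ with $U$ a quasi-compact open covered by finitely many rational subsets $R_1,\dots,R_m$ of $X$, the unit-ideal relation defining each $R_k$ gives $X \setminus R_k = \bigcup_\ell \{x : |g_k(x)| < |f_{k\ell}(x)|\}$, where $g_k$ is the denominator and the $f_{k\ell}$ the numerators of $R_k$; intersecting over $k$ and multiplying each strict inequality by the products of the chosen numerators for the other indices — these products being pointwise invertible on the relevant loci — one rewrites $|Z|$ as a finite union of sets of the shape $\{x : |e_i(x)| < |e_0(x)|,\ 1 \le i \le n\}$ with $e_0,\dots,e_n \in \mathcal O_X(X)$. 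The delicate point is to arrange, after possibly refining the covering and enlarging the finite system of $e_i$'s, that $\sum_{i=0}^n e_i\,\mathcal O_X(X) = \mathcal O_X(X)$; this is carried out inside the constructible topology on $X$, keeping the generators global, and follows the pattern of Huber's analysis of constructible subsets of affinoid adic spaces in \cite[\S 1.2, \S 2.3]{hub96}. This produces a finite covering $|Z| = \bigcup_\alpha U_\alpha$ by convex pro-constructible subsets, each of the stated form.

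Finally, applying Lemma \ref{lem : reduce to finite cover of closed subsets} with $\mathscr W = \mathscr J$, $p(W) = (X_W, |Z_W|)$, $q(W) = (X_W, |Z_W| \cap f_W^{-1}(w))$, $\sF_0 = G$, and the covering $|Z| = \bigcup_\alpha U_\alpha$, reduces $(\ref{main situation equation})$ for $\sF = i_* G$ to the same assertion with $|Z_W|$ replaced by the finite intersections $\bigcap_{\alpha \in J} \overline{U_{\alpha, W}}$; since each such intersection is again empty or of the stated form, one more pass through the same decomposition shows that it suffices to treat the case in which $|Y|$ itself has the required shape, which is the content of the lemma. \textbf{The main obstacle} is the second stage: producing the $e_i$ as \emph{global} sections of $\mathcal O_{\underline Y}$ obeying the unit-ideal relation, rather than passing to a rational subdomain (over which tightness need not persist), while checking throughout that the sets occurring — including the closures appearing in Lemma \ref{lem : reduce to finite cover of closed subsets} — are convex, pro-constructible, and of the prescribed form.
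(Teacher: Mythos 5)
Your overall strategy (decompose $|Y|$ into finitely many subsets of the prescribed shape and feed the resulting covering into Lemma \ref{lem : reduce to finite cover of closed subsets}) is the same as the paper's, but there are two genuine gaps. First, you assert in your first stage that $|Y|$ is a closed \emph{constructible} subset of $\underline{Y}$, and your second stage depends on this when you write $|Z| = X \setminus U$ with $U$ a quasi-compact open covered by finitely many rational subsets. Finiteness of $p$ only gives that $|Y|$ is closed; a closed subset of a spectral space is pro-constructible but in general not constructible, so its complement need not be quasi-compact. The paper closes this gap with an extra limit argument: it writes $|Y| = \bigcap_{i \in \mathscr{I}} C_i$ as a cofiltered intersection of closed constructible subsets, invokes \cite[Corollary 2.4.6]{hub96} (compatibility of cohomology with such cofiltered intersections of closed subsets) together with the interchange of colimits to reduce the assertion (\ref{main situation equation}) to the finite subintersections $\bigcap_{j \in J} C_{j,W}$, and only then may assume $|Y|$ closed and constructible. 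Without this step your reduction does not apply to the $Y$ actually produced by step (8) of Huber's Proposition 4.4.3.

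Second, the heart of the matter --- that a closed constructible subset of an affinoid adic space is a finite union of sets $\{x : |e_i(x)| < |e_0(x)|,\ 1 \le i \le n\}$ with $e_0,\dots,e_n$ \emph{global} sections generating the unit ideal --- is precisely \cite[Lemma 3.1.10(ii)]{hub96}, which the paper simply cites. Your attempt to reprove it by distributing the complements of rational subsets and multiplying the strict inequalities together is exactly where your own text flags ``the main obstacle'': the products you form need not satisfy $\sum_i e_i\,\mathcal{O}_X(X) = \mathcal{O}_X(X)$, and you do not resolve this. As written, the key combinatorial input is therefore not established; you should either cite Huber's lemma or supply a complete proof of it. The remainder of your argument --- passing from $(Y,p_*G)$ to $(\underline{Y}, i_*G)$ via finiteness of $p$, stability under the standard \'etale base changes $W \to S$, and the final application of Lemma \ref{lem : reduce to finite cover of closed subsets} --- is consistent with what the paper does.
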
    
 \begin{proof} 
  Since the morphism $p$ is finite, $|Y|$ is closed in $\underline{Y}$. 
   Hence $|Y| = \bigcap_{i \in \mathscr{I}} C_i$, where 
   $C_i \subset \underline{Y}$ is a closed constructible set
   where 
   we use the notion of constructible set as introduced in 
   \cite[Chapitre 0, D\'efinition 2.3.10]{EGAI}. 
   We claim that we can reduce to when $|Y|$ is closed and constructible.
   Note that  
   Indeed, for every
   $(W,\overline{w}) \in \mathscr{J}$, let 
   $Y_W := Y \times_X X_W$ where we take the fibre product in the 
   category of pseudo-adic spaces. 
   We then have that $|Y_W| = \bigcap_{i \in \mathscr{I}} C_{i,W}$ where 
   for every $i \in \mathscr{I}$, 
   $C_{i,W}$ is the preimage of $C_i$ 
   in $\underline{Y_W}$ for the projection 
   $\underline{Y_W} \to \underline{Y}$. 
   Since $p$ is finite, we deduce that to show (\ref{main situation equation}), it 
   suffices to show that for every $q \in \mathbb{N}$,
    the natural morphism 
   $$\varinjlim_{(W,\overline{w}) \in \mathscr{J}} H^q((\underline{Y_W},\bigcap_{i \in \mathscr{I}} C_{i,W}),G) \xrightarrow{} \varinjlim_{(W,\overline{w}) \in \mathscr{J}}  H^q((\underline{Y_W}, \bigcap_{i \in \mathscr{I}} C_{i,W} \cap f_W^{-1}(w)),G)$$
   is an isomorphism. 
   Since colimits commute with colimits, we deduce from 
   \cite[Corollary 2.4.6]{hub96} that it suffices 
   to verify that for every finite subset $J \subseteq \mathscr{I}$ and 
   $q \in \mathbb{N}$, the natural morphism 
   $$\varinjlim_{(W,\overline{w}) \in \mathscr{J}} H^q((\underline{Y_W}, \bigcap_{j \in J} C_{j,W}),G) \xrightarrow{} \varinjlim_{(W,\overline{w}) \in \mathscr{J}}  
   H^q((\underline{Y_W}, \bigcap_{j \in J} C_{j,W} \cap f_W^{-1}(w)),G)$$ 
   is an isomorphism.
   We may hence suppose that $|Y|$ is closed and constructible. 
 By \cite[Lemma 3.1.10(ii)]{hub96} and Lemma 
 \ref{lem : reduce to finite cover of closed subsets}, 
 we can conclude a proof of the lemma. 
 \end{proof}}  
%
%
     Note that since $X$ is affinoid, $\underline{Y}$ is affinoid as well. 
     Let $Y' := \underline{Y}$. 
     Let $i$ denote the closed embedding $Y \hookrightarrow Y'$ and 
     set $\sG := i_*(G)$. Let $p'$ denote the finite morphism 
     $Y' \to X$. 
     
     We claim that it suffices to prove the theorem for the specialization 
     morphism $f \circ p' \colon Y' \to S$ and the sheaf $\sG$.
     Indeed, observe firstly
     that by \cite[Lemma 1.10.17(i)]{hub96}
      the morphisms $i$ and $p'$ are 
     proper.  Hence,
     \begin{equation} \label{another one bites the dust}
     \sF_{|X_W} \xrightarrow{\sim} p'_{W*}(\sG_{|Y'_W})
     \end{equation} 
     where $p'_W \colon Y'_W \to X_W$ is the morphism 
     induced by base change. 
     This implies an 
     isomorphism 
     \begin{align} \label{tedious}
     H^n(X_W,\sF_{|X_W})  \xrightarrow{\sim} H^n(Y'_W,\sG_{|Y'_W}).
     \end{align}
     On the other hand, we have a sequence of ismorphisms, 
     \begin{align} \label{dust again}
 \nonumber   \sF_{|f_W^{-1}(w)} &\overset{(i)} \to (p'_{W*}(\sG_{|Y'_W}))_{|f_W^{-1}(w)} \\
                                   &\overset{(ii)} \to p'_{0W*} \sG_{|(f_W \circ p'_W)^{-1}(w)}
     \end{align} 
     where $p'_{0W}$ denotes the finite morphism 
     $(f_W \circ p'_W)^{-1}(w) \to f_W^{-1}(w)$. 
     The isomorphism (i) is due to equation (\ref{another one bites the dust}).
     Note that $p'_W$ is finite and hence proper. 
     Hence, \cite[Theorem 4.1.2(b)]{hub96} implies the isomorphism (ii). 
     Equation (\ref{dust again}) gives an isomorphism 
     \begin{align} \label{tedious 2}
     H^n(f_W^{-1}(w),\sF) \xrightarrow{\sim} H^n((f_W \circ p'_W)^{-1}(w),\sG).
      \end{align}
One deduces the claim using equations (\ref{tedious}) and (\ref{tedious 2}).
 
     
      By Lemma \ref{lem : finite + tight = tight}, since $Y'$ is finite over $X$ 
      and $X$ is a universal compactification, 
      the composition $Y' \to X \to S$ is tight. 
      We now apply Proposition \ref{using prospecial} to conclude a proof of the theorem.}
\end{proof}

{\begin{cor}\label{proper base change to strict henselian local rings}
 Let $f \colon X \to S$ be a proper specialization morphism where $S$ is strictly local i.e.
 $S = \mathrm{Spec}(R)$ where $R$ is a strictly henselian local ring. 
 Let $s \in S$ denote the closed point and $X_s := f^{-1}(s)$ be the fibre over $s$. 
 Let $\sF$ be a torsion abelian sheaf.  
 We then have that,
 for every
 $n \in \mathbb{N}$,
 \begin{align*} 
  [R^nf_*(\sF)]_{{s}} \simeq H^n(X_{{s}},\sF_{|X_{{s}}}). 
 \end{align*}  
\end{cor}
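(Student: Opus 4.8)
The statement is a direct specialization of Theorem \ref{proper base change}, so the plan is to reduce to it and then collapse the colimit appearing on its right-hand side. The only genuine input beyond that theorem is the standard observation that over a strictly local base the category of \'etale neighbourhoods of the closed point has an initial object.

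First I would note that since $R$ is strictly henselian its residue field $\kappa(s)$ is separably closed, so $\Spec\kappa(s)\to S$ is already a geometric point lying over $s$; take $\overline{s}$ to be this geometric point, so that $[R^nf_*(\sF)]_s=[R^nf_*(\sF)]_{\overline{s}}$. Since $f$ is proper and $s$ is closed we are exactly in Situation \ref{the main situation} with the extra hypothesis of Theorem \ref{proper base change}, which therefore provides a natural isomorphism
\[
[R^nf_*(\sF)]_{\overline{s}}\xrightarrow{\sim}\varinjlim_{(W,\overline{w})}H^n\big(f_W^{-1}(w),\sF\big),
\]
the colimit running over the cofiltered category $\mathscr{J}$ of \'etale neighbourhoods of $(S,\overline{s})$.

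The remaining step is to see that this colimit is trivial. For any $(W,\overline{w})\in\mathscr{J}$, the point $w\in W$ under $\overline{w}$ lies over the closed point $s$, and $\kappa(w)$ is a finite separable extension of the separably closed field $\kappa(s)$, hence $\kappa(w)=\kappa(s)$; since $R$ is a henselian local ring and $W\to S$ is \'etale, there is then a unique section $S\to W$ carrying $\overline{s}$ to $\overline{w}$ (cf. the characterisation of henselian local rings, \cite[Tag 04GG]{stacks-project}). Thus $(S,\mathrm{id}_S,\overline{s})$ is an initial object of $\mathscr{J}$, equivalently a final object of $\mathscr{J}^{\mathrm{op}}$; as the displayed colimit is filtered over $\mathscr{J}^{\mathrm{op}}$, it is computed by its value at this final object, namely $H^n(f^{-1}(s),\sF)=H^n(X_s,\sF_{|X_s})$, using that the base change of $f$ along $\mathrm{id}_S$ is $f$ itself and $f^{-1}(s)=X_s$ as pseudo-adic spaces. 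Composing the two isomorphisms yields the corollary.

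I do not expect a serious obstacle here: all of the adic-geometric content is already packaged inside Theorem \ref{proper base change}, and what is left is a routine cofinality argument. The two places that merely require care are the bookkeeping about the direction of the colimit — so that ``initial in $\mathscr{J}$'' correctly translates into ``final in the category indexing the colimit'' — and checking that the member $f_W^{-1}(w)$ of the family at $W=S$, $w=s$ is literally the pseudo-adic fibre $X_s=f^{-1}(s)$ of Remark \ref{rem : pseudo-adic space}, so that the coefficient sheaves agree on the nose.
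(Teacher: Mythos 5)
Your argument is correct and is exactly the route the paper takes: the paper's proof consists of the single line ``This is a direct consequence of Theorem \ref{proper base change}'', and your write-up merely supplies the standard details it leaves implicit, namely that over a strictly henselian base $(S,\mathrm{id}_S,\overline{s})$ is an initial object of the category of \'etale neighbourhoods (so the filtered colimit collapses to its value there) and that the term at $W=S$, $w=s$ is literally the pseudo-adic fibre $X_s$. No gap.
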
 
\begin{proof} 
This is a direct consequence of Theorem \ref{proper base change}.
\end{proof} }

{
\begin{cor} \label{cor : proper base change for complexes}
Let $f \colon X \to S$ be a proper specialization morphism.
 Let $s \in S$ be a closed point and $\overline{s} \to S$ be a geometric
  point over $s$. 
   Let $\sF$ be a complex in $\mathcal{D}^+(X_{\text{\'et}},\mathbb{Z})$ 
   whose cohomology sheaves are torsion. 
 For every
 $n \in \mathbb{Z}$, we have a natural isomorphism
  \begin{align*}
[R^nf_*(\sF)]_{\overline{s}} \to \varinjlim_{(W,\overline{w})} R^n\Gamma(f_W^{-1}(w),\sF)
\end{align*}
 where the colimit on the right runs over all \'etale neighbourhoods $(W,\overline{w})$ 
 of $(S,\overline{s})$, $w \in W$ is the image of $\overline{w}$,
 $X_W := X \times_S W$ 
  and
 $f_W \colon X_W \to W$ is the morphism induced by base change.
  \end{cor}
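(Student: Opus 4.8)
The plan is to reduce Corollary \ref{cor : proper base change for complexes} to Theorem \ref{proper base change} by a standard d\'evissage on $\sF$, so that the only real content sits in the sheaf case already proved. Since both sides of the asserted isomorphism are computed \'etale-locally around $s$, one may first replace $S$ by an affine open neighbourhood of $s$, which is harmless by Lemma \ref{lem:smoobaschanrefpovi} and Remark \ref{rem : certain base change for proper}; thus we may assume $S$ affine. Next I would make the natural morphism of the statement precise at the level of complexes, as the derived analogue of the map in Situation \ref{the main situation}: for each \'etale neighbourhood $(W,\overline{w})$ of $(S,\overline{s})$ the projection $X_W = X\times_S W \to X$ is \'etale by Proposition \ref{prop:exifibprodspecmor}, and \'etale base change (Lemma \ref{lem:smoobaschanrefpovi}) together with the computation of the stalk of $Rf_*\sF$ at $\overline{s}$ by a bounded-below complex of injectives identifies $[R^nf_*(\sF)]_{\overline{s}}$ with $\varinjlim_{(W,\overline{w})} R^n\Gamma(X_W,\sF|_{X_W})$; composing with the pullback maps $R\Gamma(X_W,\sF|_{X_W})\to R\Gamma(f_W^{-1}(w),\sF|_{f_W^{-1}(w)})$ along the (pro-constructible) immersions $f_W^{-1}(w)\hookrightarrow X_W$ and passing to the colimit yields the morphism in the statement. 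Because pullback of complexes is triangulated and the stalk functor and the filtered colimit over \'etale neighbourhoods are exact, this is a morphism of $\delta$-functors on bounded-below complexes of $X_{\text{\'et}}$-sheaves with torsion cohomology.

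Now fix $n\in\mathbb{Z}$. For $N\ge n$ the good-truncation triangle $\tau_{\le N}\sF \to \sF \to \tau_{\ge N+1}\sF \xrightarrow{+1}$ induces maps of long exact sequences on both sides, compatibly with the natural transformation. On the left, $R^mf_*$ of a complex concentrated in degrees $\ge N+1$ vanishes for $m\le N$ (the hypercohomology spectral sequence lives in $p\ge 0$); on the right the same vanishing holds for $R^m\Gamma(f_W^{-1}(w),-)$, and a filtered colimit of zeros is zero. Hence in degree $n$ both functors depend only on $\tau_{\le N}\sF$, and we may assume $\sF$ bounded. Then induct on the amplitude of $\sF$: the base case $\sF=\mathcal{G}[-q]$ with $\mathcal{G}$ a torsion abelian sheaf is precisely Theorem \ref{proper base change} shifted by $q$, and for the inductive step one applies the natural transformation to the long exact sequences attached to $\mathcal{H}^a(\sF)[-a]\to\sF\to\tau_{\ge a+1}\sF$ at the lowest nonzero degree $a$, where two of the three vertical maps are isomorphisms by the base case and by induction, so the five lemma gives the third.

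Equivalently, one can argue uniformly with the two hypercohomology spectral sequences
\[
E_2^{p,q} = [R^pf_*\mathcal{H}^q(\sF)]_{\overline{s}} \ \Rightarrow\ [R^{p+q}f_*(\sF)]_{\overline{s}},
\]
\[
E_2^{p,q} = \varinjlim_{(W,\overline{w})} H^p(f_W^{-1}(w),\mathcal{H}^q(\sF)) \ \Rightarrow\ \varinjlim_{(W,\overline{w})} R^{p+q}\Gamma(f_W^{-1}(w),\sF),
\]
the second obtained from the hypercohomology spectral sequence over each $(W,\overline{w})$ by exactness of filtered colimits, using that restriction to a subspace is exact so that $\mathcal{H}^q(\sF)|_{f_W^{-1}(w)}=\mathcal{H}^q(\sF|_{f_W^{-1}(w)})$. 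Both spectral sequences are bounded below in $q$ (since $\sF\in\mathcal{D}^+$) and supported in $p\ge 0$, hence convergent; the natural transformation is compatible with them and induces an isomorphism on the $E_2$-pages by Theorem \ref{proper base change}, so it is an isomorphism on the abutments.

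There is no essential obstacle here, as all the substance is in Theorem \ref{proper base change}. The only point requiring genuine care is the construction of the natural transformation for complexes and the verification that it is a morphism of $\delta$-functors, equivalently that it is compatible with the truncation triangles (respectively with the two spectral sequences); in the spectral sequence formulation one must also justify interchanging the filtered colimit over $(W,\overline{w})$ with the hypercohomology spectral sequence. Both become routine once one works with honest bounded-below complexes of injective sheaves.
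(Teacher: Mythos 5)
Your proposal is correct and, in its spectral-sequence formulation, is essentially identical to the paper's proof: the paper compares the two hypercohomology spectral sequences $[R^pf_*H^q(\sF)]_{\overline{s}} \Rightarrow [R^{p+q}f_*(\sF)]_{\overline{s}}$ and $\varinjlim_{(W,\overline{w})} H^p(f_W^{-1}(w),H^q(\sF)) \Rightarrow \varinjlim_{(W,\overline{w})} R^{p+q}\Gamma(f_W^{-1}(w),\sF)$ (the latter obtained by exactness of filtered colimits and of the stalk functor) and applies Theorem \ref{proper base change} on the $E_2$-pages. Your additional truncation/five-lemma d\'evissage is a valid alternative but is not needed.
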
 
\begin{proof} 
 Our proof is similar to that of \cite[Tag 0DDE]{stacks-project}.
 Let $(W,\overline{w})$ be an \'etale neighbourhood of $(S,\overline{s})$. 
Observe that we have spectral sequences 
\[R^pf_*H^q(\sF) \implies R^{p+q}f_*(\sF)\]
and 
\[H^p(f_W^{-1}(w),H^q(\sF)) \implies R^{p+q}\Gamma(f_W^{-1}(w),\sF).\]
 Since $\varinjlim_{(W,\overline{w})}$ is a filtered colimit,
    it is exact by \cite[Tag 04B0]{stacks-project}. 
Moreover,
the functor of taking the stalk at $\overline{s}$ 
is exact as well. 
It follows that we have spectral sequences 
\[[R^pf_*H^q(\sF)]_{\overline{s}} \implies [R^{p+q}f_*(\sF)]_{\overline{s}}\]
and 
\[\varinjlim_{(W,\overline{w})} H^p(f_W^{-1}(w),H^q(\sF)) \implies \varinjlim_{(W,\overline{w})} R^{p+q}\Gamma(f_W^{-1}(w),\sF).\]
We now apply Theorem \ref{proper base change} to conclude the proof. 
\end{proof} }

\section{An application}\label{sec:anapplication}

Recall from \S \ref{section : smooth base change} that we fixed a
torsion ring $A$. As an application of our work we prove the following theorem concerning type (S) formal schemes (cf. \cite[\S 1.9]{hub96}). We remind the reader that given a type (S) formal scheme $\mathfrak{X}$, there is the associated reduced scheme $\mathfrak{X}_s$ and the analytic adic generic fiber $\mathfrak{X}_{\eta}$ (cf. the paragraph immediately preceding Definition \ref{defi:spemorrepov}). Moreover there is the specialization morphism $\lambda_{\mathfrak{X}} \colon \mathfrak{X}_{\eta} \to \mathfrak{X}_s$ and by Lemma \ref{lem:comsquspemoareneacu}, there is no ambiguity whether one considers the pushforward along \eqref{eq:thepsisneacu} or  \eqref{eq:nearbycuspe}.

\begin{thm} \label{vanishing cycles commutes with lower shriek ref}
 Let $\mathfrak{f} \colon \mathfrak{X} \to \mathfrak{Y}$ be a finite type and separated morphism of type (S) formal schemes. Suppose in addition that $\mathfrak{Y}$ is quasi-compact and quasi-separated. Then there is a canonical equivalence 
 \begin{align*} 
    R^{+}\mathfrak{f}_{s!} \circ R^{+}\lambda_{\mathfrak{X}*} \xrightarrow{\sim} R^{+}\lambda_{\mathfrak{Y}*} \circ R^{+}\mathfrak{f}_{\eta!}
 \end{align*}
 of functors $D^{+}(\mathfrak{X}_{\eta}, A) \to D^+(\mathfrak{Y}_s, A)$.
\end{thm}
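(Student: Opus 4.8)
The plan is to factor the statement into two pieces: a base-change compatibility coming from Lemma \ref{lem:comsquspemoareneacu} (nearby cycles squares are cartesian for \'etale morphisms of formal schemes), and the proper base change theorem \ref{proper base change} together with the compactification theory of \S\ref{sec:comfpadssdoce}. First I would reduce to the affine situation: since $\mathfrak{Y}$ is quasi-compact and quasi-separated and the statement is local on $\mathfrak{Y}_s$ (via Remark \ref{rem : certain base change for proper} and smooth base change, Lemma \ref{lem:smoobaschanrefpovi}), I may assume $\mathfrak{Y} = \Spf(A_0)$ with $\mathfrak{Y}_s$ affine. Because $\mathfrak{f}$ is of finite type and separated, $\lambda_{\mathfrak{X}}$ composed with $\mathfrak{f}_\eta$ is a separated finite type specialization morphism (using Proposition \ref{prop:commorinvaprofrrep} and Proposition \ref{prop:specmorarproprepov}), so $R^+\mathfrak{f}_{\eta!}$ is defined via a universal compactification $\mathfrak{X}_\eta \hookrightarrow Z \to \mathfrak{Y}_\eta$, and one checks the composite $Z \to \mathfrak{Y}_\eta \xrightarrow{\lambda_{\mathfrak{Y}}} \mathfrak{Y}_s$ together with $\mathfrak{f}_s$ fits into the framework where both sides make sense.

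The heart of the argument is the identity in Corollary \ref{lem:lowshricomutwicomrepov} (which the authors announce in \S\ref{sec:propbasersuil}), deduced from Theorem \ref{proper base change}: for a proper specialization morphism $\lambda$ and a compactification, pushforward along $\lambda$ commutes with $j_!$ appropriately. Concretely, I would write $\mathfrak{f}_{\eta} = \overline{\mathfrak{f}_\eta} \circ j$ with $j$ a quasi-compact open embedding and $\overline{\mathfrak{f}_\eta}$ proper (Theorem \ref{thm:compspemor}, Corollary \ref{cor:compmorcompmor}), and similarly handle $\mathfrak{f}_s$ on the scheme side using the classical compactification of a separated finite type morphism of schemes (Nagata). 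The key diagram is a cube: the front face relates $\lambda_{\mathfrak{X}}$ and $\lambda_{Z}$ (nearby cycles of the compactification $Z$), which is cartesian in the sense of Proposition \ref{prop:exifibprodspecmor} by Lemma \ref{lem:comsquspemoareneacu} whenever $Z \to \mathfrak{Y}_\eta$ arises from a formal model — here I would need to argue that the universal compactification $Z$ is (locally) the adic generic fibre of a type (S) formal scheme compactifying $\mathfrak{X}$ over $\mathfrak{Y}$, so that $\lambda_Z$ makes sense and the square with $\lambda_{\mathfrak{X}}$ is cartesian. Then proper base change (Corollary \ref{cor : proper base change for complexes}) applied to $\overline{\mathfrak{f}_\eta}$ identifies stalks of $R^+\overline{\mathfrak{f}_\eta}_* j_! \sF$ with cohomology of fibres, which one matches with the corresponding stalks on the scheme side using the classical proper base change for $\mathfrak{X}_s \to \mathfrak{Y}_s$ and the identification of fibres of $\lambda$ as pro-special subsets (Lemma \ref{lem : fibre is pro-special}) together with Huber's comparison \cite[Theorem 3.2.1]{hub96}.

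More precisely, the chain of equivalences I would assemble is: $R^+\mathfrak{f}_{s!} R^+\lambda_{\mathfrak{X}*} = R^+\overline{\mathfrak{f}_s}_* j^s_! R^+\lambda_{\mathfrak{X}*} \xrightarrow{\sim} R^+\overline{\mathfrak{f}_s}_* R^+\lambda_{\overline{\mathfrak{X}}*} j^\eta_! \xrightarrow{\sim} R^+\lambda_{\mathfrak{Y}*} R^+\overline{\mathfrak{f}_\eta}_* j^\eta_! = R^+\lambda_{\mathfrak{Y}*} R^+\mathfrak{f}_{\eta!}$, where $\overline{\mathfrak{X}}$ denotes a compactification, $j^s, j^\eta$ the corresponding open immersions, the first equivalence is the commutation of $\lambda_*$ with $j_!$ along the cartesian square of Lemma \ref{lem:comsquspemoareneacu} (this is where $j_!$ being exact and the cartesianness of the square are both used, exactly as in smooth base change Lemma \ref{lem:smoobaschanrefpovi} but now for the open immersion), and the second equivalence is proper base change for $\lambda$ comparing the proper pushforward $R^+\overline{\mathfrak{f}_s}_*$ on the scheme side with $R^+\overline{\mathfrak{f}_\eta}_*$ on the adic side — this is precisely Corollary \ref{cor : proper base change for complexes} combined with the classical proper base change theorem, checked stalkwise over each geometric point of $\mathfrak{Y}_s$. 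The main obstacle I expect is the first step: justifying that the nearby-cycles square for the open immersion $j \colon \mathfrak{X}_\eta \hookrightarrow Z$ is cartesian and that one can apply a base-change isomorphism $R^+\lambda_{Z*} j_! \xrightarrow{\sim} j^s_! R^+\lambda_{\overline{\mathfrak{X}}*}$; this requires knowing that the universal compactification $\overline{\mathfrak{X}}^{/S}$ of $\mathfrak{X}_\eta \to \mathfrak{X}_s$ is realized by a formal model (so that Lemma \ref{lem:comsquspemoareneacu} applies, or at least an analogue of it for open immersions), and more delicately that the open immersion $j$ is itself compatible with specialization, i.e. $j$ is pulled back from an open immersion of special fibres — which should follow from Remark \ref{description of compactification} describing the compactification by finitely many inequalities $|e_i(x)| \le 1$ that descend to the scheme level, but the bookkeeping needed to make this into a clean cartesian-square statement in the category of specialization morphisms is the technical crux.
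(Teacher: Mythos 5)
There is a genuine gap, and it sits exactly where you locate your ``technical crux'': you want the universal compactification $Z$ of $\mathfrak{f}_\eta$ to be (locally) the adic generic fibre of a type (S) formal scheme compactifying $\mathfrak{X}$ over $\mathfrak{Y}$, so that a nearby-cycles map $\lambda_Z$ exists and Lemma \ref{lem:comsquspemoareneacu} can be invoked. But the paper points out, immediately after stating the theorem, that the compactification of $\mathfrak{f}_\eta$ need not be of finite type and in particular need not possess a formal model; the entire theory of specialization morphisms and of Theorem \ref{thm:compspemor} was built precisely to sidestep this. So the step $j^s_!\, R^{+}\lambda_{\mathfrak{X}*} \xrightarrow{\sim} R^{+}\lambda_{\overline{\mathfrak{X}}*}\, j^{\eta}_!$ cannot be justified the way you propose. (Two smaller points: cartesianness of that square is not actually what you need --- Corollary \ref{lem:baschanspecmoropenr}(2) only requires a commutative square of a quasi-compact open embedding into a \emph{proper specialization morphism}; and your second equivalence is just functoriality of pushforward along the composite, not proper base change.)

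The intended argument is much more direct and avoids constructing any such $\overline{\mathfrak{X}}$ by hand. The square $\mathfrak{f}_s \circ \lambda_{\mathfrak{X}} = \lambda_{\mathfrak{Y}} \circ \mathfrak{f}_\eta$ exhibits a single separated, finite type specialization morphism $\mathfrak{X}_\eta \to \mathfrak{Y}_s$ (using Proposition \ref{prop:specmorarproprepov} and Proposition \ref{prop:commorinvaprofrrep}), which by Theorem \ref{thm:compspemor} admits a universal compactification \emph{as a specialization morphism}, so $R^{+}(\mathfrak{f}_s\circ\lambda_{\mathfrak{X}})_! = R^{+}(\lambda_{\mathfrak{Y}}\circ\mathfrak{f}_\eta)_!$ is well defined by Definition \ref{defi:lowershrfurepovi}/Lemma \ref{lem:indlowshrirepovi}. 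One then applies the two composition formulas: Corollary \ref{lem:lowshricomutwicomrepov} gives $R^{+}(\mathfrak{f}_s\circ\lambda_{\mathfrak{X}})_! = R^{+}\mathfrak{f}_{s!}\circ R^{+}\lambda_{\mathfrak{X}!} = R^{+}\mathfrak{f}_{s!}\circ R^{+}\lambda_{\mathfrak{X}*}$ (properness of $\lambda_{\mathfrak{X}}$), and Lemma \ref{lem:lowshrcomprefpovi} gives $R^{+}(\lambda_{\mathfrak{Y}}\circ\mathfrak{f}_\eta)_! = R^{+}\lambda_{\mathfrak{Y}*}\circ R^{+}\mathfrak{f}_{\eta!}$. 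All of the base-change content you are trying to arrange in your cube (including the analogue of your first equivalence, which is Corollary \ref{lem:baschanspecmoropenr} applied to compactifications built via Theorem \ref{thm:compspemor}, ultimately resting on Theorem \ref{proper base change}) is already packaged inside the proof of Corollary \ref{lem:lowshricomutwicomrepov}. You correctly identified that corollary as the heart of the matter; the missing realization is that you should apply it, and its companion Lemma \ref{lem:lowshrcomprefpovi}, to the two factorizations of the composite $\mathfrak{X}_\eta \to \mathfrak{Y}_s$, rather than rebuild the base change from a formal model that may not exist.
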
 

The idea for proving Theorem \ref{vanishing cycles commutes with lower shriek ref} is simple. One takes the compactification of the morphism $\mathfrak{f}_{\eta}$ in the sense of \cite[Definition 5.1.8]{hub96} and reduces Theorem \ref{vanishing cycles commutes with lower shriek ref} to the case of an open immersion, where the result is known by Corollary 3.5.11(ii) in loc.cit. However the compactification of $\mathfrak{f}_{\eta}$ may no longer be of finite type (in particular may not possess a formal model). Thus the strategy is use the enlarged class of specialization morphisms we have constructed and their compactifications (cf. Theorem \ref{thm:compspemor}). First we need some preparation. 

\begin{sit} \label{situation : application of proper base change}  
\emph{Let}
$$
\begin{tikzcd} [row sep = large, column sep = large] 
X \arrow[r, "j_1", hookrightarrow] \arrow[d, "\alpha"] &
Y \arrow[d, "\beta"] \\
S \arrow[r, "j_2", hookrightarrow] & T  
\end{tikzcd}
$$ 
\emph{be a commutative diagram where $\alpha$ and $\beta$ are proper specialization morphisms, and $j_1$ and $j_2$ are open embeddings.}
\end{sit}

\begin{lem} \label{lem : simple equation}
We work within the context of Situation \ref{situation : application of proper base change}.
There is a natural equivalence
\[j_{2!}j^*_{2}R^+\beta_*j_{1!} \to R^+\beta_*j_{1!}.\]
\end{lem}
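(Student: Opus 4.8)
The statement claims that $j_{2!}j_2^*R^+\beta_*j_{1!} \to R^+\beta_*j_{1!}$ is an equivalence, where the arrow is the counit of the adjunction $(j_{2!}, j_2^*)$ applied to the complex $R^+\beta_*j_{1!}\sF$. Since $j_{2!}j_2^*$ is the functor ``extension by zero of the restriction to $S$'', and the counit $j_{2!}j_2^*\mathcal{G} \to \mathcal{G}$ is an isomorphism precisely when $\mathcal{G}$ is supported on $j_2(S)$, the whole statement reduces to showing that $R^+\beta_*j_{1!}\sF$ is (set-theoretically) supported on the open subset $j_2(S) \subseteq T$; that is, that its stalks vanish at every geometric point $\bar t \to T$ with $t \notin j_2(S)$. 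The plan is therefore: (i) reduce the vanishing-of-stalks claim to a statement about fibres of $\beta$ via proper base change, and (ii) observe that these fibres are empty over points outside $j_2(S)$ by the commutativity of the square.

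First I would reduce to checking stalks. The map in question is a morphism of complexes in $\mathcal{D}^+(T_{\text{\'et}}, A)$, so it is an equivalence if and only if it induces isomorphisms on all cohomology sheaves, equivalently on all stalks at geometric points $\bar t \to T$. For $t \in j_2(S)$ the counit $j_{2!}j_2^*\mathcal{G} \to \mathcal{G}$ is an isomorphism on stalks by the very definition of $j_{2!}$ (the stalk of $j_{2!}j_2^*\mathcal{G}$ at such a point equals the stalk of $\mathcal{G}$). So it remains to treat $t \notin j_2(S)$, where the stalk of $j_{2!}j_2^*\mathcal{G}$ vanishes, and we must show the stalk of $R^+\beta_*j_{1!}\sF$ at $\bar t$ also vanishes. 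Since cohomology commutes with the stalk functor, it suffices to prove $[R^n\beta_*(j_{1!}\sF)]_{\bar t} = 0$ for all $n$.

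Next I would apply proper base change. Because $\beta$ is a proper specialization morphism and $t$ can be taken to be a closed point (using Remark \ref{rem : certain base change for proper} to restrict to an affine open of $T$ containing $t$ and noting the relevant stalk only depends on an \'etale neighbourhood of $\bar t$; the sheaf $j_{1!}\sF$ is torsion since $\sF$ is an $A$-module and $A$ is a torsion ring), Corollary \ref{cor : proper base change for complexes} (or directly Theorem \ref{proper base change}) gives
\[
[R^n\beta_*(j_{1!}\sF)]_{\bar t} \simeq \varinjlim_{(W,\bar w)} H^n\!\left(\beta_W^{-1}(w), (j_{1!}\sF)|_{\beta_W^{-1}(w)}\right),
\]
the colimit running over \'etale neighbourhoods of $(T,\bar t)$. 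The key geometric input is that $j_1(X) = \beta^{-1}(j_2(S))$ as subsets of $Y$: this follows from the commutativity of the square together with the fact that $j_1$ realizes $X$ as $Y \times_T S$ at the level of underlying spaces (the square is cartesian on topological spaces because $j_2$ is an open embedding and $j_1$ is the induced open embedding $\beta^{-1}(j_2(S)) \hookrightarrow Y$ — this is how $X \hookrightarrow Y$ sits over $S \hookrightarrow T$). Consequently, for any \'etale $W \to T$ and any $w \in W$ mapping to a point of $T \setminus j_2(S)$, the fibre $\beta_W^{-1}(w)$ is disjoint from the support of $j_{1,W!}\sF|_{X_W}$, hence $(j_{1!}\sF)|_{\beta_W^{-1}(w)} = 0$ and the cohomology groups above all vanish. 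Since any \'etale neighbourhood of $\bar t$ can be shrunk so that its image lands in $T \setminus j_2(S)$ (as $j_2(S)$ is open and $t \notin j_2(S)$, so $T\setminus j_2(S)$ is a neighbourhood of $t$ in the constructible topology — more simply, restrict $W$ to the preimage of an open avoiding... \emph{no}: $T \setminus j_2(S)$ is closed, not open; instead use that the colimit is filtered and the condition ``$w \mapsto T\setminus j_2(S)$'' can be arranged after replacing $W$ by the complement of $j_2(S)$ pulled back, which is still an \'etale neighbourhood since $\bar t$ lies over it), the colimit vanishes. This completes the argument.

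\textbf{Main obstacle.} The only genuinely delicate point is the identification $j_1(X) = \beta^{-1}(j_2(S))$ on underlying topological spaces, i.e. that the commutative square of Situation \ref{situation : application of proper base change} is cartesian on points. This is not literally assumed — only commutativity is — so one must either add this as a hypothesis, deduce it from the way such squares arise in the application (where $X = \mathfrak{X}_\eta$, $Y$ a compactification, and the horizontal maps are genuine open immersions of the generic fibres, making the square cartesian by construction), or argue that $\beta^{-1}(j_2(S))$ is an open subspace through which $j_1$ factors and that this factorization is an isomorphism because both sides are partially proper over $j_2(S)$ with the same generic behaviour (invoking a uniqueness statement like Lemma \ref{lem:extmoralrepov}(1) or the universal property of compactification from Lemma \ref{lem:firredrefspemor}). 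I expect the cleanest route is to note that $\beta^{-1}(j_2(S)) \to j_2(S)$ is a proper specialization morphism containing $j_1(X)$ as a quasi-compact open with the same function sheaf, and that by the universal property in Lemma \ref{lem:firredrefspemor} / Theorem \ref{thm:compspemor} applied to $\alpha$, this forces $\beta^{-1}(j_2(S)) = j_1(X)$ when the square is set up as a compactification — which is exactly the situation it is used in.
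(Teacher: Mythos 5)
Your proposal follows the paper's proof essentially verbatim: reduce the claim to the vanishing of the stalks $[R^n\beta_*j_{1!}\sF]_{\overline t}$ for $t\in T\smallsetminus j_2(S)$, compute these stalks via Corollary \ref{cor : proper base change for complexes}, and observe that the fibres $\beta_W^{-1}(w)$ miss the support of $j_{1!}\sF$, so every term in the colimit is zero. That part is correct.

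The one substantive comment concerns your ``main obstacle'': the identity $j_1(X)=\beta^{-1}(j_2(S))$ is never needed, and the paper does not use it. All the argument requires is the inclusion $\beta(j_1(X))\subseteq j_2(S)$, which is immediate from the commutativity $\beta\circ j_1=j_2\circ\alpha$ assumed in Situation \ref{situation : application of proper base change}: any point of $X_W$ (the preimage of $X$ in $Y_W$) maps under $\beta_W$ into the preimage of $j_2(S)$, while $w$ lies over $t\notin j_2(S)$, so $\beta_W^{-1}(w)\cap X_W=\emptyset$ and $(j_{1!}\sF)|_{\beta_W^{-1}(w)}=0$ by \cite[Theorem 5.2.2(iv)]{hub96}. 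So no cartesianness hypothesis, and none of the appeals to Lemma \ref{lem:extmoralrepov} or Lemma \ref{lem:firredrefspemor} in your last paragraph, are required. Likewise the digression about shrinking $W$ so that its image avoids $j_2(S)$ is unnecessary: every term of the filtered colimit already vanishes, since the relevant $w$ always lies over $t$.
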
 
\begin{proof} 
 By adjointness, we have a natural morphism 
 \[\Phi \colon j_{2!}j^*_{2}R^+\beta_*j_{1!} \to R^+\beta_*j_{1!}.\]
 Observe that it suffices to show that 
 if 
 $\sF \in \mathcal{D}^+(X_{\text{\'et}},A)$, $t \in T \smallsetminus S$
 and $n \in \mathbb{Z}$ 
  then 
 $[R^n\beta_*j_{1!}(\sF)]_{\overline{t}} = 0$.
 By 
 Corollary \ref{cor : proper base change for complexes}, for every
 $n \in \mathbb{Z}$, we have an isomorphism
  \begin{equation}\label{main theorem equation}
[R^n\beta_*j_{1!}(\sF)]_{\overline{t}} \xrightarrow{\sim} \varinjlim_{(W,\overline{w})} R^n\Gamma(\beta_W^{-1}(w),j_{1!}(\sF))
\end{equation}
 where the colimit on the right runs over all \'etale neighbourhoods $(W,\overline{w})$ 
 of $(T,\overline{t})$.  
 Let $(W,\overline{w})$ be an \'etale neighbourhood of $(T,\overline{t})$ and let 
 $X_W$ be the preimage of $X$ for the morphism 
 $Y_W \to Y$. If $j_{1W}$ denotes the open embedding 
 $X_W \hookrightarrow Y_W$ then 
 by \cite[Theorem 5.2.2(iv)]{hub96}, 
 we have an isomorphism
 $$(j_{1!}(\sF))_{|Y_W} \xrightarrow{\sim} j_{1W!}(\sF_{|X_W}).$$  
 Since $\beta_W^{-1}(w) \cap X_W = \emptyset$, we see that 
 $R^n\Gamma(\beta_W^{-1}(w),j_{1!}(\sF)) = 0$. This concludes the proof.   
\end{proof}

\begin{cor} \label{lem:baschanspecmoropenr}
We work within the context of 
Situation \ref{situation : application of proper base change}. 
\begin{enumerate} 
\item Let $I$ be an injective $A$-module on $X_{\text{\'et}}$. Then, for every $n \geq 1$,
$$ R^n\beta_*j_{1!}(I) = 0.$$
\item There is a natural equivalence 
\[
R^{+}\beta_{*} \circ j_{1!} \xrightarrow{\sim} j_{2!} \circ R^{+}\alpha_{*}.
\]
\end{enumerate}
\end{cor}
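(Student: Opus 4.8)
\textbf{Proof proposal for Corollary \ref{lem:baschanspecmoropenr}.}

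The plan is to deduce both assertions from Lemma \ref{lem : simple equation} together with the proper base change machinery (Theorem \ref{proper base change} and Corollary \ref{cor : proper base change for complexes}) and Huber's theory of compactifications. For part (1), let $I$ be an injective $A$-module on $X_{\text{\'et}}$ and fix $n \geq 1$. To show $R^n\beta_*j_{1!}(I) = 0$ it suffices, since $\beta$ is proper, to check the vanishing on stalks at all points of $T$. If $t \in S$ (equivalently $\overline{t}$ factors through $S$), then by the smooth base change of Lemma \ref{lem:smoobaschanrefpovi} applied to the cartesian square in Situation \ref{situation : application of proper base change} (with $j_2$ an open embedding, hence \'etale), $j_2^* R^n\beta_* j_{1!}(I) = R^n\alpha_* j_2^* j_{1!}(I) = R^n\alpha_*(I)$ on $S_{\text{\'et}}$; but $I$ is injective on $X_{\text{\'et}}$ and higher direct images of injectives vanish, so this stalk is $0$. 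If $t \in T \smallsetminus S$, then precisely the computation in the proof of Lemma \ref{lem : simple equation} applies: by Corollary \ref{cor : proper base change for complexes}, $[R^n\beta_*j_{1!}(I)]_{\overline{t}} \xrightarrow{\sim} \varinjlim_{(W,\overline{w})} R^n\Gamma(\beta_W^{-1}(w), j_{1!}(I))$, and since $\beta_W^{-1}(w) \cap X_W = \emptyset$ one has $(j_{1!}(I))_{\mid \beta_W^{-1}(w)} = 0$ by \cite[Theorem 5.2.2(iv)]{hub96}, so the stalk vanishes. This establishes (1).

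For part (2), I would construct the natural morphism first and then check it is an isomorphism using (1). By adjointness there is a canonical morphism $j_2^* R^+\beta_* j_{1!} \to R^+\alpha_* j_1^* j_{1!} = R^+\alpha_*$ (the base change map of Lemma \ref{lem:smoobaschanrefpovi}, which is in fact an equivalence since $j_2$ is \'etale), and then by the $(j_{2!}, j_2^*)$ adjunction this produces $j_{2!} j_2^* R^+\beta_* j_{1!} \to j_{2!} R^+\alpha_*$. Composing with the inverse of the equivalence $j_{2!} j_2^* R^+\beta_* j_{1!} \xrightarrow{\sim} R^+\beta_* j_{1!}$ from Lemma \ref{lem : simple equation} gives the desired natural transformation
\[
R^+\beta_* \circ j_{1!} \xrightarrow{\sim} j_{2!} \circ R^+\alpha_*.
\]
To see it is an equivalence, it suffices to work with a complex $\sF \in \mathcal{D}^+(X_{\text{\'et}},A)$ and, by way of a Cartan--Eilenberg (or injective) resolution together with part (1), reduce to the case $\sF = I[0]$ with $I$ injective: part (1) guarantees that $j_{1!}(I)$ is $\beta_*$-acyclic, so $R^+\beta_* j_{1!}(I)$ is represented by the single sheaf $\beta_* j_{1!}(I)$ in degree $0$, and similarly $j_{2!} R^+\alpha_*(I) = j_{2!}\alpha_*(I)$ since $\alpha_*$ preserves injectives-acyclicity trivially. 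It then remains to check that the underived morphism $\beta_* j_{1!}(I) \to j_{2!}\alpha_*(I)$ is an isomorphism of sheaves on $T_{\text{\'et}}$, which one does on stalks: over $t \in S$ both sides compute $\alpha_*(I)_{\overline{t}}$ via smooth base change, and over $t \in T \smallsetminus S$ both sides vanish (the right side by the definition of $j_{2!}$, the left side by the emptiness argument as in Lemma \ref{lem : simple equation} applied in degree $0$).

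The main obstacle I anticipate is the careful bookkeeping in the reduction to injective objects for part (2): one must be sure that the natural transformation built abstractly really does coincide, after evaluation on injectives, with the concrete stalkwise comparison map, so that checking it is an isomorphism on stalks genuinely suffices. This is a standard but slightly delicate compatibility of adjunction units/counits with base change morphisms; everything else (the two stalk computations, the acyclicity input from part (1), and the appeal to Lemma \ref{lem : simple equation}) is essentially formal given the results already established. No new geometric input beyond Situation \ref{situation : application of proper base change} and properness of $\alpha$, $\beta$ is needed.
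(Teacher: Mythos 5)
Your overall architecture matches the paper's: vanishing of $R^n\beta_*j_{1!}(I)$ for injective $I$ is checked separately over $S$ and over $T\smallsetminus S$ (the latter exactly via Lemma \ref{lem : simple equation} and Corollary \ref{cor : proper base change for complexes}), the natural transformation in (2) is produced from Lemma \ref{lem : simple equation} by adjunction, and the equivalence is verified after reducing to injectives using (1). However, there is a genuine gap in the step you use over points of $S$: you apply Lemma \ref{lem:smoobaschanrefpovi} ``to the cartesian square in Situation \ref{situation : application of proper base change},'' but that square is only assumed \emph{commutative}, not cartesian. In general $X$ is a strictly smaller open subspace of $\beta^{-1}(S)$ (already for $X$ an open disc, $Y$ its compactification and $S=T$ a point), and in the actual application (Corollary \ref{lem:lowshricomutwicomrepov}) the relevant square is indeed not cartesian: $Z$ there is a compactification of $\overline{X}^{/S}\to U$, and nothing forces $\beta_2^{-1}(S)=\overline{X}^{/S}$. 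So the identity $j_2^*R^n\beta_*j_{1!}(I)=R^n\alpha_*(I)$, and likewise the claim in part (2) that the base change map $j_2^*R^+\beta_*j_{1!}\to R^+\alpha_*$ is an equivalence, are not justified as written.

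The repair is the content of the paper's proof: set $Z:=Y\times_TS=\beta^{-1}(S)$ with projections $j_4\colon Z\hookrightarrow Y$ (an open embedding, since $j_2$ is one) and $\gamma\colon Z\to S$, and factor $j_1=j_4\circ j_3$ with $j_3\colon X\hookrightarrow Z$ an open embedding. Smooth base change (Lemma \ref{lem:smoobaschanrefpovi}) legitimately applies to the cartesian square on $Z$, giving $j_2^*\circ R^+\beta_*\circ j_{1!}=R^+\gamma_*\circ j_4^*\circ j_{1!}=R^+\gamma_*\circ j_{3!}$; one then identifies $R^+\gamma_*\circ j_{3!}$ with $R^+\alpha_!=R^+\alpha_*$ by observing that $(Z,\gamma,j_3)$ is a compactification of $\alpha$ (here $\gamma$ is proper by Remark \ref{rem : certain base change for proper}, $j_3$ is quasi-compact because $\alpha$ is quasi-compact and $\gamma$ is quasi-separated, and one invokes Lemma \ref{lem:indlowshrirepovi}). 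With this substitution your argument for both parts goes through; the remaining points of your write-up (the vanishing off $S$, the acyclicity of $j_{1!}I$, the construction of $\epsilon$ via Lemma \ref{lem : simple equation}, and the stalkwise check) agree with the paper.
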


\begin{proof}
{We verify part (1) as follows. Let $Z$ be the fibre product $Y \times_{T} S$, which exists by Proposition \ref{prop:exifibprodspecmor}. Consider the commutative diagram coming from the universal property of fibre products}
$$
\begin{tikzcd} [row sep = large, column sep = large] 
X \arrow[rr, "j_1", hookrightarrow, bend left] \arrow[r, "j_3"] \arrow[d, "\alpha"] 
& Z \arrow[r, "j_4"] \arrow[ld, "\gamma"] 
& Y \arrow[d, "\beta"] \\
S \arrow[rr, "j_2", hookrightarrow] && T.  
\end{tikzcd}
$$
{Note that since $j_1$ and $j_4$ are étale, so is $j_3$ by \cite[Proposition 1.6.7(iii)]{hub96}.
By Remark 
\ref{rem : certain base change for proper}, since
$\beta$ is proper, $\gamma$ is proper. 
Since $\alpha$ is proper as well, it follows 
that $j_3$ is both quasi-compact and separated (by similar arguments appearing in the proof of 
Lemma \ref{lem:indlowshrirepovi}).

Then
\begin{align} \label{equn : salt lake city}
\nonumber j_{2}^{*} \circ R^{+}\beta_{*} \circ j_{1!} &\overset{(i)}{=} R^{+}\gamma_{*} \circ j_{4}^{*} \circ j_{1!} \\
\nonumber &\overset{(ii)}{=} R^{+}\gamma_{*} \circ  j_{3!} \\
&\overset{(iii)}{=} R^{+}\alpha_{*}
\end{align}
where (i) follows from Lemma \ref{lem:smoobaschanrefpovi}, (ii) follows from $j_1 = j_4 \circ j_3$ and (iii)
 follows from Lemma \ref{lem:lowshrcomprefpovi}
 after noting that $j_3$ is quasi-compact because 
 $\alpha$ is quasi-compact and $\gamma$ is quasi-separated.
 Hence, we get that for every $n \geq 1$,
$$j_2^*R^n\beta_*j_{1!}(I) = R^{n }\alpha_{*}(I) = 0.$$ 
It remains to show that $R^{n}\beta_{*} j_{1!}(I)$ vanishes outside of $S$. 
This is an immediate consequence of Lemma \ref{lem : simple equation}.
Hence, $R^n\beta_*j_{1!}(I) = 0$ everywhere.

We now verify part (2). 
  Recall that we have a natural morphism 
  $j_{!} \to j_{*}$ which implies a natural map 
  $\beta_*j_{1!} \to \beta_*j_{1*}$. 
  Since $\beta \circ j_1 = j_2 \circ \alpha$, 
   $\beta_*j_{1*} = j_{2*}\alpha_*$ and hence we have a natural 
   morphism 
   \[\beta_*j_{1!} \to j_{2*}\alpha_*.\]
   Applying $j_{2!}j^*_{2}$, we get a morphism
   \[j_{2!}j^*_{2}\beta_*j_{1!} \to j_{2!}j^*_{2} j_{2*}\alpha_*.\] 
   By Lemma \ref{lem : simple equation}, 
   we get a natural morphism
   \[\epsilon \colon \beta_*j_{1!}  \to j_{2!}\alpha_*.\]
   We claim that 
   $\epsilon$ is an equivalence. 
   Indeed, observe that 
   Equation (\ref{equn : salt lake city}) shows that 
   this is true over $S$ while
   Lemma \ref{lem : simple equation} shows that this true outside of $S$.  
   Part (1) of the lemma implies that 
   $R^+(\beta_*j_{1!}) = R^+\beta_*j_{1!}$.
   Since $\epsilon$ is a natural equivalence, we have that
   the induced morphism 
   \[R^+\beta_*j_{1!} \to  j_{2!}R^+\alpha_*\]
   is an isomorphism.   
   }

\end{proof}

\begin{cor} \label{lem:lowshricomutwicomrepov}
Let $\alpha \colon X \to S$ be a separated and finite type specialization morphism. Let $g \colon S \to T$ {be a separated and
finite type morphism of schemes}. Then $g \circ \alpha$ is a separated and finite type specialization morphism. Moreover, if 
$T$ is quasi-compact and quasi-separated,
 there is a natural equivalence 
\[
R^{+}(g \circ \alpha)_{!} \simeq R^{+}g_{!} \circ R^{+}\alpha_{!}
\]
of functors $\mathcal{D}^+(X_{\text{ét}},A) \to \mathcal{D}^+(T_{\text{ét}},A)$.
\end{cor}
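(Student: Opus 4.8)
The structure of the proof mirrors that of Lemma \ref{lem:lowshrcomprefpovi}, replacing the morphism of adic spaces there by the morphism of schemes $g$ and using Corollary \ref{lem:baschanspecmoropenr}(2) as the key input. First, that $g \circ \alpha$ is a separated and finite type specialization morphism is immediate from Remark \ref{rem:compmoretad} and Proposition \ref{prop:commorinvaprofrrep}, together with the observation that $g$, being of finite type over the quasi-compact scheme $T$, is quasi-compact. For the main assertion, write $\alpha$ as $X \xrightarrow{j} \overline{X}^{/S} \xrightarrow{\beta} S$ with $(\overline{X}^{/S},\beta,j)$ the universal compactification of $\alpha$ from Theorem \ref{thm:compspemor}, so that $R^{+}\alpha_! = R^{+}\beta_* \circ j_!$ by Definition \ref{defi:lowershrfurepovi}; and, using that $T$ is quasi-compact and quasi-separated, choose a Nagata compactification $S \xrightarrow{k} \overline{S} \xrightarrow{\overline{g}} T$ of $g$, where $k$ is a quasi-compact open immersion (both $S$ and $\overline{S}$ being quasi-compact and quasi-separated) and $\overline{g}$ is proper, so that $R^{+}g_! = R^{+}\overline{g}_* \circ k_!$.

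Next I would apply Theorem \ref{thm:compspemor} to the composite $k \circ \beta \colon \overline{X}^{/S} \to \overline{S}$. This specialization morphism is separated, locally of finite type and taut: all three properties are stable under composition with morphisms of schemes by Proposition \ref{prop:commorinvaprofrrep}; $\beta$ is proper, hence separated and locally of finite type, and it is taut since it is quasi-compact with quasi-separated source, as in the argument of Lemma \ref{lem : affinoid to affine sep and taut}; and $k$ is an open immersion of schemes. Hence $k \circ \beta$ admits a universal compactification $\overline{X}^{/S} \xrightarrow{j'} Z \xrightarrow{\gamma} \overline{S}$. Moreover $k \circ \beta$ is quasi-compact ($\beta$ is quasi-compact and $k$ is a quasi-compact open immersion), so $\gamma$ is proper by Corollary \ref{cor:compmorcompmor}. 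Then $j' \circ j \colon X \hookrightarrow Z$ is a quasi-compact open embedding and $\overline{g} \circ \gamma$ is a proper specialization morphism by Proposition \ref{prop:commorinvaprofrrep}, so $(Z,\overline{g}\circ\gamma,j'\circ j)$ is a compactification of $g \circ \alpha$ in the sense of Definition \ref{def:compmorrefpoiofvie}. By Lemma \ref{lem:indlowshrirepovi} and Proposition \ref{prop:compmorrepovi},
\[
R^{+}(g\circ\alpha)_! = R^{+}(\overline{g}\circ\gamma)_* \circ (j'\circ j)_! = R^{+}\overline{g}_* \circ R^{+}\gamma_* \circ j'_! \circ j_!.
\]

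Finally I would invoke the commutative square with vertical maps $\beta \colon \overline{X}^{/S} \to S$ and $\gamma \colon Z \to \overline{S}$ and horizontal maps $j' \colon \overline{X}^{/S} \hookrightarrow Z$ and $k \colon S \hookrightarrow \overline{S}$; it commutes because $\gamma \circ j' = k \circ \beta$ by the construction of $j'$, and it is an instance of Situation \ref{situation : application of proper base change} since $\beta$ and $\gamma$ are proper specialization morphisms and $j',k$ are open embeddings. Corollary \ref{lem:baschanspecmoropenr}(2) then supplies a natural equivalence $R^{+}\gamma_* \circ j'_! \xrightarrow{\sim} k_! \circ R^{+}\beta_*$, and substituting into the displayed identity gives
\[
R^{+}(g\circ\alpha)_! \xrightarrow{\sim} R^{+}\overline{g}_* \circ k_! \circ R^{+}\beta_* \circ j_! = R^{+}g_! \circ R^{+}\alpha_!,
\]
as desired. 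The main obstacle I anticipate is the verification that $k \circ \beta$ really does satisfy the hypotheses of Theorem \ref{thm:compspemor} — above all the tautness of $\beta$ — and the closely related point that the quasi-compactness and quasi-separatedness of $T$ propagate to $\overline{S}$ so that Corollary \ref{cor:compmorcompmor} and the quasi-compact open immersion $k$ are available; once these are in place, the remainder is formal bookkeeping with compositions already treated in Propositions \ref{prop:commorinvaprofrrep} and \ref{prop:compmorrepovi}.
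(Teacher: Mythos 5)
Your proposal is correct and follows essentially the same route as the paper's proof: universal compactification of $\alpha$ via Theorem \ref{thm:compspemor}, a Nagata compactification of $g$, a further universal compactification of the composite $\overline{X}^{/S} \to \overline{S}$, and then Lemma \ref{lem:indlowshrirepovi} together with Corollary \ref{lem:baschanspecmoropenr}(2) applied to the resulting square of proper specialization morphisms and open embeddings. Your explicit verification of the hypotheses of Theorem \ref{thm:compspemor} (in particular tautness of $\beta$) is a welcome addition that the paper leaves implicit.
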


\begin{proof}
The strategy is very similar to the proof of Lemma \ref{lem:lowshrcomprefpovi}. The fact that the composition $g \circ \alpha$ is a separated and finite type specialization morphism follows from Proposition \ref{prop:commorinvaprofrrep}. For the second part consider the following diagram
$$
\begin{tikzcd} [row sep = large, column sep = large] 
X \arrow[r, "j_1", hookrightarrow] \arrow[rd, "\alpha"] &
\overline{X}^{/S} \arrow[d, "\beta_1"] \arrow[r, "j_2", hookrightarrow] &
Z \arrow[d, "\beta_2"] \\
& S \arrow[r, "j_3", hookrightarrow] \arrow[rd, "g"] &
U \arrow[d, "h"] \\
&& T
\end{tikzcd}
$$
where the top left triangle is the universal compactification of $\alpha$ via Theorem \ref{thm:compspemor}, the bottom right triangle is a compactification of 
$g$ via \cite[Theorem 4.1]{Co07}. Let us 
explain the top right square. The 
specialization morphism $j_3 \circ \beta_1 \colon \overline{X}^{/S} \to U$ is separated and of finite type by Proposition \ref{prop:commorinvaprofrrep}. Thus again by Theorem \ref{thm:compspemor}, it admits a universal compactification which we denote by the triple $(Z, \beta_2, j_2)$. In particular all the horizontal arrows (in the diagram) are quasi-compact open embeddings and the vertical arrows are proper. Therefore the composition $j_2 \circ j_1$ is a quasi-compact open embedding of analytic adic spaces and by Proposition \ref{prop:commorinvaprofrrep}, the composition $h \circ \beta_2$ is a proper specialization morphism. Thus the outer triangle is a compactification of $g \circ \alpha$.

Then 
$$R^{+}g_{!} \circ R^{+}\alpha_{!} = R^{+}h_{*}\circ j_{3!}\circ R^{+}\beta_{1*} \circ j_{1!}$$
and 
\begin{align*}
R^{+}(g \circ \alpha)_{!} &\overset{(i)}{=} R^{+}(h \circ \beta_2)_{*} \circ (j_2 \circ j_1)_{!} \\
&\overset{(ii)}{=} R^{+}h_{*} \circ R^{+}\beta_{2*} \circ j_{2!} \circ j_{1!} \\
&\overset{(iii)}{=} R^{+}h_{*} \circ j_{3!} \circ R^{+}\beta_{1*} \circ j_{1!}
\end{align*}
where (i) follows from Lemma \ref{lem:indlowshrirepovi}, (ii) follows from Proposition \ref{prop:compmorrepovi} and \cite[Theorem 5.4.3]{hub96}, and (iii) follows from 
Corollary \ref{lem:baschanspecmoropenr}. This completes the proof.
\end{proof}

We now arrive at the promised application of the theory developed thus far.

\begin{proof} (of Theorem \ref{vanishing cycles commutes with lower shriek ref})
We have a commutative diagram
$$
\begin{tikzcd} [row sep = large, column sep = large] 
\mathfrak{X}_{\eta} \arrow[r, "\lambda_{\mathfrak{X}}"] \arrow[d, "\mathfrak{f}_{\eta}"] &
\mathfrak{X}_s \arrow[d, "\mathfrak{f}_s"] \\
\mathfrak{Y}_{\eta} \arrow[r, "\lambda_{\mathfrak{Y}}"] &
\mathfrak{Y}_s  
\end{tikzcd}
$$
where we remind the reader that $\lambda_{\mathfrak{X}}$ (resp. $\lambda_{\mathfrak{Y}}$) are the specialization morphisms, in the sense of Definition \ref{defi:spemorrepov} induced by the morphisms of locally ringed spaces $(\mathfrak{X}_{\eta}, \mathcal{O}_{\mathfrak{X}_{\eta}}^+ ) \to (\mathfrak{X}, \mathcal{O}_{\mathfrak{X}})$ (resp. $(\mathfrak{Y}_{\eta}, \mathcal{O}_{\mathfrak{Y}_{\eta}}^+ ) \to (\mathfrak{Y}, \mathcal{O}_{\mathfrak{Y}})$), cf. \S \ref{apend:numner1}. The composition $\mathfrak{f}_s \circ \lambda_{\mathfrak{X}} = \lambda_{\mathfrak{Y}} \circ \mathfrak{f}_{\eta}$ is a separated and finite type specialization morphism (because both $\lambda_{\mathfrak{X}}$ and $ \lambda_{\mathfrak{Y}}$ are proper by Proposition \ref{prop:specmorarproprepov}, and both $\mathfrak{f}_{s}$ and $\mathfrak{f}_{\eta}$ are separated and of finite type by assumption on $\mathfrak{f}$). Thus by Theorem \ref{thm:compspemor} it admits a (universal) compactification. By Definition \ref{defi:lowershrfurepovi}/Lemma \ref{lem:indlowshrirepovi}, we have a well defined $R^{+}(\mathfrak{f}_s \circ \lambda_{\mathfrak{X}})_{!} = R^{+}(\lambda_{\mathfrak{Y}} \circ \mathfrak{f}_{\eta})_{!}$ right derived lower shriek functor. We compute
\begin{align*}
R^{+}(\mathfrak{f}_s \circ \lambda_{\mathfrak{X}})_{!} &\overset{(i)}{=} R^{+}\mathfrak{f}_{s!} \circ R^{+}\lambda_{\mathfrak{X}!} \\
&\overset{(ii)}{=} R^{+}\mathfrak{f}_{s!} \circ R^{+}\lambda_{\mathfrak{X}*} 
\end{align*}
where (i) follows from Corollary \ref{lem:lowshricomutwicomrepov}, (ii) follows from the fact that $\lambda_{\mathfrak{X}}$ is proper (cf. Proposition \ref{prop:specmorarproprepov}).

Similarly one obtains $R^{+}(\lambda_{\mathfrak{Y}} \circ \mathfrak{f}_{\eta})_{!} = R^{+}\lambda_{\mathfrak{Y}*} \circ R^{+}\mathfrak{f}_{\eta!}$ where one uses Lemma \ref{lem:lowshrcomprefpovi} in place of Corollary \ref{lem:lowshricomutwicomrepov}. The result now follows.
\end{proof} 

We end this section with the following remark.
\begin{rem}
\emph{Let $\alpha \colon X \to S$ be a separated, taut and locally of finite type specialization morphism. Then as in Definition \ref{defi:lowershrfurepovi}, one can define a functor $R^{+}\alpha_{!}$ with the expected properties. Moreover if $X$ satisfies some finiteness conditions (e.g. $X$ is locally of finite type over an algebraically closed valued field $k$), then the functor
\[
R^{+}\alpha_{!} \colon \mathcal{D}^+(X_{\text{ét}},A) \to \mathcal{D}^+(S_{\text{ét}},A)
\]
admits a right adjoint functor
\[
R^{+}\alpha^{!} \colon \mathcal{D}^+(S_{\text{ét}},A) \to \mathcal{D}^+(X_{\text{ét}},A).
\]
This places \cite[Corollary 4.3(1)]{ildarjohn}  into a larger framework.}
\end{rem}

  \bibliographystyle{plain}
\bibliography{library}
   
\noindent Ildar Gaisin\\
 Graduate School of Mathematical Sciences, The University of Tokyo,\\
 3-8-1 Komaba, Meguro, Tokyo, 153-0041, Japan. \\
\textit{email : ildar@ms.u-tokyo.ac.jp} \\
  
\noindent  John Welliaveetil \\
 Kavli Institute for the Physics and Mathematics of the Universe,
 The University of Tokyo,\\
 5-1-5 Kashiwanoha
Kashiwa, 277-8583, Japan\\
 \textit{email : welliaveetil@gmail.com}

   \end{document}